\newtheorem{Thm}{Theorem}
\newtheorem{Lem}{Lemma}
\newtheorem{Ass}{Assumption}
\newtheorem{Rem}{Remark}
\newtheorem{Cor}{Corollary}
\def \hy #1{\textcolor{black}{#1}}
\def \tocheck #1{\textcolor{black}{#1}}
\def \finetune #1{\textcolor{black}{#1}}
\icmltitlerunning{Tackling Data Heterogeneity: A New Unified Framework for Decentralized SGD}
\begin{document}

%

%





\twocolumn[
\icmltitle{Tackling Data Heterogeneity: A New Unified Framework for Decentralized SGD with Sample-induced Topology}




\begin{icmlauthorlist}
\icmlauthor{Yan Huang}{zju}
\icmlauthor{Ying Sun}{psu}
\icmlauthor{Zehan Zhu}{zju}
\icmlauthor{Changzhi Yan}{zju}
\icmlauthor{Jinming Xu}{zju}
\end{icmlauthorlist}

\icmlaffiliation{zju}{College of Control Science and Engineering, Zhejiang University, Hangzhou, China}
\icmlaffiliation{psu}{{School of Electrical Engineering and Computer Science, The Pennsylvania State University,  PA 16802, USA}}

\icmlcorrespondingauthor{Jinming Xu}{jimmyxu@zju.edu.cn}

\icmlkeywords{Machine Learning, ICML}

\vskip 0.3in
]



\printAffiliationsAndNotice{}  

\begin{abstract}

We develop a general framework unifying several gradient-based stochastic optimization methods for empirical risk minimization problems both in centralized and distributed scenarios. The framework hinges on the introduction of an augmented graph consisting of nodes modeling the samples and edges modeling both the inter-device communication and intra-device stochastic gradient computation. By designing properly the topology of the augmented graph, we are able to recover as special cases the renowned Local-SGD and DSGD algorithms, and provide a unified perspective for variance-reduction (VR) and gradient-tracking (GT) methods such as SAGA, Local-SVRG and GT-SAGA. We also provide a unified convergence analysis for smooth and (strongly) convex objectives relying on a proper structured Lyapunov function, and the obtained rate can recover the best known results for many existing algorithms. The rate results further reveal that VR and GT methods can effectively eliminate data heterogeneity within and across devices, respectively, enabling the exact convergence of the algorithm to the optimal solution. Numerical experiments confirm the findings in this paper.
\end{abstract}

\section{Introduction} \label{Introduction}

With the increasing popularity of large-scale machine learning, distributed stochastic optimization methods have sparked considerable interest to improve learning efficiency in both academia and industry \citep{lian2017can, boyd2011distributed}. {In contrast to the typical centralized/parameter-server architecture \citep{dean2012large, lian2015asynchronous, stich2019local}} where a center node coordinates the entire optimization process, which usually becomes the bottleneck, distributed structure has its unique advantage in improving computation and communication efficiency  \citep{nedic2018network}. Besides, since the data is locally maintained by each node, data privacy can be well preserved for data-sensitive application domains \citep{li2019convergence}.

We consider the prototypical empirical risk minimization problem collaboratively solved by a set of agents over a communication network. The overall objective of the agents is to seek an optimal solution $x^*\in \mathbb{R}^d$ that solves the following finite-sum problem:
\begin{equation}\label{ERM}
\underset{x\in \mathbb{R}^d}{\min}f\left( x \right) =\frac{1}{n}\sum_{i=1}^n{\left(f_i(x):=\frac{1}{m}\sum_{j=1}^m{\underset{f_{ij}\left( x \right)}{\underbrace{f\left( x,\xi _{i,j} \right) }}}\right)},
\end{equation}
where $f_i:\mathbb{R}^d\rightarrow \mathbb{R}$ is the local private loss function accessible only by the associated node
{$i\in\mathcal{N}:=\left\{ 1,2,\cdots ,n \right\}$}, and $\{\xi_{i,j}\}_{j = 1}^{m}$ denote the data samples locally stored at node $i\in\mathcal{N}$ and $\{f_{ij}\}$ denote the corresponding loss functions. 

Problem (\ref{ERM}) has been extensively studied over the last decade and enormous distributed algorithms, e.g., \citet{nedic2009distributed, nedic2010constrained, lobel2011distributed, yuan2016convergence, pu2020push}, have been proposed to solve this problem. The readers are referred to the recent survey paper \citep{nedic2018network} and the references therein. Among these algorithms, the distributed gradient decent (DGD) algorithm \citep{nedic2009distributed} is a simple yet effective method. However, it suffers from steady-state error when employing constant stepsizes due to the fact that the fixed point of DGD is inherently not consensual \citep{yuan2016convergence}. To bridge the gap between centralized gradient decent and DGD, a gradient-tracking scheme is introduced to overcome the above issue \citep{xu2015augmented, di2016next, nedic2017achieving, qu2017harnessing}, which introduces an auxiliary variable to 
estimate the full gradient $\nabla f$ (the sum of local gradients) leveraging dynamic average consensus \citep{zhu2010discrete} so as to make the fixed point consensual. The introduction of gradient tracking scheme allow us to effectively account for the heterogeneity among local data sets (also known as \emph{external} variance among data distribution of nodes).  


\textbf{Variance-reduced (VR) and local methods.} All of the above-mentioned algorithms are deterministic and require evaluating the local full  gradient $\nabla f_i$ at each iteration, leading to high computational complexity \citep{hong2017prox}. A natural way to reduce the computational complexity is to use stochastic gradients to approximate the local full gradient. To this end, a vast number of distributed stochastic optimization algorithms are proposed for the problem (\ref{ERM}), such as DSGD \citep{ram2009asynchronous}, D-PSGD \citep{lian2017can} and SGP \citep{assran2019stochastic}, and just to name a few. These stochastic optimization algorithms work quite well in practice but usually require diminishing stepsizes to attenuate the variance of the stochastic gradient, a.k.a. \emph{internal} (sample) variance. An effective solution to this is to employ the idea of variance-reduction and learn the local full gradient $\nabla f_i$ iteratively, as did in SVRG \citep{johnson2013accelerating}, SAGA \citep{defazio2014saga}, L-SVRG \citep{hofmann2015variance, qian2021svrg} and {SARAH \citep{pmlr-v70-nguyen17b}}  to eliminate the internal variance, yielding faster convergence such as D-SAGA \citep{calauzenes2017distributed}. To avoid high communication burden among nodes, one may trade computation with communication by performing several local gradient steps between two consecutive communication steps, which leads to communication-efficient algorithms, such as Local-SGD \citep{khaled2020tighter} and Local-SVRG \citep{gorbunov2021local}; \hy{or employing periodic global averaging (PGA) for speeding up consensus, such as Gossip-PGA \citep{chen2021accelerating}.}

\textbf{Gradient-tracking-based (GT) stochastic methods.} The gradient-tracking scheme have been also recently incorporated into various stochastic optimization algorithms as a key step to eliminate the \emph{external} variance. For example,  DSGT \citep{pu2020distributed} and DSA \citep{mokhtari2016dsa} can achieve higher accuracy than that of DSGD by removing the bias due to the heterogeneity among local data sets. Nevertheless, gradient-tracking, by its nature, is still unable to eliminate the data sample variance. This naturally leads to the integration of variance-reduction methods with the gradient-tracking scheme. For instance, \citet{sun2020improving} employ a scheme with both gradient-tracking and variance-reduction to solve a smooth (probably non-convex) problem and show that it converges to a stationary point sublinearly. {\citet{li2021destress} proposed a similar algorithm with a nested loop structure for the sake of improving its overall complexity.} \hy{\citet{xin2020variance} and \citet{jiang2022distributed} consider a similar GT-VR framework and obtain a linear rate for strongly convex problems and $\mathcal{O}\left( 1/k \right) $ rate for non-convex setting, respectively.} {Similar attempts have been recently made towards composite optimization problems \cite{ye2020pmgt}.}
\hy{
There are also many other efforts made to solve Problem \eqref{ERM}, such as those based on approximate Newton-type methods \citep{li2020communication} and acceleration schemes \citep{scaman2017optimal, hendrikx2021optimal}.}

\textbf{Unified framework for first-order stochastic optimization algorithms.} There have been some efforts made to unify the aforementioned algorithms. In particular, {\citet{hu2017unified}} unify several variance-reduction methods by establishing the intrinsic connection between stochastic optimization methods and dynamic jump systems. \citet{wang2021cooperative} consider to use a time-varying mixing matrix to model Cooperative SGD which can recover several existing non-variance-reduced methods. Building on this, \citet{koloskova2020unified} propose a unified framework for Decentralized (Gossip) SGD by employing changing topology and multiple local updates. To incorporate variance-reduction methods, \citet{gorbunov2020unified} study a general framework that can account for variance-reduction, importance sampling, mini-batch sampling, leading to a unified theory of variance reduced and non-variance-reduced SGD methods. The authors also extend this framework to cover
Local-SGD and variance-reduced SGD methods, which recovers the rate in \citep{koloskova2020unified}.
\hy{However, to the best of our knowledge, there is no such a framework for distributed first-order stochastic optimization algorithms that can recover all the above-mentioned VR-, GT-, Local- and PGA-based methods both in centralized and decentralized settings.}

\subsection{Our Contribution}
In this work, we develop a new unified framework based on the introduction of an augmented graph whose nodes model the data samples. Leveraging a proper sampling strategy on the augmented graph, this framework allows us to recover many existing algorithms as well as their corresponding best known rates. In contrast to the existing frameworks as mentioned above, the proposed framework not only contain them as special cases but also enable us to easily design new efficient algorithms with guaranteed rates, especially those employing gradient-tracking scheme, yielding a broader range of methods to be incorporated. The main contributions of this paper are summarized as follows:

\vspace{-0.2cm}
\begin{itemize}
 \item \textbf{New unified framework for algorithm design and analysis.} The proposed framework unify various gradient-based stochastic optimization methods both in centralized and distributed scenarios. With proper sampling strategies on the augmented graph, we can easily recover these VR-, GT-, Local- and PGA-based methods, as well as their proper combinations (see Table~\ref{Tab_comparing}). \hy{Besides, this framework also provides a new unifying perspective for GT- and VR-based schemes, which are otherwise two separate approaches before, and show an equivalence of Local-SGD, Gossip-PGA and DSDG in terms of iteration complexity once their expected topology connectivity is same.} 
 \item \textbf{Recovering various existing algorithms along with the best known rates.} A unified convergence analysis is provided, which relies on a proper Lyapunov function for smooth and (strongly) convex objectives. The obtained rates either recover the existing best known rates or are new for certain algorithms under our settings, including SAGA, Local-SGD, DSGD, Local-SVRG and GT-SAGA (see Table~\ref{Tab_comparing}). 
 \hy{These rates also show
the clear dependence of the convergence performance on the
above-mentioned schemes, such as GT, VR, Local update,
and PGA.} The theoretical results further reveal that VR- and GT-based methods are usually needed to achieve exact convergence in scenarios where data heterogeneity is a key concern.

 \item \textbf{New efficient algorithms with provable rates.} Our framework allows us to easily come up with new efficient algorithms with proper design of the sampling strategy on the augmented graph and provides the corresponding rate guarantee as well, such as Local-SAGA, PGA-SAGA, PGA-GT-SAGA, which are not formally analyzed before (see Table~\ref{Tab_comparing}). Moreover, the proposed framework provides more flexibility in design of \hy{network topology which can be of multi-layer structure in certain scenarios for communication efficiency}.
 
\end{itemize}

\section{Problem Formulation}
\label{Preliminary}
We consider solving Problem~\eqref{ERM} over a peer-to-peer network, modeled as an augmented directed graph (digraph) $\mathcal{G}=(\mathcal{V},\mathcal{E})$ where $\mathcal{V}:=\{1,2,...,M\}$ with $M=nm$ denotes the set of nodes modeling the data samples and $\mathcal{E}\subseteq\mathcal{V}\times \mathcal{V}$ denotes the set of edges consisting of ordered pairs $(i,j)$ modeling the virtual/actual communication link from $j$ to $i$. We then make the following blanket assumptions on the cost functions of problem~\eqref{ERM}.

\begin{Ass}\label{Ass_smoothness}
Each $f_{i}: \mathbb{R}^d\rightarrow \mathbb{R}$ is $\mu$-strongly convex and $L$-smooth, i.e., for any $x, x'\in \mathbb{R}^d$
\begin{equation}
\begin{aligned}
\left< \nabla f_i\left( x \right) -\nabla f_i\left( x^{\prime} \right), x-x^{\prime} \right> \geqslant \mu \left\| x-x^{\prime} \right\| ^2,
\end{aligned}
\end{equation}
\vspace{-0.25cm}
\begin{equation}
\begin{aligned}
& \left\| \nabla f_i\left( x \right) -\nabla f_i\left( x' \right) \right\|\leqslant L\left\| x-x' \right\|.
\end{aligned}
\end{equation}
\end{Ass}

\begin{Ass}\label{Ass_sampling}
(Bounded data heterogeneity at optimum) Let $x^* \in \mathrm{arg}\min_{x} f\left( x \right) $.
For each $f_{ij}: \mathbb{R}^d\rightarrow \mathbb{R}$, $i\in [n], j\in [m] $, there exist positive constants $\sigma ^*, \zeta ^*$ such that
\begin{equation}\label{Def_noise}
\begin{aligned}
&\frac{1}{M}\sum_{i=1}^n{\sum_{j=1}^m{\left\| \nabla f_{ij}\left( x^* \right) -\nabla f_i\left( x^* \right) \right\| ^2}}\leqslant \sigma ^*,
\end{aligned}
\end{equation}
\vspace{-0.3cm}
\begin{equation}\label{Def_heterogeny}
\begin{aligned}
&\frac{1}{n}\sum_{i=1}^n{\left\| \nabla f_i\left( x^* \right) \right\| ^2}\leqslant \zeta ^*.
\end{aligned}
\end{equation}
\end{Ass}

\begin{Rem}
The parameter $\sigma ^*, \zeta ^*$ are  defined to measure the local gradient sampling variance and data heterogeneity across devices, respectively. 
Notice that we do not require the data heterogeneity be bounded uniformly for all $x$ but only at the optimum $x^*$, which is weaker than the previous works such as {\citep{lian2017can, wang2021cooperative}}.
\end{Rem}

\begin{Ass}\label{Expected smoothness}
({Averaged smoothness})
For all $i\in[n]$ and $\forall x, x^\prime\in \mathbb{R}^d$, we have
\begin{equation}\label{Eq_Expe_smooth}
\begin{aligned}
&\frac{1}{m}\sum_{j=1}^m{\left\| \nabla f_{ij}\left( x \right) -\nabla f_{ij}\left( x^{\prime} \right) \right\| ^2}
\\
&\leqslant 2L\left( f_i\left( x \right) -f_i\left( x^{\prime} \right) -\left< \nabla f_i\left( x^{\prime} \right) , x-x^{\prime} \right> \right). 
\end{aligned}
\end{equation}
\end{Ass}
\vspace{-0.25cm}
Assumption \ref{Expected smoothness} is automatically satisfied if we assume that each $f_{ij}$ is convex and $L$-smooth.

\section{Sample-wise Push-Pull Framework}

In this section, we introduce the sample-wise Push-Pull framework for the finite-sum problem (\ref{ERM}). To this end, we use an augmented graph $\mathcal{G}$ with a two-level structure as depicted in Figure~\ref{Fig_augmented_graph} to illustrate the key ideas. 

\begin{figure}[h] 
    \centering
    {
        \begin{minipage}[t]{0.45\textwidth}
            \centering          
            \includegraphics[width=\textwidth]{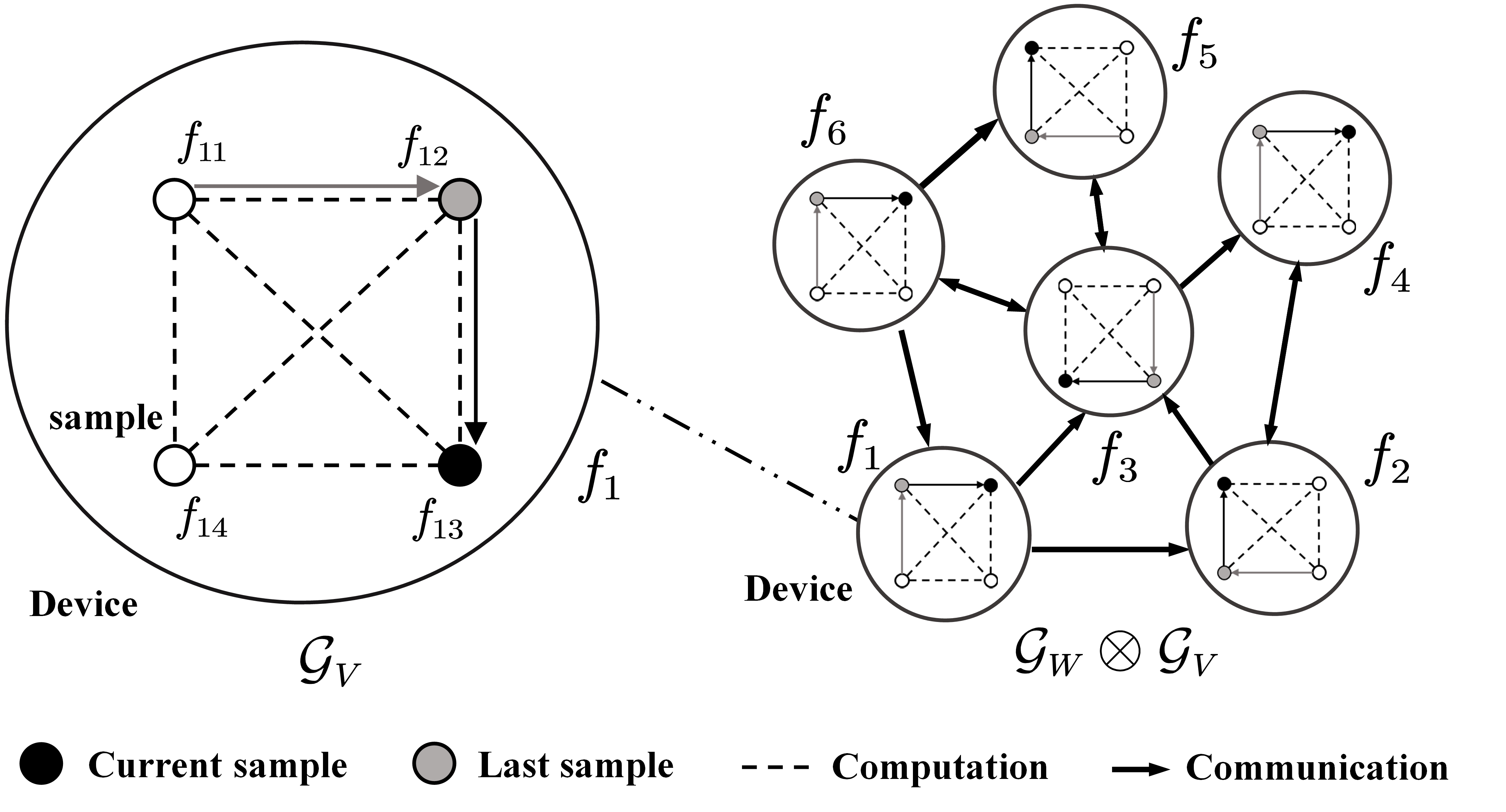}   
        \end{minipage}%
    }
    \vspace{-0.2cm}
     \caption{A two-level augmented graph for $n=6$ and $m=4$. The graph $\mathcal{G}_V$ on the left refers to a fully connected underlying graph within one device, and the solid arrows represent the \emph{local} stochastic gradient computation for updating parameters. The graph $\mathcal{G} = \mathcal{G}_W\otimes \mathcal{G}_V$ on the right represents the augmented graph induced by the samples locally stored at all the devices.}\label{Fig_augmented_graph}
\end{figure}
We first model each device as a super-node connected by the actual communication network $\mathcal{G}_W$. Then, we replace each device $i$ with a virtual fully connected graph $\mathcal{G}_V$ whose nodes model the samples $\xi_{i,j}$, yielding an augmented graph $\mathcal{G} = \mathcal{G}_W\otimes \mathcal{G}_V$. 
\textcolor{black}{As we will show shortly, implementation of stochastic optimization algorithms can be deemed as recursively sampling the edges of $\mathcal{G}$, which  consists in: \textbf{i)} locally updating the parameter of sample node $j$ in device $i$ using the gradient of $f_{il}$ when the edge $(j, l)\in\mathcal{G}_V$ associated with device $i$; \textbf{ii)}  performing an actual communication step between devices $i$ and $s$ when the edge $(i, s) \in\mathcal{G}_W$. }

As a convention in decentralized optimization, we associate each node in $\mathcal{G}$ a local variable $x_i \in \mathbb{R}^d$ that serves as a copy of the global variable $x$. Besides, each node $i$ also maintains an auxiliary variable $y_i \in \mathbb{R}^d$ that estimates the full gradient $\nabla f(x)$. For brevity, we use $X, Y\in \mathbb{R}^{M\times d}$ to denote matrices stacking the $x_i$'s and $y_i$'s, respectively:
\begin{equation*}
\begin{aligned}
X:=\left[ x_{1}, x_{2}, \cdots, x_{M} \right] ^T,\,\,
Y:=\left[y_{1}, y_{2}, \cdots , y_{M} \right] ^T.
\end{aligned}
\end{equation*}
Accordingly, the collective gradient vectors of all local objective functions at $X$ is denoted as
$$
\nabla F\left( X \right) :=\left[ \nabla f_1\left( x_{1} \right) ,\cdots ,\nabla f_M\left( x_{M} \right)  \right] ^T.
$$
Now, we are ready to introduce our algorithmic framework, termed sample-wise Push-Pull (SPP), for Problem (\ref{ERM}):
\begin{subequations}\label{SPP_framework}
\begin{align}
&X_{k+1}=R_{k}X_k-\alpha \varGamma _{k}Y_k \label{recu_X},
\\
&Y_{k+1}=C_{k}Y_k+\nabla F\left( X_{k+1} \right) -\nabla F\left( X_k \right) \label{recu_Y},
\end{align}
\end{subequations}
where $\alpha$ is a constant step-size, and $R_k, C_k\in \mathbb{R}^{M\times M}$ are time-varying weight matrices (to be properly designed) for reaching consensus (pull operation) among nodes and tracking the overall average gradient (push operation), respectively; $\varGamma _k \in \mathbb{R}^{M\times M}$ is a random sampling matrix for selecting edges of the augmented graph.

In what follows, we explain the role of the above three key parameters of $\varGamma _{k}, R_k$ and $C_k$ over an augment graph with the two-level structure as shown in Figure~\ref{Fig_augmented_graph}.

\textbf{Sampling on augmented graph.} In our proposed SPP framework,
both the processes of {actual communication among devices and local stochastic gradient computation for updating parameters} can be regarded as selecting a subset of nodes from the augmented graph $\mathcal{G}=\mathcal{G}_W\otimes \mathcal{G}_V$.
In particular, each device $i$ elects its samples (nodes in $\mathcal{G}_V$) participating in the update at iteration $k$, indicated by a binary-valued vector $\boldsymbol{e}_{i,k}\in \left\{ 0,1 \right\} ^{m\times 1}$: the $j$-th element $e_{i,k}^j$ is set to be one if the $\xi_{ij}$ is selected and zero otherwise. The concatenated vector $\boldsymbol{e}_k=[\boldsymbol{e}_{1,k}^{T}\,\,, \cdots , \boldsymbol{e}_{n,k}^{T}]^T \in \mathbb{R}^M$ then indicates the identities of all the samples in the network selected at iteration $k$. \textcolor{black}{Denote by $b_k$ the number of selected samples of each device at iteration $k$, i.e., $b_k=\mathbf{1}_m^T e_{i,k}, \forall i\in \left[ n \right]$.}
Inherited from the weight matrix of $\mathcal{G}$ at iteration $k$, which is $W_k \otimes \mathbf{1}\mathbf{1}^T$, the sampling matrix becomes:
\begin{equation}\label{Lab_Gamma_k}
\begin{aligned}
&\varGamma _k:=\varLambda _{k+1}\left( W_k\otimes \mathbf{1}\mathbf{1}^T \right) \frac{\varLambda _k}{b_k}, 
\end{aligned}
\vspace{-0.2cm}
\end{equation}
{where $\Lambda_k = \mathbf{diag}\left( \boldsymbol{e}_{k} \right)$ denotes the elected samples at iteration $k$ who will send messages to the nodes picked by $\Lambda_{k+1}$.} Indeed, $\varGamma _k$ models the virtual/actual message passing among sample node from iteration $k$ to $k+1$.

Then, we consider the following sampling strategy,
which is commonly used for many distributed learning problems.

\begin{Ass}\label{Ass_uniformly_sampling_without_replacement}
For all $k\geqslant0$, each device $i\in[n]$ independently and uniformly selects $b_k$ data samples from its local datasets at random without replacement.
\end{Ass}

\textbf{Intra and inter consensus guarantee.} The main purpose of the weight matrix $R_k$ is to ensure consensus of estimates within and across devices, which is designed as follows:
\begin{equation} \label{Lab_R_k}
\begin{aligned}
&R_k:=\mathbf{I}_M-\varLambda _{k+1}+\varGamma _k,
\end{aligned}
\end{equation}
where the term $\mathbf{I}_M-\varLambda _{k+1}$ represents that the parameters kept at the nodes that are not sampled remain unchanged at iteration $k+1$. The term $\varGamma_k$, as defined in (\ref{Lab_Gamma_k}), denotes the message passing from samples $\xi _{k}$ to $\xi _{k+1}$ over the augmented graph, and only samples in $\xi _{k+1}$ perform update.
Note that in such design, $R_k$ is \emph{row-stochastic}. There are indeed two consensus processes involved in the above process: 1) \emph{consensus within each device}, which is guaranteed by the fully connectivity of $\mathcal{G}_V$, meaning that the latest parameters can be always sent to the current sample node; 2) \emph{consensus across devices}, which is ensured by the proper design of weight matrix $W_k$ that has been incorporated in $\varLambda_k$.

\textbf{Accurate full-gradient estimation for tackling data heterogeneity.} In order to obtain an accurate estimate on gradient descent direction, variance-reduction and gradient-tracking methods are widely used to eliminate the variance of the stochastic gradient within and across the devices, respectively. In the proposed framework, we properly design the doubly-stochastic weight matrix $C_k$ corresponding to the augmented graph $\mathcal{G}_W\otimes \mathcal{G}_V$ as
\begin{equation}\label{Lab_C_k}
\begin{aligned}
C_k:=G_k\otimes V_k.
\end{aligned}
\end{equation}
Since $C_k$ is doubly-stochastic, we have by induction,
\begin{equation}\label{Eq_fixed_sum_Y}
\begin{aligned}
\frac{\mathbf{1}_{M}^{T}}{M}Y_k=\frac{\mathbf{1}_{M}^{T}}{M}\nabla F\left( X_k \right) , \forall k \geqslant 0,
\end{aligned}
\end{equation}
which enables all the nodes to track the full gradient $\nabla f(x)$. In fact, it will become clear that $G_k\in \mathbb{R}^{n\times n}$ is meant for gradient-tracking across devices while $V_k\in \mathbb{R}^{m\times m}$ is dedicated to variance-reduction within device.

Thus, we can properly choose these above weight matrices $\varGamma _{k}, R_k,C_k$ to recover existing algorithms with or without GT and VR operations. 
We use $\mathcal{A}\left(\varGamma _{k}, R_k,C_k \right)$ to denote algorithms generated from the proposed SPP framework. 
For ease of presentation, we will use the above same setting for $\varGamma _{k}$, $R_k$ and $C_k$ throughout the paper.

\begin{Rem}
In contrast to the existing frameworks, our proposed SPP framework provides a more general prospective for algorithm design based on sampling of an augmented graph, i.e., we allow for adopting various consensus schemes, gradient estimation methods and their combinations by different choices of the three key matrices $R_k$, $C_k$ and $\varGamma_k$. Our framework covers several recently proposed frameworks, such as Local-SGD {\citep{gorbunov2021local}}, {Cooperative SGD} {\citep{wang2021cooperative}}, {Decentralized (Gossip) SGD} {\citep{koloskova2020unified}}. Indeed, none of these works consider both gradient-tracking and variance-reduction with changing topology and local updates. Besides, the proposed framework has the potential to recover {Multi-Level Local-SGD} \citep{castiglia2020multi} by properly designing a topology of hierarchical structures.
\end{Rem}

\section{Existing Algorithms as Special Cases and Beyond}\label{Sec_Recover}
In this section, we show how the proposed SPP framework can, indeed, recover a large number of existing algorithms as special cases. 
To this end, we introduce a projection matrix $S_k\in \mathbb{R}^{n\times M}$, which is defined as
\begin{equation} \label{Def_projection_matrix}
\begin{aligned}
S_k:=\left( \mathbf{I}_n\otimes \mathbf{1}_{m}^{T} \right) \frac{\varLambda _k}{b_k}, 
\end{aligned}
\end{equation}
{that can reduce the dimension of the general SPP algorithm from $M$ (number of samples) to  $n$ (number of physical devices), yielding an algorithm that can be implemented efficiently on actual devices with new updating variables:}
\begin{equation}\label{Def_X_Y_hat}
\begin{aligned}
\hat{X}_k:=S_kX_k, \quad \hat{Y}_{k}:=S_{k}Y_{k}.
\end{aligned}
\end{equation}
Now, we will use the cases of SAGA/L-SVRG ($n=1$), and GT-SAGA ($n > 1$) to illustrate the equivalence between the {recovered} algorithm  and the general SPP algorithm. More details on the recovery of various existing algorithms can be found in Appendix \ref{Appendix_recover}.

\paragraph{Recovering SAGA/L-SVRG.}
Under the proposed SPP framework, we choose the parameters $\left(\varGamma _{k}, R_k,C_k \right)$ as defined in (\ref{Lab_Gamma_k}), (\ref{Lab_R_k}) and (\ref{Lab_C_k}) for SAGA with:
\begin{equation}
\begin{aligned}
W_k=G_k=1,\,\, V_k=\mathbf{J}_m,\,\, \forall k\geqslant 1.
\end{aligned}
\end{equation}
\tocheck{We denote by $\boldsymbol{s}_k$ the mini-batch of randomly selected $b\in[1, m]$ sample nodes at iteration $k$,} then we can derive the recursion of decision variable:
\begin{equation}
\begin{aligned}
\hat{x}_{k+1}&=\hat{x}_k-\alpha \hat{y}_k,
\end{aligned}
\vspace{-0.2cm}
\end{equation}
where $\left( \hat{x}_k, \hat{y}_k \right) $ is an instance of $\left( \hat{X}_k, \hat{Y}_k \right) $ with $n=1$. Then, by noticing that only \textcolor{black}{the randomly selected sample nodes performs update, i.e,  $x_{s,k+1}=\hat{x}_{k+1}$, $\forall s\in \boldsymbol{s}_{k+1}$, while the other sample nodes remain unchanged,} i.e., $x_{j,k+1}=x_{j,k}, \forall j \notin  \boldsymbol{s}_{k+1}$, 
we can derive the recursion of full-gradient estimation variable:
\begin{equation}
\begin{aligned}
\hat{y}_{k+1}&=\frac{\mathbf{1}_{m}^{T}}{m}\nabla F\left( X_k \right) +S_{k+1}\left( \nabla F\left( X_{k+1} \right) -\nabla F\left( X_k \right) \right) 
\\
&=\frac{1}{m}\sum_{j=1}^m{\nabla f_j\left( x_{j,k} \right)}
\\
&+\frac{1}{b}\sum_{s\in \boldsymbol{s}_{k+1}}{\left( \nabla f_s\left( \hat{x}_{k+1} \right) -\nabla f_s\left( x_{s,k} \right) \right)},
\end{aligned}
\end{equation}

The above procedures imply that
$\nabla F\left( X_k \right)$ actually plays the role of gradient table \citep{defazio2014saga}. Thus, the original SAGA is recovered.

Different from SAGA, L-SVRG, {\citep{qian2021svrg}} reduces the stochastic gradient variance by performing full gradient update with a certain probability, it can be also recovered by SPP with the following stochastic matrix $V_k$ varies as:
\begin{equation}
\begin{aligned}
\begin{cases}
	V_k=\mathbf{I}_m,\quad\, b_k=b, \quad\,\, w.p.\quad 1-p,\\
	V_k=\mathbf{J}_m,\quad b_k=m, \quad w.p.\quad p.\\
\end{cases}
\end{aligned}
\end{equation}
Then, we obtain the recovered L-SVRG algorithm by $S_k$,
\begin{equation}
\begin{aligned}
\hat{x}_{k+1}&=\hat{x}_k-\alpha \hat{y}_k,
\\
\hat{y}_{k+1}&=\frac{1}{m}\sum_{j=1}^m{\nabla f_j\left( x_{j,t_{k+1}} \right)}
\\
&+\frac{1}{b_{k+1}}\sum_{s\in \boldsymbol{s}_{k+1}}{\left( \nabla f_s\left( \hat{x}_{k+1} \right) -\nabla f_s\left( x_{s, t_{k+1}} \right) \right)},
\end{aligned}
\end{equation}
where $t_{k+1} < k+1$ denotes the {latest} iteration before $k+1$ performing full gradient update, i.e., $b_{t_{k+1}}=m$. 

\paragraph{Recovering GT-SAGA.}
In contrast to the centralized setting, there exists extra data heterogeneity among devices ($n>1$). Gradient tracking methods are proposed to further eliminating the global data heterogeneity. We recover GT-SAGA \cite{xin2020variance} by choosing $\left(\varGamma _{k}, R_k,C_k \right)$ as defined in (\ref{Lab_Gamma_k}), (\ref{Lab_R_k}) and (\ref{Lab_C_k}) with:
\begin{equation}
\begin{aligned}
W_k=G_k=W,\,\,V_k=\mathbf{J}_m,\,\, \forall k\geqslant 1.
\end{aligned}
\end{equation}
Then, multiplying both sides of \eqref{SPP_framework} with $S_{k+1}$ and {using the fact that $S_{k+1}R_k=S_{k+1}\varGamma _k=W_kS_k$ (c.f., Lemma~\eqref{Lem_recover})}, we can obtain a reduced algorithm as follows:
\begin{equation}
\begin{aligned}
&\hat{X}_{k+1}=W\left( \hat{X}_k-\alpha \hat{Y}_k \right) ,
\\
&\hat{Y}_{k+1}={WS_k}Y_k+S_{k+1}\left( \nabla F\left( X_{k+1} \right) -\nabla F\left( X_k \right) \right) 
\\
&=W\hat{Y}_k-\left( \mathbf{I}_n\otimes \frac{\mathbf{1}_{m}^{T}}{m} \right) \left( \nabla F\left( X_{k+1} \right) -\nabla F\left( X_k \right) \right) .
\end{aligned}
\end{equation}
Further, noticing that $\nabla F\left( X_k \right)$ resembles the gradient table of all samples at iteration $k$, GT-SAGA is thus recovered under the proposed SPP framework. See Appendix~\ref{Appendix_recover} for more detailed recovery of various other algorithms.

\paragraph{New efficient algorithms.}
It should be noted that our proposed SPP framework can further recover a large family of algorithms by proper combination of existing algorithms, such as Local-SVRG, Gossip-PGA. In so doing, we can easily design some new efficient algorithms that have not been formally proposed before with provable rates, such as Local-SAGA, PGA-SAGA, PGA-GT-SAGA (see Table \ref{Tab_comparing}).

\section{Convergence Analysis}\label{Sec_Con_analysis}
\begin{table*}[h!]
    \small
	\begin{center}
		\caption{Topology design and complexity comparison for the algorithms that can be recovered and analysed by our SPP framework under \textbf{strongly convex} setting. The choices of $W_k$ (consensus), $V_k$ (variance-reduction) and $G_k$ (gradient-tracking) are presented with corresponding values of $\left( \rho _{W},r,p,q \right)$. Notice that
		``$W_k = 1$'' implies centralized scenario with only one device.
		``$\tilde{\mathcal{O}}$'' implies that the $\log \frac{1}{\varepsilon}$ factor is omitted when the sub-linear terms are dominant. The mark ``$\textbf{\textcolor{blue}{*}}$" implies that the best-known rate is recovered by our convergence analysis under the same settings;
		``\textbf{\textcolor{blue}{$\dagger$}}" denotes that the obtained rate is new under our settings.}
		\label{Tab_comparing}
	\resizebox{\textwidth}{!}{%
		\begin{tabular}{c|c|c|c|c|c|c} 
			\hline
			\rule {0pt}{10pt}
			\textbf{Algorithm} & $W_k$ & \textbf{$V_k$} & $G_k$ & $\left( \rho _{W},r,p,q \right) $ & \textbf{Obtained Complexity ($\tilde{\mathcal{O}}(\cdot)$)}
			& \textbf{Results}
			\rule {0pt}{10pt}
			\\
			\hline
			\rule {0pt}{10pt}
			SAGA & \multirow{2}{*}{1} & \multirow{2}{*}{$\mathbf{J}_m$} & \multirow{2}{*}{1}& \multirow{2}{*}{$\left\{ 0, 1, 1, \frac{b}{m} \right\} $}            &  \multirow{2}{*}{$ \left( \frac{L}{\mu}+\frac{m}{b} \right) \log \frac{1}{\varepsilon} 
				$}
			& \multirow{2}{*}{Cor.~\ref{Cor_Complexcity_2}}
				\\
			\rule {0pt}{0pt}
			{\tiny\citep{defazio2014saga}} \textbf{\textcolor{blue}{*}} & & & & & \\
			\hline
			\rule {0pt}{10pt}
			L-SVRG & \multirow{2}{*}{1} &\multirow{2}{*}{$\left\{ \mathbf{I}_m, \mathbf{J}_m \right\} $} & \multirow{2}{*}{1}  & \multirow{2}{*}{$\left\{ 0, 1, p, 1 \right\} $}  &\multirow{2}{*}{$\left( \frac{L}{\mu}+\frac{1}{p} \right) \log \frac{1}{\varepsilon}
				$}
			& \multirow{2}{*}{Cor.~\ref{Cor_Complexcity_2}}
				\\
			\rule {0pt}{0pt}
			{\tiny\citep{qian2021svrg}} \textbf{\textcolor{blue}{*}} & & & & & \\
			\hline
			\rule {0pt}{10pt}
			Local-SGD & \multirow{2}{*}{$\left\{ \mathbf{I}_n, \mathbf{J}_n \right\} $}      &\multirow{2}{*}{$\mathbf{I}_m$}     &\multirow{2}{*}{$\mathbf{I}_n$}      &\multirow{2}{*}{$\left\{ 1, r, 0, 0 \right\} $}     &\multirow{2}{*}{$ \frac{L}{\mu r}+\frac{\sigma ^*}{nb\mu ^2\varepsilon}+\sqrt{\frac{(1-r)L}{\mu ^3r\varepsilon}\left( \frac{\zeta ^*}{r}+\frac{\sigma ^*}{b} \right)}
				$} 
			& \multirow{2}{*}{Cor.~\ref{Cor_Complexcity_1}}
			\\
			\rule {0pt}{0pt}
			{\tiny  {\citep{khaled2020tighter}}} \textbf{\textcolor{blue}{*}} & & & & & \\
			\hline
			\rule {0pt}{10pt}
			DSGD &\multirow{2}{*}{ $W$}              &\multirow{2}{*}{$\mathbf{I}_m$}            &\multirow{2}{*}{$\mathbf{I}_n$}                   &\multirow{2}{*}{$\left\{ \rho _{W}, 0, 0, 0 \right\} $}                  &\multirow{2}{*}{$\frac{L}{\mu \left( 1-\rho _W \right)}+\frac{\sigma ^*}{nb\mu ^2\varepsilon}+\sqrt{\frac{\rho _WL}{\mu ^3\left( 1-\rho _W \right) \varepsilon}\left( \frac{\zeta ^*}{1-\rho _W}+\frac{\sigma ^*}{b} \right)}
				$} 
			& \multirow{2}{*}{Cor.~\ref{Cor_Complexcity_1}}
				\\
			\rule {0pt}{10pt}
			\rule {0pt}{10pt}
			{\tiny\citep{lian2017can}}\ \textbf{\textcolor{blue}{*}} & & & & & \\
			\hline
			\rule {0pt}{10pt}
			Gossip-PGA &\multirow{2}{*}{$\left\{ W, \mathbf{J}_n \right\} $}                &\multirow{2}{*}{$\mathbf{I}_m$}              &\multirow{2}{*}{$\mathbf{I}_n$}            &\multirow{2}{*}{$\left\{ \rho _{W}, r, 0, 0 \right\} $}                &\multirow{2}{*}{$\frac{L}{\mu \left( 1-\rho _{r,W} \right)}+\frac{\sigma ^*}{nb\mu ^2\varepsilon}+\sqrt{\frac{\rho _{r,W}L}{\mu ^3\left( 1-\rho _{r,W} \right) \varepsilon}\left( \frac{\zeta ^*}{1-\rho _{r,W}}+\frac{\sigma ^*}{b} \right)}
				$}          
			& \multirow{2}{*}{Cor.~\ref{Cor_Complexcity_1}}
				\\
			\rule {0pt}{10pt}
			{\tiny\citep{chen2021accelerating}} & & & & & \\
			\hline
			\rule {0pt}{10pt}
			Local-SAGA &\multirow{2}{*}{$\left\{ \mathbf{I}_n, \mathbf{J}_n \right\} $}  &\multirow{2}{*}{$\mathbf{J}_m$}              &\multirow{2}{*}{$\mathbf{I}_n$}               &\multirow{2}{*}{$\left\{1, r, 1, \frac{b}{m} \right\}$}             &\multirow{2}{*}{$\frac{L}{\mu r}+\frac{m}{b}+\sqrt{\frac{(1-r)L\zeta ^*}{\mu ^3 r^2\varepsilon}}
			$}          
			& \multirow{2}{*}{Cor.~\ref{Cor_Complexcity_2}}
			\\
			\rule {0pt}{0pt}
			{\tiny\textcolor{blue}{(New)} }& & & & & \\
			\hline
			\rule {0pt}{10pt}
			Local-SVRG &\multirow{2}{*}{$\left\{ \mathbf{I}_n, \mathbf{J}_n \right\} $}             &\multirow{2}{*}{$\left\{ \mathbf{I}_m, \mathbf{J}_m \right\} $}              &\multirow{2}{*}{$\mathbf{I}_n$}               &\multirow{2}{*}{$\left\{
					1, r, p, 1\right\} $}               &\multirow{2}{*}{ $\frac{L}{\mu r}+\frac{1}{p}+\sqrt{\frac{(1-r)L\zeta ^*}{\mu ^3 r^2\varepsilon}}
			$}        
			& \multirow{2}{*}{Cor.~\ref{Cor_Complexcity_2}}
			\\
			\rule {0pt}{10pt}
			{\tiny\citep{gorbunov2021local}} \textbf{\textcolor{blue}{$\dagger$}}& & & & &\\
			\hline
			\rule {0pt}{10pt}
			D-SAGA &\multirow{2}{*}{$W $}           &\multirow{2}{*}{$\mathbf{J}_m$}                 &\multirow{2}{*}{$\mathbf{I}_n$}                 &\multirow{2}{*}{$\left\{ \rho _{W}, 0, 1, \frac{b}{m} \right\} $}              &\multirow{2}{*}{$ \frac{L}{\mu \left( 1-\rho _W \right)}+\frac{m}{b}+\sqrt{\frac{\rho _WL\zeta ^*}{\mu ^3 \left( 1-\rho _W \right)^2\varepsilon}}
				$}         
			& \multirow{2}{*}{Cor.~\ref{Cor_Complexcity_2}}
				\\
			\rule {0pt}{10pt}
			{\tiny \citep{calauzenes2017distributed}}\textbf{\textcolor{blue}{$\dagger$}} & & & & &\\
			\hline
			\rule {0pt}{10pt}
			PGA-SAGA &\multirow{2}{*}{$\left\{ W, \mathbf{J}_n \right\} $}           &\multirow{2}{*}{$\mathbf{J}_m$}                 &\multirow{2}{*}{$\mathbf{I}_n$}                 &\multirow{2}{*}{$\left\{ \rho _{W}, r, 1, \frac{b}{m} \right\} $}              &\multirow{2}{*}{$\frac{L}{\mu \left( 1-\rho _{r,W} \right)}+\frac{m}{b}+\sqrt{\frac{\rho _{r,W}L\zeta ^*}{\mu ^3\left( 1-\rho _{r,W} \right)^2\varepsilon }}
				$}            
			& \multirow{2}{*}{Cor.~\ref{Cor_Complexcity_2}}
				\\
			\rule {0pt}{10pt}
			{\tiny\textcolor{blue}{(New)} }& & & & & \\
			\hline
			\rule {0pt}{10pt}
			GT-SAGA &\multirow{2}{*}{$W$}     &\multirow{2}{*}{$\mathbf{J}_m$}     &\multirow{2}{*}{$W$}            &\multirow{2}{*}{$\left\{ 
					\rho _{W}, 0, 1,\frac{b}{m} \right\} $}              &\multirow{2}{*}{$\left( \frac{L}{\mu \left( 1-\rho _{W} \right) ^2}+\frac{m}{b} \right) \log \frac{1}{\varepsilon}
			$}      
			& \multirow{2}{*}{Cor.~\ref{Cor_Complexcity_3}}
			\\
			\rule {0pt}{0pt}
			{\tiny\citep{xin2020variance}} \textbf{\textcolor{blue}{$\dagger$}} & & & & & \\
			\hline
			\rule {0pt}{10pt}
			PGA-GT-SAGA &\multirow{2}{*}{$\left\{ W,\mathbf{J}_n \right\} $}             &\multirow{2}{*}{$\mathbf{J}_m$}                 &\multirow{2}{*}{$W_k$}              &\multirow{2}{*}{$\left\{ 
					\rho _{W}, r, p, \frac{b}{m}
				 \right\} $}             &\multirow{2}{*}{$\left( \frac{L}{\mu \left( 1-\rho _{r,W} \right) ^2}+\frac{m}{b} \right) \log \frac{1}{\varepsilon}
			$}          
			& \multirow{2}{*}{Cor.~\ref{Cor_Complexcity_3}}
			\\
		\rule {0pt}{10pt}
		{\tiny\textcolor{blue}{(New)} } & & & & & \\
			\hline
		\end{tabular}		}
	\end{center}
	\vspace{-0.5cm}
\end{table*}

\hy{In this section, we provide a unified convergence analysis of the proposed SPP framework for both strongly convex and general convex cases. To this end, we first define $p:=P\left( V_k=\mathbf{J}_m \right)$ as the probability of performing local variance-reduction; $q:=\mathbb{E}\left[ b_k/m|V_k=\mathbf{J}_m \right]$ the expected ratio of batch-size while performing variance-reduction; $r:=P\left( W_k=\mathbf{J}_n \right)$ the probability of adopting global averaging. In order to characterize the algorithm to be incorporated into the proposed framework, we also need to specify the non-negative matrices $W_k, G_k, V_k$ which correspond to mixing, gradient-tracking and variance-reduction, respectively, as given by the following assumption.}

\begin{Ass}\label{Ass_algoirthm}
{ The non-negative matrices $W_k, G_k, V_k$  are independently and randomly chosen as
\begin{equation*}
W_k\in \left\{ W, \mathbf{J}_n \right\} ,\,\,
G_k\in \left\{ \mathbf{I}_n, W_k \right\} , \,\,
V_k\in \left\{ \mathbf{I}_m, \mathbf{J}_m \right\},
\end{equation*}
for all $ k\geqslant 0$, and $W_k$ is doubly stochastic, i.e., $\mathbf{1}_{n}^{T}W_k=\mathbf{1}_{n}^{T}, W_k\mathbf{1}_n=\mathbf{1}_n$, and satisfies
\begin{equation}
\rho _{r,W}:=\mathbb{E}\left[ \left\| W_k-\mathbf{J}_n \right\| _{2}^{2} \right] =\left( 1-r \right) \rho _W<1,
\end{equation}
where $\rho _W:=\left\| W-\mathbf{J}_n \right\|_2^2$.
}
\end{Ass}

\begin{Rem}
{Assumption \ref{Ass_algoirthm} does not require a contraction property of $W_k$ at each iteration but in the sense of expectation. For instance, $W_k$ in Local-SGD can be randomly or periodically chosen from $\left\{ \mathbf{I}_n, \mathbf{J}_n \right\}$, or simply set as $W_k=W$ with $\rho _W<1$ for $k\geqslant 0$ in DSGD (c.f., Appendix \ref{Appendix_recover}).
}
\end{Rem}

\textbf{Main results}. For convergence analysis, we define the average variables of $X_k$ and $Y_k$ as follows:
\begin{equation}\label{Def_x_y_bar}
\begin{aligned}
&\bar{x}_k:=\frac{\mathbf{1}_n^T}{n}S_kX_k,\quad
\bar{y}_k:=\frac{\mathbf{1}_n^T}{n}S_kY_k.
\end{aligned}
\end{equation}
Then, we are ready to establish the convergence rates of SPP for smooth and strongly convex objective functions under both centralized and distributed settings. To this end, we first construct a general Lyapunov function consisting of several error terms as follows:

\vspace{-0.5cm}
\begin{equation}\label{Lyapunov_func}
\begin{aligned}
T_{k+1}&:=c_0\underset{\mathrm{optimal}\,\,\mathrm{gap}}{\underbrace{\left\| \bar{x}_{k+1}-x^* \right\| ^2}}+c_1\underset{\mathrm{consensus} \,\, \mathrm{error}}{\underbrace{\left\| \hat{X}_{k+1}-\mathbf{1}_n\bar{x}_{k+1} \right\| ^2}}
\\
&+c_2\underset{\mathrm{delayed} \,\,\mathrm{VR}\,\, \mathrm{error}}{\underbrace{\left\| \nabla F\left( X_{t_k} \right) -\nabla F\left( \mathbf{1}_Mx^* \right) \right\| ^2}}
\\
&+c_3\underset{\mathrm{VR}\,\, \mathrm{error}}{\underbrace{\left\| \nabla F\left( X_k \right) -\nabla F\left( \mathbf{1}_Mx^* \right) \right\| ^2}}
\\
&+c_4\underset{\mathrm{GT}\,\, \mathrm{error}}{\underbrace{\left\| \hat{Y}_{k+1}-\mathbf{1}_n\bar{y}_{k+1} \right\| ^2}},
\end{aligned}
\vspace{-0.1cm}
\end{equation}

where $c_0, c_1, c_2, c_3, c_4 \geqslant 0$ are constants to be properly determined (see Appendix~\ref{tech_prelim} for more details). {Note that the vanishing of the Lyapunov function implies the attainment of the optimum for strongly convex objective functions.}

Now, we proceed to present our main results\footnote{All proofs can be found in Appendix \ref{Appe_Con_analysis}} for algorithms $\mathcal{A}\left(\varGamma _{k}, R_k,C_k \right)$ under different parameter settings and we summarize their complexity in Table~{\ref{Tab_comparing}} for  comparison.

\begin{Thm}\label{Thm_Without_GT_VR}
Consider algorithms $\mathcal{A}\left( \cdot , \cdot , C_k\equiv \mathbf{I}_M \right)$ generated from the SPP framework with a constant batch-size of $b$. Suppose Assumption \ref{Ass_smoothness}-\ref{Ass_algoirthm} hold and $\mu >0$. Let
\[
c_0=1,\,\,c_1=\left( 1-r \right) \frac{8\alpha L\left( 4\alpha L+1 \right)}{n\left( 1-\rho _{r,W} \right)},c_2=c_3=c_4=0
\]
and the step-size satisfy 
\begin{equation}\label{Step_size_WO}
\begin{aligned}
{\alpha = \min \left\{ \mathcal{O}\left( \frac{1}{L} \right) , \mathcal{O}\left( \frac{1-\rho _{r,W}}{L\sqrt{\rho _{r,W}}} \right) \right\}.}
\end{aligned}
\end{equation}
Then, we have for all $k\geqslant 0$
\begin{equation}
\begin{aligned}
\mathbb{E}\left[ T_{k+1} \right] &\leqslant \left(1-\min \left\{ \alpha \mu ,\frac{1-\rho _{r,W}}{8} \right\}\right) \mathbb{E}\left[ T_k \right] +\frac{2\alpha ^2\sigma ^*}{nb}
\\
&+\left( 1-r \right) \frac{16\alpha ^3L\rho _{r,W}}{1-\rho _{r,W}}\left( \frac{4\zeta ^*}{1-\rho _{r,W}}+\frac{\sigma ^*}{b} \right). 
\end{aligned}
\end{equation}
\end{Thm}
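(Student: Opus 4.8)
The plan is to track only the two Lyapunov terms that survive when $c_2=c_3=c_4=0$—the optimality gap $\|\bar x_{k+1}-x^*\|^2$ and the consensus error $\|\hat X_{k+1}-\mathbf 1_n\bar x_{k+1}\|^2$—derive a one-step recursion for each, and then combine them with the prescribed weights $c_0=1$ and $c_1=(1-r)\tfrac{8\alpha L(4\alpha L+1)}{n(1-\rho_{r,W})}$. Since $C_k\equiv\mathbf I_M$, recursion \eqref{recu_Y} telescopes, so with the natural initialization $Y_0=\nabla F(X_0)$ we have $Y_k=\nabla F(X_k)$ for all $k$; projecting by $S_{k+1}$ and using $S_{k+1}R_k=S_{k+1}\varGamma_k=W_kS_k$ (Lemma~\ref{Lem_recover}) reduces the whole scheme to a decentralized/local SGD-type recursion $\hat X_{k+1}=W_k(\hat X_k-\alpha\,\widehat{\nabla F}_k)$ driven by the sampled stochastic gradients. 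This is the common starting point for both recursions.

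For the optimality gap I would write the averaged update $\bar x_{k+1}=\bar x_k-\alpha\,\bar g_k$, where $\bar g_k$ is the device- and sample-averaged stochastic gradient. Taking conditional expectation over the sampling (Assumption~\ref{Ass_uniformly_sampling_without_replacement}) gives $\mathbb E[\bar g_k]=\tfrac1n\sum_i\nabla f_i(\hat x_{i,k})$, evaluated at the local iterates rather than at $\bar x_k$. Expanding the square and applying $\mu$-strong convexity (Assumption~\ref{Ass_smoothness}) to the inner-product term yields the $-\alpha\mu$ contraction, while the mismatch between $\hat x_{i,k}$ and $\bar x_k$ is controlled by $L$-smoothness and feeds back the consensus error. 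The residual fluctuation $\mathbb E\|\bar g_k-\mathbb E\bar g_k\|^2$ is then bounded through averaged smoothness (Assumption~\ref{Expected smoothness}) and the variance constant $\sigma^*$ (Assumption~\ref{Ass_sampling}), producing the $\tfrac{2\alpha^2\sigma^*}{nb}$ term, with the $1/(nb)$ gain coming from averaging over the $n$ devices and the without-replacement mini-batch of size $b$.

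For the consensus error I would exploit the mixing step: subtracting $\mathbf 1_n\bar x_{k+1}$ and using that $W_k-\mathbf J_n$ annihilates the mean, the error contracts by $\|W_k-\mathbf J_n\|_2^2$, whose expectation is $\rho_{r,W}=(1-r)\rho_W$ by Assumption~\ref{Ass_algoirthm}; the $(1-r)$ factor is precisely what makes every consensus-originated error term carry $(1-r)$ and vanish under full global averaging. A Young/Jensen splitting separates the contraction from the injected gradient term $\alpha^2\rho_{r,W}\|\nabla F(X_k)\|^2$, which I would control by adding and subtracting $\nabla F(\mathbf 1_Mx^*)$ and invoking smoothness, so that it decomposes into a piece proportional to the optimality gap, a piece proportional to the consensus error itself, and the constants $\zeta^*$ and $\sigma^*/b$.

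Finally I would form $\mathbb E[T_{k+1}]=\mathbb E[\|\bar x_{k+1}-x^*\|^2]+c_1\mathbb E[\|\hat X_{k+1}-\mathbf 1_n\bar x_{k+1}\|^2]$ and add the two recursions. The weight $c_1\sim\alpha$ is chosen so that the consensus error injected into the optimality-gap recursion is exactly absorbed by the $c_1$-weighted contraction of the consensus recursion, while the step-size restriction \eqref{Step_size_WO} simultaneously forces the self-amplification factor $\rho_{r,W}(1+\mathcal O(\alpha L))$ below $1-(1-\rho_{r,W})/8$ and keeps the gradient-to-gap coupling dominated by $-\alpha\mu$. Collecting the non-contracting remainders yields the stated additive error, the $(1-r)\tfrac{16\alpha^3L\rho_{r,W}}{1-\rho_{r,W}}(\cdots)$ term arising from the $c_1\sim\alpha$ weighting of the $\alpha^2$ gradient injection in the consensus recursion. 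The main obstacle is this last balancing step: getting the cross-coupling coefficients and the step-size to line up so that \emph{both} components contract at the common rate $1-\min\{\alpha\mu,(1-\rho_{r,W})/8\}$ with exactly these constants—not merely some contraction—is where the careful bookkeeping with Assumptions~\ref{Ass_sampling}–\ref{Expected smoothness} and the without-replacement variance reduction really matters.
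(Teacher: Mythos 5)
Your plan follows the same route as the paper's proof: restrict the Lyapunov function to the two surviving terms, derive a one-step recursion for each (the paper's Lemma~\ref{Lem_opt_gap} for the optimality gap and Lemma~\ref{Lem_Cons_error_1} for the consensus error, with the sampling noise $\sigma_k\leqslant\sigma^*/b$ handled as in Lemma~\ref{Lem_sigma_k}), and then combine them with the prescribed weights under the step-size restriction. The structure, the role of $c_1\sim\alpha$ in absorbing the cross-coupling, and the origin of each additive term are all identified correctly.

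There is, however, one concrete gap in the bookkeeping of the optimality-gap recursion. You take $\mathbb E[\bar g_k\mid\mathcal F_k]=\frac1n\sum_i\nabla f_i(\hat x_{i,k})$ uniformly and let the mismatch between $\hat x_{i,k}$ and $\bar x_k$ feed back the consensus error; this produces a feedback coefficient of order $\frac{\alpha L(4\alpha L+1)}{n}$ \emph{without} a $(1-r)$ prefactor, since you locate all $(1-r)$ factors in $\rho_{r,W}=(1-r)\rho_W$ from the mixing step, which only covers the consensus recursion. The paper instead splits the expected gradient by the law of total probability over whether global averaging occurred (Lemma~\ref{Expt_gradient}: $\mathbb E[\bar y_k\mid\mathcal F_k]=r\nabla f(\bar x_k)+(1-r)g_k$); in the probability-$r$ branch the gradient is exactly $\nabla f(\bar x_k)$ and no consensus mismatch appears, so the feedback term in Lemma~\ref{Lem_opt_gap} carries the prefactor $(1-r)$. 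This matters because the theorem prescribes $c_1=(1-r)\frac{8\alpha L(4\alpha L+1)}{n(1-\rho_{r,W})}$: absorbing your un-refined feedback would require $\frac{\alpha L(4\alpha L+1)}{n}\leqslant c_1\cdot\frac{1-\rho_{r,W}}{8}=(1-r)\frac{\alpha L(4\alpha L+1)}{n}$, i.e. $1\leqslant 1-r$, which fails for every $r>0$ (precisely the Local-SGD and Gossip-PGA instances this theorem is meant to cover). So either you redo the expectation with the conditional split, or you must enlarge $c_1$ by a factor $1/(1-r)$, which changes the stated constants (the additive term $(1-r)\frac{16\alpha^3L\rho_{r,W}}{1-\rho_{r,W}}\bigl(\frac{4\zeta^*}{1-\rho_{r,W}}+\frac{\sigma^*}{b}\bigr)$ would lose one factor of $(1-r)$). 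The fix is exactly the paper's Lemma~\ref{Expt_gradient}; with it, your combination step goes through as planned.
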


\begin{Rem}
Theorem \ref{Thm_Without_GT_VR} yields a known tightest convergence rate as \citet{koloskova2020unified} for a class of decentralized optimization algorithms without adopting VR or GT schemes, i.e., Local-SGD, DSGD (see Table.~{\ref{Tab_comparing}}).
\end{Rem}

\begin{Cor}\label{Cor_Complexcity_1}
Under the conditions in Theorem \ref{Thm_Without_GT_VR},
there exists a suitable upper-bound of step-size $\alpha$ (refer to supplementary) such that
{$\mathbb{E}\left[ T_K \right] \leqslant \varepsilon $} after at most the following number of iterations $K$:
\begin{equation}
\begin{aligned}
&K\geqslant {\mathcal{O}}\left( \frac{L}{\mu \left( 1-\rho _{r,W} \right)}\log \frac{\mathbb{E}\left[ T_0 \right]}{\varepsilon} \right) 
\\
&+{\mathcal{O}}\left( \frac{\sigma ^*}{n\mu ^2\varepsilon}+\sqrt{\frac{{\rho _{r,W}}L}{\mu ^3\left( 1-\rho _{r,W} \right) \varepsilon}\left( \frac{\zeta ^*}{1-\rho _{r,W}}+\frac{\sigma ^*}{b} \right)} \right).
\end{aligned}
\end{equation}
\end{Cor}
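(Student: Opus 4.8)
The plan is to treat the one-step bound from Theorem~\ref{Thm_Without_GT_VR} as a contraction-with-additive-noise recursion and convert it into an iteration-complexity estimate by a standard constant-step-size tuning argument. Write the recursion as
\[
\mathbb{E}[T_{k+1}] \leq (1-\gamma)\,\mathbb{E}[T_k] + A\alpha^2 + B\alpha^3,
\]
with contraction rate $\gamma := \min\{\alpha\mu,\,(1-\rho_{r,W})/8\}$ and noise coefficients $A := 2\sigma^*/(nb)$ and $B := (1-r)\,\tfrac{16L\rho_{r,W}}{1-\rho_{r,W}}\big(\tfrac{4\zeta^*}{1-\rho_{r,W}} + \tfrac{\sigma^*}{b}\big)$. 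First I would observe that the step-size cap in \eqref{Step_size_WO} can be taken with small enough absolute constants that $\alpha\mu \leq (1-\rho_{r,W})/8$, so the contraction factor is exactly $1-\alpha\mu$; this reduces the problem to the canonical form $\mathbb{E}[T_{k+1}] \leq (1-\alpha\mu)\mathbb{E}[T_k] + A\alpha^2 + B\alpha^3$, valid for every $\alpha \leq \eta_{\max}$, where $\eta_{\max}$ denotes the right-hand side of \eqref{Step_size_WO}.

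Next I would unroll over $k=0,\dots,K-1$ and sum the geometric series to obtain
\[
\mathbb{E}[T_K] \leq (1-\alpha\mu)^K\,\mathbb{E}[T_0] + \frac{A\alpha^2 + B\alpha^3}{\alpha\mu} = (1-\alpha\mu)^K\,\mathbb{E}[T_0] + \frac{A\alpha}{\mu} + \frac{B\alpha^2}{\mu}.
\]
To drive the right-hand side below a target $\varepsilon$, the basic tuning picks $\alpha = \min\{\eta_{\max},\, \mu\varepsilon/(4A),\, \sqrt{\mu\varepsilon/(4B)}\}$ so the two steady-state terms are each at most $\varepsilon/4$, and then requires $(1-\alpha\mu)^K\mathbb{E}[T_0] \leq \varepsilon/2$, i.e. $K \geq (\alpha\mu)^{-1}\log(2\mathbb{E}[T_0]/\varepsilon)$ via $(1-\alpha\mu)^K \leq e^{-\alpha\mu K}$. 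Substituting $1/\alpha = \max\{1/\eta_{\max},\, 4A/(\mu\varepsilon),\, \sqrt{4B/(\mu\varepsilon)}\}$ and bounding the maximum by the sum already yields the three-term structure. To keep the $\log(1/\varepsilon)$ factor off the two sublinear noise terms (as written in the statement), I would instead invoke the sharper $K$-adaptive tuning lemma in the spirit of \citet{stich2019local, koloskova2020unified}, in which the step size is chosen as a function of $K$; there the logarithmic factor on the noise terms is only a $\mathrm{polylog}(K)$ absorbed into $\tilde{\mathcal{O}}$, so inverting $A/(\alpha\mu)\sim\varepsilon$ and $B\alpha^2/\mu\sim\varepsilon$ produces $\mathcal{O}(A/(\mu^2\varepsilon))$ and $\mathcal{O}(\tfrac{1}{\mu}\sqrt{B/(\mu\varepsilon)})$ without $\log(1/\varepsilon)$.

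Finally I would substitute the explicit constants. Using $1/\eta_{\max} = \mathcal{O}(L) + \mathcal{O}\big(L\sqrt{\rho_{r,W}}/(1-\rho_{r,W})\big)$ together with $\sqrt{\rho_{r,W}} \leq 1$ and $1/(1-\rho_{r,W}) \geq 1$, the linear-rate term collapses to $\mathcal{O}\big(L/(\mu(1-\rho_{r,W}))\big)$ and carries the $\log(\mathbb{E}[T_0]/\varepsilon)$ factor; the term $A/(\mu^2\varepsilon) = 2\sigma^*/(nb\mu^2\varepsilon)$ gives the sampling-variance term; and $\tfrac{1}{\mu}\sqrt{B/(\mu\varepsilon)}$ expands, after pulling out the absolute constants and using $(1-r)\leq 1$, to $\sqrt{\tfrac{\rho_{r,W}L}{\mu^3(1-\rho_{r,W})\varepsilon}\big(\tfrac{\zeta^*}{1-\rho_{r,W}}+\tfrac{\sigma^*}{b}\big)}$, matching the claimed bound.

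I expect the main obstacle to be the step-size bookkeeping rather than the algebra: one must (i) confirm that the $\alpha$ selected to control the noise is consistent with $\alpha\mu \leq (1-\rho_{r,W})/8$ so the contraction factor is genuinely $1-\alpha\mu$ (in the complementary regime the stronger contraction $(1-\rho_{r,W})/8$ applies and only accelerates convergence, so the bound still holds), and (ii) apply the $K$-adaptive form of the tuning lemma carefully so that only the linear-rate term inherits the $\log(1/\varepsilon)$ factor. Everything else reduces to routine substitution of $A$, $B$, and $\eta_{\max}$.
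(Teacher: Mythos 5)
Your proposal is correct and takes essentially the same route as the paper: the paper's own proof likewise unrolls the Theorem~\ref{Thm_Without_GT_VR} recursion into geometric decay plus a steady-state term (with $D_1=2\sigma^*/(nb)$ and $D_2$ matching your $A$, $B$) and then applies exactly the Stich/Koloskova-style $K$-adaptive step-size tuning you invoke, choosing $\alpha=\min\left\{\gamma,\ \ln\left(\max\left\{2,\mu^2K^2/D_1,\mu^3K^3/D_2\right\}\right)/(\mu K)\right\}$ and splitting into two cases according to which bound is active. The only slip is your parenthetical fallback: when $\alpha\mu>(1-\rho_{r,W})/8$ the contraction rate $\min\{\alpha\mu,(1-\rho_{r,W})/8\}$ equals $(1-\rho_{r,W})/8$, which is the \emph{slower} of the two rates (it does not ``accelerate convergence''); this is harmless here because your requirement that the absolute constants in \eqref{Step_size_WO} be small enough to force $\alpha\mu\leqslant(1-\rho_{r,W})/8$ means that regime never arises.
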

Then, we provide the convergence analysis for algorithms that adopt VR schemes, such as Local-SVRG, Local-SAGA (new) and D-SAGA, in order to eliminate the internal variance $\sigma^*$ due to  gradient sampling.
\begin{Thm}\label{Thm_With_VR_only}
Consider algorithms $\mathcal{A}\left( \cdot ,\cdot ,C_k\equiv \mathbf{I}_n\otimes V_k \right)$ generated from the SPP framework with $p>0$. Suppose Assumption \ref{Ass_smoothness}-\ref{Ass_algoirthm} hold and $\mu>0$. Let
\begin{equation*}
\begin{aligned}
c_0=1, c_1=\frac{20L\alpha}{n\left( 1-\rho _{r,W} \right)}, c_2=\frac{5\alpha ^2}{Mp},  c_3=\frac{16\alpha ^2}{Mq}, c_4=0
\end{aligned}
\end{equation*}
and the step-size satisfy 
\begin{equation}\label{Step_size_VR}
\begin{aligned}
{\alpha = \mathcal{O}\left( \frac{1-\rho _{r,W}}{L} \right).}
\end{aligned}
\end{equation}
Then, we have for all $k\geqslant 0$
\begin{equation}
\begin{aligned}
\mathbb{E}\left[ T_{k+1} \right] &\leqslant \left( 1-\min \left\{ \alpha \mu , \frac{pq}{2}, \frac{1-\rho _{r,W}}{8} \right\}\right) \mathbb{E}\left[ T_k \right] 
\\
&+\frac{80\alpha ^3L\rho _{r,W}}{\left( 1-\rho _{r,W} \right) ^2}\zeta^*.
\end{aligned}
\end{equation}

\end{Thm}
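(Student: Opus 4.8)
The plan is to establish the claimed one-step contraction by controlling the evolution of each of the four \emph{active} error terms in the Lyapunov function $T_k$ (recall that $c_4=0$, so the gradient-tracking error is inactive here, consistent with $G_k=\mathbf{I}_n$) and then combining them with the prescribed weights $c_0,\dots,c_3$. Taking conditional expectation with respect to the sampling randomness at iteration $k$ (governed by Assumptions~\ref{Ass_uniformly_sampling_without_replacement} and~\ref{Ass_algoirthm}), I would first exploit the averaged recursion $\bar{x}_{k+1}=\bar{x}_k-\alpha\bar{y}_k$, which follows from the doubly-stochasticity of $C_k$ together with~\eqref{Eq_fixed_sum_Y}, to argue that $\bar{y}_k$ is a conditionally (near-)unbiased estimator of the full gradient evaluated at the local copies. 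The optimal-gap term then admits the standard strongly-convex descent $\|\bar{x}_{k+1}-x^*\|^2\le(1-\alpha\mu)\|\bar{x}_k-x^*\|^2-(\text{descent})+\alpha^2\,\mathrm{Var}(\bar{y}_k)$, where the estimator variance must be traced back to the VR error terms.

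The crux of the VR analysis is to bound $\mathrm{Var}(\bar{y}_k)$ by the current and delayed VR errors. Here I would invoke Assumption~\ref{Expected smoothness} (averaged smoothness) so that the sampling variance of the local SAGA/L-SVRG estimator is dominated by $\|\nabla F(X_k)-\nabla F(\mathbf{1}_Mx^*)\|^2$ and $\|\nabla F(X_{t_k})-\nabla F(\mathbf{1}_Mx^*)\|^2$, up to a multiple of the optimal gap. Crucially, because variance reduction is active, no $\sigma^*$ term survives in the estimator variance — this is exactly what permits the final residual to be free of $\sigma^*$. Simultaneously I would bound the consensus error $\|\hat{X}_{k+1}-\mathbf{1}_n\bar{x}_{k+1}\|^2$ using the expected contraction $\rho_{r,W}$ of $W_k$ from Assumption~\ref{Ass_algoirthm}; the row-stochasticity of $R_k$ (cf.~\eqref{Lab_R_k}) and the reduction identity $S_{k+1}R_k=W_kS_k$ yield a contraction factor of order $1-(1-\rho_{r,W})/2$ after a Young's-inequality split, at the cost of a term proportional to $\alpha^2\rho_{r,W}/(1-\rho_{r,W})$ times a residual gradient. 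It is precisely this consensus term, when fed the gradient residual at $x^*$, that generates the heterogeneity-driven remainder: since there is no gradient tracking to cancel $\nabla f_i(x^*)$, Assumption~\ref{Ass_sampling} bounds it by $\zeta^*$, and combining the $c_1\propto\alpha/(1-\rho_{r,W})$ weight with the $\alpha^2\rho_{r,W}/(1-\rho_{r,W})$ coefficient produces exactly the $\tfrac{80\alpha^3L\rho_{r,W}}{(1-\rho_{r,W})^2}\zeta^*$ term.

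For the two VR error terms I would derive their coupled recursion from the update of $\nabla F(X_k)$: unselected sample nodes retain stale stored gradients while selected ones are refreshed, so under Assumption~\ref{Ass_uniformly_sampling_without_replacement} each table entry is replaced in expectation at rate $pq$ per iteration. This gives $\mathbb{E}\|\nabla F(X_{t_{k+1}})-\nabla F(\mathbf{1}_Mx^*)\|^2\le(1-pq)\,\mathbb{E}\|\nabla F(X_{t_k})-\nabla F(\mathbf{1}_Mx^*)\|^2+pq\,\mathbb{E}\|\nabla F(X_k)-\nabla F(\mathbf{1}_Mx^*)\|^2$, which is the origin of the $pq/2$ rate; the current VR error is then controlled by $L$-smoothness against the optimal gap and the consensus error.

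The final step is the bookkeeping: multiply the four inequalities by $c_0,c_1,c_2,c_3$, add them, and verify that the cross terms (the estimator variance feeding the optimal gap, the $\alpha^2$ gradient terms feeding the consensus bound, and the VR/delayed-VR exchange) telescope into a net contraction at rate $\min\{\alpha\mu,\,pq/2,\,(1-\rho_{r,W})/8\}$. The main obstacle I anticipate is calibrating the constants so that these couplings close simultaneously: the step-size restriction $\alpha=\mathcal{O}((1-\rho_{r,W})/L)$ is precisely what absorbs the higher-order ($\alpha^2,\alpha^3$) gradient terms into the linear descent and keeps $c_1\propto\alpha/(1-\rho_{r,W})$ balanced against $c_2,c_3\propto\alpha^2$. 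Forcing every cross term to be dominated \emph{without} degrading the claimed rate — rather than proving the existence of a contraction at all — is the delicate part.
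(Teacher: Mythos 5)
Your overall architecture is exactly the paper's: the same Lyapunov function with $c_4=0$, the same optimality-gap recursion in which the estimator variance $\sigma_k$ is rerouted (by the law of total probability, Lemma~\ref{Lem_sigma_k}) into the current and delayed VR errors so that no $\sigma^*$ term survives, the same consensus recursion contracting at roughly $\frac{1+\rho_{r,W}}{2}$ with residual $\alpha^2\rho_{r,W}\bigl(\frac{4n\zeta^*}{1-\rho_{r,W}}+n\sigma_k\bigr)$ (Lemma~\ref{Lem_Cons_error_1}), whose $\zeta^*$ part multiplied by $c_1=\frac{20L\alpha}{n(1-\rho_{r,W})}$ gives exactly the claimed residual $\frac{80\alpha^3L\rho_{r,W}}{(1-\rho_{r,W})^2}\zeta^*$, and the same weighted-sum bookkeeping showing all cross-coefficients are nonpositive under $\alpha=\mathcal{O}((1-\rho_{r,W})/L)$. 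You identified every mechanism correctly, including why $\sigma^*$ disappears and where $\zeta^*$ comes from.

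The one step that would fail as written is your treatment of the two VR errors. The paper does not use a merged rate-$pq$ recursion; it keeps two separate ones: the delayed error jumps to the current-table error with probability $p$ (Lemma~\ref{Lem_VR_error_delay}, no $q$ appears there), while the current-table error contracts at rate $q$ \emph{with refresh terms} $2(1-r)mqL^2\|\hat{X}_k-\mathbf{1}_n\bar{x}_k\|^2+4MqL\left(f(\bar{x}_k)-f(x^*)\right)$ on the right-hand side (Lemma~\ref{Lem_VR_error}), because refreshed entries store gradients evaluated at the \emph{current iterates}, not at $x^*$. Your merged inequality omits these refresh terms, and it is false in the SAGA case: with $p=1$ one has $t_{k+1}=k$, so your inequality reduces to
\begin{equation*}
(1-q)\,\mathbb{E}\left[\left\|\nabla F\left(X_k\right)-\nabla F\left(\mathbf{1}_Mx^*\right)\right\|^2\right]\leqslant(1-q)\,\mathbb{E}\left[\left\|\nabla F\left(X_{k-1}\right)-\nabla F\left(\mathbf{1}_Mx^*\right)\right\|^2\right],
\end{equation*}
i.e., monotone non-increase of the gradient-table error, which is not provable (and not true) in general. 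Relatedly, the current VR error cannot be ``controlled by $L$-smoothness against the optimal gap and the consensus error'' as a direct bound, since the table holds stale gradients at past iterates; smoothness only controls the refreshed fraction inside the rate-$q$ recursion. Once Lemmas~\ref{Lem_VR_error} and \ref{Lem_VR_error_delay} replace your merged step — and note this is what the prescribed coefficients $c_2\propto\alpha^2/(Mp)$ and $c_3\propto\alpha^2/(Mq)$ are designed for — your plan coincides with the paper's proof; the $pq/2$ in the final rate is then merely a coarsening of $\min\{p/2,\,q/2\}$, not the contraction factor of any single recursion.
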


\begin{Cor}\label{Cor_Complexcity_2}
Under the conditions in Theorem \ref{Thm_With_VR_only}, there exists a suitable upper-bound of step-size $\alpha$ (refer to supplementary) such that we have $\mathbb{E}\left[ T_K \right] \leqslant \varepsilon $ after at most the number of iterations $K$:

\vspace{-0.2cm}
\begin{equation}
\begin{aligned}
K&\geqslant \mathcal{O}\left( \left( \frac{1}{pq}+\frac{L}{\mu \left( 1-\rho _{r,W} \right)} \right) \log \frac{\mathbb{E}\left[ T_0 \right]}{\varepsilon} \right) 
\\
&+\mathcal{O}\left( \sqrt{\frac{{\rho _{r,W}}L\zeta ^*}{\left( 1-\rho _{r,W} \right) ^2\mu ^3\varepsilon}} \right).
\end{aligned}
\end{equation}
\end{Cor}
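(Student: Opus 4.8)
The plan is to read Theorem~\ref{Thm_With_VR_only} as a scalar linear recursion $\mathbb{E}[T_{k+1}] \leq (1-\delta)\mathbb{E}[T_k] + e\alpha^3$ with contraction factor $\delta = \min\{\alpha\mu, \tfrac{pq}{2}, \tfrac{1-\rho_{r,W}}{8}\}$ and heterogeneity coefficient $e := \tfrac{80L\rho_{r,W}}{(1-\rho_{r,W})^2}\zeta^*$, and then feed it into the standard step-size tuning argument (the Stich/Koloskova-type lemma that the corollary alludes to by ``a suitable upper-bound of step-size $\alpha$''). The decisive structural feature, inherited from the VR design, is that the additive error is purely $\mathcal{O}(\alpha^3)$: the $\mathcal{O}(\alpha^2)$ sampling-variance term $\tfrac{2\alpha^2\sigma^*}{nb}$ that appears in Theorem~\ref{Thm_Without_GT_VR} has been eliminated, so no $\tfrac{\sigma^*}{\mu^2\varepsilon}$ term survives and the final bound will contain only a logarithmic transient term plus a single $\varepsilon$-dependent term scaling like $1/\sqrt{\varepsilon}$ (this is exactly what distinguishes Corollary~\ref{Cor_Complexcity_2} from Corollary~\ref{Cor_Complexcity_1}).

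First I would fix the admissible step-size cap so that the $\min$ defining $\delta$ collapses to the optimization term. Writing $\alpha \leq 1/d$ with $d := \max\{\mathcal{O}(\tfrac{L}{1-\rho_{r,W}}), \tfrac{2\mu}{pq}, \tfrac{8\mu}{1-\rho_{r,W}}\}$ — the first entry being the cap from \eqref{Step_size_VR} — guarantees $\alpha\mu \leq \min\{\tfrac{pq}{2}, \tfrac{1-\rho_{r,W}}{8}\}$ and hence $\delta = \alpha\mu$; this $d$ is precisely the ``suitable upper-bound'' invoked in the statement. With $a := \mu$, vanishing quadratic coefficient, and $e$ as above, the recursion is now in the canonical form $r_{k+1} \leq (1-a\alpha)r_k + e\alpha^3$ for $\alpha \leq 1/d$.

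Next I would invoke the tuning lemma, which (after choosing the step-size adaptively) yields a bound of the form $\mathbb{E}[T_K] \leq \tilde{\mathcal{O}}\bigl(\tfrac{d}{\mu}\mathbb{E}[T_0]\,e^{-\mu K/(2d)} + \tfrac{e}{\mu^3 K^2}\bigr)$, the crucial point being that the $\mathcal{O}(\alpha^3)$ contribution decays as $1/K^2$ \emph{without} an extra logarithmic penalty. Requiring each summand to be at most $\varepsilon$ and solving for $K$ gives two conditions: the exponential transient needs $K = \mathcal{O}(\tfrac{d}{\mu}\log\tfrac{\mathbb{E}[T_0]}{\varepsilon})$, and since $\tfrac{d}{\mu} = \mathcal{O}(\tfrac{L}{\mu(1-\rho_{r,W})} + \tfrac{1}{pq})$ — the term $\tfrac{8}{1-\rho_{r,W}}$ being absorbed into $\tfrac{L}{\mu(1-\rho_{r,W})}$ because $L \geq \mu$ — this reproduces the first bracket of the corollary; the $1/K^2$ floor needs $K = \mathcal{O}(\sqrt{e/(\mu^3\varepsilon)}) = \mathcal{O}(\sqrt{\rho_{r,W}L\zeta^*/((1-\rho_{r,W})^2\mu^3\varepsilon)})$, reproducing the second, log-free, term. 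Summing the two conditions yields the stated $K$.

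The recursion unrolling is routine; the only genuine obstacle is the tuning lemma, specifically obtaining the noise term \emph{without} a $\log\tfrac1\varepsilon$ factor. A naive constant-step-size unrolling would give $\mathbb{E}[T_K] \leq (1-\alpha\mu)^K\mathbb{E}[T_0] + \tfrac{e\alpha^2}{\mu}$, and shrinking $\alpha$ to force the floor $\tfrac{e\alpha^2}{\mu}\leq\varepsilon$ would attach an unwanted $\log\tfrac1\varepsilon$ to the $1/\sqrt\varepsilon$ term; avoiding this requires the adaptive (decreasing) step-size schedule built into the Stich-type lemma, which I would state and prove once in the appendix and reuse for both Corollary~\ref{Cor_Complexcity_1} and Corollary~\ref{Cor_Complexcity_2}. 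The remaining work is bookkeeping: verifying that the quadratic coefficient is genuinely zero (so no $\tfrac{\sigma^*}{\mu^2\varepsilon}$ term appears) and that the constants in $d$ and $e$ are read off correctly from Theorem~\ref{Thm_With_VR_only}.
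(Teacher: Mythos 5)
Your proposal is correct and takes essentially the same route as the paper: the paper likewise treats Theorem~\ref{Thm_With_VR_only} as a linear recursion with contraction $\min\left\{ \alpha\mu, \frac{pq}{2}, \frac{1-\rho_{r,W}}{8} \right\}$ and noise floor $D_1\alpha^3$ where $D_1=\frac{80L\rho_{r,W}}{\left(1-\rho_{r,W}\right)^2}\zeta^*$, and applies the Stich/Koloskova-type horizon-dependent step-size tuning $\alpha \leqslant \min\left\{ \gamma, \frac{\ln\left(\max\left\{2,\mu^3K^2/D_1\right\}\right)}{\mu K} \right\}$ to balance the exponential transient against the $1/K^2$ floor. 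The only cosmetic differences are that the paper collapses the $\min$ by a case analysis on $K$ rather than by capping $\alpha$ upfront as you do, and it tolerates (and hides) the logarithmic factor on the $1/\sqrt{\varepsilon}$ term with a constant tuned step size, whereas you invoke the decreasing-schedule variant of the tuning lemma to remove it.
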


Finally, we provide the convergence analysis for algorithms adopting both VR and GT schemes, such as GT-SAGA and PGA-GT-SAGA (new) which are capable of removing both the internal ($\sigma^*$) and external variance ($\zeta^*$) induced by the data heterogeneity within and across devices.

\begin{Thm} \label{Thm_With_GT_VR}
Consider algorithms $\mathcal{A}\left( \cdot ,\cdot ,C_k = W_k\otimes \mathbf{J}_m \right)$ generated from the SPP framework. Suppose Assumption \ref{Ass_smoothness}-\ref{Ass_algoirthm} hold and $\mu>0$. Let
\begin{equation*}
\begin{aligned}
&c_0=1, \,\,c_1=\frac{1-\rho _{r,W}}{n\rho _{r,W}\left( 1+\rho _{r,W} \right)}, \,\,c_2=0, 
\\
&c_3=\frac{20\alpha ^2}{Mq\left( 1-\rho _{r,W} \right) ^2}, \,\,c_4=\frac{8\alpha ^2}{n\left( 1-\rho _{r,W} \right)}
\end{aligned}
\end{equation*}
and the step-size satisfy 
\begin{equation} \label{Step_size_WO_GT_VR}
\begin{aligned}
\alpha = \mathcal{O}\left( \frac{\left( 1-\rho _{r,W} \right) ^2}{L} \right).
\end{aligned}
\end{equation}
Then, we have for all $k\geqslant 0$
\begin{equation}
\begin{aligned}
\mathbb{E}\left[ T_{k+1} \right] \leqslant \left(1-\min \left\{ \alpha \mu ,\,\, \frac{q}{2},\,\, \frac{1-\rho _{r,W}}{8} \right\}\right) \mathbb{E}\left[ T_k \right] .
\end{aligned}
\end{equation}

\end{Thm}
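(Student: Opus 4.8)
The plan is to build a one-step contraction for the full Lyapunov function $T_k$ by first establishing separate descent/contraction inequalities for each of its four active components (recall $c_2=0$ here) and then combining them with the prescribed weights $c_0,c_1,c_3,c_4$ so that every cross term is absorbed into a diagonal contraction. The starting point is the reduced recursion obtained from Lemma~\ref{Lem_recover}, namely $\hat{X}_{k+1}=W_k(\hat{X}_k-\alpha\hat{Y}_k)$; projecting once more by $\mathbf{1}_n^T/n$ and using double stochasticity of $W_k$ yields the clean averaged dynamics $\bar{x}_{k+1}=\bar{x}_k-\alpha\bar{y}_k$, while the gradient-tracking invariant \eqref{Eq_fixed_sum_Y} forces the averaged direction $\bar{y}_k$ to reduce to the exact full-gradient average $\frac{1}{M}\mathbf{1}_M^T\nabla F(X_k)$. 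This last fact is precisely what removes the external-heterogeneity bias and is the reason no $\zeta^*$ term can survive.

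For the optimal-gap term I would expand $\|\bar{x}_{k+1}-x^*\|^2$, invoke $\mu$-strong convexity and $L$-smoothness on the inner product $\langle\bar{y}_k,\bar{x}_k-x^*\rangle$, and control the discrepancy $\|\bar{y}_k-\nabla f(\bar{x}_k)\|^2$ by the consensus error via Jensen and smoothness. This produces a $(1-\alpha\mu)$ factor plus multiples of the consensus error and of $\alpha^2\|\bar{y}_k\|^2$, the latter re-expressed through the VR-error term $\|\nabla F(X_k)-\nabla F(\mathbf{1}_Mx^*)\|^2$ together with the optimal gap. For the consensus term, the exact identity $\hat{X}_{k+1}-\mathbf{1}_n\bar{x}_{k+1}=(W_k-\mathbf{J}_n)\left[(\hat{X}_k-\mathbf{1}_n\bar{x}_k)-\alpha(\hat{Y}_k-\mathbf{1}_n\bar{y}_k)\right]$ combined with $\mathbb{E}\|W_k-\mathbf{J}_n\|_2^2=\rho_{r,W}$ gives a contraction by $\rho_{r,W}$; a Young split turns this into a factor near $\tfrac{1+\rho_{r,W}}{2}$ at the cost of an $\tfrac{\alpha^2\rho_{r,W}}{1-\rho_{r,W}}$ multiple of the GT error, which fixes the balance encoded in $c_1=\frac{1-\rho_{r,W}}{n\rho_{r,W}(1+\rho_{r,W})}$.

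The VR and GT terms are the two that make the noise floor vanish. For the VR error I would use that, under Assumption~\ref{Ass_uniformly_sampling_without_replacement} with $V_k=\mathbf{J}_m$, each sample refreshes its gradient-table entry with probability $q$, giving $\mathbb{E}\|\nabla F(X_{k+1})-\nabla F(\mathbf{1}_Mx^*)\|^2\le(1-q)\|\nabla F(X_k)-\nabla F(\mathbf{1}_Mx^*)\|^2+q\cdot(\text{current smoothness error})$, where the current error is bounded by the optimal gap and the consensus error via Assumption~\ref{Expected smoothness}; this delivers the $q/2$ contraction. For the GT error I would project the $Y$-recursion, use the same $\rho_{r,W}$-contraction of $W_k-\mathbf{J}_n$, and bound the driving term $\nabla F(X_{k+1})-\nabla F(X_k)$ by $L\|X_{k+1}-X_k\|$, after which $\|X_{k+1}-X_k\|^2$ decomposes into the consensus error, the GT error and $\alpha^2\|\bar{y}_k\|^2$. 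The coefficients $c_3$ and $c_4$, both proportional to $\alpha^2$, are chosen exactly so that these gradient-difference contributions are subsumed by the diagonal terms.

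The hard part is the two-way coupling between the consensus error and the GT error: the consensus recursion feeds the GT error through the $\alpha(\hat{Y}_k-\mathbf{1}_n\bar{y}_k)$ increment, while the GT recursion feeds the consensus error back through $L\|X_{k+1}-X_k\|$, which itself contains the consensus gap. Closing this loop requires the product of the two $\rho_{r,W}$-contractions to dominate the gradient-induced growth, which is possible only once the step-size is taken as small as $\alpha=\mathcal{O}\!\left(\frac{(1-\rho_{r,W})^2}{L}\right)$ — the square on $1-\rho_{r,W}$ reflecting exactly this double mixing. With that step-size and the stated weights, every cross term is dominated by one of the three diagonal contractions $\alpha\mu$, $q/2$, $(1-\rho_{r,W})/8$; and since both variance-generating quantities ($\sigma^*$ through the VR term, $\zeta^*$ through the GT term) now reside inside $T_k$ rather than as exogenous additive constants, the recursion closes to the purely geometric bound claimed, with no residual noise.
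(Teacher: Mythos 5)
Your proposal is correct and takes essentially the same route as the paper's proof: the paper likewise derives one-step recursions for each active error term (its Lemmas~\ref{Lem_opt_gap}, \ref{Lem_Cons_error_GT}, \ref{Lem_VR_error} and \ref{Lem_GT_error}), combines them with precisely these weights $c_0,c_1,c_3,c_4$, and picks $\alpha=\mathcal{O}\left(\left(1-\rho_{r,W}\right)^2/L\right)$ so that every coupling coefficient $e_1,\dots,e_4$ becomes non-positive, leaving the pure geometric contraction. One caution: your claim that the invariant \eqref{Eq_fixed_sum_Y} forces $\bar{y}_k$ to equal the exact full-gradient average $\frac{\mathbf{1}_M^T}{M}\nabla F\left(X_k\right)$ is not true pointwise, since $\bar{y}_k=\frac{\mathbf{1}_n^T}{n}S_kY_k$ averages only over the sampled nodes; the paper instead quantifies this sampling discrepancy by the term $\sigma_k$ and shows in Lemma~\ref{Lem_sigma_k} (third case) that under $C_k=W_k\otimes\mathbf{J}_m$ it is bounded by the VR error---which is exactly what your later step of routing $\alpha^2\left\|\bar{y}_k\right\|^2$ through the VR-error term accomplishes, so this slip is a presentational inconsistency rather than a structural gap.
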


\begin{Cor}\label{Cor_Complexcity_3}
Under the same conditions in Theorem \ref{Thm_With_GT_VR}, we have $\mathbb{E}\left[ T_K \right] \leqslant \varepsilon $ after at most the number of iterations $K$:
\begin{equation}
\begin{aligned}
K\geqslant {\mathcal{O}}\left( \left( \frac{L}{\mu \left( 1-\rho _{r,W} \right) ^2}+\frac{1}{q} \right) \log \frac{\mathbb{E}\left[ T_0 \right]}{\varepsilon} \right). 
\end{aligned}
\end{equation}
\end{Cor}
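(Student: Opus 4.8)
The plan is to observe that, unlike Theorems~\ref{Thm_Without_GT_VR} and~\ref{Thm_With_VR_only}, the recursion supplied by Theorem~\ref{Thm_With_GT_VR} carries no additive residual term: the combined GT and VR schemes annihilate both the internal variance $\sigma^*$ and the external variance $\zeta^*$. Hence the Lyapunov function obeys a pure geometric contraction $\mathbb{E}\left[ T_{k+1} \right] \leqslant (1-\delta)\,\mathbb{E}\left[ T_k \right]$ with $\delta := \min\{\alpha\mu,\, q/2,\, (1-\rho_{r,W})/8\}$, and the corollary reduces to elementary iteration bookkeeping. First I would unroll this one-step bound by induction to obtain $\mathbb{E}\left[ T_K \right] \leqslant (1-\delta)^K\,\mathbb{E}\left[ T_0 \right]$, which is legitimate because $\delta\in(0,1)$ under the step-size constraint~\eqref{Step_size_WO_GT_VR}.

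To force the right-hand side below $\varepsilon$, I would apply the standard estimate $1-\delta\leqslant e^{-\delta}$ (equivalently $-\log(1-\delta)\geqslant\delta$), giving $(1-\delta)^K\,\mathbb{E}\left[ T_0 \right]\leqslant e^{-\delta K}\,\mathbb{E}\left[ T_0 \right]$. Requiring $e^{-\delta K}\,\mathbb{E}\left[ T_0 \right]\leqslant\varepsilon$ and solving for $K$ yields the sufficient condition $K\geqslant \frac{1}{\delta}\log\frac{\mathbb{E}\left[ T_0 \right]}{\varepsilon}$. Next I would expand $1/\delta=\max\{\tfrac{1}{\alpha\mu},\,\tfrac{2}{q},\,\tfrac{8}{1-\rho_{r,W}}\}$ and substitute the prescribed step-size $\alpha=\mathcal{O}((1-\rho_{r,W})^2/L)$, so that $\tfrac{1}{\alpha\mu}=\mathcal{O}\!\left(\tfrac{L}{\mu(1-\rho_{r,W})^2}\right)$. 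Since $L\geqslant\mu$ and $1-\rho_{r,W}\leqslant 1$, the conditioning term $\tfrac{L}{\mu(1-\rho_{r,W})^2}$ dominates $\tfrac{8}{1-\rho_{r,W}}$, so the latter can be absorbed; converting the remaining maximum into a sum via $\max\{a,b\}\leqslant a+b$ then gives $\frac{1}{\delta}=\mathcal{O}\!\left(\tfrac{L}{\mu(1-\rho_{r,W})^2}+\tfrac{1}{q}\right)$, and the claimed complexity follows.

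There is essentially no analytical obstacle here, since all the heavy lifting---constructing the Lyapunov coefficients $c_0,\dots,c_4$ and establishing the noiseless contraction---is already carried out in Theorem~\ref{Thm_With_GT_VR}. The only points requiring mild care are verifying that $\delta<1$ so the geometric unrolling is valid, and confirming that discarding the $\tfrac{8}{1-\rho_{r,W}}$ term is justified in the stated regime; both are routine once the contraction factor is in hand.
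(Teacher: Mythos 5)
Your proposal is correct and follows essentially the same route as the paper: both unroll the noiseless geometric contraction from Theorem~\ref{Thm_With_GT_VR}, substitute the step-size $\alpha=\mathcal{O}\left( \left( 1-\rho _{r,W} \right) ^2/L \right)$, and convert the contraction factor into the stated iteration count (the paper compresses this into a reference to the argument of Corollary~\ref{Cor_Complexcity_1}, which in this residual-free case reduces to exactly your bookkeeping). Your explicit checks---that $\delta\in(0,1)$, and that $8/\left( 1-\rho _{r,W} \right)$ is absorbed by $L/\left( \mu \left( 1-\rho _{r,W} \right) ^2 \right)$ since $\mu\leqslant L$---are the only details the paper leaves implicit, and they are handled correctly.
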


\begin{Rem}
It follows from the above theorems that one can always effectively remove the data heterogeneity within and across devices by properly choosing VR and GT schemes, respectively. Besides, the above results also establish a clear dependency of the complexity on the parameters related to network, cost functions and algorithm design.
\end{Rem}

\begin{Rem}\label{Rem_tabel}
In Table~{\ref{Tab_comparing}}, we provide the complexity results for several well known existing and some new algorithms for smooth and strongly convex objectives under our proposed SPP framework.
With proper choices of $W_k$, $V_k$ and $G_k$,
the complexity induced by Theorem~{\ref{Thm_Without_GT_VR}} yields a best-known convergence rate for Local-SGD and DSGD , i.e., matching the results in {\citep{koloskova2020unified}}.
We also recover the complexity results of SAGA and L-SVRG in the centralized scenario from Theorem~{\ref{Thm_With_VR_only}}; 
{Besides, we enhance the result of GT-SAGA {\citep{xin2020variance}} in that the dependency on the condition number of objectives is improved from $(L/\mu)^2$ to $L/\mu$ (c.f., Th.~{\ref{Thm_With_GT_VR}}) in our slightly stronger settings.} 
We also provide several new algorithms with provable rates, such as Local-SAGA, PGA-SAGA and PGA-GT-SAGA, which have not been formally proposed and analyzed yet.
\end{Rem}

\begin{table*}[t]
    \small
	\begin{center}
		\caption{Topology design and complexity comparison for the algorithms that can be recovered and analysed by our SPP framework under \textbf{convex} settings ($\mu=0$). The choices of $W_k$ (consensus), $V_k$ (variance-reduction) and $G_k$ (gradient-tracking) are presented with corresponding values of $\left( \rho _{W},r,p,q \right)$. Notice that
		``$W_k = 1$'' implies centralized scenario with only one device. The mark ``$\textbf{\textcolor{blue}{*}}$" implies that the best-known rate is recovered by our analysis under the same settings;
		``\textbf{\textcolor{blue}{$\dagger$}}" denotes that the obtained rate is new under our settings;
		$\mathcal{O}(\cdot)$ hides 
	    the initial error $\left\| x_0-x^* \right\| ^2$.}
		\label{Tab_comparing_convex}
\resizebox{\textwidth}{!}{%
		\begin{tabular}{c|c|c|c|c|c|c} 
			\hline
			\rule {0pt}{10pt}
			\textbf{Algorithm} & $W_k$ & \textbf{$V_k$} & $G_k$ & $\left( \rho _{W},r,p,q \right) $ & \textbf{Obtained Complexity ($\mathcal{O}(\cdot)$)} & \textbf{Results}
			\rule {0pt}{10pt}
			
			\\
			\hline
			\rule {0pt}{10pt}
			SAGA & \multirow{2}{*}{1} & \multirow{2}{*}{$\mathbf{J}_m$} & \multirow{2}{*}{1}& \multirow{2}{*}{$\left\{ 0, 1, 1, \frac{b}{m} \right\} $}            &  \multirow{2}{*}{$  \frac{mL}{b\varepsilon}
				$} 
			& \multirow{2}{*}{Cor.~\ref{Cor_5}}
				\\
			\rule {0pt}{0pt}
			{\tiny\citep{defazio2014saga}} \textbf{\textcolor{blue}{*}} & & & & & \\
			\hline
			\rule {0pt}{10pt}
			L-SVRG & \multirow{2}{*}{1} &\multirow{2}{*}{$\left\{ \mathbf{I}_m, \mathbf{J}_m \right\} $} & \multirow{2}{*}{1}  & \multirow{2}{*}{$\left\{ 0, 1, p, 1 \right\} $}  &\multirow{2}{*}{$ \frac{L}{p\varepsilon}
				$}  
			& \multirow{2}{*}{Cor.~\ref{Cor_5}}
				\\
			\rule {0pt}{10pt}
			{\tiny\citep{qian2021svrg}} \textbf{\textcolor{blue}{*}} & & & & & \\
			\hline
			\rule {0pt}{10pt}
			Local-SGD & \multirow{2}{*}{$\left\{ \mathbf{I}_n, \mathbf{J}_n \right\} $}      &\multirow{2}{*}{$\mathbf{I}_m$}     &\multirow{2}{*}{$\mathbf{I}_n$}      &\multirow{2}{*}{$\left\{ 1, r, 0, 0 \right\} $}     &\multirow{2}{*}{$\frac{L}{r\varepsilon}+\frac{\sigma ^*}{nb\varepsilon ^2}+\frac{1}{\varepsilon ^{3/2}}\sqrt{\frac{(1-r)L}{r}\left( \frac{\zeta ^*}{r}+\frac{\sigma ^*}{b} \right)}
				$}   
			& \multirow{2}{*}{Cor.~\ref{Cor_4}}	
			\\
			\rule {0pt}{10pt}
			{\tiny\citep{khaled2020tighter}} \textbf{\textcolor{blue}{*}} & & & & & \\
			\hline
			\rule {0pt}{10pt}
			DSGD &\multirow{2}{*}{ $W$}              &\multirow{2}{*}{$\mathbf{I}_m$}            &\multirow{2}{*}{$\mathbf{I}_n$}                   &\multirow{2}{*}{$\left\{ \rho _{W}, 0, 0, 0 \right\} $}                  &\multirow{2}{*}{$\frac{L}{\left( 1-\rho _W \right) \varepsilon}+\frac{\sigma ^*}{nb\varepsilon ^2}+\frac{1}{\varepsilon ^{3/2}}\sqrt{\frac{\rho _W L}{1-\rho _W}\left( \frac{\zeta ^*}{1-\rho _W}+\frac{\sigma ^*}{b} \right)}
				$}        
			& \multirow{2}{*}{Cor.~\ref{Cor_4}}		
			\\
			\rule {0pt}{10pt}
			\rule {0pt}{10pt}
			{\tiny\citep{lian2017can}}\ \textbf{\textcolor{blue}{*}} & & & & & \\
			\hline
			\rule {0pt}{10pt}
			Gossip-PGA &\multirow{2}{*}{$\left\{ W, \mathbf{J}_n \right\} $}                &\multirow{2}{*}{$\mathbf{I}_m$}              &\multirow{2}{*}{$\mathbf{I}_n$}            &\multirow{2}{*}{$\left\{ \rho _{W}, r, 0, 0 \right\} $}                &\multirow{2}{*}{$\frac{L}{\left( 1-\rho _{r,W} \right) \varepsilon}+\frac{\sigma ^*}{nb\varepsilon ^2}+\frac{1}{\varepsilon ^{3/2}}\sqrt{\frac{\rho _{r,W} L}{1-\rho _{r,W}}\left( \frac{\zeta ^*}{1-\rho _{r,W}}+\frac{\sigma ^*}{b} \right)}
				$}          
			& \multirow{2}{*}{Cor.~\ref{Cor_4}}		
			\\
			\rule {0pt}{10pt}
			{\tiny\citep{chen2021accelerating}} & & & & & \\
			\hline
			\rule {0pt}{10pt}
			Local-SAGA &\multirow{2}{*}{$\left\{ \mathbf{I}_n, \mathbf{J}_n \right\} $}  &\multirow{2}{*}{$\mathbf{J}_m$}              &\multirow{2}{*}{$\mathbf{I}_n$}               &\multirow{2}{*}{$\left\{1, r, 1, \frac{b}{m} \right\}$}             &\multirow{2}{*}{$\frac{mL}{br\varepsilon}+\frac{m\sqrt{(1-r)L\zeta^*}}{br\varepsilon ^{3/2}}
			$}         
			& \multirow{2}{*}{Cor.~\ref{Cor_5}}	
			\\
			\rule {0pt}{0pt}
			{\tiny\textcolor{blue}{(New)} }& & & & & \\
			\hline
			\rule {0pt}{10pt}
			Local-SVRG &\multirow{2}{*}{$\left\{ \mathbf{I}_n, \mathbf{J}_n \right\} $}             &\multirow{2}{*}{$\left\{ \mathbf{I}_m, \mathbf{J}_m \right\} $}              &\multirow{2}{*}{$\mathbf{I}_n$}               &\multirow{2}{*}{$\left\{
					1, r, p, 1\right\} $}               &\multirow{2}{*}{ $\frac{L}{rp\varepsilon}+\frac{\sqrt{(1-r)L\zeta ^*}}{rp\varepsilon ^{3/2}}
			$}         
			& \multirow{2}{*}{Cor.~\ref{Cor_5}}	
			\\
			\rule {0pt}{0pt}
			{\tiny\citep{gorbunov2021local}} \textcolor{blue}{$\dagger$} & & & & &\\
			\hline
			\rule {0pt}{10pt}
			D-SAGA &\multirow{2}{*}{$W $}           &\multirow{2}{*}{$\mathbf{J}_m$}                 &\multirow{2}{*}{$\mathbf{I}_n$}                 &\multirow{2}{*}{$\left\{ \rho _{W}, r, 1, \frac{b}{m} \right\} $}              &\multirow{2}{*}{$\frac{mL}{b\left( 1-\rho _W \right) \varepsilon}+\frac{m\sqrt{\rho _WL\zeta^*}}{b\left( 1-\rho _W \right) \varepsilon ^{3/2}}
				$}              
			& \multirow{2}{*}{Cor.~\ref{Cor_5}}		
			\\
			\rule {0pt}{10pt}
			{\tiny\citep{calauzenes2017distributed}}\textbf{\textcolor{blue}{$\dagger$}} & & & & &\\
			\hline
			\rule {0pt}{10pt}
			PGA-SAGA &\multirow{2}{*}{$\left\{ W, \mathbf{J}_n \right\} $}           &\multirow{2}{*}{$\mathbf{J}_m$}                 &\multirow{2}{*}{$\mathbf{I}_n$}                 &\multirow{2}{*}{$\left\{ \rho _{W}, r, 1, \frac{b}{m} \right\} $}              &\multirow{2}{*}{$\frac{mL}{b\left( 1-\rho _{r,W} \right) \varepsilon}+\frac{m\sqrt{\rho _{r,W}L\zeta^*}}{b\left( 1-\rho _{r,W} \right) \varepsilon ^{3/2}}
				$}               
			& \multirow{2}{*}{Cor.~\ref{Cor_5}}		
			\\
			\rule {0pt}{10pt}
			{\tiny\textcolor{blue}{(New)} }& & & & & \\
			\hline
			\rule {0pt}{10pt}
			GT-SAGA &\multirow{2}{*}{$W$}     &\multirow{2}{*}{$\mathbf{J}_m$}     &\multirow{2}{*}{$W$}            &\multirow{2}{*}{$\left\{ 
					\rho _{W}, 0, 1,\frac{b}{m} \right\} $}              &\multirow{2}{*}{$\frac{mL}{b\left( 1-\rho _W \right) ^2\varepsilon}
			$}       
			& \multirow{2}{*}{Cor.~\ref{Cor_6}}	
			\\
			\rule {0pt}{10pt}
			{\tiny\citep{xin2020variance}} \textbf{\textcolor{blue}{$\dagger$}} & & & & & \\
			\hline
			\rule {0pt}{10pt}
			PGA-GT-SAGA &\multirow{2}{*}{$\left\{ W,\mathbf{J}_n \right\} $}             &\multirow{2}{*}{$\mathbf{J}_m$}                 &\multirow{2}{*}{$W_k$}              &\multirow{2}{*}{$\left\{ 
					\rho _{W}, r, p, \frac{b}{m}
				 \right\} $}             &\multirow{2}{*}{$\frac{mL}{b\left( 1-\rho _{r,W} \right) ^2\varepsilon}
			$}          
			& \multirow{2}{*}{Cor.~\ref{Cor_6}}	
			\\
		\rule {0pt}{10pt}
		{\tiny\textcolor{blue}{(New)} } & & & & & \\
			\hline
		\end{tabular}
		}
	\end{center}
\vspace{-0.3cm}
\end{table*}

\tocheck{For general convex cases ($\mu = 0$), we provide the obtained complexity of the algorithms that can be recovered and analysed by the SPP framework in Table~\ref{Tab_comparing_convex}. In particular, we can also recover the best-known sub-linear rates of SAGA, L-SVRG, DSGD and Local-SGD. Moreover, we establish the sublinear rate of GT-SAGA in general convex settings, which is not yet considered in {\citet{xin2020variance}} (c.f., Theorem~{\ref{Thm_4}}-{\ref{Thm_6}}).
All proofs can be found in Appendix \ref{App_sublinear_results}. 
}

\section{Experimental Results}\label{Sec_expe}
In this section, we report some experiments to verify the theoretical findings for the proposed framework under different settings of data heterogeneity and topology\footnote{More experimental results can be found in Appendix \ref{Appe_add_experiments}.}. 

\begin{table}[ht]
    \scriptsize
 \begin{center}
    \vspace{-0.2cm}
  \caption{Summary of the experimental setup.}
  \label{Exp_Setting}
  \begin{tabular}{|c|c|c|c|c|c|} 
   \hline
   \rule {0pt}{10pt}
   \textbf{Dataset} & \textbf{Node} ($n$) & \#\textbf{Train} & \#\textbf{Test} & \textbf{BS ($n\times b$)} & \textbf{SS ($\alpha$)}
   \rule {0pt}{10pt}
   \\
   \hline
   \rule {0pt}{10pt}
   {F-MNIST} & \{8, 50\} & 60000 & 10000 & 200 &  0.05 \\
   \hline
   \rule {0pt}{10pt}
   CIFAR-10 & \{8, 50\} & 50000 & 10000 & 400 &  0.008 \\
   \hline
  \end{tabular}
 \end{center}
 {\quad BS: Batch-size;  SS: Step-size.}
 \vspace{-0.5cm}
\end{table}

\textbf{Experiment settings.} 
We train a regularized logistic regression classifier on both CIFAR-10 and Fashion-MNIST (F-MNIST) datasets over a network of $n$ nodes each of which locally stores $m$ data samples, which reads:
\begin{equation}\label{prob:logistic}
\begin{aligned}
\vspace{-0.2cm}
\underset{x\in \mathbb{R}^{d}}{\min}f\left( x \right) :=\frac{1}{n}\sum_{i=1}^n{\frac{1}{m}\sum_{j=1}^m{\underset{f_{ij}}{\left( \underbrace{\ell \left( x,\xi _{ij} \right) +\frac{\lambda}{2}\left\| x \right\| ^2} \right)}}}
\end{aligned}
\vspace{-0.2cm}
\end{equation}
with the cross-entropy loss $\ell$ defined as:
\begin{equation}\label{cross-entropy loss}
\begin{aligned}
\vspace{-0.3cm}
\ell \left( x,\xi _{ij} \right) :=-\sum_{c=1}^{10}{\phi _{ij}^{c}}\log \left( 1+\exp \left( -x^T\theta _{i,j} \right) \right) ^{-1},
\vspace{-0.3cm}
\end{aligned}
\end{equation}
where $\lambda=0.001$ is the regularization parameter, $\theta _{i,j} \in \mathbb{R}^d$ and $\phi _{ij}^{c}\in \left\{ -1, 1 \right\}$ is the feature vector and the label of class $c\in[10]$ associated with sample  $\xi _{ij}$, respectively. 
The datasets and parameters we used are summarized in Table~\ref{Exp_Setting} with $r=0.05$ for certain algorithms with global averaging.

\textbf{Data heterogeneity.} \textcolor{black}{We consider datasets with unbalanced label distribution, which is an important type of data heterogeneity for  classification problems in distributed settings \citep{hsieh2020non}, and control the level of data heterogeneity through an arithmetic sequence with a difference of $h$ to allocate training samples of each class (c.f., Eq.~(\ref{Eq_data_splite_1}, \ref{Eq_data_splite_2}) in Appendix~\ref{Appe_add_experiments}).
Fig.~\ref{Expe} plots the testing accuracy of the algorithms to be compared under different settings of heterogeneous label distributions: i) independent and identically (label) distributed (IID) datasets on the top row, i.e., $h=0$; ii) non-IID datasets with $h=124$ on the middle row; iii) A highly unbalanced label distribution denoted by $h=h_{\max}$ on the bottom row (c.f., Table \ref{data_split_h20}).} \finetune{It follows from Fig.~\ref{Expe} that, when the labels are evenly distributed (top row), the VR-based algorithms (such as SAGA, D-SAGA, Local-SAGA, denoted in solid lines) outperform the others without VR schemes. Furthermore, when the level of data heterogeneity increases (middle and bottom rows), GT-SAGA and PGA-GT-SAGA that adopt GT schemes can maintain relatively high testing accuracy while those without GT will degrade dramatically, especially for those that also do not employ VR schemes, which implies that GT-based methods are more robust against the data heterogeneity. The above experimental results verify the effectiveness of adopting VR and GT schemes in scenarios where datasets are heterogeneous, which corroborates the Theorem~\ref{Thm_Without_GT_VR}-\ref{Thm_With_GT_VR}. }
\begin{figure}[h]
\vspace{-0.2cm}
    \centering
    \subfigure 
    {
        \begin{minipage}[t]{0.22\textwidth}
            \centering          
            \includegraphics[width=\textwidth]{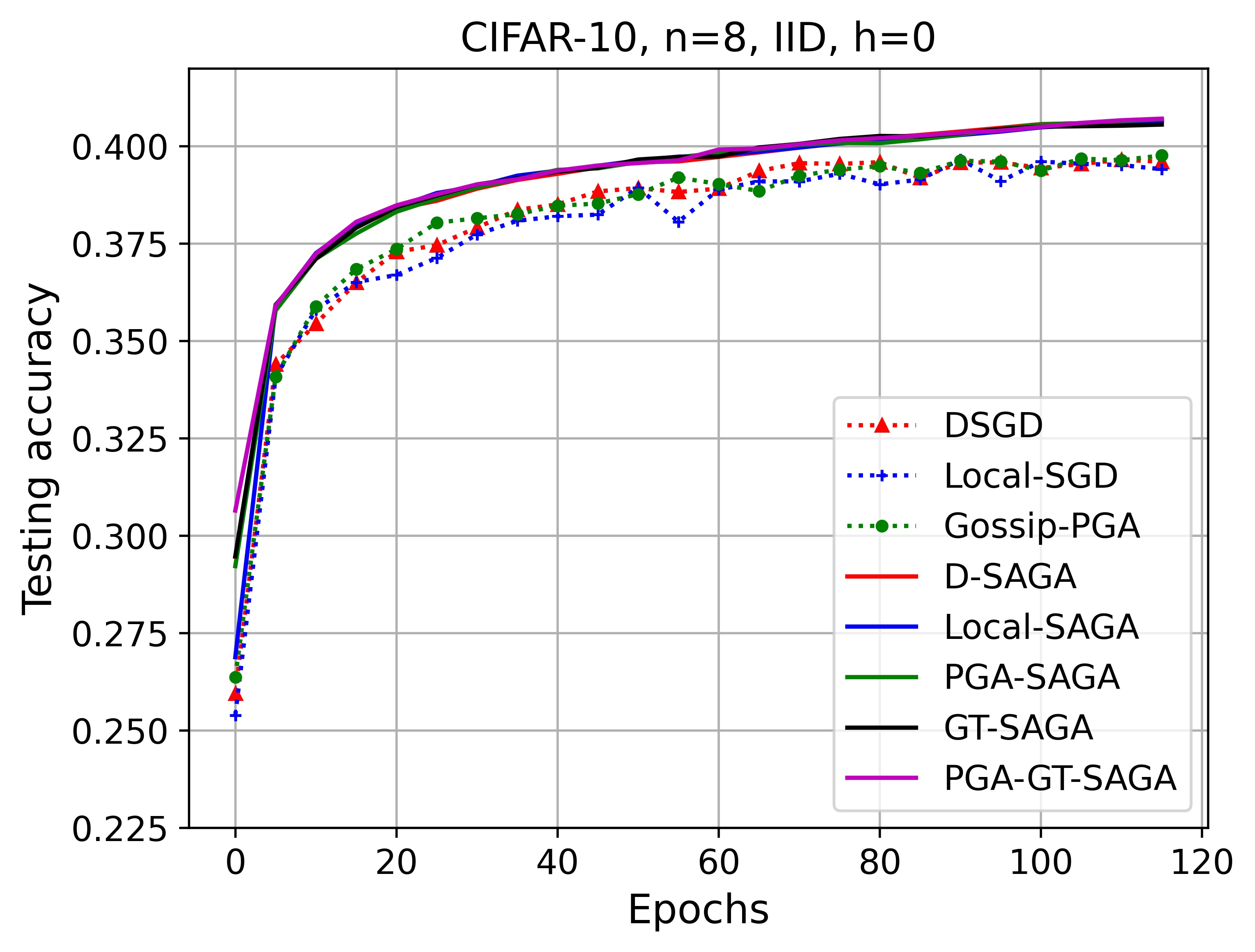}   
        \end{minipage}%
    }
    \subfigure 
    {
        \begin{minipage}[t]{0.22\textwidth}
            \centering      
            \includegraphics[width=\textwidth]{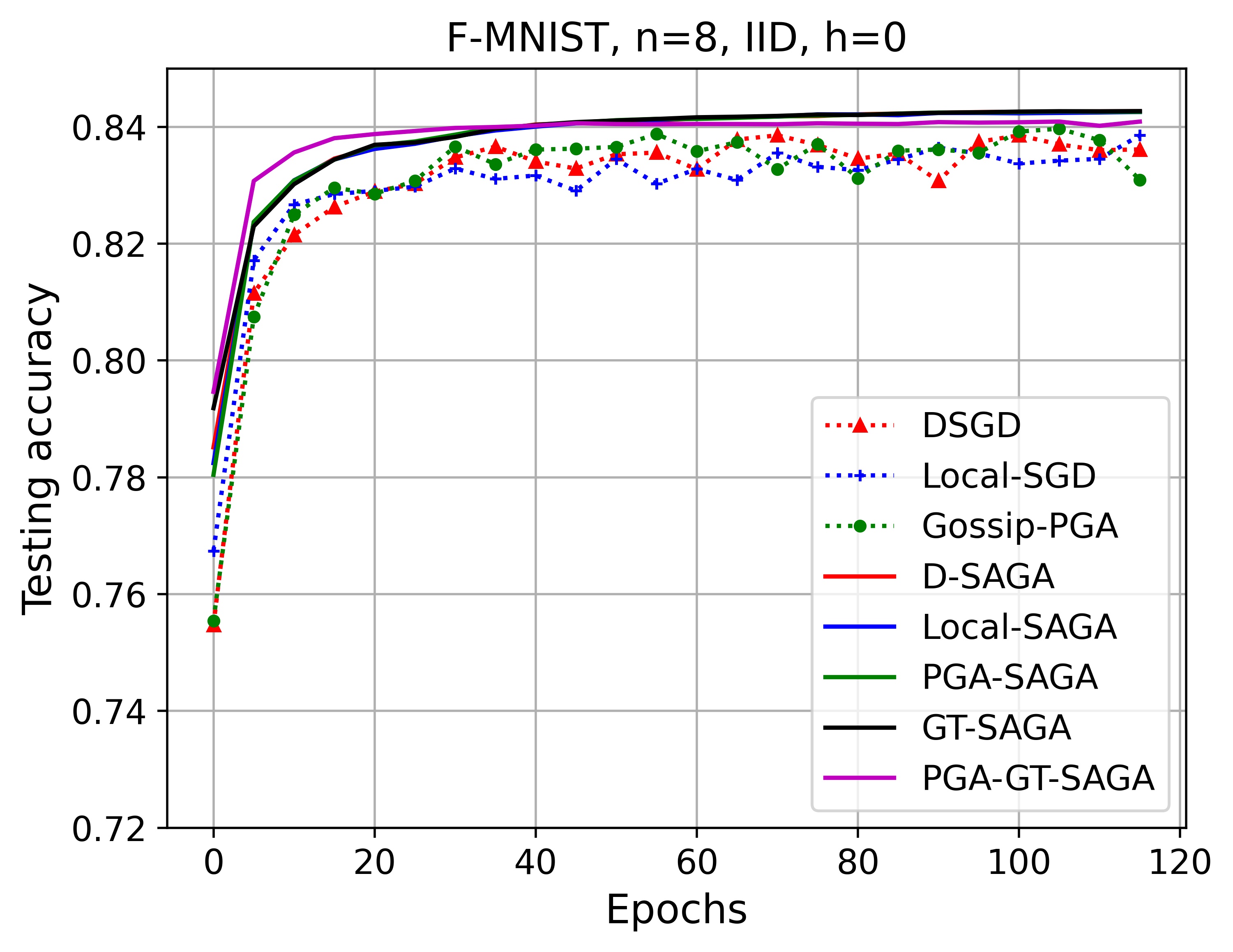}   
        \end{minipage}
    }\\ \vspace{-10pt}%
    \subfigure 
    {
    \begin{minipage}[t]{0.22\textwidth}
            \centering      
            \includegraphics[width=\textwidth]{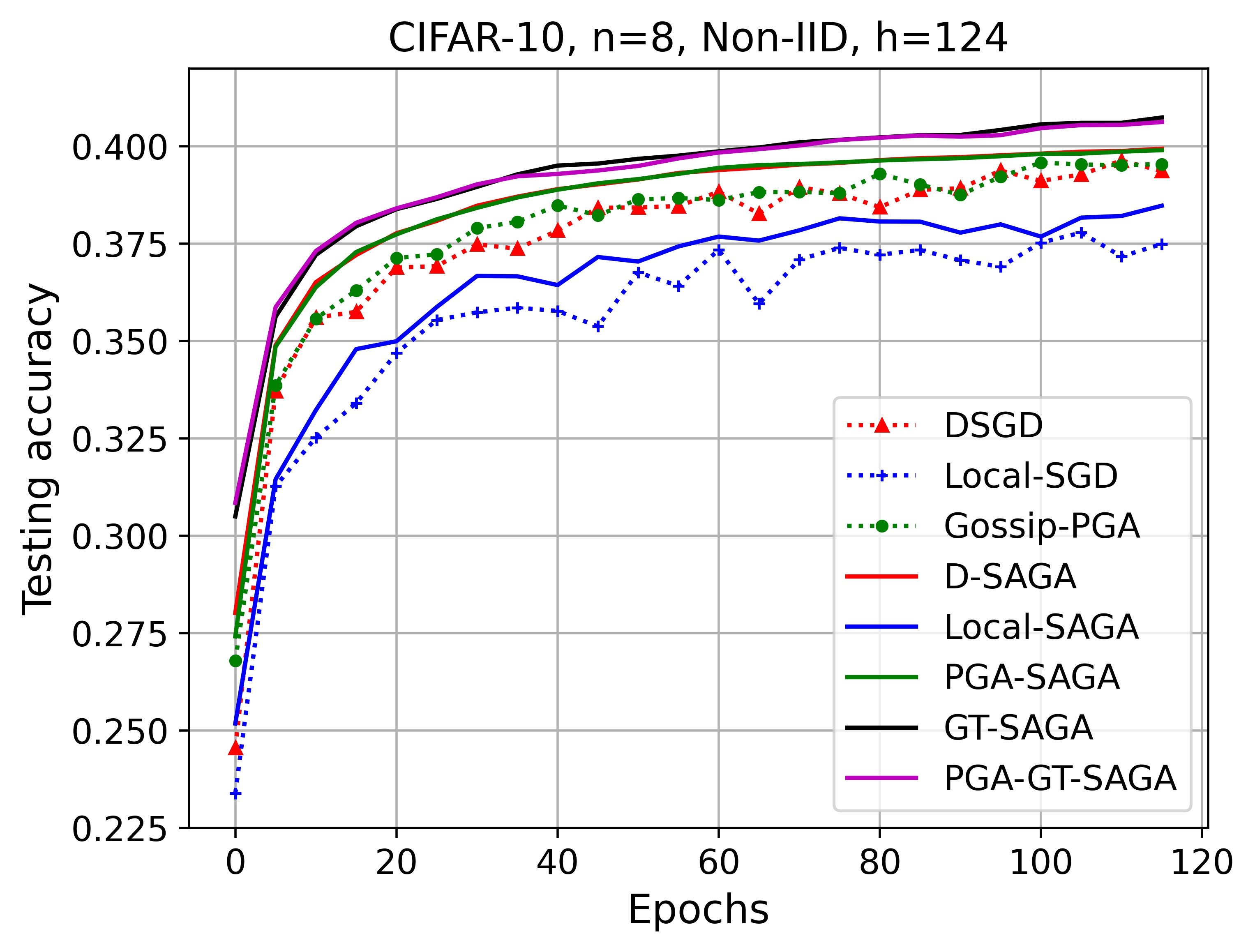}   
        \end{minipage}
    }%
    \subfigure 
    {
        \begin{minipage}[t]{0.22\textwidth}
            \centering      
            \includegraphics[width=\textwidth]{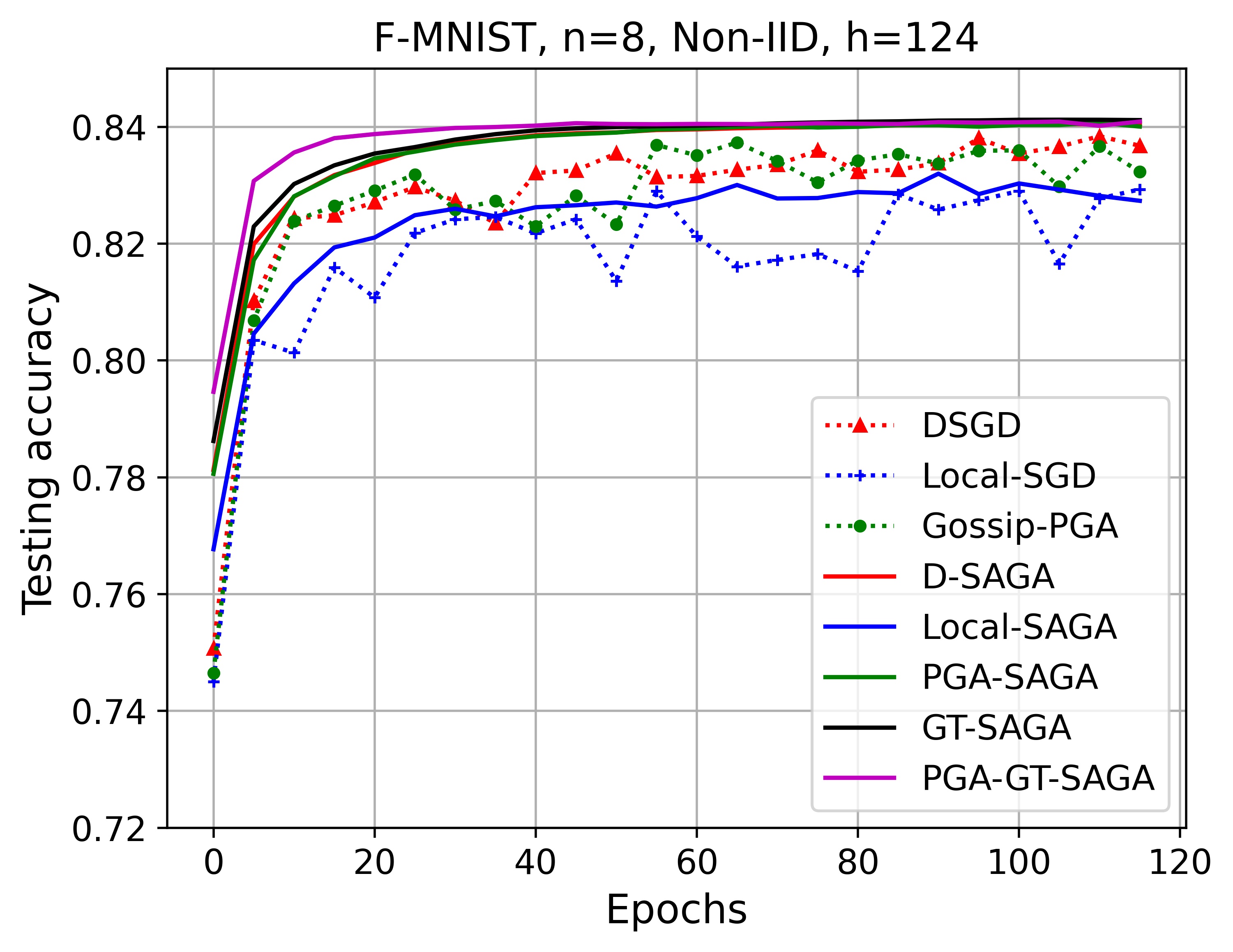}   
        \end{minipage}
    }\\ \vspace{-10pt}%
    \subfigure 
    {
    \begin{minipage}[t]{0.22\textwidth}
            \centering      
            \includegraphics[width=\textwidth]{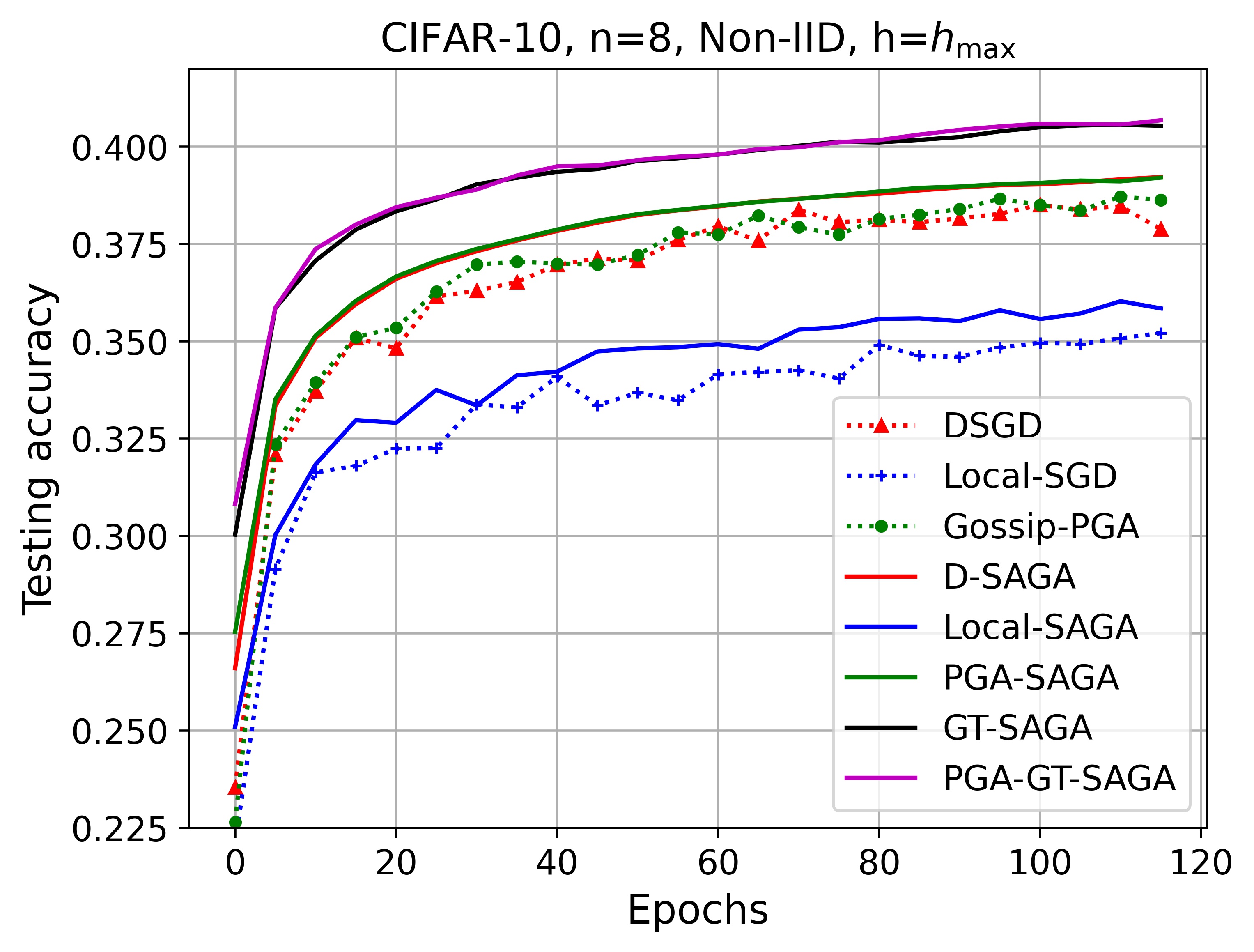}   
        \end{minipage}
    }%
    \subfigure 
    {
        \begin{minipage}[t]{0.22\textwidth}
            \centering      
            \includegraphics[width=\textwidth]{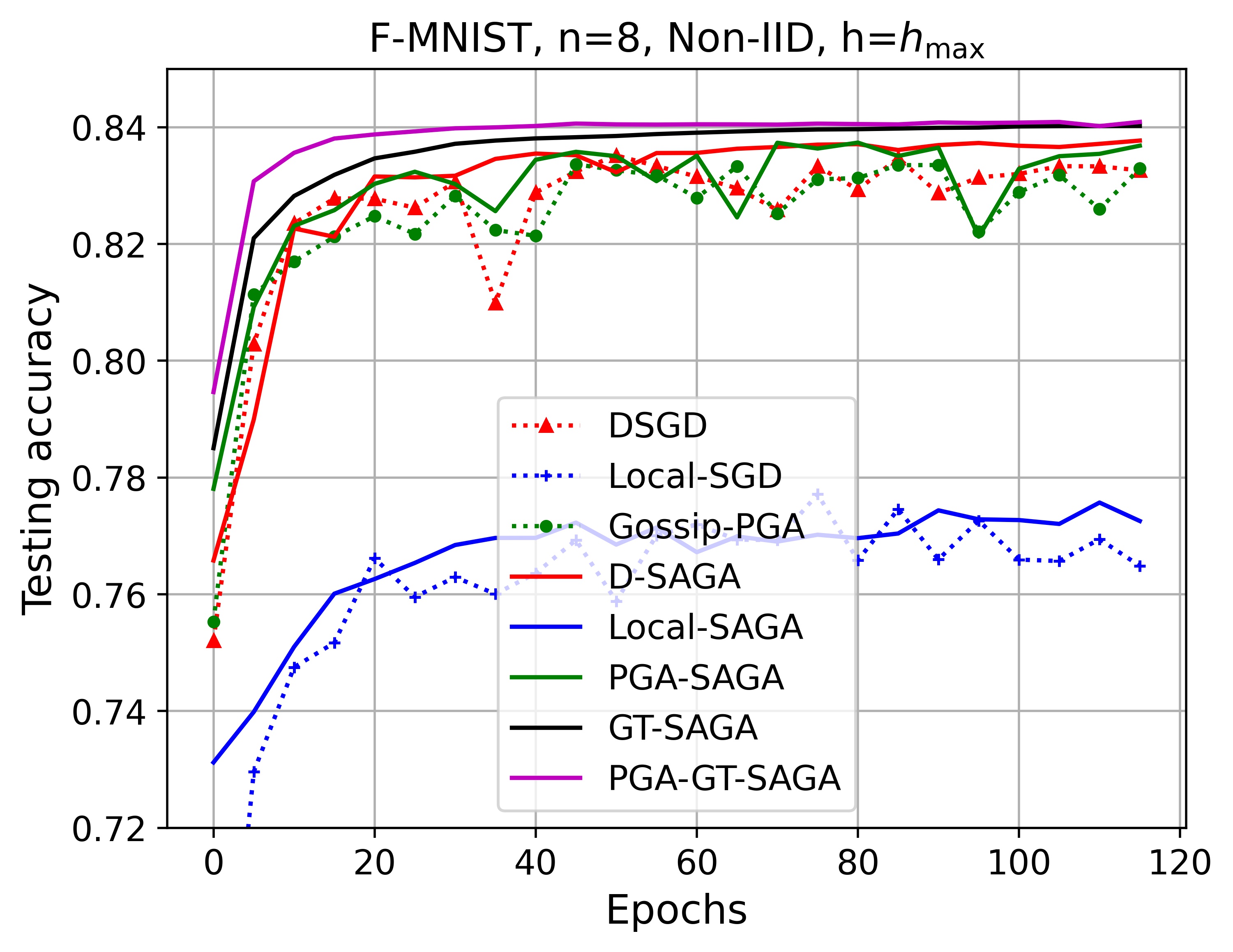}   
        \end{minipage}
    }%
    \vspace{-0.25cm}
    \caption{Performance comparison of DSGD, Local-SGD, Gossip-PGA, D-SAGA, Local-SAGA, PGA-SAGA, GT-SAGA and PGA-GT-SAGA on directed ring graph with $n=8$ under different settings of data heterogeneity.}
    \vspace{-0.25cm}
    \label{Expe}
\end{figure}

\vspace{-0.1cm}
\textbf{Topology dependence.}
\finetune{To verify the dependency of the impact of data heterogeneity on the performance against the connectivity of the topology}, we conduct several experiments (with a fixed value of $h=20$) on different graphs: directed ring with $n=8,\, \rho _W\approx 0.92$ ; exponential graph with $n=50, \,\rho _W\approx 0.99$ for DSGD, D-SAGA and GT-SAGA, respectively, whose testing accuracy results are plotted in Fig.~\ref{Fig_topo_dependence}. \finetune{It follows from  Fig.~\ref{Fig_topo_dependence} that the performance of both DSGD and D-SAGA will be degraded when the connectivity of the graph becomes worse while GT-SAGA maintains relatively high testing accuracy since it removes data heterogeneity $\zeta^*$ by employing GT-schemes.}

\begin{figure}[h]
\vspace{-0.7cm}
    \centering
    \subfigure 
    {
        \begin{minipage}[t]{0.22\textwidth}
            \centering          
            \includegraphics[width=\textwidth]{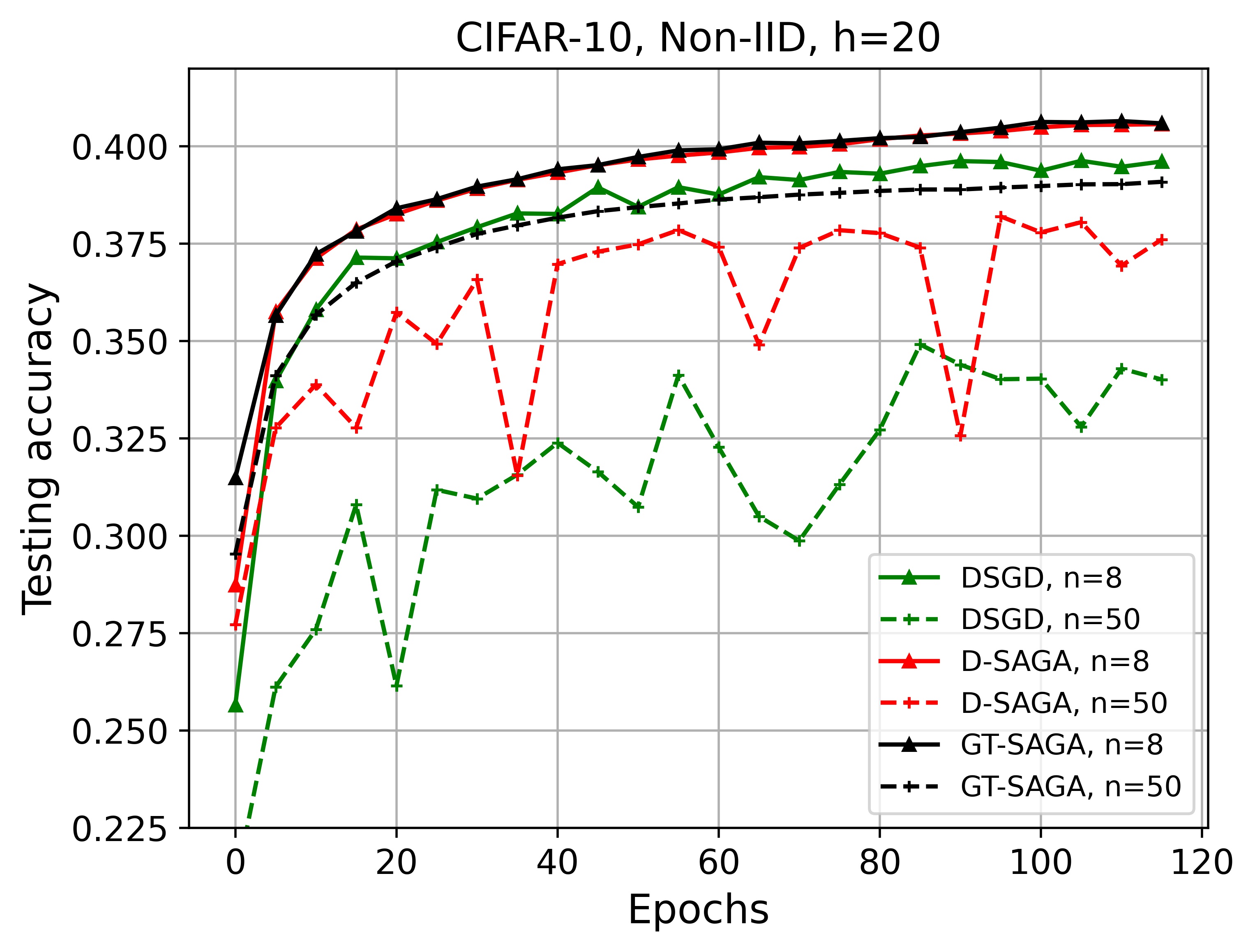}   
        \end{minipage}%
    }
    \subfigure 
    {
        \begin{minipage}[t]{0.22\textwidth}
            \centering      
            \includegraphics[width=\textwidth]{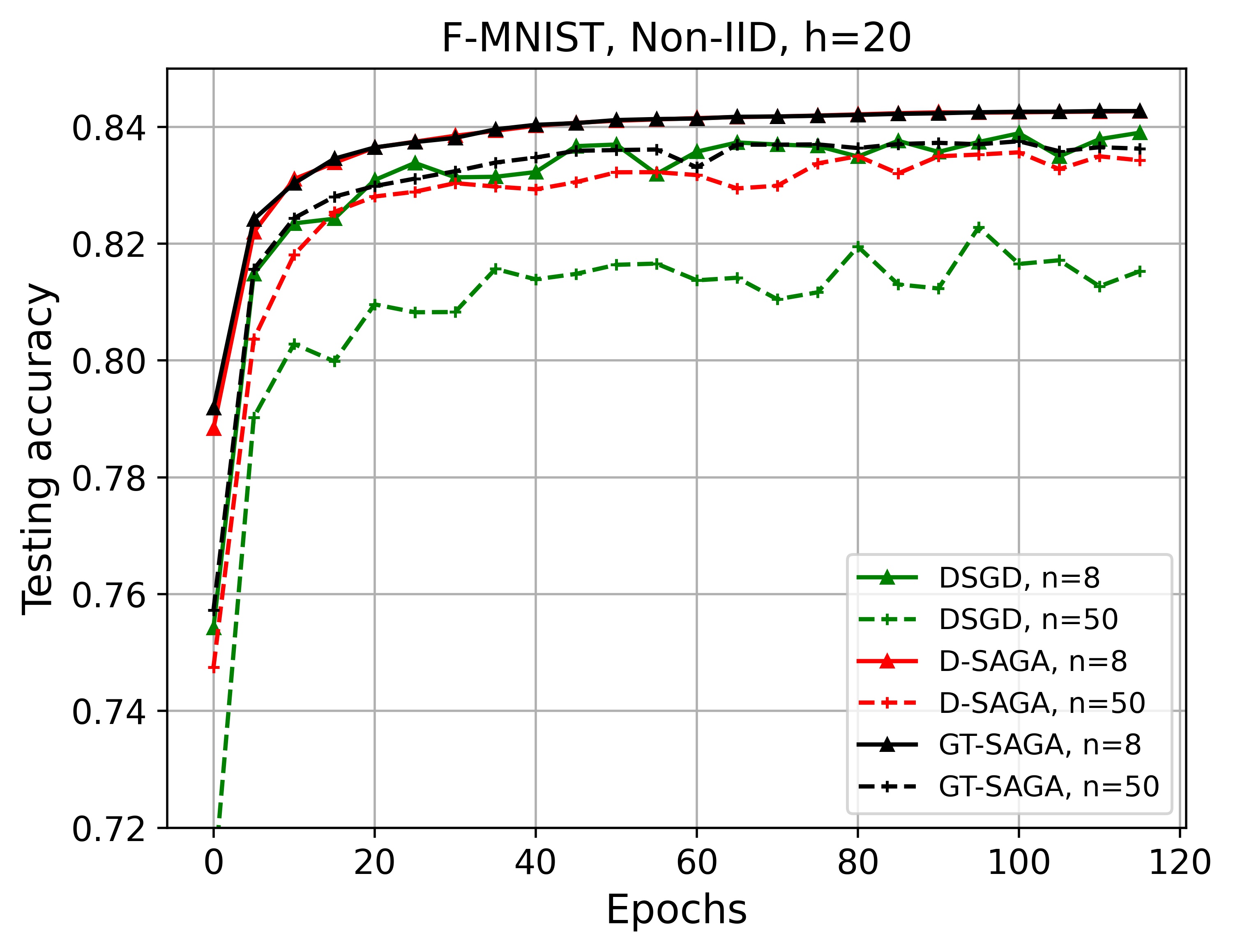}   
        \end{minipage}
    }%
    \vspace{-0.5cm}
    \caption{Performance comparison of DSGD, D-SAGA, and GT-SAGA on graphs with $n=8, 50$.}
    \label{Fig_topo_dependence}
    \vspace{-0.5cm}
\end{figure}

\section{Conclusions}
This paper develops a new unified framework for first-order stochastic gradient methods both in centralized and distributed scenarios. The proposed framework is able to recover many existing stochastic algorithms along with their corresponding rates. It also enable us to easily design new efficient algorithms by proper design of sampling strategies on the augmented graph. This framework is especially suitable for scenarios where data heterogeneity is a key concern. Since the framework heavily depends on the underlying augmented graph, it is of great interest and importance to design a proper augmented graph and sampling strategy to account for different important scenarios.

\section*{Acknowledgements}
The work of Huang, Zhu, Yan, and Xu has been supported in parts by National Natural Science Foundation of China under Grants 62003302, 62088101, 61922058 and 62173225; and in parts by the Key Laboratory of Collaborative Sensing and Autonomous Unmanned Systems of Zhejiang Province. The work of  Sun has been supported by  the Office of Naval Research, under the grant N00014-21-1-2673.



\bibliographystyle{apalike}
\bibliography{reference}

\onecolumn
\textbf{\large {Appendix}}

\vspace{-0.2cm}
{\footnotesize
\tableofcontents
}

\appendix

\newpage
\renewcommand\thesection{\Alph{section}}

\section{Recovering Existing Algorithms}\label{Appendix_recover}
In this section, we show that most of existing algorithms can be recovered by the proposed SPP framework. To this end, we first recall our proposed SPP framework (\ref{SPP_framework}) as follows:
\begin{equation*}
\begin{aligned}
&X_{k+1}=R_{k}X_k-\alpha \varGamma _{k}Y_k,
\\
&Y_{k+1}=C_{k}Y_k+\nabla F\left( X_{k+1} \right) -\nabla F\left( X_k \right) ,
\end{aligned}
\end{equation*}
where
\begin{equation*}
\begin{aligned}
\varGamma _k=\varLambda _{k+1}\left( W_k\otimes \mathbf{1}\mathbf{1}^T \right) \frac{\varLambda _k}{b_k},\quad \,\,R_k=\mathbf{I}_M-\varLambda _{k+1}+\varGamma _k,\quad C_k=G_k\otimes V_k.
\end{aligned}
\end{equation*}
Note that the matrices $R_k$ is row-stochastic and $C_k$ is doubly-stochastic such that we have by induction
\[
\frac{\mathbf{1}_{M}^{T}}{M}Y_k=\frac{\mathbf{1}_{M}^{T}}{M}\nabla F\left( X_k \right), \forall k\geq 0.
\]
Also, let us recall the notions with dimension mentioned in the main text:
\begin{equation}
\begin{aligned}
X_k&:=\left[ x_{1,k}, x_{2,k}, \cdots , x_{M,k} \right] ^T\in \mathbb{R}^{M\times d},\quad x_{i,k}\in \mathbb{R}^{d}\quad \forall i,
\\
Y_k&:=\left[ y_{1,k}, y_{2,k}, \cdots ,y_{M,k} \right] ^T\in \mathbb{R}^{M\times d}, \quad y_{i,k}\in \mathbb{R}^{d}\quad  \forall i,
\\
\nabla F\left( X_k \right) &:=\left[ \nabla f_1\left( x_{1,k} \right) ,\cdots ,\nabla f_M\left( x_{M,k} \right) \right] ^T\in \mathbb{R}^{M\times d},
\\
\hat{X}_k&:=S_kX_k=\left[ \hat{x}_{1,k}^{T},\hat{x}_{2,k}^{T},\cdots ,\hat{x}_{n,k}^{T} \right] ^T\in \mathbb{R}^{n\times d},\quad 
\\
\hat{Y}_k&:=S_kY_k=\left[ \hat{y}_{1,k}^{T},\hat{y}_{2,k}^{T},\cdots ,\hat{y}_{n,k}^{T} \right] ^T\in \mathbb{R}^{n\times d},
\\
\bar{x}_k&:=\frac{\mathbf{1}_{n}^{T}}{n}\hat{X}_k=\frac{\mathbf{1}_{n}^{T}}{n}S_kX_k\in \mathbb{R}^{1\times d}, \quad 
\\
\bar{y}_k&:=\frac{\mathbf{1}_{n}^{T}}{n}\hat{Y}_k=\frac{\mathbf{1}_{n}^{T}}{n}S_kY_k\in \mathbb{R}^{1\times d},
\\
x^*&:=\underset{x}{\mathrm{argmin}} f\left( x \right) \in \mathbb{R}^{1\times d}.
\end{aligned}
\end{equation}
Now, we show that how each algorithm mentioned in this paper can be recovered with particular choices of $\left( \varGamma_k ,\,\,R_k, C_k \right)$ as well as the projection matrix $S_k:=\left( \mathbf{I}_n\otimes \mathbf{1}_{m}^{T} \right) \frac{\varLambda _k}{b_k}$ (c.f., Eq. ~\eqref{Def_projection_matrix}).

The following lemma related to Kronecker product is crucial in recovering these algorithms.

\begin{Lem}\label{Lem_recover}
Suppose Assumption \ref{Ass_uniformly_sampling_without_replacement} holds. Then we have for all $k\geqslant 0$,
\begin{equation}
\begin{aligned}
S_{k+1}R_k=S_{k+1}\varGamma _k=W_kS_k.
\end{aligned}
\end{equation}
\end{Lem}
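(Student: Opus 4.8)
The plan is to exploit the block/Kronecker structure of all the matrices together with the fact that the sampling diagonal $\Lambda_{k+1}$ is a $0$--$1$ (hence idempotent) matrix. I would first note that it suffices to establish the single identity $S_{k+1}\varGamma _k = W_k S_k$: once this is proved, the claim $S_{k+1}R_k = W_k S_k$ follows immediately from $R_k = (\mathbf{I}_M-\varLambda _{k+1})+\varGamma _k$ provided $S_{k+1}(\mathbf{I}_M-\varLambda _{k+1})=0$. Since $S_{k+1}=(\mathbf{I}_n\otimes\mathbf{1}_m^T)\varLambda _{k+1}/b_{k+1}$ terminates in $\varLambda _{k+1}$ and $\varLambda _{k+1}^2=\varLambda _{k+1}$, we have $\varLambda _{k+1}(\mathbf{I}_M-\varLambda _{k+1})=0$, so that term drops out.

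For the core identity I would substitute the definitions and again use $\varLambda _{k+1}^2=\varLambda _{k+1}$ to reduce $S_{k+1}\varGamma _k$ to $\tfrac{1}{b_{k+1}b_k}(\mathbf{I}_n\otimes\mathbf{1}_m^T)\varLambda _{k+1}(W_k\otimes\mathbf{1}_m\mathbf{1}_m^T)\varLambda _k$. The crux is to evaluate the triple product block by block. Writing $\varLambda _{k+1}=\mathbf{diag}(\varLambda _{1,k+1},\dots,\varLambda _{n,k+1})$ with $\varLambda _{i,k+1}=\mathbf{diag}(\boldsymbol e_{i,k+1})$, the left factor $(\mathbf{I}_n\otimes\mathbf{1}_m^T)\varLambda _{k+1}$ is block-diagonal with $i$-th row-block $\boldsymbol e_{i,k+1}^T$, while the $(i,s)$ block of $W_k\otimes\mathbf{1}_m\mathbf{1}_m^T$ is $[W_k]_{is}\mathbf{1}_m\mathbf{1}_m^T$. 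Multiplying, the $(i,s)$ block becomes $[W_k]_{is}(\boldsymbol e_{i,k+1}^T\mathbf{1}_m)\mathbf{1}_m^T$.

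This is precisely where Assumption \ref{Ass_uniformly_sampling_without_replacement} enters, and it is the only substantive point: sampling exactly $b_{k+1}$ indices per device without replacement forces $\boldsymbol e_{i,k+1}^T\mathbf{1}_m=b_{k+1}$ \emph{uniformly over all} $i$, so $b_{k+1}$ factors out as a scalar and the triple product collapses to $b_{k+1}(W_k\otimes\mathbf{1}_m^T)$. The $b_{k+1}$ then cancels the $1/b_{k+1}$ in $S_{k+1}$, leaving $S_{k+1}\varGamma _k=(W_k\otimes\mathbf{1}_m^T)\varLambda _k/b_k$. Finally I would use the Kronecker mixed-product rule to write $W_k\otimes\mathbf{1}_m^T=(W_k\otimes 1)(\mathbf{I}_n\otimes\mathbf{1}_m^T)=W_k(\mathbf{I}_n\otimes\mathbf{1}_m^T)$, whence $S_{k+1}\varGamma _k=W_k(\mathbf{I}_n\otimes\mathbf{1}_m^T)\varLambda _k/b_k=W_kS_k$, completing the argument.

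The main obstacle is bookkeeping rather than conceptual: keeping the Kronecker and block indices straight in the triple product, and ensuring that the per-device batch size $b_{k+1}$ is genuinely identical across all devices, which is exactly what the uniform without-replacement sampling assumption secures. Were devices allowed to draw differing numbers of samples, the scalar $b_{k+1}$ would not factor out cleanly and the identity would break down.
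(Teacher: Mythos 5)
Your proposal is correct and takes essentially the same route as the paper's own proof: both reduce $S_{k+1}R_k$ to $S_{k+1}\varGamma_k$ via $\varLambda_{k+1}(\mathbf{I}_M-\varLambda_{k+1})=0$, then evaluate the block/Kronecker triple product, using the fact that every device selects exactly $b_{k+1}$ samples so that $\boldsymbol{e}_{i,k+1}^T\mathbf{1}_m=b_{k+1}$ factors out and cancels the $1/b_{k+1}$ in $S_{k+1}$, leaving $W_k(\mathbf{I}_n\otimes\mathbf{1}_m^T)\varLambda_k/b_k=W_kS_k$. The only cosmetic difference is that you phrase the block computation via the vectors $\boldsymbol{e}_{i,k+1}$ and the Kronecker mixed-product rule while the paper writes out the block matrices explicitly; the substance is identical.
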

\begin{proof}
{Since $\varLambda _k\left( \mathbf{I}_M-\varLambda _k \right) =0$ according to the definition of $\Lambda_k$, we have $S_{k+1}R_k=S_{k+1}\varGamma _k$.} Further, we obtain that
\begin{equation}
\begin{aligned}
S_{k+1}R_k&=S_{k+1}\varGamma _k=\left( \left( \mathbf{I}_n\otimes \mathbf{1}_{m}^{T} \right) \frac{\varLambda _{k+1}}{b_{k+1}} \right) \left( \left( W_k\otimes \mathbf{1}\mathbf{1}^T \right) \frac{\varLambda _k}{b_k} \right) 
\\
&=\frac{1}{b_kb_{k+1}}\left[ \begin{matrix}
	\ddots&		&		\\
	&		\mathbf{1}_{m}^{T}\varLambda _{k+1}^{i}&		\\
	&		&		\ddots\\
\end{matrix} \right] \left[ \begin{matrix}
	\cdots&		w_{1j,k}\mathbf{1}\mathbf{1}^T\varLambda _{k}^{j}&		\cdots\\
	&		\vdots&		\\
	\cdots&		w_{nj,k}\mathbf{1}\mathbf{1}^T\varLambda _{k}^{j}&		\cdots\\
\end{matrix} \right] 
\\
&=\frac{1}{b_k}\left[ \begin{matrix}
	\cdots&		w_{1j,k}\mathbf{1}_{m}^{T}\varLambda _{k}^{j}&		\cdots\\
	&		\vdots&		\\
	\cdots&		w_{nj,k}\mathbf{1}_{m}^{T}\varLambda _{k}^{j}&		\cdots\\
\end{matrix} \right] =W_k\left( \mathbf{I}_n\otimes \mathbf{1}_{m}^{T} \right) \frac{\varLambda _k}{b_k},
\end{aligned}
\end{equation}
which completes the proof.
\end{proof}

\subsection{Recovering (centralized) SAGA, L-SVRG and SARAH}
To recover the algorithms with VR schemes, we set the matrices $\left( \varGamma_k, \,R_k,\, C_k \right)$ as follows:
\begin{equation*}
\begin{aligned}
\varGamma _k=\varLambda _{k+1}\mathbf{1}\mathbf{1}^T\frac{\varLambda _k}{b_k},\quad R_k=\mathbf{I}_m-\varLambda _{k+1}+\varLambda _{k+1}\mathbf{1}\mathbf{1}^T\frac{\varLambda _k}{b_k},\quad C_k=V_k.
\end{aligned}
\end{equation*}

\textbf{For SAGA.} The matrices $W_k$, $G_k$ and $V_k$ are set as follows (c.f., Section~\ref{Sec_Recover} for more details for these matrices):
\begin{equation}
\begin{aligned}
W_k=G_k=1,\,\, V_k=\mathbf{J}_m,\,\, k\geqslant 1.
\end{aligned}
\end{equation}
We denote by $\boldsymbol{s}_k$ the set of randomly selected sample nodes at iteration $k$ with constant mini-batch size $b\in[1, m]$. Then, multiplying $S_{k+1}$ for both sides of \eqref{SPP_framework} and invoking Lemma \ref{Lem_recover}, we can derive the following recursions:
\begin{equation}\label{recoverd SAGA}
\begin{aligned}
\hat{x}_{k+1}&=S_{k+1}\left( R_kX_k-\alpha \varGamma _kY_k \right) =\hat{x}_k-\alpha \hat{y}_k,
\\
\hat{y}_{k+1}&=S_{k+1}\left( \mathbf{J}_mY_k+\nabla F\left( X_{k+1} \right) -\nabla F\left( X_{t_k} \right) \right) 
\\
&=\frac{\mathbf{1}_{m}^{T}}{m}\nabla F\left( X_k \right) +S_{k+1}\left( \nabla F\left( X_{k+1} \right) -\nabla F\left( X_k \right) \right) 
\\
&=\frac{1}{m}\sum_{j=1}^m{\nabla f_j\left( x_{j,k} \right)}+\frac{1}{b}\sum_{s\in \boldsymbol{s}_{k+1}}{\left( \nabla f_s\left( \hat{x}_{k+1} \right) -\nabla f_s\left( x_{s,k} \right) \right)},
\end{aligned}
\end{equation}
where $\left( \hat{x}_k, \hat{y}_k \right) $ is the specific instance of $\left( \hat{X}_k, \hat{Y}_k \right) $ with $n=1$, and $s\in \boldsymbol{s}_{k+1}$ represents the index of sample randomly picked at iteration $k+1$. It is worth noting that only the picked sample nodes in $s\in \boldsymbol{s}_{k+1}$ performs updates at each iteration. As a result, the collective gradient vector $\nabla F\left( X_k \right) $ keeps the historical gradients of all samples, resembling the role of the table storing the gradients in the original SAGA algorithm \citep{defazio2014saga}.

\textbf{For L-SVRG.} The matrix $V_k$ corresponding to variance reduction and batch-size $b_k$ vary as:
\begin{equation*}
\begin{aligned}
\begin{cases}
	V_k=\mathbf{I}_m,\quad\, b_k=b, \quad\,\, w.p.\quad 1-p\\
	V_k=\mathbf{J}_m,\quad b_k=m, \quad w.p.\quad p\\
\end{cases},
\end{aligned}
\end{equation*}
which indicates that the algorithm performs full gradient update with probability $p$.
Then, by Lemma \ref{Lem_recover} and Lemma \ref{Lem_Y_t}, we derive the recursions of the recovered L-SVRG algorithm with the projection matrix $S_{k+1}$: 
\begin{equation}
\begin{aligned}
\hat{x}_{k+1}&=S_{k+1}\left( R_kX_k-\alpha \varGamma _kY_k \right) =\hat{x}_k-\alpha \hat{y}_k,
\\
\hat{y}_{k+1}&=S_{k+1}\left( V_kY_k+\nabla F\left( X_{k+1} \right) -\nabla F\left( X_k \right) \right) 
\\
&=\frac{1}{m}\sum_{j=1}^m{\nabla f_j\left( x_{j,t_{k+1}} \right)} +\frac{1}{b_{k+1}}\sum_{s\in \boldsymbol{s}_{k+1}}{\left( \nabla f_s\left( \hat{x}_{k+1} \right) -\nabla f_s\left( x_{s,t_{k+1}} \right) \right)},
\end{aligned}
\end{equation}
where $t_{k+1} < k+1$ denotes the latest iteration before $k+1$ performing the full gradient update, i.e., $b_{t_{k+1}}=m$, and $x_{j,t_{k+1}}=\hat{x}_{t_{k+1}}, \forall j\in \left[ m \right]$. The original L-SVRG algorithm \citep{qian2021svrg} is thus recovered.

{\textbf{For SARAH.} As with SAGA and L-SVRG, we can verify that SARAH can be also recovered from the SPP framework by setting:
\begin{equation}
\begin{aligned}
V_k = \mathbf{J}_m, \quad
b_k=\begin{cases}
	b, \quad \,\, w.p.\quad 1-p\\
	m, \quad w.p.\quad p\\
\end{cases},
\end{aligned}
\end{equation}
which indicates that SARAH always performs variance reduction (since $V_k = \mathbf{J}_m$) while using dynamic sampling strategy. Therefore, we can obtain the original SARAH algorithm \citep{pmlr-v70-nguyen17b} by the projection matrix $S_k$, which shares the same recursions of SAGA in \eqref{recoverd SAGA} while intermittently performing full gradient update. It can be further revealed from Table \ref{Tab_uni_SARAH} that SARAH is, indeed, a mixing of SAGA and L-SVRG.
}
\begin{table}[h!]
  \caption{{A unified perspective for SAGA, L-SVRG and SARAH under SPP framework}}
 \label{Tab_uni_SARAH}
 \renewcommand\arraystretch{1.2}
 \begin{center}
  \label{VR Methods}
  \vspace{0.2cm}
  \begin{tabular}{c|c|c|c} 
   \hline
   \rule {0pt}{8pt}
    \textbf{Algorithms}  & SAGA & L-SVRG & \textbf{SARAH} 
   \\ 
    \hline
   \rule {0pt}{8pt}
   $b_k$ & $b$ & $\left\{b, \,\, m \right\} $ & $\left\{ b, \,\, m\right\} $\\
   \hline
   \rule {0pt}{8pt}
   $V_k$ & $\mathbf{J}_m$ & $\left\{ \mathbf{I}_m, \mathbf{J}_m \right\}$ & $\mathbf{J}_m $\\
   \hline
  \end{tabular}
 \end{center}
\end{table}

\subsection{Recovering Local-SAGA, D-SAGA and PGA-SAGA}
We choose the parameters $\left( \varGamma_k, \,R_k,\, C_k \right)$ as follow:
\begin{equation*}
\begin{aligned}
\varGamma _k=\varLambda _{k+1}\left( W_k\otimes \mathbf{1}\mathbf{1}^T \right) \frac{\varLambda _k}{b_k},\quad
R_k=\mathbf{I}_M-\varLambda _{k+1}+\varLambda _{k+1}\left( W_k\otimes \mathbf{1}\mathbf{1}^T \right) \frac{\varLambda _k}{b_k},  \quad
C_k=\left( \mathbf{I}_n\otimes V_k \right) .
\end{aligned}
\end{equation*}

\textbf{For Local-SAGA.} We set $G_k=\mathbf{I}_n, V_n=\mathbf{J}_m$, and the mixing matrix $W_k$ corresponding to the actual communication topology varies as:
\begin{equation*}
\begin{aligned}
W_k=\begin{cases}
	\mathbf{I}_n,\quad w.p.\quad 1-r\\
	\mathbf{J}_n,\quad w.p.\quad r\\
\end{cases}.
\end{aligned}
\end{equation*}

From Lemma \ref{Lem_recover}, we derive the following recursion of the recovered Local-SAGA algorithm:
\begin{equation}
\begin{aligned}
\hat{X}_{k+1}&=S_{k+1}\left( R_kX_k-\alpha \varGamma _kY_k \right) =W_k\left( \hat{X}_k-\alpha \hat{Y}_k \right),
\\
\hat{Y}_{k+1}&=S_{k+1}\left( \left( \left( \mathbf{I}_n\otimes \mathbf{J}_m \right) Y_k+\nabla F\left( X_{k+1} \right) -\nabla F\left( X_k \right) \right) \right) 
\\
&=\left( \mathbf{I}_n\otimes \frac{\mathbf{1}_{m}^{T}}{m} \right) \nabla F\left( X_k \right) +S_{k+1}\left( \nabla F\left( X_{k+1} \right) -\nabla F\left( X_k \right) \right).
\end{aligned}
\end{equation}
Then, by noticing that the decision variables $X_{k+1}$ and $X_k$ are only different at the rows corresponding to samples at iteration $k+1$, we have
\begin{equation}
\begin{aligned}
S_{k+1}\left( \nabla F\left( X_{k+1} \right) -\nabla F\left( X_k \right) \right) =\left( \mathbf{I}_n\otimes \frac{\mathbf{1}_{m}^{T}}{m} \right) \left( \nabla F\left( X_{k+1} \right) -\nabla F\left( X_k \right) \right) ,
\end{aligned}
\end{equation}
which implies
\begin{equation}
\begin{aligned}
\hat{Y}_{k+1}=\left( \mathbf{I}_n\otimes \frac{\mathbf{1}_{m}^{T}}{m} \right) \nabla F\left( X_k \right).
\end{aligned}
\end{equation}
By doing so, we get the recovered Local-SAGA algorithm:
\begin{equation}
\begin{aligned}
\hat{X}_{k+1}=W_k\left( \hat{X}_k-\alpha \left( \mathbf{I}_n\otimes \frac{\mathbf{1}_{m}^{T}}{m} \right) \nabla F\left( X_k \right) \right) ,
\end{aligned}
\end{equation}
which indicates that each device performs SAGA over their local datasets, and carry out global communication at a probability of $r$ to reach consensus . 

\textbf{For D-SAGA and PGA-SAGA.} Similar to Local-SAGA, the only difference between them are the mixing matrix $W_k$, in specific, we can recover D-SAGA by choosing $W_k = W$ for $k\geqslant0$, and PGA-SAGA by choosing $W_k$ varying as:
\begin{equation}
\begin{aligned}
W_k=\begin{cases}
	W,\quad w.p.\quad \,\,1-r\\
	\mathbf{J}_n,\quad w.p.\quad \,\,r\\
\end{cases}.
\end{aligned}
\end{equation}

\subsection{Recovering Local-SVRG and D-SVRG}
\textbf{For Local-SVRG.} The matrix $W_k$ corresponding to the actual communication topology, and $V_k$ corresponding to variance reduction vary as:
\begin{equation*}
\begin{aligned}
W_k=\begin{cases}
	\mathbf{I}_n,\quad w.p.\quad 1-r\\
	\mathbf{J}_n,\quad w.p.\quad r\\
\end{cases},  \quad
\begin{cases}
	V_k=\mathbf{I}_m,\quad\, b_k=b, \quad\,\, w.p.\quad 1-p\\
	V_k=\mathbf{J}_m,\quad b_k=m, \quad w.p.\quad p\\
\end{cases},
\end{aligned}
\end{equation*}
which indicates that each device performs full gradient update with probability $p$ (L-SVRG) over their local datasets, and global communication to reach consensus with probability $r$.

Then, similar to Local-SAGA,  we can drive the recursions of the recovered Local-SVRG algorithm with $S_{k+1}$:
\begin{equation}
\begin{aligned}
\hat{X}_{k+1}&=S_{k+1}\left( R_kX_k-\alpha \varGamma _kY_k \right) =W_k\left( \hat{X}_k-\alpha \hat{Y}_k \right),
\\
\hat{Y}_{k+1}&=\left( \mathbf{I}_n\otimes \mathbf{1}_{m}^{T} \right) \frac{\varLambda _{k+1}}{b_{k+1}}\left( \left( \mathbf{I}_n\otimes V_k \right) Y_k+\nabla F\left( X_{k+1} \right) -\nabla F\left( X_k \right) \right) 
\\
&=\left( \mathbf{I}_n\otimes \frac{\mathbf{1}_{m}^{T}}{m} \right) \nabla F\left( X_{t_{k+1}} \right) +\left( \mathbf{I}_n\otimes \frac{\mathbf{1}_{m}^{T}}{m} \right) \left( \nabla F\left( X_{k+1} \right) -\nabla F\left( X_{t_{k+1}} \right) \right). 
\end{aligned}
\end{equation}
Noticing that $t_{k+1} < k+1$ denotes the iteration before $k+1$ performing full gradient update, which thus recovers the original Local-SVRG algorithm \citep{gorbunov2021local}.

\textbf{For D-SVRG.} Similar to Local-SVRG,  it is straightforward to recover D-SVRG by choosing the mixing matrix $W_k = W$ for $k\geqslant0$.

\subsection{Recovering GT-SAGA}
We choose the parameters $\left( \varGamma_k, \,R_k,\, C_k \right)$ as follow:
\begin{equation*}
\begin{aligned}
\varGamma _k=\varLambda _{k+1}\left( W_k\otimes \mathbf{1}\mathbf{1}^T \right) \frac{\varLambda _k}{b_k},\quad R_k=\mathbf{I}_M-\varLambda _{k+1}+\varLambda _{k+1}\left( W_k\otimes \mathbf{1}\mathbf{1}^T \right) \frac{\varLambda _k}{b_k},\quad C_k=\left( W_k\otimes V_k \right),
\end{aligned}
\end{equation*}
which indicates performing both gradient tracking and variance reduction. Then, using Lemma \ref{Lem_recover},  we can derive the recursions of the recovered GT-SAGA algorithm:
\begin{equation}
\begin{aligned}
\hat{X}_{k+1}&=S_{k+1}\left( R_kX_k-\alpha \varGamma _kY_k \right) =W_k\left( \hat{X}_k-\alpha \hat{Y}_k \right) ,
\\
\hat{Y}_{k+1}&=\left( \mathbf{I}_n\otimes \mathbf{1}_{m}^{T} \right) \frac{\varLambda _{k+1}}{b_{k+1}}\left( \left( W_k\otimes \mathbf{J}_m \right) Y_k+\nabla F\left( X_{k+1} \right) -\nabla F\left( X_k \right) \right) 
\\
&=W_k\left( \mathbf{I}_n\otimes \frac{\mathbf{1}_{m}^{T}}{m} \right) Y_k+S_{k+1}\nabla F\left( X_{k+1} \right) -S_{k+1}\nabla F\left( X_k \right) \,\,
\\
&=W_k\hat{Y}_k-\left( \mathbf{I}_n\otimes \frac{\mathbf{1}_{m}^{T}}{m} \right) \nabla F\left( X_{k+1} \right) -\left( \mathbf{I}_n\otimes \frac{\mathbf{1}_{m}^{T}}{m} \right) \nabla F\left( X_k \right) ,
\end{aligned}
\end{equation}
where in the last equality we have used the following facts:
\begin{equation*}
\begin{aligned}
&\left( \mathbf{I}_n\otimes \frac{\mathbf{1}_{m}^{T}}{m} \right) Y_k=\left( \mathbf{I}_n\otimes \frac{\mathbf{1}_{m}^{T}}{m} \right) \varLambda _kY_k=\hat{Y}_k, \\
&S_{k+1}\left( \nabla F\left( X_{k+1} \right) -\nabla F\left( X_k \right) \,\, \right) =\left( \mathbf{I}_n\otimes \frac{\mathbf{1}_{m}^{T}}{m} \right) \left( \nabla F\left( X_{k+1} \right) -\nabla F\left( X_k \right) \right).
\end{aligned}
\end{equation*}
Noticing that $\nabla F\left( X_k \right)$, indeed, plays the role of the table of SAGA storing the historical gradients, we thus recover the original GT-SAGA algorithm \citep{xin2020variance}.

\textbf{New efficient algorithms.} It should be noted that we can further design various new algorithms by properly choosing the matrices $W_k$, $G_k$ and $V_k$ to obtain suitable performance for different scenarios.

\section{Technical Preliminaries}
\label{tech_prelim}
The key idea of the proofs for the main results in Section \ref{Sec_Con_analysis} is based on the proper design of the following Lyapunov function as defined in (\ref{Lyapunov_func}), which we recall here:
\begin{equation*}
\begin{aligned}
T_{k+1}&:=c_0\left\| \bar{x}_{k+1}-x^* \right\| ^2+c_1\left\| \hat{X}_{k+1}-\mathbf{1}_n\bar{x}_{k+1} \right\| ^2
\\
&+c_2\left\| \nabla F\left( X_{t_k} \right) -\nabla F\left( \mathbf{1}_Mx^* \right) \right\| ^2+c_3\left\| \nabla F\left( X_k \right) -\nabla F\left( \mathbf{1}_Mx^* \right) \right\| ^2+c_4\left\| \hat{Y}_k-\mathbf{1}_n\bar{y}_k \right\| ^2,
\end{aligned}
\end{equation*}
where $c_0, c_1, c_2, c_3, c_4 \geqslant 0$ are parameters to be properly determined.
In particular, the above Lyapunov function consists of the following five error terms in the sense of expectation:
\begin{itemize}
    \item  \textbf{Optimality gap}: $\mathbb{E}\left[ \left\| \bar{x}_k-x^* \right\|^{2} \right] $;
    \item \textbf{Consensus error across proxy nodes}: $\mathbb{E}\left[\left\| \hat{X}_k-\mathbf{1}_n\bar{x}_k \right\| ^2 \right]$;
    \item \textbf{Delayed variance-reduction error}: $\mathbb{E}\left[ \left\| \nabla F\left( X_{t_k} \right) -\nabla F\left( \mathbf{1}_Mx^* \right) \right\|^{2} \right]$;
    \item \textbf{Variance-reduction error }: $\mathbb{E}\left[ \left\| \nabla F\left( X_k \right) -\nabla F\left( \mathbf{1}_Mx^* \right) \right\| ^2 \right] $;
    \item \textbf{Gradient tracking error}: $\mathbb{E}\left[ \left\| \hat{Y}_k-\mathbf{1}_n\bar{y}_k \right\| ^2 \right] $.
\end{itemize}

\textbf{Proof Sketch}: To obtain the main results, we first need to establish the evolution of each of the above error terms denoted as $\boldsymbol{e}_k$ (c.f., Lemma \ref{Lem_opt_gap}-\ref{Lem_GT_error}) to come up with a recursive dynamics: $\boldsymbol{e}_{k+1}\leqslant A\boldsymbol{e}_k+\boldsymbol{b}$. Then, by properly choosing a non-negative non-zero coefficient vector $\boldsymbol{c}:=\left[ c_0,\cdots,c_4\right] ^T$ such that  we have $\boldsymbol{c}^TA\leqslant \varrho  \boldsymbol{c}^T$ with $\varrho  <1$, we are able us to obtain the convergence results as stated in Theorem (Corollary) \ref{Thm_Without_GT_VR}-\ref{Thm_With_GT_VR} for smooth and strongly convex objectives ($\mu>0$). Similar procedures can be applied to obtain the sub-linear rates for general convex cases ($\mu = 0$).

Before proceeding to the main proofs, we first introduce some lemmas that will be crucial in the subsequent analysis. Besides, we denote by $\mathcal{F}_k$ the $\sigma$-algebra generated by $
\left\{ \varLambda _0, R_0, C_0, \cdots , \varLambda _{k-1}, R_{k-1}, C_{k-1} \right\} $, and define $\mathbb{E}\left[ \cdot |\mathcal{F}_k \right] $ as the conditional expectation given $\mathcal{F}_k$. We also recall the following definitions:
$$
\rho _W:=\left\| W-\mathbf{J}_n \right\|_2^2,\,\,r:=P\left( W_k=\mathbf{J}_n \right) ,\,\, p:=P\left( V_k=\mathbf{J}_m \right) ,\,\,q:=\mathbb{E}\left[ b_k/m|V_k=\mathbf{J}_m \right],
$$
where $\rho_W$ denotes the spectral gap of the fixed mixing matrix $W$; $r$ represents the probability of adopting global averaging; $p$ represents the probability of performing local variance reduction (i.e., updating the gradient table kept by each device); $q$ represents the expected batch-size of samples while performing variance reduction.

\subsection{Supporting Lemmas}
In this section, we first provide some lemmas that will be used in the subsequent analysis.

\begin{Lem}
{\citep{nesterov2003introductory}} Suppose Assumption \ref{Ass_smoothness} hold. Then, for any $x, x'\in \mathbb{R}^d$, we have
\begin{equation}\label{Eq_Convex_smooth}
\begin{aligned}
&\left\| \nabla f_i\left( x \right) -\nabla f_i\left( x' \right) \right\| ^2\leqslant 2L\left( f_i\left( x \right) -f_i\left( x' \right) -\left< \nabla f_i\left( x' \right) , x-x' \right> \right).
\end{aligned}
\end{equation}
\end{Lem}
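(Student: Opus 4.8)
The plan is to prove this via the standard auxiliary-function argument, using only convexity and $L$-smoothness from Assumption~\ref{Ass_smoothness} (strong convexity is not needed here). First I would fix $x'$ and introduce the shifted function $h(z):=f_i(z)-\langle \nabla f_i(x'),z\rangle$. Adding a linear term does not alter second-order behavior, so $h$ inherits both convexity and $L$-smoothness from $f_i$, and its gradient is $\nabla h(z)=\nabla f_i(z)-\nabla f_i(x')$. The point of this construction is that $\nabla h(x')=0$, so by convexity $x'$ is a global minimizer of $h$; this is what lets me convert a smoothness bound into the desired gradient-difference estimate.

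Next I would invoke the descent lemma (the quadratic upper bound that $L$-smoothness provides) at the point $x$, evaluated along one gradient step of size $1/L$:
\begin{equation*}
h\left( x-\tfrac{1}{L}\nabla h(x) \right)\leqslant h(x)-\tfrac{1}{2L}\left\| \nabla h(x) \right\|^2 .
\end{equation*}
Since $x'$ minimizes $h$, the left-hand side is at least $h(x')$, and rearranging gives the one-line core estimate $\tfrac{1}{2L}\left\| \nabla h(x) \right\|^2\leqslant h(x)-h(x')$.

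It then remains only to unwind the definitions. Substituting $\nabla h(x)=\nabla f_i(x)-\nabla f_i(x')$ together with the identity $h(x)-h(x')=f_i(x)-f_i(x')-\langle \nabla f_i(x'),x-x'\rangle$, and multiplying through by $2L$, recovers exactly \eqref{Eq_Convex_smooth}. The only nontrivial ingredient is the descent lemma itself, which I would either cite as standard or derive in one line from $L$-smoothness via $h(z+d)\leqslant h(z)+\langle \nabla h(z),d\rangle+\tfrac{L}{2}\|d\|^2$ with $d=-\tfrac{1}{L}\nabla h(z)$. The main point to be careful about is the claim that $x'$ is a global minimizer of $h$: this relies on convexity of $h$ (so that the stationary point $\nabla h(x')=0$ is indeed global), which is the only place convexity enters. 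Everything else is routine algebra, so I do not expect any genuine obstacle.
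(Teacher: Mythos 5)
Your proposal is correct and complete; the paper itself gives no proof of this lemma, deferring entirely to the citation of \citet{nesterov2003introductory}, and your argument (shift by the linear term $\langle \nabla f_i(x'),\cdot\rangle$, note $x'$ becomes a global minimizer of the convex $L$-smooth function $h$, then apply the descent lemma at a step of size $1/L$) is precisely the standard textbook proof behind that citation. All steps check out, including the observation that convexity of $h$ — inherited from the (strong) convexity in Assumption~\ref{Ass_smoothness} — is the only place convexity is needed to promote the stationary point to a global minimizer.
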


\begin{Lem}\label{Lem_Exp2Raidus} 
Let $\boldsymbol{x}\in \mathbb{R}^n$ be a random vector and $A\in \mathbb{R}^{n\times n}$ be a random matrix. Then, if $A$ is independent on $\boldsymbol{x}$, we have
\begin{equation}
\begin{aligned}
\mathbb{E}\left[ \left\| A\boldsymbol{x} \right\| ^2 \right] =\mathbb{E}\left[ \boldsymbol{x}^TA^TA\boldsymbol{x} \right] =\mathbb{E}\left[ \boldsymbol{x}^T\mathbb{E}\left[ A^TA|\boldsymbol{x} \right] \boldsymbol{x} \right] \leqslant \rho \left( \mathbb{E}\left[ A^TA \right] \right) \mathbb{E}\left[ \left\| \boldsymbol{x} \right\| ^2 \right].
\end{aligned}
\end{equation}
\end{Lem}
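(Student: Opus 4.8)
The plan is to treat the two displayed equalities as routine bookkeeping and to reduce the inequality to a Rayleigh-quotient estimate for a fixed positive semidefinite matrix. First I would establish the first equality by expanding the Euclidean norm: for any realization, $\left\| A\boldsymbol{x} \right\|^2 = \left( A\boldsymbol{x} \right)^T A\boldsymbol{x} = \boldsymbol{x}^T A^T A\boldsymbol{x}$, and taking expectations gives $\mathbb{E}\left[ \left\| A\boldsymbol{x} \right\|^2 \right] = \mathbb{E}\left[ \boldsymbol{x}^T A^T A\boldsymbol{x} \right]$.

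For the second equality I would invoke the tower property of conditional expectation, conditioning on $\boldsymbol{x}$. Since $\boldsymbol{x}$ is measurable with respect to the conditioning $\sigma$-algebra, it can be pulled outside the inner expectation, yielding $\mathbb{E}\left[ \boldsymbol{x}^T A^T A\boldsymbol{x} \right] = \mathbb{E}\left[ \mathbb{E}\left[ \boldsymbol{x}^T A^T A\boldsymbol{x} \mid \boldsymbol{x} \right] \right] = \mathbb{E}\left[ \boldsymbol{x}^T \mathbb{E}\left[ A^T A \mid \boldsymbol{x} \right]\boldsymbol{x} \right]$. Because $A$ is independent of $\boldsymbol{x}$, the conditional expectation collapses to the unconditional one, $\mathbb{E}\left[ A^T A \mid \boldsymbol{x} \right] = \mathbb{E}\left[ A^T A \right] =: M$, a deterministic matrix.

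The inequality is where the only genuine content lies. I would first observe that $M = \mathbb{E}\left[ A^T A \right]$ is symmetric and positive semidefinite: each realization $A^T A$ is symmetric and satisfies $v^T A^T A v = \left\| Av \right\|^2 \geqslant 0$ for all $v$, and both properties are preserved under the (entrywise) expectation. For such an $M$ the Rayleigh-quotient bound gives $v^T M v \leqslant \lambda_{\max}(M)\left\| v \right\|^2$ for every $v$, where $\lambda_{\max}(M)$ denotes the largest eigenvalue. Applying this pointwise with $v = \boldsymbol{x}$ and taking expectations yields $\mathbb{E}\left[ \boldsymbol{x}^T M\boldsymbol{x} \right] \leqslant \lambda_{\max}(M)\,\mathbb{E}\left[ \left\| \boldsymbol{x} \right\|^2 \right]$.

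The final, and in fact the only nontrivial, point is to identify $\lambda_{\max}(M)$ with the spectral radius $\rho(M)$ appearing in the statement. This holds precisely because $M$ is symmetric positive semidefinite, so all its eigenvalues are real and nonnegative and hence $\rho(M) = \max_i \lvert \lambda_i(M) \rvert = \lambda_{\max}(M)$. I expect the main (minor) obstacle to be making the positive-semidefiniteness of $\mathbb{E}\left[ A^T A \right]$ explicit, since it is this property alone that justifies replacing $\lambda_{\max}$ by $\rho$; absent it, the spectral radius could exceed the largest eigenvalue and the stated bound would fail.
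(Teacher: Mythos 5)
Your proposal is correct and follows exactly the route the paper intends: the paper omits the proof, stating only that the lemma ``follows directly from the smoothing lemma,'' which is precisely the tower-property argument you carry out, combined with the standard Rayleigh-quotient bound. Your write-up simply makes explicit the details the paper leaves implicit (in particular, that $\mathbb{E}\left[ A^TA \right]$ is symmetric positive semidefinite, so its spectral radius coincides with its largest eigenvalue), and is a valid completion of the omitted proof.
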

The above lemma follows directly from the smoothing lemma {\citep{gut2005probability}} and thus its proof is omitted.

\begin{Lem}\label{Lem_expectation}
{Suppose Assumption \ref{Ass_uniformly_sampling_without_replacement} hold.} Then, we have for all $k\geqslant0$,
\begin{equation}\label{Eq_rho_S_k}
\begin{aligned}
\rho(\mathbb{E}\left[ \left( S_k \right) ^TS_k \right]) \leqslant \frac{1}{m},
\end{aligned}
\end{equation}
and
\begin{equation}\label{Eq_rho_bar_S_k}
\begin{aligned}
\rho \left( \mathbb{E}\left[ \left( \frac{\mathbf{1}_{n}^{T}}{n}S_k \right) ^T\left( \frac{\mathbf{1}_{n}^{T}}{n}S_k \right) \right] \right) \leqslant \frac{1}{M},
\end{aligned}
\end{equation}
where $\rho(\cdot)$ denotes the spectral radius of a matrix.
\end{Lem}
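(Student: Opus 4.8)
The plan is to compute $\mathbb{E}\left[ (S_k)^TS_k \right]$ and $\mathbb{E}\left[ \left(\tfrac{\mathbf{1}_n^T}{n}S_k\right)^T\left(\tfrac{\mathbf{1}_n^T}{n}S_k\right) \right]$ entrywise and then identify their spectra \emph{exactly}, exploiting the block/Kronecker structure induced by the sampling. Throughout I would condition on the (possibly random) batch size $b_k$, since the claimed constants $1/m$ and $1/M$ do not depend on $b_k$ and thus it suffices to prove the bounds for each realization.

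First I would write the entries of $S_k=(\mathbf{I}_n\otimes\mathbf{1}_m^T)\varLambda_k/b_k$ explicitly: indexing columns by pairs $(i,j)$ with $i\in[n]$, $j\in[m]$, the $(l,(i,j))$ entry equals $\frac{1}{b_k}\chi_{ij}$ when $l=i$ and $0$ otherwise, where $\chi_{ij}$ denotes the indicator that $\xi_{ij}$ is selected. It follows immediately that $(S_k)^TS_k$ is block diagonal across devices, with device-$i$ block having $(j,j')$ entry $\frac{1}{b_k^2}\chi_{ij}\chi_{ij'}$. Taking expectations and using that uniform sampling without replacement (Assumption~\ref{Ass_uniformly_sampling_without_replacement}) yields $\mathbb{E}[\chi_{ij}]=b_k/m$ and $\mathbb{E}[\chi_{ij}\chi_{ij'}]=\frac{b_k(b_k-1)}{m(m-1)}$ for $j\neq j'$, each expected block takes the form $(d-o)\mathbf{I}_m+o\,\mathbf{1}_m\mathbf{1}_m^T$ with $d=\frac{1}{b_km}$ and $o=\frac{b_k-1}{b_km(m-1)}$. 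Its eigenvalues are $d+(m-1)o$ along $\mathbf{1}_m$ and $d-o$ on $\mathbf{1}_m^\perp$; a short computation gives $d+(m-1)o=\frac1m$ and $0\le d-o=\frac{m-b_k}{b_km(m-1)}\le\frac1m$ (the nonnegativity uses $b_k\le m$). Hence each block, and therefore the full block-diagonal matrix, has spectral radius exactly $1/m$, proving the first claim.

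For the second bound I would note that $\frac{\mathbf{1}_n^T}{n}S_k$ is the row vector $v$ with $v_{(i,j)}=\frac{1}{nb_k}\chi_{ij}$, so $\mathbb{E}[v^Tv]$ has $((i,j),(i',j'))$ entry $\frac{1}{n^2b_k^2}\mathbb{E}[\chi_{ij}\chi_{i'j'}]$. The crucial point is that cross-device independence in Assumption~\ref{Ass_uniformly_sampling_without_replacement} makes every off-diagonal ($i\neq i'$) block equal to the rank-one matrix $\frac{1}{n^2m^2}\mathbf{1}_m\mathbf{1}_m^T$, while the diagonal blocks reproduce $\frac{1}{n^2}$ times the blocks from the first part. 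Collecting terms gives the commuting decomposition
\[
\mathbb{E}[v^Tv]=\frac{d-o}{n^2}\mathbf{I}_M+\Big(\tfrac{o}{n^2}-\tfrac{1}{n^2m^2}\Big)\,\mathbf{I}_n\otimes\mathbf{1}_m\mathbf{1}_m^T+\frac{1}{n^2m^2}\,\mathbf{1}_n\mathbf{1}_n^T\otimes\mathbf{1}_m\mathbf{1}_m^T,
\]
whose three summands are simultaneously diagonalized by the Kronecker eigenbasis $u\otimes w$ with $u\in\{\mathbf{1}_n,\mathbf{1}_n^\perp\}$ and $w\in\{\mathbf{1}_m,\mathbf{1}_m^\perp\}$. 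Evaluating the four cases and using $d-o+mo=\frac1m$, I expect eigenvalues $\frac1M$ on $\mathbf{1}_n\otimes\mathbf{1}_m$, $0$ on $\mathbf{1}_n^\perp\otimes\mathbf{1}_m$, and $\frac{d-o}{n^2}\le\frac1M$ on the two $\mathbf{1}_m^\perp$ directions, so the spectral radius equals $1/M$.

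The main obstacle is precisely this second bound: unlike $(S_k)^TS_k$, the matrix $\mathbb{E}[v^Tv]$ is not block diagonal, so bounding each block separately would overestimate the spectral radius. The key simplification that yields the sharp constant $1/M$ is the observation that cross-device independence collapses the off-diagonal blocks to multiples of $\mathbf{1}_m\mathbf{1}_m^T$, turning the whole matrix into a sum of commuting Kronecker terms whose spectrum can be read off exactly rather than merely bounded; care is only needed to confirm $d-o\ge0$ so that the spectral radius coincides with the largest eigenvalue.
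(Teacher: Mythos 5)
Your proof is correct, and it takes a genuinely different route from the paper on the second claim. For the first bound, your computation of $\mathbb{E}\left[ (S_k)^TS_k \right]$ coincides with the paper's (your pairwise inclusion probabilities $b_k/m$ and $\tfrac{b_k(b_k-1)}{m(m-1)}$ are exactly the combinatorial ratios $C_{m-1}^{b_k-1}/C_{m}^{b_k}$ and $C_{m-2}^{b_k-2}/C_{m}^{b_k}$ appearing there), but where the paper invokes the Gershgorin circle theorem to bound the spectral radius by $\tfrac{1}{b_km}+(m-1)\tfrac{b_k-1}{b_km(m-1)}=\tfrac{1}{m}$, you diagonalize each block $(d-o)\mathbf{I}_m+o\,\mathbf{1}_m\mathbf{1}_m^T$ exactly; same constant, slightly more information, since you identify $1/m$ as an attained eigenvalue rather than an upper bound. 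The real divergence is in the second bound: the paper never forms $\mathbb{E}\left[v^Tv\right]$ at all. It writes $\left(\tfrac{\mathbf{1}_n^T}{n}S_k\right)^T\left(\tfrac{\mathbf{1}_n^T}{n}S_k\right)=S_k^T\tfrac{\mathbf{1}_n\mathbf{1}_n^T}{n^2}S_k$ and uses $\rho\left(\tfrac{\mathbf{1}_n\mathbf{1}_n^T}{n^2}\right)=\tfrac1n$ together with the first bound (in effect, $\tfrac{\mathbf{1}_n\mathbf{1}_n^T}{n^2}\preceq\tfrac1n\mathbf{I}_n$, so the Loewner ordering passes through the expectation) to conclude $\rho\leqslant\tfrac1n\cdot\tfrac1m=\tfrac1M$ in one line, with no use of cross-device independence. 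Your route, computing the off-diagonal blocks via independence and diagonalizing the commuting Kronecker decomposition, is heavier but sharper: it shows the constant $1/M$ is attained (eigenvalue on $\mathbf{1}_n\otimes\mathbf{1}_m$) and exhibits the full spectrum $\left\{ \tfrac1M,\,0,\,\tfrac{d-o}{n^2} \right\}$, whereas the paper's argument only certifies the inequality. One small point worth making explicit in your write-up: the reduction ``condition on $b_k$ and prove the bound per realization'' is valid precisely because the conditional expectations are PSD, so a uniform Loewner bound $\preceq\tfrac1m\mathbf{I}$ (resp. $\preceq\tfrac1M\mathbf{I}$) survives the mixture over $b_k$; the paper handles this implicitly by carrying the outer expectation over $b_k$ through the computation.
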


\begin{proof}
Under Assumption \ref{Ass_uniformly_sampling_without_replacement}, we know that each node $i$ performs independent identically distributed mini-batch sampling without replacement from finite $m$ data samples locally. 
Therefore, by Lemma \ref{Lem_Exp2Raidus}, we have
\begin{equation*}
\begin{aligned}
&\mathbb{E}\left[ \left( S_k \right) ^TS_k \right] =\mathbb{E}\left[ \frac{\varLambda _k}{b_k}\left( \mathbf{I}_n\otimes \mathbf{1}_m\mathbf{1}_{m}^{T} \right) \frac{\varLambda _k}{b_k} \right]
\\
&=\mathbb{E}\left[ \frac{1}{\left( b_k \right) ^2}\left( \frac{C_{m-1}^{b_k-1}}{C_{m}^{b_k}}\mathbf{I}_M+\frac{C_{m-2}^{b_k-2}}{C_{m}^{b_k}}\left( \mathbf{I}_n\otimes \left( \mathbf{1}_m\mathbf{1}_{m}^{T}-\mathbf{I}_m \right) \right) \right) \right] 
\\
&=\mathbb{E}\left[ \frac{1}{\left( b_k \right) ^2}\left( \frac{b_k}{m}\left( 1-\frac{b_k-1}{m-1} \right) \mathbf{I}_M+\frac{b_k\left( b_k-1 \right)}{m\left( m-1 \right)}\left( \mathbf{I}_n\otimes \mathbf{1}_m\mathbf{1}_{m}^{T} \right) \right) \right] ,
\end{aligned}
\end{equation*}
where $C_m^{b_k}$ denote the number of $b_k$ combinations from the set $\left[ m \right]$.
Then, using Gershgorin circle theorem, we have
$$
\rho \left( \mathbb{E}\left[ \left( S_k \right) ^TS_k \right] \right) \leqslant \frac{1}{m},
$$
and
\begin{equation*}
\begin{aligned}
\rho \left( \mathbb{E}\left[ \left( \frac{\mathbf{1}_{n}^{T}}{n}S_k \right) ^T\left( \frac{\mathbf{1}_{n}^{T}}{n}S_k \right) \right] \right) \leqslant \rho \left( \frac{\mathbf{1}_n\mathbf{1}_{n}^{T}}{n^2} \right) \rho \left( \mathbb{E}\left[ \left( S_k \right) ^TS_k \right] \right) =\frac{1}{M},
\end{aligned}
\end{equation*}
which completes the proof.
\end{proof}

\begin{Lem}\label{Lem_Y_t}
Suppose $C_{k} = \mathbf{I}_M$ hold for $k>t_k$. Then, we have by induction that
\begin{equation}
\begin{aligned}
Y_k=C_{t_k}Y_{t_k}+\nabla F\left( X_k \right) -\nabla F\left( X_{t_k} \right).
\end{aligned}
\end{equation}
\end{Lem}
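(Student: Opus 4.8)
The plan is to prove the identity by induction on $k$, anchored at the reference index $t_k$ and driven by the telescoping of the consecutive-gradient differences appearing in the $Y$-update \eqref{recu_Y}. Recall that $t_k$ is the most recent index at which a nontrivial mixing/variance-reduction step was applied, so by hypothesis $C_j = \mathbf{I}_M$ for every $j$ with $t_k < j < k$; this is precisely what makes the intermediate terms collapse.

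For the base case I would take $k = t_k + 1$ and simply instantiate the recursion \eqref{recu_Y} at iteration $t_k$, which reads
\begin{equation*}
Y_{t_k+1} = C_{t_k} Y_{t_k} + \nabla F\left( X_{t_k+1} \right) - \nabla F\left( X_{t_k} \right).
\end{equation*}
This is exactly the claimed formula with $k$ replaced by $t_k+1$, so the base case holds with no computation.

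For the inductive step, suppose the formula holds at some index $k > t_k$, i.e. $Y_k = C_{t_k} Y_{t_k} + \nabla F(X_k) - \nabla F(X_{t_k})$. Since $k > t_k$, the hypothesis gives $C_k = \mathbf{I}_M$, so the update \eqref{recu_Y} reduces to $Y_{k+1} = Y_k + \nabla F(X_{k+1}) - \nabla F(X_k)$. Substituting the inductive hypothesis for $Y_k$ and cancelling the $\pm\,\nabla F(X_k)$ terms yields
\begin{equation*}
Y_{k+1} = C_{t_k} Y_{t_k} + \nabla F\left( X_{k+1} \right) - \nabla F\left( X_{t_k} \right),
\end{equation*}
which is the assertion at $k+1$, closing the induction.

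The proof is essentially routine; the only point requiring care is the index bookkeeping, namely confirming that $C_j = \mathbf{I}_M$ is genuinely in force at every index strictly between $t_k$ and the running $k$, so that each inductive step is entitled to drop the $C_k$ factor. Since $t_k$ is by definition the latest index before $k$ at which a full-gradient (variance-reduction) step occurs, this identity condition is automatic, and the telescoping cancellation of the interior $\nabla F(X_j)$ terms is what produces the clean two-endpoint difference $\nabla F(X_k) - \nabla F(X_{t_k})$.
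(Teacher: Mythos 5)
Your proof is correct and is exactly the argument the paper intends: the paper states the lemma with the phrase ``we have by induction'' and omits the details, and your base case at $k=t_k+1$ plus the telescoping inductive step (using $C_k=\mathbf{I}_M$ for $k>t_k$ to drop the mixing factor) is precisely that omitted induction. Your remark on the index bookkeeping --- that the hypothesis keeps $t_k$ as the fixed reference index throughout the induction --- is the only point of care, and you handle it correctly.
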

The above lemma shows that there will be a delay in the recursion of $Y_k$ when the estimation on the full gradient is not adopted at each iteration (i.e., $C_k=\mathbf{I}_M$), such as SVRG and L-SVRG algorithms.

\begin{Lem}\label{Lem_X_by_RW}
Under the proposed SPP framework, we have for $k>0$
\begin{equation}\label{Eq_X_by_X_hat}
\begin{aligned}
X_{k+1}=\left( \mathbf{I}_M-\varLambda _{k+1} \right) X_k+\varLambda _{k+1}\left( \hat{X}_{k+1}\otimes \mathbf{1}_m \right),
\end{aligned}
\end{equation}
and
\begin{equation}\label{Eq_nabla_F_by_X_hat}
\begin{aligned}
\nabla F\left( X_k \right) = \left( \mathbf{I}_M-\varLambda _k \right) \nabla F\left( X_{k-1} \right) +\varLambda _k\nabla F\left( \hat{X}_k\otimes \mathbf{1}_m \right).
\end{aligned}
\end{equation}
\end{Lem}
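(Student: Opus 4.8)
The goal is to prove Lemma~\ref{Lem_X_by_RW}, which expresses the recursions for $X_{k+1}$ and $\nabla F(X_k)$ in terms of the reduced variable $\hat{X}_k$. The plan is to work directly from the definitions of the SPP update \eqref{SPP_framework}, the weight matrix $R_k$ in \eqref{Lab_R_k}, the sampling matrix $\varGamma_k$ in \eqref{Lab_Gamma_k}, the projection $S_k$ in \eqref{Def_projection_matrix}, and the structural facts already established in Lemma~\ref{Lem_recover}. The two identities should be provable independently, each by a short algebraic manipulation.

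For the first identity \eqref{Eq_X_by_X_hat}, I would start from $X_{k+1} = R_k X_k - \alpha \varGamma_k Y_k$ and substitute $R_k = \mathbf{I}_M - \varLambda_{k+1} + \varGamma_k$ to obtain $X_{k+1} = (\mathbf{I}_M - \varLambda_{k+1})X_k + \varGamma_k X_k - \alpha \varGamma_k Y_k$. The key step is then to recognize that $\varGamma_k X_k - \alpha \varGamma_k Y_k$ can be rewritten using $\hat{X}_{k+1}$. Since $\varGamma_k = \varLambda_{k+1}(W_k \otimes \mathbf{1}\mathbf{1}^T)\tfrac{\varLambda_k}{b_k}$, one sees that multiplication by $\varGamma_k$ produces a row-replicated (i.e., $\otimes\,\mathbf{1}_m$) structure premultiplied by $\varLambda_{k+1}$. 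Concretely, I would show that $\varGamma_k Z = \varLambda_{k+1}\big( (W_k S_k Z) \otimes \mathbf{1}_m \big)$ for any conformable $Z$, using the fact that $(\mathbf{I}_n \otimes \mathbf{1}_m^T)$ applied after $(W_k \otimes \mathbf{1}\mathbf{1}^T)\tfrac{\varLambda_k}{b_k}$ collapses to $W_k S_k$ as in Lemma~\ref{Lem_recover}. Applying this to $Z = X_k - \alpha Y_k$ and invoking the $X$-recursion at the reduced level $\hat{X}_{k+1} = W_k(\hat{X}_k - \alpha \hat{Y}_k) = W_k S_k(X_k - \alpha Y_k)$ then yields $\varGamma_k(X_k - \alpha Y_k) = \varLambda_{k+1}(\hat{X}_{k+1} \otimes \mathbf{1}_m)$, completing the first identity.

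For the second identity \eqref{Eq_nabla_F_by_X_hat}, the argument is more structural than algebraic: it rests on observing which rows of $X_k$ actually change between iterations. Only the sampled nodes (those picked by $\varLambda_k$) perform an update at step $k$, so the rows of $X_k$ indexed outside the support of $\varLambda_k$ coincide with the corresponding rows of $X_{k-1}$; this gives $(\mathbf{I}_M - \varLambda_k)\nabla F(X_k) = (\mathbf{I}_M - \varLambda_k)\nabla F(X_{k-1})$. For the sampled rows, the consensus mechanism guarantees every sampled node in device $i$ holds the common reduced value $\hat{x}_{i,k}$, so that $\varLambda_k X_k = \varLambda_k(\hat{X}_k \otimes \mathbf{1}_m)$, whence $\varLambda_k \nabla F(X_k) = \varLambda_k \nabla F(\hat{X}_k \otimes \mathbf{1}_m)$. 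Adding the two pieces gives the claim. I would make precise the claim that sampled nodes receive the freshly averaged value, which follows from the structure of $R_k$ (the $\varGamma_k$ term routes the latest consensus estimate into the nodes selected by $\varLambda_{k+1}$).

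The main obstacle I anticipate is bookkeeping with the Kronecker and sampling matrices, specifically verifying cleanly that $\varGamma_k Z = \varLambda_{k+1}\big((W_k S_k Z) \otimes \mathbf{1}_m\big)$ and that the row-replication identity $\varLambda_k X_k = \varLambda_k(\hat{X}_k \otimes \mathbf{1}_m)$ holds. These hinge on the fully connected structure of $\mathcal{G}_V$ ensuring intra-device consensus is re-established at every sampling step, which is exactly the property formalized in the derivation preceding Lemma~\ref{Lem_recover}; the challenge is to track the index sets correctly rather than to deploy any deep idea. Once these two substitution identities are in hand, both equations follow by direct substitution with no further estimation.
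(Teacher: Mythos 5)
Your proposal is correct and follows essentially the same route as the paper: the first identity is obtained by substituting $R_k=\mathbf{I}_M-\varLambda_{k+1}+\varGamma_k$ and factoring $W_k\otimes \mathbf{1}\mathbf{1}^T=\left( W_k\otimes \mathbf{1}_m \right) \left( \mathbf{I}_n\otimes \mathbf{1}_m^T \right)$ so that $\varGamma_k\left( X_k-\alpha Y_k \right) =\varLambda _{k+1}\left( \hat{X}_{k+1}\otimes \mathbf{1}_m \right)$ (the paper's one-line Kronecker computation, which your identity $\varGamma_kZ=\varLambda_{k+1}\left( \left( W_kS_kZ \right) \otimes \mathbf{1}_m \right)$ makes explicit), and the second follows by plugging the first identity into $\nabla F$ and using that $\nabla F$ acts row-wise together with $\varLambda_k\left( \mathbf{I}_M-\varLambda_k \right) =0$, exactly as in the paper.
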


\begin{proof}
By~(\ref{Lab_Gamma_k}), (\ref{Lab_R_k}), we have
\begin{equation*}
\begin{aligned}
X_{k+1}&=\left( \mathbf{I}_M-\varLambda _{k+1} \right) X_k+\varLambda _{k+1}\left( W_k\otimes \mathbf{1}\mathbf{1}^T \right) \frac{\varLambda _k}{b_k}\left( X_k-\alpha Y_k \right) 
\\
&=\left( \mathbf{I}_M-\varLambda _{k+1} \right) X_k+\varLambda _{k+1}\left( W_k\otimes \mathbf{1}_m \right) \left( \mathbf{I}_n\otimes \mathbf{1}_m^T \right)  \frac{\varLambda _k}{b_k}\left(X_k-\alpha Y_k \right) 
\\
&=\left( \mathbf{I}_M-\varLambda _{k+1} \right) X_k+\varLambda _{k+1}\left( \hat{X}_{k+1}\otimes \mathbf{1}_m \right),
\end{aligned}
\end{equation*}
where in the second equality we have used the property of Kronecker product that $\left( A\otimes B \right) \left( C\otimes D \right) =AC\otimes BD$. In what follows, by noticing that $\varLambda _k\nabla F\left( X_k \right)$ represents selecting the rows of $\nabla F\left( X_k \right)$ corresponding to the randomly selected samples at iteration $k$ by $\Lambda_k$, we have
\begin{equation*}
\begin{aligned}
\nabla F\left( X_k \right) &=\nabla F\left( \left( \mathbf{I}_M-\varLambda _k \right) X_{k-1}+\varLambda _k\left( \hat{X}_k\otimes \mathbf{1}_m \right) \right) 
\\
&=\left( \mathbf{I}_M-\varLambda _k+\varLambda _k \right) \nabla F\left( \left( \mathbf{I}_M-\varLambda _k \right) X_{k-1}+\varLambda _k\left( \hat{X}_k\otimes \mathbf{1}_m \right) \right) 
\\
&=\varLambda _k\nabla F\left( \hat{X}_k\otimes \mathbf{1}_m \right) +\left( \mathbf{I}_M-\varLambda _k \right) \nabla F\left( X_{k-1} \right) ,
\end{aligned}
\end{equation*}
which completes the proof.
\end{proof}

Lemma \ref{Lem_X_by_RW} illustrates the update rule of decision variables over the augmented graph, that is, only the sampled nodes (samples) perform update, while other nodes that are not sampled remain unchanged.

\begin{Lem}\label{Expt_gradient}
Suppose Assumption \ref{Ass_uniformly_sampling_without_replacement} and \ref{Ass_algoirthm} hold. Then we have for all $k>0$
\begin{equation}
\begin{aligned}
\mathbb{E}\left[ \bar{y}_k|\mathcal{F}_{k} \right] =r\nabla f\left( \bar{x}_k \right) +\left( 1-r \right) g_k,
\end{aligned}
\end{equation}
where
\begin{equation}
\begin{aligned}
g_k=\frac{\mathbf{1}_{M}^{T}}{M}\nabla F\left( \hat{X}_k\otimes \mathbf{1}_m \right),
\end{aligned}
\end{equation}
and $r=P\left( W_k=\mathbf{J}_m \right)$ denotes the probability of performing global averaging at each iteration.
\end{Lem}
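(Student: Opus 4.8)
The plan is to evaluate $\mathbb{E}[\bar y_k \mid \mathcal F_k]$ by first writing $\bar y_k$ as a device-wise average of individual sampled gradients and then integrating out the two independent sources of randomness that govern iteration $k$: the mini-batch drawn at each device (Assumption~\ref{Ass_uniformly_sampling_without_replacement}) and the mixing matrix $W_k\in\{W,\mathbf J_n\}$ (Assumption~\ref{Ass_algoirthm}).

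First I would reduce $\bar y_k$ to a sample-wise form. From $\bar y_k=\frac{\mathbf 1_n^T}{n}S_kY_k$ and $S_k=(\mathbf I_n\otimes\mathbf 1_m^T)\frac{\varLambda_k}{b_k}$, the identity $\frac{\mathbf 1_n^T}{n}(\mathbf I_n\otimes\mathbf 1_m^T)=\frac1n\mathbf 1_M^T$ gives $\bar y_k=\frac{1}{nb_k}\mathbf 1_M^T\varLambda_kY_k$, i.e.\ the average of the rows of $Y_k$ over the currently sampled nodes. To convert these rows into gradients I would use the doubly-stochastic (fixed-sum) property \eqref{Eq_fixed_sum_Y} to telescope the gradient-tracking contributions, and Lemma~\ref{Lem_X_by_RW}, which shows that the rows selected by $\varLambda_k$ carry exactly the device-consensus iterate $\hat x_{i,k}$; hence $\varLambda_kY_k$ reduces, after the tracking terms cancel in expectation, to $\varLambda_k\nabla F(\hat X_k\otimes\mathbf 1_m)$. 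This expresses $\bar y_k$ as $\frac1n\sum_{i=1}^n$ of the device-$i$ sampled average of $\nabla f_{ij}(\hat x_{i,k})$.

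Next I would take the conditional expectation in two stages. By uniform sampling without replacement (Assumption~\ref{Ass_uniformly_sampling_without_replacement}), the device-$i$ sampled average of $\nabla f_{ij}(\hat x_{i,k})$ is unbiased for $\frac1m\sum_{j=1}^m\nabla f_{ij}(\hat x_{i,k})=\nabla f_i(\hat x_{i,k})$, so averaging over devices yields $g_k=\frac1n\sum_i\nabla f_i(\hat x_{i,k})$. Then I would condition on the mixing choice (Assumption~\ref{Ass_algoirthm}): on the event $\{W_k=\mathbf J_n\}$, of probability $r$, global averaging forces all device iterates to coincide with $\bar x_k$, so the device-averaged local gradients collapse to $\frac1n\sum_i\nabla f_i(\bar x_k)=\nabla f(\bar x_k)$; on the complementary event, of probability $1-r$, the evaluation points remain the local $\hat x_{i,k}$ and the contribution is $g_k$. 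Combining the two cases with weights $r$ and $1-r$ produces $r\nabla f(\bar x_k)+(1-r)g_k$.

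The main obstacle is the bookkeeping created by the coupling between the sampling matrix $\varLambda_k$ and the tracking variable $Y_k$: since $Y_k$ is itself assembled from $\nabla F(X_k)$ on the sampled rows, one cannot simply pull $\mathbb E[\varLambda_k]$ through the product, and the Kronecker identity $(A\otimes B)(C\otimes D)=AC\otimes BD$ must be applied carefully to disentangle the intra-device averaging (carried by $\mathbf 1_m^T$ and $V_k$) from the inter-device averaging (carried by $W_k$), so that \eqref{Eq_fixed_sum_Y} applies cleanly and the delayed tracking terms vanish in expectation. Handling this cancellation, together with the case split over $W_k$, is the step that requires the most care.
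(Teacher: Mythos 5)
Your proposal is correct and follows essentially the same route as the paper's proof: expand $\bar{y}_k=\frac{\mathbf{1}_n^T}{n}S_kY_k$ via the $Y_k$-recursion, cancel the tracking/delayed terms in expectation using the doubly-stochastic fixed-sum identity \eqref{Eq_fixed_sum_Y} together with $\mathbb{E}[\frac{\mathbf{1}_n^T}{n}S_k]=\frac{\mathbf{1}_M^T}{M}$, reduce $S_k\nabla F(X_k)$ to $S_k\nabla F(\hat{X}_k\otimes\mathbf{1}_m)$ via Lemma~\ref{Lem_X_by_RW} and $\varLambda_k(\mathbf{I}_M-\varLambda_k)=0$, and finish with the law of total probability over $W_k$. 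The only (immaterial) difference is that you take the sampling expectation before the case split on $W_k$, whereas the paper splits first and then integrates out $S_k$ within each event.
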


\begin{proof}
By the definition of $\bar{y}_k$ in (\ref{Def_x_y_bar}),
we have $\bar{y}_k=\frac{\mathbf{1}_{n}^{T}}{n}S_k Y_k,$ and
\begin{equation}
\begin{aligned}
\mathbb{E}\left[ \bar{y}_k|\mathcal{F}_k \right] &\overset{\left( \ref{recu_Y} \right)}{=}\mathbb{E}\left[ \frac{\mathbf{1}_{n}^{T}}{n}S_k\left( C_{k-1}Y_{k-1}+\nabla F\left( X_k \right) -\nabla F\left( X_{k-1} \right) \right) |\mathcal{F}_k \right] 
\\
&{=}\mathbb{E}\left[ \left( \frac{\mathbf{1}_{M}^{T}}{M}Y_{k-1}+\frac{\mathbf{1}_{n}^{T}}{n}S_k\nabla F\left( X_k \right) -\frac{\mathbf{1}_{n}^{T}}{n}S_k\nabla F\left( X_{k-1} \right) \right) |\mathcal{F}_k \right] 
\\
&\overset{\left( \ref{Eq_fixed_sum_Y} \right)}{=}\mathbb{E}\left[ \left( \frac{\mathbf{1}_{M}^{T}}{M}\nabla F\left( X_{k-1} \right) +\frac{\mathbf{1}_{n}^{T}}{n}S_k\nabla F\left( X_k \right) -\frac{\mathbf{1}_{M}^{T}}{M}\nabla F\left( X_{k-1} \right) \right) |\mathcal{F}_k \right] 
\\
&=\mathbb{E}\left[ \frac{\mathbf{1}_{n}^{T}}{n}S_k\nabla F\left( X_k \right) |\mathcal{F}_k \right],
\end{aligned}
\end{equation}
where in the second equality we have used the fact that $\mathbb{E}\left[ \frac{\mathbf{1}_{n}^{T}}{n}S_k \right] =\frac{\mathbf{1}_{n}^{T}}{n}\left( \mathbf{I}_n\otimes \frac{\mathbf{1}_{m}^{T}}{m} \right) =\frac{\mathbf{1}_{M}^{T}}{M}$ due to Assumption \ref{Ass_uniformly_sampling_without_replacement} and $C_{k-1}$ is doubly-stochastic. Then, by the Eq.~(\ref{Eq_nabla_F_by_X_hat}) in Lemma \ref{Lem_X_by_RW}, we obtain
\begin{equation}
\begin{aligned}
\mathbb{E}\left[ \bar{y}_k|\mathcal{F}_k \right] &=\mathbb{E}\left[ \frac{\mathbf{1}_{n}^{T}}{n}S_k\left[ \nabla F\left( \varLambda _k\left( \hat{X}_k\otimes \mathbf{1}_m \right) +\left( \mathbf{I}_M-\varLambda _k \right) X_{k-1} \right) \right] |\mathcal{F}_k \right] 
\\
&=\mathbb{E}\left[ \frac{\mathbf{1}_{n}^{T}}{n}S_k\left[ \varLambda _k\nabla F\left( \hat{X}_k\otimes \mathbf{1}_m \right) +\left( \mathbf{I}_M-\varLambda _k \right) \nabla F\left( X_{k-1} \right) \right] |\mathcal{F}_k \right] 
\\
&=\mathbb{E}\left[ \frac{\mathbf{1}_{n}^{T}}{n}S_k\nabla F\left( \hat{X}_k\otimes \mathbf{1}_m \right) |\mathcal{F}_k \right] ,
\end{aligned}
\end{equation}
where in the last equality we have used the fact that $\varLambda _k\left( \mathbf{I}_M-\varLambda _k \right) =0$. 
Therefore, by the law of total probability, we obtain 
\begin{equation*}
\begin{aligned}
\mathbb{E}\left[ \bar{y}_k|\mathcal{F}_k \right] &=r\mathbb{E}\left[ \frac{\mathbf{1}_{n}^{T}}{n}S_k\nabla F\left( \hat{X}_k\otimes \mathbf{1}_m \right) |\mathcal{F}_k, W_k=\mathbf{J}_n \right] +\left( 1-r \right) \mathbb{E}\left[ \frac{\mathbf{1}_{n}^{T}}{n}S_k\nabla F\left( \hat{X}_k\otimes \mathbf{1}_m \right) |\mathcal{F}_k, W_k=W \right] 
\\
&=r\nabla f\left( \bar{x}_k \right) +\left( 1-r \right) \frac{\mathbf{1}_{M}^{T}}{M}\nabla F\left( \hat{X}_k\otimes \mathbf{1}_m \right) ,
\end{aligned}
\end{equation*}
which completes the proof.
\end{proof}

Lemma \ref{Expt_gradient} shows that the expectation of $\bar{y}_k$ is a linear combination of the full gradient evaluated at the averaged decision variable and that evaluated at the decision variable of proxy nodes.

In the next lemma, we define an error term $\sigma_k$ (resembling internal variance) which can be bounded by the variance reduction errors, and establish the upper bounds under three particular settings discussed in the main text.

\begin{Lem}\label{Lem_sigma_k}
Suppose Assumption \ref{Ass_smoothness}-\ref{Ass_algoirthm} hold. Denote
\begin{equation}\label{Sigma_k}
\sigma _k:=n\mathbb{E}\left[ \left\| \frac{\mathbf{1}_{n}^{T}}{n}S_k\nabla F\left( \mathbf{1}_Mx^* \right) +\frac{\mathbf{1}_{n}^{T}}{n}S_kC_{t_k}Y_{t_k}-\frac{\mathbf{1}_{n}^{T}}{n}S_k\nabla F\left( X_{t_k} \right) -\nabla f\left( x^* \right) \right\| ^2|\mathcal{F}_k \right], 
\end{equation}
where $t_{k}<k$ represents the latest iteration before $k$ that $C_{t_{k}}\ne \mathbf{I}_M$.
Then we have, for all $k>0$, if we choose $C_k\equiv \mathbf{I}_M$ and $b_k = b$, 
\begin{equation}\label{Bound_sigma_k_1}
\begin{aligned}
\sigma _k \leqslant \frac{\sigma ^*}{{b}},
\end{aligned}
\end{equation}

else if $C_k=\mathbf{I}_n\otimes V_k$ with $p>0$ holds,
\begin{equation}\label{Bound_sigma_k_2}
\begin{aligned}
\sigma _k\leqslant \frac{1-p}{M}\mathbb{E}\left[ \left\| \nabla F\left( X_{t_{k-1}} \right) -\nabla F\left( \mathbf{1}_Mx^* \right) \right\| ^2|\mathcal{F}_k,V_{k-1}=\mathbf{I}_m \right] 
\\
+\frac{p}{M}\mathbb{E}\left[ \left\| \nabla F\left( X_{k-1} \right) -\nabla F\left( \mathbf{1}_Mx^* \right) \right\| ^2|\mathcal{F}_k,V_{k-1}=\mathbf{J}_m \right],
\end{aligned}
\end{equation}
else if $C_k=W_k\otimes \mathbf{J}_m$ holds, 
\begin{equation}\label{Bound_sigma_k_3}
\begin{aligned}
\sigma _k\leqslant \frac{1}{M}\mathbb{E}\left[ \left\| \nabla F\left( X_{k-1} \right) -\nabla F\left( \mathbf{1}_Mx^* \right) \right\| ^2|\mathcal{F}_k \right].
\end{aligned}
\end{equation}
\end{Lem}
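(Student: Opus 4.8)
The plan is to recognize $\sigma_k$ as $n$ times the conditional variance of an unbiased, sampling-based estimator of $\nabla f(x^*)$, and then to bound that variance device by device using the finite-population sampling identity already exploited in the proof of Lemma~\ref{Lem_expectation}. Throughout I abbreviate $P_k:=\frac{\mathbf 1_n^T}{n}S_k$ and $U:=\nabla F(\mathbf 1_M x^*)+C_{t_k}Y_{t_k}-\nabla F(X_{t_k})$, noting that $U$ is $\mathcal F_k$-measurable since $t_k<k$. The first step is to show unbiasedness, $\mathbb E[P_k U\mid\mathcal F_k]=\nabla f(x^*)$. This combines three facts: $\mathbb E[\frac{\mathbf 1_n^T}{n}S_k\mid\mathcal F_k]=\frac{\mathbf 1_M^T}{M}$ (uniform sampling, as in the proof of Lemma~\ref{Expt_gradient}); double-stochasticity of $C_{t_k}$, giving $\frac{\mathbf 1_M^T}{M}C_{t_k}=\frac{\mathbf 1_M^T}{M}$; and the invariance $\frac{\mathbf 1_M^T}{M}Y_{t_k}=\frac{\mathbf 1_M^T}{M}\nabla F(X_{t_k})$ from \eqref{Eq_fixed_sum_Y}, which cancels the last two summands of $\frac{\mathbf 1_M^T}{M}U$ and leaves $\frac{\mathbf 1_M^T}{M}\nabla F(\mathbf 1_M x^*)=\nabla f(x^*)$. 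Consequently $\sigma_k=n\,\mathbb E[\|(P_k-\frac{\mathbf 1_M^T}{M})U\|^2\mid\mathcal F_k]$ is a pure sampling variance.

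Next I would exploit two structural observations. Since $P_k\mathbf 1_M=1$ and both $P_k$ and $\frac{\mathbf 1_M^T}{M}$ average each device's rows separately, $(P_k-\frac{\mathbf 1_M^T}{M})(\mathbf I_n\otimes\mathbf 1_m)C=0$ for every $C\in\mathbb R^{n\times d}$; hence $U$ may be replaced by its device-wise centered version $\tilde U$ (subtract each device's row-average) without changing $\sigma_k$. Because each device samples $b_k$ of its $m$ rows independently and without replacement, $(P_k-\frac{\mathbf 1_M^T}{M})\tilde U=\frac1n\sum_i D_i$ with the $D_i$ independent and mean-zero given $\mathcal F_k$, so $\sigma_k=\frac1n\sum_i\mathbb E[\|D_i\|^2\mid\mathcal F_k]$. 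The sampling-without-replacement variance identity (the same combinatorial computation underlying Lemma~\ref{Lem_expectation}) then gives $\mathbb E[\|D_i\|^2\mid\mathcal F_k,b_k]=\frac{m-b_k}{b_k m(m-1)}\sum_j\|\tilde U_{ij}\|^2\le\frac{1}{b_k m}\sum_j\|\tilde U_{ij}\|^2$, whence $\sigma_k\le\mathbb E[\frac{1}{M b_k}\|\tilde U\|^2\mid\mathcal F_k]$.

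Finally I would specialize $U$ and $\tilde U$ to the three regimes. When $C_k\equiv\mathbf I_M$, telescoping \eqref{recu_Y} (Lemma~\ref{Lem_Y_t}) reduces the tracking terms to $Y_k-\nabla F(X_k)=0$ under $Y_0=\nabla F(X_0)$, so $U=\nabla F(\mathbf 1_M x^*)$ and $\tilde U_{ij}=\nabla f_{ij}(x^*)-\nabla f_i(x^*)$; with $b_k=b$, Assumption~\ref{Ass_sampling} gives $\|\tilde U\|^2\le M\sigma^*$ and hence \eqref{Bound_sigma_k_1}, $\sigma_k\le\sigma^*/b$. For $C_k=\mathbf I_n\otimes V_k$ and $C_k=W_k\otimes\mathbf J_m$, the crucial point is that $C_{t_k}$ carries the factor $\mathbf J_m$, so $C_{t_k}Y_{t_k}$ is constant across the rows within each device; device-wise centering annihilates it, leaving $\tilde U$ as the centered version of $\nabla F(\mathbf 1_M x^*)-\nabla F(X_{t_k})$, so $\|\tilde U\|^2\le\|\nabla F(X_{t_k})-\nabla F(\mathbf 1_M x^*)\|^2$, and $b_k\ge1$ collapses $\frac{1}{Mb_k}$ to $\frac1M$. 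Identifying $t_k$ then closes each case: for $C_k=W_k\otimes\mathbf J_m$ every step performs variance reduction, so $t_k=k-1$ and we obtain \eqref{Bound_sigma_k_3}; for $C_k=\mathbf I_n\otimes V_k$, conditioning on $V_{k-1}$ gives $t_k=k-1$ with probability $p$ and $t_k=t_{k-1}$ with probability $1-p$, and the law of total probability yields \eqref{Bound_sigma_k_2}.

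The main obstacle I anticipate is not any single case but the sampling-variance bookkeeping in the second step: keeping the finite-population correction constant sharp, so that Case~1 retains the essential $1/b$ factor rather than the looser bound the coarse estimate $\rho(\mathbb E[P_k^T P_k])\le 1/M$ of Lemma~\ref{Lem_expectation} would produce, while simultaneously verifying that device-wise centering is exact and that $C_{t_k}Y_{t_k}$ is device-wise constant precisely when the $\mathbf J_m$ factor is present. Handling the random, $V_k$-correlated batch size $b_k$ in the two variance-reduction regimes, where the conditioning on $V_{k-1}$ that pins down $t_k$ must be interleaved with the expectation over the step-$k$ sampling, is the part most prone to error.
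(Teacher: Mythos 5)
Your proposal is correct and follows essentially the same route as the paper's proof: both interpret $\sigma_k$ as $n$ times the conditional variance of an unbiased sampling estimator (unbiasedness coming from $\mathbb{E}[S_k]=\mathbf{I}_n\otimes\mathbf{1}_m^T/m$, the column stochasticity of $C_{t_k}$, and the invariance \eqref{Eq_fixed_sum_Y}), decompose it device-by-device using independence across devices and the zero mean of each device's term, and close the last two cases by pinning down $t_k$ via the law of total probability over $V_{k-1}$. The only differences are organizational and do not change the substance: the paper argues case-by-case (a direct per-device computation for $C_k\equiv\mathbf{I}_M$; for the other two cases, "variance $\leqslant$ second moment" followed by the spectral bound $\rho\left(\mathbb{E}\left[S_k^TS_k\right]\right)\leqslant 1/m$ of Lemma~\ref{Lem_expectation}), whereas you first derive a single master bound via exact device-wise centering and the without-replacement correction $\tfrac{m-b_k}{b_k m(m-1)}$ and then specialize, which is equally valid and lands on the same constants.
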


\begin{proof}
By the definition of $\sigma_k$ and $t_k$, for $C_k\equiv \mathbf{I}_M$ in the first case, which implies $t_k=0$, we obtain by noticing $Y_0=\nabla F\left( X_0 \right)$ and $b_k=b$ that
\begin{equation}
\begin{aligned}
&\sigma _k=\frac{1}{n}\mathbb{E}\left[ \left\| \mathbf{1}_{n}^{T}S_k\nabla F\left( \mathbf{1}_Mx^* \right) -\mathbf{1}_{n}^{T}\left( \mathbf{I}_n\otimes \frac{\mathbf{1}_{m}^{T}}{m} \right) \nabla F\left( \mathbf{1}_Mx^* \right) \right\| ^2|\mathcal{F}_k \right] 
\\
&=\frac{1}{n}\sum_{i=1}^n{\mathbb{E}\left( \left\| \frac{1}{b_k}\sum_{j\in \xi _{i,k}}{\left( \nabla f_{ij}\left( x^* \right) -\nabla f_i\left( x^* \right) \right)} \right\| ^2|\mathcal{F}_k \right)}
\\
&\leqslant \frac{1}{n}\sum_{i=1}^n{\mathbb{E}\left( \frac{1}{b_{k}^{2}}\sum_{j\in \xi _{i,k}}{\left\| \nabla f_{ij}\left( x^* \right) -\nabla f_i\left( x^* \right) \right\| ^2}|\mathcal{F}_k \right)}
\leqslant \frac{\sigma ^*}{{b}},
\end{aligned}
\end{equation}
where we used the fact that $\mathbb{E}\left[ \nabla f_{ij}\left( x^* \right) -\nabla f_i\left( x^* \right) \right] =0, \,j\in \xi _{i}$.

For the case that $C_k=\mathbf{I}_n\otimes V_k$ with $p>0$, 
recalling that $C_k$ is column-stochastic, then we have
\begin{equation}
\begin{aligned}
\sigma _k&=n\mathbb{E}\left[ \left\| \frac{\mathbf{1}_{n}^{T}}{n}\left( \left( \mathbf{I}_n\otimes \frac{\mathbf{1}_{m}^{T}}{m} \right) \nabla F\left( X_{t_k} \right) -S_k\nabla F\left( X_{t_k} \right) +S_k\nabla F\left( \mathbf{1}_Mx^* \right) -\left( \mathbf{I}_n\otimes \frac{\mathbf{1}_{m}^{T}}{m} \right) \nabla F\left( \mathbf{1}_Mx^* \right) \right) \right\| ^2|\mathcal{F}_k \right] 
\\
&=\frac{1}{n}\mathbb{E}\left[ \left\| \left( \mathbf{I}_n\otimes \frac{\mathbf{1}_{m}^{T}}{m} \right) \nabla F\left( X_{t_k} \right) -S_k\nabla F\left( X_{t_k} \right) +S_k\nabla F\left( \mathbf{1}_Mx^* \right) -\left( \mathbf{I}_n\otimes \frac{\mathbf{1}_{m}^{T}}{m} \right) \nabla F\left( \mathbf{1}_Mx^* \right) \right\| ^2|\mathcal{F}_k \right] ,
\end{aligned}
\end{equation}
where in the second equality we used $\mathbb{E}\left[ S_k \right] =\mathbf{I}_n\otimes \frac{\mathbf{1}_{m}^{T}}{m} $ induced by Assumption \ref{Ass_uniformly_sampling_without_replacement}. Furthermore, noticing the fact that $\mathbb{E}\left[ \left\| X-\mathbb{E}\left[ X \right] \right\| ^2 \right] \leqslant \mathbb{E}\left[ \left\| X \right\| ^2 \right]$, we get
\begin{equation}
\begin{aligned}
\sigma _k&\leqslant \frac{1}{n}\mathbb{E}\left[ \left\| S_k\nabla F\left( X_{t_k} \right) -S_k\nabla F\left( \mathbf{1}_Mx^* \right) \right\| ^2|\mathcal{F}_k \right] 
\\
&\leqslant \frac{1-p}{M}\mathbb{E}\left[ \left\| \nabla F\left( X_{t_{k-1}} \right) -\nabla F\left( \mathbf{1}_Mx^* \right) \right\| ^2|\mathcal{F}_k,V_{k-1}=\mathbf{I}_m \right] 
\\
&+\frac{p}{M}\mathbb{E}\left[ \left\| \nabla F\left( X_{k-1} \right) -\nabla F\left( \mathbf{1}_Mx^* \right) \right\| ^2|\mathcal{F}_k,V_{k-1}=\mathbf{J}_m \right],
\end{aligned}
\end{equation}
where in the last inequality we have used the law of total probability.

For $C_k=W_k\otimes \mathbf{J}_m$ in the third case, we have $p=1$ and
\begin{equation*}
\begin{aligned}
\sigma _k\leqslant \frac{1}{M}\mathbb{E}\left[ \left\| \nabla F\left( X_{k-1} \right) -\nabla F\left( \mathbf{1}_Mx^* \right) \right\| ^2|\mathcal{F}_k \right].
\end{aligned}
\end{equation*}
which completes the proof.
\end{proof}

Lemma \ref{Lem_sigma_k} shows that the error term $\sigma_k$ is related to the optimization error  when the variance-reduction methods are adopted (see Lemma \ref{Lem_VR_error} and \ref{Lem_VR_error_delay}), otherwise, there will be a fixed upper bound on $\sigma_k$ as defined in Assumption \ref{Ass_sampling}.

Now we are going to bound each error term in the Lyapunov function (\ref{Lyapunov_func}) and then  combine them together to obtain a recursion of the constructed Lyapunov function with proper choice of $c_0-c_4$.

\subsection{The Recursion of Optimality Gap}
\begin{Lem}\label{Lem_opt_gap}
Suppose Assumption \ref{Ass_smoothness}-\ref{Ass_algoirthm} hold. Then, we have for all $k>0$,
\begin{equation}\label{Cons_error_X}
\begin{aligned}
\mathbb{E}\left[ \left\| \bar{x}_{k+1}-x^* \right\| ^2|\mathcal{F}_k \right] &\leqslant \left( 1-\alpha \mu \right)  \left\| \bar{x}_k-x^* \right\| ^2 -\left( 2\alpha -8\alpha ^2L \right) \left( f\left( \bar{x}_k \right) -f\left( x^* \right) \right) 
\\
&+\left( 1-r \right) \frac{\left( 4\alpha ^2L^2+\alpha L \right)}{n}\left\| \hat{X}_k-\mathbf{1}_n\bar{x}_k \right\| ^2 +\frac{2\alpha ^2\sigma _k}{n}.
\end{aligned}
\end{equation}
\end{Lem}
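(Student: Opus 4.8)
The idea is to project the matrix recursion onto the network average, turning it into a one-dimensional strongly convex SGD recursion for $\bar x_k$, and then to control the conditional first and second moments of the effective descent direction $\bar y_k$ using the structural lemmas already proved. First I would derive the averaged dynamics: left-multiplying $X_{k+1}=R_kX_k-\alpha\varGamma_kY_k$ by $\frac{\mathbf 1_n^T}{n}S_{k+1}$ and invoking Lemma~\ref{Lem_recover}, namely $S_{k+1}R_k=S_{k+1}\varGamma_k=W_kS_k$, together with the double stochasticity $\mathbf 1_n^TW_k=\mathbf 1_n^T$ from Assumption~\ref{Ass_algoirthm}, the mixing matrix cancels and one gets the scalar recursion $\bar x_{k+1}=\bar x_k-\alpha\bar y_k$. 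Expanding the square yields $\|\bar x_{k+1}-x^*\|^2=\|\bar x_k-x^*\|^2-2\alpha\langle\bar y_k,\bar x_k-x^*\rangle+\alpha^2\|\bar y_k\|^2$, so that after applying $\mathbb E[\,\cdot\,|\mathcal F_k]$ (with $\hat X_k,\bar x_k$ treated as $\mathcal F_k$-measurable and the step-$k$ sampling $\Lambda_k$ and mixing $W_k$ as the fresh randomness) the whole estimate reduces to a lower bound on the cross term and an upper bound on the second moment $\mathbb E[\|\bar y_k\|^2|\mathcal F_k]$.

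For the cross term I would feed in Lemma~\ref{Expt_gradient}, which gives $\mathbb E[\bar y_k|\mathcal F_k]=r\nabla f(\bar x_k)+(1-r)g_k$ with $g_k=\frac{\mathbf 1_M^T}{M}\nabla F(\hat X_k\otimes\mathbf 1_m)=\frac1n\sum_{i=1}^n\nabla f_i(\hat x_{i,k})$. To the $r$-part I apply $\mu$-strong convexity of $f$ (Assumption~\ref{Ass_smoothness}, using $\nabla f(x^*)=0$) to get $\langle\nabla f(\bar x_k),\bar x_k-x^*\rangle\ge f(\bar x_k)-f(x^*)+\frac\mu2\|\bar x_k-x^*\|^2$. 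For the $(1-r)$-part I lower-bound each $\langle\nabla f_i(\hat x_{i,k}),\bar x_k-x^*\rangle$ by splitting $\bar x_k-x^*=(\hat x_{i,k}-x^*)+(\bar x_k-\hat x_{i,k})$, applying strong convexity to the first summand and the descent lemma (smoothness) to the second, then averaging over $i$ and using Jensen's inequality $\frac1n\sum_i\|\hat x_{i,k}-x^*\|^2\ge\|\bar x_k-x^*\|^2$. Recombining the two pieces, the $\mu$-terms add to the full coefficient $-\alpha\mu\|\bar x_k-x^*\|^2$, the function values to $-2\alpha(f(\bar x_k)-f(x^*))$, and the smoothness slack produces exactly $(1-r)\frac{\alpha L}{n}\|\hat X_k-\mathbf 1_n\bar x_k\|^2$.

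The second moment is the core of the argument. I would split $\bar y_k=v_k+w_k$, where $v_k:=\frac{\mathbf 1_n^T}{n}S_k\nabla F(\mathbf 1_Mx^*)+\frac{\mathbf 1_n^T}{n}S_kC_{t_k}Y_{t_k}-\frac{\mathbf 1_n^T}{n}S_k\nabla F(X_{t_k})-\nabla f(x^*)$ is precisely the term whose conditional second moment equals $\sigma_k/n$ by the definition in Lemma~\ref{Lem_sigma_k}, and $w_k:=\frac{\mathbf 1_n^T}{n}S_k(\nabla F(\hat X_k\otimes\mathbf 1_m)-\nabla F(\mathbf 1_Mx^*))$; this identity follows from the delayed $Y$-recursion (Lemma~\ref{Lem_Y_t}) and $S_k\nabla F(X_k)=S_k\nabla F(\hat X_k\otimes\mathbf 1_m)$ (Lemma~\ref{Lem_X_by_RW}). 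Young's inequality $\|\bar y_k\|^2\le2\|v_k\|^2+2\|w_k\|^2$ contributes $\frac{2\alpha^2\sigma_k}{n}$. For $\mathbb E[\|w_k\|^2|\mathcal F_k]$ I would apply the spectral bound $\rho(\mathbb E[(\frac{\mathbf 1_n^T}{n}S_k)^T\frac{\mathbf 1_n^T}{n}S_k])\le\frac1M$ of Lemma~\ref{Lem_expectation} (valid because $\hat X_k$ is independent of $S_k$) to pass to $\frac1M\|\nabla F(\hat X_k\otimes\mathbf 1_m)-\nabla F(\mathbf 1_Mx^*)\|^2$, then insert $\pm\nabla F(\mathbf 1_M\bar x_k)$ and invoke averaged smoothness (Assumption~\ref{Expected smoothness}) device-by-device: the deviation piece is controlled by $L^2\|\hat x_{i,k}-\bar x_k\|^2$ and the remaining piece by $2L(f_i(\bar x_k)-f_i(x^*)-\langle\nabla f_i(x^*),\bar x_k-x^*\rangle)$, which averages (using $\nabla f(x^*)=0$) to $4L(f(\bar x_k)-f(x^*))+\frac{2L^2}{n}\|\hat X_k-\mathbf 1_n\bar x_k\|^2$. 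Splitting on $W_k$ as in Lemma~\ref{Expt_gradient}, so that the consensus contribution is absorbed when $W_k=\mathbf J_n$, attaches the factor $(1-r)$ and yields the coefficients $8\alpha^2L$ and $(1-r)\frac{4\alpha^2L^2}{n}$. Substituting the two bounds back into the expansion and collecting the $\|\bar x_k-x^*\|^2$, function-gap, consensus-error and variance terms reproduces the claim.

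I expect this second-moment estimate to be the main obstacle, for three intertwined reasons: isolating the fluctuation $v_k$ so that its conditional variance is exactly the quantity $\sigma_k/n$ of Lemma~\ref{Lem_sigma_k}; converting the gradient-deviation norm $\|w_k\|$ into a function-gap part plus a consensus-error part through averaged smoothness rather than plain $L$-smoothness; and carrying the $(1-r)$ weighting correctly through the conditioning on $W_k$ so that it lands on the consensus term but not the function gap. The delicate supporting point throughout is the bookkeeping of $\mathcal F_k$-measurability, namely that $\hat X_k$ (hence $\bar x_k$ and the consensus error) is determined by the iterate recursion $\hat X_k=W_{k-1}(\hat X_{k-1}-\alpha\hat Y_{k-1})$ and is therefore independent of the fresh sampling matrix $S_k$, which is exactly what makes the spectral bound of Lemma~\ref{Lem_expectation} applicable.
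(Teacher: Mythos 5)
Your proposal is correct and follows essentially the same route as the paper's own proof: the averaged recursion $\bar{x}_{k+1}=\bar{x}_k-\alpha\bar{y}_k$, the square expansion, the cross-term bound via Lemma~\ref{Expt_gradient} with the $r$/$(1-r)$ split (per-device strong convexity plus smoothness and Jensen), and the second-moment bound that isolates the fluctuation with conditional variance $\sigma_k/n$ by Young's inequality and controls the remainder by inserting $\pm\nabla F\left( \mathbf{1}_M\bar{x}_k \right)$ and applying averaged smoothness. Your slight reorganization (splitting $\bar{y}_k=v_k+w_k$ before applying the spectral bound of Lemma~\ref{Lem_expectation}, rather than expanding one large norm into the terms $S_1, S_2$ as the paper does) is immaterial, and even your somewhat informal attachment of the $(1-r)$ factor to the consensus coefficient matches the level of rigor in the paper's own final display.
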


\begin{proof}
Using (\ref{Def_x_y_bar}), we have
$$
\bar{x}_{k+1}=\bar{x}_k-\alpha \bar{y}_k,
$$
which implies that 
\begin{equation*}
\begin{aligned}
\left\| \bar{x}_{k+1}-x^* \right\| ^2=\left\| \bar{x}_k-x^* \right\| ^2-2\alpha \left< \bar{y}_k, \bar{x}_k-x^* \right> +\left\| \bar{y}_k \right\| ^2.
\end{aligned}
\end{equation*}
Then by lemma \ref{Lem_X_by_RW} and \ref{Expt_gradient}, we get
\begin{equation*}
\begin{aligned}
&\mathbb{E}\left[ \left\| \bar{x}_{k+1}-x^* \right\|^{2}|\mathcal{F}_{k} \right]
\\
&\leqslant \left\| \bar{x}_k-x^* \right\|^{2}-2\alpha \mathbb{E}\left[ \left< \bar{y}_k, \bar{x}_k-x^* \right> |\mathcal{F}_{k} \right] +\alpha ^2\mathbb{E}\left[ \left\| \bar{y}_k \right\|^{2}|\mathcal{F}_{k} \right]
\\
&=\left\| \bar{x}_k-x^* \right\|^{2}-2\alpha \left< r\nabla f\left( \bar{x}_k \right) +\left( 1-r \right) g_k,\bar{x}_k-x^* \right> +\alpha ^2\mathbb{E}\left[ \left\| \bar{y}_k \right\|^{2}|\mathcal{F}_{k} \right]
\\
&\overset{(a)}{\leqslant}\left( 1-\alpha \mu r \right) \left\| \bar{x}_k-x^* \right\|^{2}-2\alpha r\left( f\left( \bar{x}_k \right) -f\left( x^* \right) \right)
-2\alpha \left( 1-r \right) \left< g_k ,\bar{x}_k-x^* \right> +\alpha ^2\mathbb{E}\left[ \left\| \bar{y}_k \right\|^{2}|\mathcal{F}_{k} \right]
\\
&\overset{(b)}{\leqslant}\left( 1-\alpha \mu \right) \left\| \bar{x}_k-x^* \right\|^{2}-2\alpha \left( f\left( \bar{x}_k \right) -f\left( x^* \right) \right) +\frac{\alpha L\left( 1-r \right)}{n}\left\| \hat{X}_k-\mathbf{1}_n\bar{x}_k \right\| ^2+\alpha ^2\mathbb{E}\left[ \left\| \bar{y}_k \right\|^{2}|\mathcal{F}_{k} \right],
\end{aligned}
\end{equation*}
where in the inequality (a) we have used the fact that $f$ is $\mu$-strongly convex:
\begin{equation*}
\begin{aligned}
-2\alpha \left< r\nabla f\left( \bar{x}_k \right) ,\bar{x}_k-x^* \right> \,\,\leqslant -2r\alpha \left(f\left( \bar{x}_k \right) -f\left( x^* \right) \right) -r\alpha \mu \left\| \bar{x}_k-x^* \right\| ^2,
\end{aligned}
\end{equation*}
and for the inequality (b), noticing that {$\hat{X}_k:=S_kX_k=\left[ \hat{x}_{1,k}^{T},\hat{x}_{2,k}^{T},\cdots ,\hat{x}_{n,k}^{T} \right] ^T\in \mathbb{R}^{n\times d}$}, and the fact that each $f_i$ is $\mu$-strongly convex and $L$-smooth, we have:
\begin{equation*}
\begin{aligned}
&-2\alpha (1-r)\left< g_k, \bar{x}_k-x^* \right> 
\\
&=-2\alpha \left( 1-r \right) \left< \frac{\mathbf{1}_n^T}{n}\left( \mathbf{I}_n\otimes \frac{\mathbf{1}_m^T}{m} \right) \nabla F\left( \hat{X}_k\otimes \mathbf{1}_m \right) ,\bar{x}_k-x^* \right> 
\\
&=-2\alpha \left( 1-r \right) \frac{1}{n}\sum_{i=1}^n{\left< \nabla f_i\left( \hat{x}_{i,k} \right) ,\bar{x}_k-x^* \right>}
\\
&=-2\alpha \left( 1-r \right) \frac{1}{n}\sum_{i=1}^n{\left( \left< \nabla f_i\left( \hat{x}_{i,k} \right) ,\hat{x}_{i,k}-x^* \right> +\left< \nabla f_i\left( \hat{x}_{i,k} \right) ,\bar{x}_k-\hat{x}_{i,k} \right> \right)}
\\
&\leqslant 2\alpha \left( 1-r \right) \frac{1}{n}\sum_{i=1}^n{\left( f_i\left( x^* \right) -f_i\left( \hat{x}_{i,k} \right) -\frac{\mu}{2}\left\| \hat{x}_{i,k}-x^* \right\| ^2 \right)}
\\
&+2\alpha \left( 1-r \right) \frac{1}{n}\sum_{i=1}^n{\left( f_i\left( \hat{x}_{i,k} \right) -f_i\left( \bar{x}_k \right) +\frac{L}{2}\left\| \hat{x}_{i,k}-\bar{x}_k \right\| ^2 \right)}
\\
&=-2\alpha \left( 1-r \right) \left[ f\left( \bar{x}_k \right) -f\left( x^* \right) \right] -\alpha \mu \left( 1-r \right) \left\| \bar{x}_k-x^* \right\| +\frac{\alpha L\left( 1-r \right)}{n}\left\| \hat{X}_k-\mathbf{1}_n\bar{x}_k \right\| ^2.
\end{aligned}
\end{equation*}
Further, we can bound the last term in the inequality (b) (note that $\nabla f(x^*)=0$)
\begin{equation}\label{Eq_y_bar}
\begin{aligned}
&\mathbb{E}\left\| \bar{y}_k-\nabla f\left( x^* \right) |\mathcal{F}_k \right\|^{2}
\\
&\overset{\left( \ref{Def_x_y_bar} \right)}{=}\mathbb{E}\left[ \left\| \frac{\mathbf{1}_{n}^{T}}{n}S_k\nabla F\left( X_k \right) +\frac{\mathbf{1}_{n}^{T}}{n}S_kC_{k-1}Y_{k-1}-\frac{\mathbf{1}_{n}^{T}}{n}S_k\nabla F\left( X_{k-1} \right) -\nabla f\left( x^* \right) \right\|^{2}|\mathcal{F}_k \right] 
\\
&\leqslant 2\mathbb{E}\left[ \left\| \frac{\mathbf{1}_{n}^{T}}{n}S_k\left[ \nabla F\left( X_k \right) -\nabla F\left( \mathbf{1}_M\bar{x}_k \right) +\nabla F\left( \mathbf{1}_M\bar{x}_k \right) -\nabla F\left( \mathbf{1}_Mx^* \right) \right] \right\|^{2}|\mathcal{F}_k \right] 
\\
&+2\underset{:=\sigma _k /n}{\underbrace{\mathbb{E}\left[ \left\| \frac{\mathbf{1}_{n}^{T}}{n}S_k\nabla F\left( \mathbf{1}_Mx^* \right) +\frac{\mathbf{1}_{n}^{T}}{n}S_kC_{t_k}Y_{t_k}-\frac{\mathbf{1}_{n}^{T}}{n}S_k\nabla F\left( X_{t_k} \right) -\nabla f\left( x^* \right) \right\| ^2|\mathcal{F}_k \right] }}
\\
&\leqslant4\underset{S_1}{\underbrace{\mathbb{E}\left[ \left\| \frac{\mathbf{1}_{n}^{T}}{n}S_k\left( \nabla F\left( X_k \right) -\nabla F\left( \mathbf{1}_M\bar{x}_k \right) \right) \right\| ^2|\mathcal{F}_k \right] }}
\\
&+4\underset{S_2}{\underbrace{\mathbb{E}\left[ \left\| \frac{\mathbf{1}_{n}^{T}}{n}S_k\left( \nabla F\left( \mathbf{1}_M\bar{x}_k \right) -\nabla F\left( \mathbf{1}_Mx^* \right) \right) \right\| ^2|\mathcal{F}_k \right] }}+\frac{2\sigma _k}{n}.
\end{aligned}
\end{equation}
In what follows, under Assumption \ref{Expected smoothness}, we bound the terms $S_1$ and $S_2$ respectively. For $S_1$, we have
\begin{equation}\label{Eq_s_1}
\begin{aligned}
&\mathbb{E}\left[ \left\| \frac{\mathbf{1}_{n}^{T}}{n}S_k\left( \nabla F\left( X_k \right) -\nabla F\left( \mathbf{1}_M\bar{x}_k \right) \right) \right\| ^2|\mathcal{F}_k \right] 
\\
&\overset{(\ref{Eq_X_by_X_hat})}{=}\mathbb{E}\left[ \left\| \frac{\mathbf{1}_{n}^{T}}{n}S_k\left( \nabla F\left( \hat{X}_k\otimes \mathbf{1}_m \right) -\nabla F\left( \mathbf{1}_M\bar{x}_k \right) \right) \right\| ^2|\mathcal{F}_k \right] 
\\
&\leqslant \frac{1}{n}\sum_{i=1}^n{\frac{1}{m}\sum_{j=1}^m{\left\| \nabla f_{ij}\left( \hat{x}_{i,k} \right) -\nabla f_{ij}\left( \bar{x}_k \right) \right\| ^2}}\overset{(\ref{Eq_Expe_smooth})}{\leqslant} \frac{L^2}{n}\left\| \hat{X}_k-\mathbf{1}_n\bar{x}_k \right\| ^2,
\end{aligned}
\end{equation}
and for $S_2$, we have
\begin{equation}\label{Eq_s_2}
\begin{aligned}
&\mathbb{E}\left[ \left\| \frac{\mathbf{1}_{n}^{T}}{n}S_k\left( \nabla F\left( \mathbf{1}_M\bar{x}_k \right) -\nabla F\left( \mathbf{1}_Mx^* \right) \right) \right\| ^2|\mathcal{F}_k \right] 
\\
&\leqslant \frac{1}{n}\sum_{i=1}^n{\frac{1}{m}\sum_{j=1}^m{\left\| \nabla f_{ij}\left( \bar{x}_k \right) -\nabla f_{ij}\left( x^* \right) \right\| ^2}}\overset{(\ref{Eq_Expe_smooth})}{\leqslant} 2L\left( f\left( \bar{x}_k \right) -f\left( x^* \right) \right).
\end{aligned}
\end{equation}
Then, combining \eqref{Eq_y_bar}, \eqref{Eq_s_1} and \eqref{Eq_s_2}, we get
\begin{equation}
\begin{aligned}
\mathbb{E}\left\| \bar{y}_k-\nabla f\left( x^* \right) |\mathcal{F}_k \right\| ^2\leqslant 4\left( 1-r \right) \frac{L^2}{n}\rho _W\left\| \hat{X}_k-\mathbf{1}_n\bar{x}_k \right\| ^2+8L\left( f\left( \bar{x}_k \right) -f\left( x^* \right) \right) +\frac{2\sigma _k}{n}.
\end{aligned}
\end{equation}
which completes the proof.
\end{proof}

\subsection{The Recursion of Consensus Error}
 For simplicity, we first recall the following notations:
\begin{equation*}\label{def_rho_beta} 
\begin{aligned}
r=P\left( W_k=\mathbf{J}_m \right),\quad
\rho _{r,W}=\left( 1-r \right)\rho _W,\quad
\rho _W=\left\| W-\mathbf{J}_n \right\| ^2.
\end{aligned}
\end{equation*}
Then, we bound the consensus error for two types of algorithms respectively. In particular, the results for the algorithms adopting only variance-reduction schemes are provided in Lemma \ref{Lem_Cons_error_1} while the results for those adopting both variance-reduction and gradient-tracking schemes are given in Lemma \ref{Lem_Cons_error_GT}.


\begin{Lem}\label{Lem_Cons_error_1}
Suppose Assumption \ref{Ass_smoothness}-\ref{Ass_algoirthm} hold. Then, if we set $C_k=\mathbf{I}_n\otimes V_k$, we have for all $k>0$
\begin{equation}\label{Cons_error_X} 
\begin{aligned}
&\mathbb{E}\left[ \left\| \hat{X}_{k+1}-\mathbf{1}_n\bar{x}_{k+1} \right\| ^2|\mathcal{F}_k \right] 
\\
&\leqslant \left( \frac{1+\rho _{r,W}}{2}+\frac{4\alpha ^2L^2\rho _{r,W}\left( 1+\rho _{r,W} \right)}{1-\rho _{r,W}} \right) \left\| \hat{X}_k-\mathbf{1}_n\bar{x}_k \right\| ^2 
\\
&+\frac{8n\alpha ^2L\rho _{r,W}\left( 1+\rho _{r,W} \right)}{1-\rho _{r,W}}\left( f\left( \bar{x}_k \right) -f\left( x^* \right) \right) +\alpha ^2\rho _{r,W}\left( \frac{4n\zeta ^*}{1-\rho _{r,W}}+n\sigma _k \right) .
\end{aligned}
\end{equation}
\end{Lem}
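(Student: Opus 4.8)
The plan is to turn the primal update into a one-step contraction for the consensus error and then charge the residual to the spread of the gradient estimates. First I would left-multiply \eqref{recu_X} by $S_{k+1}$ and invoke Lemma~\ref{Lem_recover} ($S_{k+1}R_k=S_{k+1}\varGamma_k=W_kS_k$) to collapse the $M$-dimensional recursion to $\hat{X}_{k+1}=W_k(\hat{X}_k-\alpha\hat{Y}_k)$; averaging with $\tfrac{\mathbf{1}_n^T}{n}W_k=\tfrac{\mathbf{1}_n^T}{n}$ gives $\bar{x}_{k+1}=\bar{x}_k-\alpha\bar{y}_k$, so that
\begin{equation*}
\hat{X}_{k+1}-\mathbf{1}_n\bar{x}_{k+1}=(W_k-\mathbf{J}_n)\big[(\hat{X}_k-\mathbf{1}_n\bar{x}_k)-\alpha(\hat{Y}_k-\mathbf{1}_n\bar{y}_k)\big],
\end{equation*}
where I use $(W_k-\mathbf{J}_n)\mathbf{1}_n=0$ and $(\mathbf{I}_n-\mathbf{J}_n)W_k=W_k-\mathbf{J}_n$ to center both brackets.

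Next I would separate the fresh randomness. Write $\hat{Y}_k=\Phi_k+E_k$, with $\Phi_k:=[\nabla f_1(\hat{x}_{1,k})^T,\dots,\nabla f_n(\hat{x}_{n,k})^T]^T$ the stacked local full gradients and $E_k:=\hat{Y}_k-\Phi_k$ the variance-reduction error; since $C_k=\mathbf{I}_n\otimes V_k$ (no tracking) the estimator is unbiased, i.e.\ $\mathbb{E}[\hat{Y}_k\mid\mathcal{F}_k]=\Phi_k$, so $E_k$ is conditionally mean-zero. Because $W_k$ is drawn independently of the sampling $\varLambda_k$, conditioning on $\varLambda_k$ first and then integrating $W_k$ makes the cross term between the $\mathcal{F}_k$-measurable part and $E_k$ vanish, while Lemma~\ref{Lem_Exp2Raidus} replaces each factor $(W_k-\mathbf{J}_n)$ by $\rho_{r,W}$; here one uses $\rho(\mathbb{E}[(W_k-\mathbf{J}_n)^T(W_k-\mathbf{J}_n)])=\mathbb{E}\|W_k-\mathbf{J}_n\|_2^2=\rho_{r,W}$, which holds exactly because $W_k\in\{W,\mathbf{J}_n\}$, applied column-wise to the $n\times d$ matrices.

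I would then recover the stated coefficients through nested Young inequalities. An outer Young with $\gamma=\tfrac{1-\rho_{r,W}}{2\rho_{r,W}}$ applied to the $\Phi_k$ part yields the contraction $\rho_{r,W}(1+\gamma)=\tfrac{1+\rho_{r,W}}{2}$ on $\|\hat{X}_k-\mathbf{1}_n\bar{x}_k\|^2$ and the amplification $\rho_{r,W}(1+\gamma^{-1})=\tfrac{\rho_{r,W}(1+\rho_{r,W})}{1-\rho_{r,W}}$ on $\alpha^2\|(\mathbf{I}_n-\mathbf{J}_n)\Phi_k\|^2$. Bounding $\|(\mathbf{I}_n-\mathbf{J}_n)\Phi_k\|^2$ device-wise, $L$-smoothness of $\nabla f_i(\hat{x}_{i,k})-\nabla f_i(\bar{x}_k)$ feeds the extra $4\alpha^2L^2\rho_{r,W}(1+\rho_{r,W})/(1-\rho_{r,W})$ into the consensus coefficient, and the averaged-smoothness bound \eqref{Eq_Convex_smooth} together with $\sum_i\langle\nabla f_i(x^*),\cdot\rangle=n\langle\nabla f(x^*),\cdot\rangle=0$ feeds the $8nL(f(\bar{x}_k)-f(x^*))$ term. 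To get the heterogeneity term with the tighter factor $\tfrac{1}{1-\rho_{r,W}}$ (rather than the amplified $\tfrac{1+\rho_{r,W}}{1-\rho_{r,W}}$), the residual coming from the stacked $\nabla f_i(\bar{x}_k)$ must be split off by a second Young tuned to $1-\rho_{r,W}$ and then bounded by $\sum_i\|\nabla f_i(x^*)\|^2\le n\zeta^*$ via Assumption~\ref{Ass_sampling}. Finally, the mean-zero $E_k$ contributes $\alpha^2\rho_{r,W}$ times the conditional variance of $\hat{Y}_k$, which equals $n\sigma_k$ by the definition \eqref{Sigma_k}; this path skips Young entirely, which is exactly why $\sigma_k$ appears with only the bare factor $\rho_{r,W}$.

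The hard part is this constant bookkeeping: the four target terms must be routed through three different inequalities — the outer Young for the consensus and function-gap terms, a second Young tuned to $1-\rho_{r,W}$ for $\zeta^*$, and the direct mean-zero path for $\sigma_k$ — so that each inherits exactly the advertised dependence on $\rho_{r,W}$; a naive single Young would over-amplify both $\zeta^*$ and $\sigma_k$ by a spurious factor $1+\rho_{r,W}$. Making the $\sigma_k$ identification rigorous also hinges on the measurability structure: unlike $\hat{X}_k$ and $\bar{x}_k$, whose dependence on the fresh sample $\varLambda_k$ cancels through $S_{k+1}R_k=W_kS_k$, the estimate $\hat{Y}_k$ genuinely fluctuates with $\varLambda_k$, so the conditional expectation must integrate that fluctuation, which is precisely what Lemma~\ref{Lem_sigma_k} quantifies under the present choice $C_k=\mathbf{I}_n\otimes V_k$.
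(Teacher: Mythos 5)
Your overall route matches the paper's: reduce via $S_{k+1}$ and Lemma~\ref{Lem_recover} to $\hat{X}_{k+1}=W_k(\hat{X}_k-\alpha\hat{Y}_k)$, extract the contraction $\rho_{r,W}$ from $(W_k-\mathbf{J}_n)$, apply Young with $\beta=\tfrac{1-\rho_{r,W}}{2\rho_{r,W}}$, bound the gradient spread by smoothness and Assumption~\ref{Ass_sampling}, and let the mean-zero sampling noise bypass Young. However, there is a genuine gap in your noise bookkeeping: you split $\hat{Y}_k=\Phi_k+E_k$ at the \emph{full} conditional mean $\Phi_k=\mathbb{E}[\hat{Y}_k\mid\mathcal{F}_k]$ and then assert that $\mathbb{E}[\|E_k\|^2\mid\mathcal{F}_k]=n\sigma_k$ "by the definition \eqref{Sigma_k}". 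That identification is false. The quantity $\sigma_k$ in \eqref{Sigma_k} is the variance of only the delayed anchor part $S_k\bigl(C_{t_k}Y_{t_k}-\nabla F(X_{t_k})+\nabla F(\mathbf{1}_Mx^*)\bigr)$, whereas your $E_k$ also contains the mini-batch sampling fluctuation of the current-iterate term $S_k\bigl(\nabla F(\hat{X}_k\otimes\mathbf{1}_m)-\nabla F(\mathbf{1}_Mx^*)\bigr)$ around its mean (nonzero whenever $b_k<m$), plus a covariance cross term. The paper avoids this precisely by splitting at a different point: it keeps the sampled gradient-difference term \emph{inside} the squared "main" part — so that its full second moment (not merely its variance) is what gets bounded by $2L^2\|\hat{X}_k-\mathbf{1}_n\bar{x}_k\|^2+4nL(f(\bar{x}_k)-f(x^*))$ — and peels off only the anchor fluctuation, whose variance is exactly $n\sigma_k$ and is exactly what Lemma~\ref{Lem_sigma_k} controls downstream. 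If you carry out your decomposition honestly, the extra variance in $E_k$ must be re-absorbed (e.g., $\mathbb{E}\|E_k\|^2\le 2n\sigma_k+2\,\mathrm{Var}$ of the gradient-difference term), which doubles the $\sigma_k$ coefficient and adds further consensus and function-gap terms, so the stated inequality \eqref{Cons_error_X} with coefficient $\alpha^2\rho_{r,W}\,n\sigma_k$ is not reached.

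A secondary, non-fatal point: your claim that a single Young "would over-amplify $\zeta^*$ by a spurious factor $1+\rho_{r,W}$", necessitating a second Young tuned to $1-\rho_{r,W}$, misreads the constants. In the paper the heterogeneity term does go through the single outer Young, picking up $2\alpha^2\rho_{r,W}\bigl(1+\tfrac{1}{\beta}\bigr)n\zeta^*=\tfrac{2\alpha^2\rho_{r,W}(1+\rho_{r,W})}{1-\rho_{r,W}}n\zeta^*$, and the advertised factor $4$ in $\tfrac{4n\zeta^*}{1-\rho_{r,W}}$ simply absorbs $1+\rho_{r,W}\leqslant 2$; no second Young is needed. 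Only $\sigma_k$ must bypass Young (which both you and the paper arrange), and that is where your decomposition point, rather than the Young routing, is what breaks the constants.
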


\begin{proof}
First of all, recalling the definition of $\hat{X}_k$ in (\ref{Def_X_Y_hat}), $\bar{x}_k$ in (\ref{Def_x_y_bar}) and $\rho _{r,W}:=\left( 1-r \right) \rho _W$, we have
\begin{equation}
\begin{aligned}
&\mathbb{E}\left[ \left\| \hat{X}_{k+1}-\mathbf{1}_n\bar{x}_{k+1} \right\| ^2|\mathcal{F}_k \right] 
\\
&=\left( 1-r \right) \mathbb{E}\left[ \left\| W_k\left[ \hat{X}_k-\alpha \hat{Y}_k \right] -\mathbf{1}_n\left( \bar{x}_k-\alpha \bar{y}_k \right) \right\| ^2|\mathcal{F}_k, W_k=W \right] 
\\
&+r\mathbb{E}\left[ \left\| W\left[ \hat{X}_k-\alpha \hat{Y}_k \right] -\mathbf{1}_n\left( \bar{x}_k-\alpha \bar{y}_k \right) \right\| ^2|\mathcal{F}_k, W_k=\mathbf{J}_n \right] 
\\
&=\left( 1-r \right) \mathbb{E}\left[ \left\| W\left[ \hat{X}_k-\alpha \hat{Y}_k \right] -\mathbf{1}_n\left( \bar{x}_k-\alpha \bar{y}_k \right) \right\| ^2|\mathcal{F}_k \right] 
\\
&\leqslant \left( 1-r \right) \left\| W-\mathbf{J}_n \right\|_2 ^2\mathbb{E}\left[ \left\| \left( \hat{X}_k-\mathbf{1}_n\bar{x}_k \right) -\alpha S_kY_k \right\| ^2|\mathcal{F}_k \right] .
\\
&=\rho _{r,W}\mathbb{E}\left[ \left\| \left( \hat{X}_k-\mathbf{1}_n\bar{x}_k \right) -\alpha S_kY_k \right\| ^2|\mathcal{F}_k \right] .
\end{aligned}
\end{equation}
Then, by Lemma \ref{Lem_Y_t}, we get
\begin{equation}\label{Eq_Consensus_X_hat}
\begin{aligned}
&\mathbb{E}\left[ \left\| \hat{X}_{k+1}-\mathbf{1}_n\bar{x}_{k+1} \right\| ^2|\mathcal{F}_k \right] 
\\
&\leqslant \rho _{r,W}\mathbb{E}\left[ \left\| \left( \hat{X}_k-\mathbf{1}_n\bar{x}_k \right) -\alpha S_k\left( C_{t_k}Y_{t_k}+\nabla F\left( X_k \right) -\nabla F\left( X_{t_k} \right) \right) \right\| ^2|\mathcal{F}_k \right] 
\\
&\overset{\left( c \right)}{\leqslant}\rho _{r,W}\mathbb{E}\left[ \left\| \left( \hat{X}_k-\mathbf{1}_n\bar{x}_k \right) -\alpha \left( S_k\nabla F\left( X_k \right) -S_k\nabla F\left( \mathbf{1}_Mx^* \right) +\left( \mathbf{I}_n\otimes \frac{\mathbf{1}^T}{m} \right) \nabla F\left( \mathbf{1}_Mx^* \right) \right) \right\| ^2|\mathcal{F}_k \right] 
\\
&+\alpha ^2\rho _{r,W}\underset{:=n\sigma _k}{\underbrace{\mathbb{E}\left[ \left\| S_k\left( C_{t_k}Y_{t_k}-\nabla F\left( X_{t_k} \right) \right) +S_k\nabla F\left( \mathbf{1}_Mx^* \right) -\left( \mathbf{I}_n\otimes \frac{\mathbf{1}^T}{m} \right) \nabla F\left( \mathbf{1}_Mx^* \right) \right\| ^2|\mathcal{F}_k \right] }}
\\
&\overset{\left( d \right)}{\leqslant}\rho _{r,W}\left( 1+\beta \right) \left\| \hat{X}_k-\mathbf{1}_n\bar{x}_k \right\| ^2 +2\alpha ^2\rho _{r,W}\left( 1+\frac{1}{\beta} \right) \left( \mathbb{E}\left[ \left\| S_k\nabla F\left( X_k \right) -S_k\nabla F\left( \mathbf{1}_Mx^* \right) \right\| ^2|\mathcal{F}_k \right] \right) 
\\
&+2\alpha ^2\rho _{r,W}\left( 1+\frac{1}{\beta} \right) n\zeta ^*+\alpha ^2\rho _{r,W}n\sigma _k,
\end{aligned}
\end{equation}
where $t_{k} < k$ denotes the latest iteration before $k$ that $V_{t_k}=\mathbf{J}_m$, and thus $C_{t_k}=\mathbf{I}_n\otimes \mathbf{J}_m$. Besides, in the inequality (c) we have used the fact that $\mathbb{E}\left[ \left\| X-\mathbb{E}\left[ X \right] \right\| ^2 \right] \leqslant \mathbb{E}\left[ \left\| X \right\| ^2 \right]$, and in the inequality (d) we have used Young inequality with parameter $\beta =\frac{1-\rho _{{r,W}}}{2\rho _{{r,W}}}$.  
Furthermore, noticing that
\begin{equation}
\begin{aligned}
&\mathbb{E}\left[ \left\| S_k\nabla F\left( X_k \right) -S_k\nabla F\left( \mathbf{1}_Mx^* \right) \right\| ^2|\mathcal{F}_k \right] 
\\
&=\mathbb{E}\left[ \left\| S_k\nabla F\left( X_k \right) -S_k\nabla F\left( \mathbf{1}_M\bar{x}_k \right) +S_k\nabla F\left( \mathbf{1}_M\bar{x}_k \right) -S_k\nabla F\left( \mathbf{1}_Mx^* \right) \right\| ^2|\mathcal{F}_k \right] 
\\
&\leqslant 2L^2\left\| \hat{X}_k-\mathbf{1}_n\bar{x}_k \right\| ^2 +4nL\left( f\left( \bar{x}_k \right) -f\left( x^* \right) \right),
\end{aligned}
\end{equation}
we thus complete the proof of the lemma.
\end{proof}

\begin{Lem}\label{Lem_Cons_error_GT}
Suppose Assumption \ref{Ass_smoothness}-\ref{Ass_algoirthm} hold. Then, if we set $C_k=W_k\otimes \mathbf{J}_m$,
we have for all $k>0$
\begin{equation}\label{Cons_error_X} 
\begin{aligned}
\mathbb{E}\left[ \left\| \hat{X}_{k+1}-\mathbf{1}_n\bar{x}_{k+1} \right\| ^2|\mathcal{F}_k \right] \leqslant \rho _{r,W}\left\| \hat{X}_k-\mathbf{1}_n\bar{x}_k \right\| ^2 +\frac{\alpha ^2\rho _{r,W}\left( 1+\rho _{r,W} \right)}{1-\rho _{r,W}}\mathbb{E}\left[ \left\| \hat{Y}_k-\mathbf{1}_n\bar{y}_k \right\| ^2|\mathcal{F}_k \right]. 
\end{aligned}
\end{equation}
\end{Lem}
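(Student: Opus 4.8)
The plan is to mirror the consensus-error argument of Lemma~\ref{Lem_Cons_error_1}, exploiting the fact that gradient tracking ($C_k=W_k\otimes\mathbf{J}_m$) leaves the $\hat{X}$-recursion in a particularly clean form. First I would reduce \eqref{recu_X} to the device level: multiplying by $S_{k+1}$ and invoking Lemma~\ref{Lem_recover} (namely $S_{k+1}R_k=S_{k+1}\varGamma_k=W_kS_k$) gives
\begin{equation*}
\hat{X}_{k+1}=W_k\left(\hat{X}_k-\alpha\hat{Y}_k\right),
\end{equation*}
an identity that holds irrespective of the choice of $C_k$. Since $W_k$ is doubly stochastic, left-multiplying by $\mathbf{1}_n^T/n$ yields $\bar{x}_{k+1}=\bar{x}_k-\alpha\bar{y}_k$, hence $\mathbf{1}_n\bar{x}_{k+1}=\mathbf{J}_n\left(\hat{X}_k-\alpha\hat{Y}_k\right)$ with $\mathbf{J}_n=\mathbf{1}_n\mathbf{1}_n^T/n$.

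Next I would project onto the disagreement subspace. Subtracting the two identities and using $W_k\mathbf{J}_n=\mathbf{J}_n\mathbf{J}_n=\mathbf{J}_n$, so that $\left(W_k-\mathbf{J}_n\right)\mathbf{J}_n=0$, I obtain
\begin{equation*}
\hat{X}_{k+1}-\mathbf{1}_n\bar{x}_{k+1}=\left(W_k-\mathbf{J}_n\right)\left[\left(\hat{X}_k-\mathbf{1}_n\bar{x}_k\right)-\alpha\left(\hat{Y}_k-\mathbf{1}_n\bar{y}_k\right)\right].
\end{equation*}
Both residual terms in brackets are $\mathcal{F}_k$-measurable, whereas $W_k$ is drawn independently of $\mathcal{F}_k$ (Assumption~\ref{Ass_algoirthm}). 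Conditioning on $\mathcal{F}_k$ and splitting by the law of total probability over $W_k=W$ (probability $1-r$) and $W_k=\mathbf{J}_n$ (probability $r$, which contributes zero since the factor $W_k-\mathbf{J}_n$ vanishes), then applying Lemma~\ref{Lem_Exp2Raidus} with $A=W_k-\mathbf{J}_n$ and $\mathbb{E}\left[\left\|W_k-\mathbf{J}_n\right\|_2^2\right]=\rho_{r,W}$, gives
\begin{equation*}
\mathbb{E}\left[\left\|\hat{X}_{k+1}-\mathbf{1}_n\bar{x}_{k+1}\right\|^2|\mathcal{F}_k\right]\le\rho_{r,W}\left\|\left(\hat{X}_k-\mathbf{1}_n\bar{x}_k\right)-\alpha\left(\hat{Y}_k-\mathbf{1}_n\bar{y}_k\right)\right\|^2.
\end{equation*}

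Finally I would separate the two Lyapunov components with Young's inequality $\left\|a-\alpha b\right\|^2\le(1+\beta)\left\|a\right\|^2+(1+1/\beta)\alpha^2\left\|b\right\|^2$, taking $\beta=\frac{1-\rho_{r,W}}{2\rho_{r,W}}$, the same balance as in Lemma~\ref{Lem_Cons_error_1}. This choice makes $\rho_{r,W}(1+1/\beta)\alpha^2=\frac{\alpha^2\rho_{r,W}(1+\rho_{r,W})}{1-\rho_{r,W}}$ reproduce exactly the stated coefficient of the gradient-tracking error $\left\|\hat{Y}_k-\mathbf{1}_n\bar{y}_k\right\|^2$, while the consensus error is multiplied by $\rho_{r,W}(1+\beta)=\frac{1+\rho_{r,W}}{2}<1$, which furnishes the contraction on that term. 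The step that deserves care is precisely this cross-term handling: the inner product $\langle\hat{X}_k-\mathbf{1}_n\bar{x}_k,\hat{Y}_k-\mathbf{1}_n\bar{y}_k\rangle$ has no sign, so it must be absorbed by Young's inequality, and the parameter $\beta$ has to be tuned to the value above to land on the claimed tracking-error coefficient (this forces the consensus factor to be $\frac{1+\rho_{r,W}}{2}$ rather than a smaller constant). Everything else is the Kronecker-and-projection algebra packaged in Lemma~\ref{Lem_recover}, together with the measurability/independence bookkeeping that lets $W_k$ be factored out of the conditional expectation. Compared with the variance-reduction-only case, the argument is markedly simpler, because full-gradient tracking keeps $\hat{Y}_k=S_kY_k$ undelayed, so none of the auxiliary $\sigma_k$ or $\zeta^*$ terms of Lemma~\ref{Lem_Cons_error_1} appear; the price is merely the extra $\left\|\hat{Y}_k-\mathbf{1}_n\bar{y}_k\right\|^2$ term, which is controlled separately by the gradient-tracking-error recursion.
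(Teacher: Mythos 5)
Your proposal is correct and follows essentially the same route as the paper's own proof: writing the new consensus error as $\left( W_k-\mathbf{J}_n \right)$ applied to the stacked deviations, letting the $W_k=\mathbf{J}_n$ branch vanish so that only the factor $\rho_{r,W}=\left( 1-r \right)\rho_W$ survives, and then splitting the cross term by Young's inequality with the same $\beta =\frac{1-\rho _{r,W}}{2\rho _{r,W}}$. Your closing observation is also apt: the Young step yields the consensus coefficient $\frac{1+\rho _{r,W}}{2}$ rather than the $\rho _{r,W}$ displayed in the lemma statement, and this is exactly what the paper's own proof produces as well, so the (harmless, since $\frac{1+\rho_{r,W}}{2}<1$ still gives contraction) mismatch lies in the paper's stated constant, not in your argument.
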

\begin{proof}
Recalling the definition of $\hat{X}_k$ in (\ref{Def_X_Y_hat}) and $\bar{x}_k$ in (\ref{Def_x_y_bar}), we have
\begin{equation}\label{Eq_X_hat_x_bar}
\begin{aligned}
&\mathbb{E}\left[ \left\| \hat{X}_{k+1}-\mathbf{1}_n\bar{x}_{k+1} \right\| ^2|\mathcal{F}_k \right] 
\\
&= \left( 1-r \right) \mathbb{E}\left[ \left\| \left( W_k-\mathbf{J}_n \right) \left( \hat{X}_k-\mathbf{1}_n\bar{x}_k \right) -\alpha \left( W-\mathbf{J}_n \right) \left( \hat{Y}_k-\mathbf{1}_n\bar{y}_k \right) \right\| ^2|\mathcal{F}_k \right] 
\\
&\leqslant \rho _W\left( 1-r \right) \left( 1+\beta \right) \left\| \hat{X}_k-\mathbf{1}_n\bar{x}_k \right\| ^2 +\alpha ^2\rho _W\left( 1-r \right) \left( 1+\frac{1}{\beta} \right) \mathbb{E}\left[ \left\| \hat{Y}_k-\mathbf{1}_n\bar{y}_k \right\| ^2|\mathcal{F}_k \right] ,
\end{aligned}
\end{equation}
where in the last inequality we have used Young inequality. Setting $\beta =\frac{1-\rho _{r,W}}{2\rho _{r,W}}$ completes the proof.
\end{proof}

\subsection{The Recursion of Variance Reduction Error}
In the following lemma, we show that the VR error will be decaying when the variance-reduction methods such as SAGA, L-SVRG, are adopted. For ease of presentation, we recall the notions as follows:
$$
q:=\rho \left( \mathbb{E}\left[ \varLambda _{k+1}|V_k=\mathbf{J}_m \right] \right) ,\quad r:=P\left( W_k=\mathbf{J}_n \right).
$$

\begin{Lem}\label{Lem_VR_error}
Suppose Assumption \ref{Ass_smoothness}-\ref{Ass_algoirthm} hold. Then, we have for all $k>0$
\begin{equation}
\begin{aligned}
&\mathbb{E}\left[ \nabla F\left( X_k \right) -\nabla F\left( \mathbf{1}_Mx^* \right) |\mathcal{F}_{k},V_{k-1}=\mathbf{J}_m \right] 
\\
&\leqslant \left( 1-q \right) \mathbb{E}\left[ \left\| \left[ \nabla F\left( X_{k-1} \right) -\nabla F\left( \mathbf{1}_Mx^* \right) \right] \right\|^{2}|\mathcal{F}_{k},V_{k-1}=\mathbf{J}_m \right] 
\\
&+2\left( 1-r \right) mqL^2 \left\| \hat{X}_k-\mathbf{1}_n\bar{x}_k \right\| ^{2} +4MqL\left( f\left( \bar{x}_k \right) -f\left( x^* \right) \right).
\end{aligned}
\end{equation}
\end{Lem}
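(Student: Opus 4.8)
The plan is to start from the one-step identity for the gradient table established in Lemma~\ref{Lem_X_by_RW}, namely Eq.~\eqref{Eq_nabla_F_by_X_hat}, $\nabla F(X_k)=(\mathbf{I}_M-\varLambda_k)\nabla F(X_{k-1})+\varLambda_k\nabla F(\hat{X}_k\otimes\mathbf{1}_m)$, and to subtract $\nabla F(\mathbf{1}_Mx^*)$ after expanding it through the partition of identity $\mathbf{I}_M=(\mathbf{I}_M-\varLambda_k)+\varLambda_k$. This gives $\nabla F(X_k)-\nabla F(\mathbf{1}_Mx^*)=(\mathbf{I}_M-\varLambda_k)u+\varLambda_k w$, where $u:=\nabla F(X_{k-1})-\nabla F(\mathbf{1}_Mx^*)$ is the \emph{stale} part retained on the unsampled rows and $w:=\nabla F(\hat{X}_k\otimes\mathbf{1}_m)-\nabla F(\mathbf{1}_Mx^*)$ is the \emph{fresh} part written on the sampled rows. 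Because $\varLambda_k$ is a diagonal $0/1$ matrix, $(\mathbf{I}_M-\varLambda_k)$ and $\varLambda_k$ have disjoint row support, so the cross term vanishes identically and $\|\nabla F(X_k)-\nabla F(\mathbf{1}_Mx^*)\|^2=\|(\mathbf{I}_M-\varLambda_k)u\|^2+\|\varLambda_k w\|^2$, with no expectation needed to kill the cross term.

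Next I would take the conditional expectation $\mathbb{E}[\,\cdot\mid\mathcal{F}_k,V_{k-1}=\mathbf{J}_m]$ over the step-$k$ sampling. Here $u$ depends only on $\nabla F(X_{k-1})$, and $w$ depends on $\hat{X}_k=W_{k-1}(\hat{X}_{k-1}-\alpha\hat{Y}_{k-1})$, which by the intra-device consensus property (so that $S_kX_k$ returns the device parameter irrespective of \emph{which} samples are drawn) is independent of the step-$k$ sampling; hence both $u$ and $w$ are independent of $\varLambda_k$. Using that $\varLambda_k$ is idempotent with $\mathbb{E}[\varLambda_k\mid V_{k-1}=\mathbf{J}_m]=q\,\mathbf{I}_M$ (Assumption~\ref{Ass_uniformly_sampling_without_replacement} together with the definition of $q$), the stale term contributes $(1-q)\|u\|^2=(1-q)\|\nabla F(X_{k-1})-\nabla F(\mathbf{1}_Mx^*)\|^2$, which is precisely the first term of the claim, while the fresh term contributes $q\,\|w\|^2$.

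Finally I would bound the fresh contribution $q\|w\|^2$ by inserting $\nabla F(\mathbf{1}_M\bar{x}_k)$ and applying Young's inequality, $\|w\|^2\le 2\|\nabla F(\hat{X}_k\otimes\mathbf{1}_m)-\nabla F(\mathbf{1}_M\bar{x}_k)\|^2+2\|\nabla F(\mathbf{1}_M\bar{x}_k)-\nabla F(\mathbf{1}_Mx^*)\|^2$. For the first piece, per-sample $L$-smoothness (Assumption~\ref{Ass_smoothness}) gives $\sum_{i,j}\|\nabla f_{ij}(\hat{x}_{i,k})-\nabla f_{ij}(\bar{x}_k)\|^2\le mL^2\|\hat{X}_k-\mathbf{1}_n\bar{x}_k\|^2$; for the second, the averaged smoothness~\eqref{Eq_Expe_smooth} summed over $i$, together with $\sum_i\nabla f_i(x^*)=n\nabla f(x^*)=0$ to cancel the inner-product term, gives $2ML(f(\bar{x}_k)-f(x^*))$. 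The factor $(1-r)$ on the consensus term then comes from conditioning on the mixing matrix $W_{k-1}$: with probability $r$ one has $W_{k-1}=\mathbf{J}_n$, forcing $\hat{X}_k=\mathbf{1}_n\bar{x}_k$ and annihilating the consensus piece, so it survives only with weight $1-r$. Collecting terms yields $q\|w\|^2\le 2(1-r)mqL^2\|\hat{X}_k-\mathbf{1}_n\bar{x}_k\|^2+4MqL(f(\bar{x}_k)-f(x^*))$, which added to the stale term gives exactly the claim.

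The two smoothness estimates and Young's inequality are routine; the step I expect to need the most care is the conditional-expectation bookkeeping, i.e., cleanly separating the two independent randomness sources (the step-$k$ sampling $\varLambda_k$, which produces the $q$ and $1-q$ weights, and the mixing $W_{k-1}$, which produces the $1-r$ weight) and justifying that $\hat{X}_k$ and $\nabla F(X_{k-1})$ are independent of $\varLambda_k$, so that the support-based orthogonal splitting commutes with taking expectations.
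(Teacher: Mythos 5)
Your proposal is correct and follows essentially the same route as the paper: the same split $\nabla F(X_k)-\nabla F(\mathbf{1}_Mx^*)=(\mathbf{I}_M-\varLambda_k)\left[\nabla F(X_{k-1})-\nabla F(\mathbf{1}_Mx^*)\right]+\varLambda_k\left[\nabla F(\hat{X}_k\otimes\mathbf{1}_m)-\nabla F(\mathbf{1}_Mx^*)\right]$ via Lemma~\ref{Lem_X_by_RW}, the vanishing cross term from the disjoint supports, the $(1-q)$/$q$ weights from $\mathbb{E}[\varLambda_k\mid V_{k-1}=\mathbf{J}_m]=q\mathbf{I}_M$, and then Young's inequality with the two smoothness estimates (the paper's \eqref{Eq_s_1} and \eqref{Eq_s_2}) plus the law of total probability over $W_{k-1}$ for the $(1-r)$ factor. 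One small nit: the bound $\sum_{i,j}\|\nabla f_{ij}(\hat{x}_{i,k})-\nabla f_{ij}(\bar{x}_k)\|^2\leqslant mL^2\|\hat{X}_k-\mathbf{1}_n\bar{x}_k\|^2$ rests on the averaged smoothness Assumption~\ref{Expected smoothness} (combined with Assumption~\ref{Ass_smoothness}), not on per-sample smoothness, which the paper never assumes.
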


\begin{proof}
Using Lemma \ref{Lem_Y_t}, we have
$$
\nabla F\left( X_k \right) = \left( \mathbf{I}_M-\varLambda _k \right) \nabla F\left( X_{k-1} \right) +\varLambda _k\nabla F\left( \hat{X}_k\otimes \mathbf{1}_m \right).
$$
Then, we further obtain
\begin{equation*}
\begin{aligned}
&\mathbb{E}\left[ \left\| \nabla F\left( X_k \right) -\nabla F\left( \mathbf{1}_Mx^* \right) \right\|^{2}|\mathcal{F}_{k}, V_{k-1}=\mathbf{J}_m \right] 
\\
&=\mathbb{E}\left[ \left\| \left( \mathbf{I}_M-\varLambda _k \right) \nabla F\left( X_{k-1} \right) +\varLambda _k\nabla F\left( \hat{X}_k\otimes \mathbf{1}_m \right) -\nabla F\left( \mathbf{1}_Mx^* \right) \right\|^{2}|\mathcal{F}_{k}, V_{k-1}=\mathbf{J}_m \right] 
\\
&=\mathbb{E}\left[ \left\| \begin{array}{c}
	\left(\mathbf{I}_M-\varLambda _k \right) \left[ \nabla F\left( X_{k-1} \right) -\nabla F\left( \mathbf{1}_Mx^* \right) \right]\\
\end{array} \right\|^{2}|\mathcal{F}_{k}, V_{k-1}=\mathbf{J}_m \right] 
\\
&+\mathbb{E}\left[ \left\| \varLambda _k\left[ \nabla F\left( \hat{X}_k\otimes \mathbf{1}_m \right) -\nabla F\left( \mathbf{1}_Mx^* \right) \right] \right\|^{2}|\mathcal{F}_{k}, V_{k-1}=\mathbf{J}_m \right] 
\\
&\overset{(\ref{Eq_s_1}),(\ref{Eq_s_2})}{\leqslant} \left( 1-q \right) \mathbb{E}\left[ \left\| \nabla F\left( X_{k-1} \right) -\nabla F\left( \mathbf{1}_Mx^* \right) \right\|^{2}|\mathcal{F}_{k}, V_{k-1}=\mathbf{J}_m\, \right] 
\\
&+2\left( 1-r \right) mqL^2\mathbb{E}\left[ \left\| \hat{X}_k-\mathbf{1}_n\bar{x}_k \right\| ^{2}|\mathcal{F}_{k},V_{k-1}=\mathbf{J}_m \right]+4MqL\left( f\left( \bar{x}_k \right) -f\left( x^* \right) \right).
\end{aligned}
\end{equation*}
\end{proof}

\subsection{The Recursion of Delayed Variance Reduction Error}

When the variance-reduction methods with random local loops, such as L-SVRG, are adopted, an additional delay error term will appear and should be dealt with properly for convergence analysis.

\begin{Lem}\label{Lem_VR_error_delay}
Suppose Assumption \ref{Ass_smoothness}-\ref{Ass_algoirthm} hold. Then, we have for all $k>0$,
\begin{equation}
\begin{aligned}
&\mathbb{E}\left[ \left\| \nabla F\left( X_{t_k} \right) -\nabla F\left( \mathbf{1}_Mx^* \right) \right\|^{2}|\mathcal{F}_{k} \right] 
\\
&\leqslant \left( 1-p \right) \mathbb{E}\left[ \left\| \nabla F\left( X_{t_{k-1}} \right) -\nabla F\left( \mathbf{1}_Mx^* \right) \right\|^{2}|\mathcal{F}_{k},V_{k-1}=\mathbf{I}_m \right] 
\\
&+p\mathbb{E}\left[ \left\| \left[ \nabla F\left( X_{k-1} \right) -\nabla F\left( \mathbf{1}_Mx^* \right) \right] \right\|^{2}|\mathcal{F}_{k},V_{k-1}=\mathbf{J}_m \right].
\end{aligned}
\end{equation}
\end{Lem}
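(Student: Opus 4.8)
The plan is to reduce the claim to a single case analysis on the most recent variance-reduction step, exploiting the recursive structure of the reference index $t_k$. Recall that $t_k<k$ denotes the latest iteration strictly before $k$ at which a full-gradient update is performed, i.e.\ $V_{t_k}=\mathbf{J}_m$. By this definition, whether $t_k$ equals $k-1$ or inherits the previous reference point is governed entirely by the choice of $V_{k-1}$: if variance reduction is triggered at iteration $k-1$ then $k-1$ becomes the new reference point, otherwise the reference point is unchanged from step $k-1$. Formally,
\begin{equation*}
t_k=\begin{cases} k-1, & V_{k-1}=\mathbf{J}_m,\\ t_{k-1}, & V_{k-1}=\mathbf{I}_m.\end{cases}
\end{equation*}

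First I would substitute this dichotomy into the delayed gradient table $X_{t_k}$: on the event $\{V_{k-1}=\mathbf{J}_m\}$ one has $X_{t_k}=X_{k-1}$, while on the event $\{V_{k-1}=\mathbf{I}_m\}$ one has $X_{t_k}=X_{t_{k-1}}$. I would then apply the law of total expectation, splitting over the two possible values of $V_{k-1}$, which by Assumption~\ref{Ass_algoirthm} is drawn independently of the past with $P(V_{k-1}=\mathbf{J}_m)=p$ and $P(V_{k-1}=\mathbf{I}_m)=1-p$. This yields
\begin{align*}
&\mathbb{E}\left[ \left\| \nabla F\left( X_{t_k} \right) -\nabla F\left( \mathbf{1}_Mx^* \right) \right\|^{2}\mid\mathcal{F}_{k} \right]\\
&=\left( 1-p \right)\mathbb{E}\left[ \left\| \nabla F\left( X_{t_{k-1}} \right) -\nabla F\left( \mathbf{1}_Mx^* \right) \right\|^{2}\mid\mathcal{F}_{k},V_{k-1}=\mathbf{I}_m \right]\\
&\quad+p\,\mathbb{E}\left[ \left\| \nabla F\left( X_{k-1} \right) -\nabla F\left( \mathbf{1}_Mx^* \right) \right\|^{2}\mid\mathcal{F}_{k},V_{k-1}=\mathbf{J}_m \right],
\end{align*}
which is precisely the asserted relation; in fact it holds with equality, so the stated inequality follows a fortiori.

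The step I expect to demand the most care is the probabilistic bookkeeping underlying the total-expectation split rather than any analytic estimation. One must verify that the reference recursion above is the correct reading of the definition of $t_k$, and --- crucially --- that the triggering probability $p$ of variance reduction at step $k-1$ is independent of the histories entering the two conditional terms, so that the convex weights $1-p$ and $p$ factor out cleanly. This is exactly where the independence of the $\{V_k\}$ across iterations in Assumption~\ref{Ass_algoirthm} is invoked. Unlike Lemma~\ref{Lem_VR_error}, no smoothness or contraction argument is needed here: once the recursion for $t_k$ is established, the lemma is a purely combinatorial identity that merely re-expresses the delayed error at time $k$ in terms of the (delayed or current) errors at time $k-1$, thereby closing the recursion required by the subsequent Lyapunov analysis.
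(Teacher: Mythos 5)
Your proposal is correct and is precisely the argument the paper has in mind: the paper omits the proof, stating only that it follows from the definition of $t_k$ together with conditional expectation over the choices of $V_{k-1}$, which is exactly the case split $t_k=k-1$ on $\{V_{k-1}=\mathbf{J}_m\}$ versus $t_k=t_{k-1}$ on $\{V_{k-1}=\mathbf{I}_m\}$ and the total-expectation decomposition you carry out. Your additional observations---that the relation in fact holds with equality and that the independence of $V_{k-1}$ from the past (Assumption~\ref{Ass_algoirthm}) is what lets the weights $p$ and $1-p$ factor out---are accurate and fill in the bookkeeping the paper leaves implicit.
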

The proof for the above lemma is straightforward leveraging the definition of $t_k$ as well as the conditional expectation on the choices of $V_k-1$, which is thus omitted.

\subsection{The Recursion of Gradient Tracking Error}
The following lemma shows that the error caused by the data heterogeneity across devices is decaying with iteration when gradient-tracking schemes are adopted.

\begin{Lem}\label{Lem_GT_error}
Suppose Assumption \ref{Ass_smoothness}-\ref{Ass_algoirthm} hold, if we set $C_k=W_k\otimes \mathbf{J}_m$, then for all $k>0$
\begin{equation}
\begin{aligned}
&\mathbb{E}\left[ \left\| \hat{Y}_{k+1}-\mathbf{1}_n\bar{y}_{k+1} \right\| ^2|\mathcal{F}_k \right] 
\\
&\leqslant \left( \frac{1+\rho _{r,W}}{2}+\frac{8\alpha ^2L^2\rho _{r,W}\left( 1+\rho _{r,W} \right)}{1-\rho _{r,W}} \right) \mathbb{E}\left[ \left\| Y_k-\mathbf{1}_M\bar{y}_k \right\| ^2|\mathcal{F}_k \right] 
\\
&+\frac{4\left( 1+\rho _{r,W} \right) L^2}{1-\rho _{r,W}}\left( 4+\left( 1-r \right) q+4\left( 1-r \right) \alpha ^2L^2 \right) \left\| \hat{X}_k-\mathbf{1}_n\bar{x}_k \right\| ^2 
\\
&+\frac{2\left( 1+\rho _{r,W} \right)}{m\left( 1-\rho _{r,W} \right)}\left( 1-q+\frac{4\alpha ^2L^2}{n} \right) \mathbb{E}\left[ \left\| \nabla F\left( X_{k-1} \right) -\nabla F\left( \mathbf{1}_Mx^* \right) \right\| _{2}^{2}|\mathcal{F}_k \right] 
\\
&+\frac{4n\left( 1+\rho _{r,W} \right)}{1-\rho _{r,W}}\left( 8\alpha ^2L^3+4L+2qL \right) \left( f\left( \bar{x}_k \right) -f\left( x^* \right) \right). 
\end{aligned}
\end{equation}
\end{Lem}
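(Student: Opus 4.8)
The plan is to derive a one-step contraction for the gradient-tracking error by exploiting the reduced recursion for $\hat Y_k$ together with the expected contractivity of $W_k$, and then to feed in the gradient increment $\nabla F(X_{k+1})-\nabla F(X_k)$ through the averaged-smoothness assumption. First I would specialize the $Y$-update \eqref{recu_Y} to $C_k=W_k\otimes\mathbf J_m$ and project by $S_{k+1}$ exactly as in the GT-SAGA recovery: using Lemma~\ref{Lem_recover} and the identities $\left(\mathbf I_n\otimes\frac{\mathbf 1_m^T}{m}\right)Y_k=\hat Y_k$ and $S_{k+1}\left(\nabla F(X_{k+1})-\nabla F(X_k)\right)=\left(\mathbf I_n\otimes\frac{\mathbf 1_m^T}{m}\right)\left(\nabla F(X_{k+1})-\nabla F(X_k)\right)$, this yields $\hat Y_{k+1}=W_k\hat Y_k+U_{k+1}$ with $U_{k+1}:=\left(\mathbf I_n\otimes\frac{\mathbf 1_m^T}{m}\right)\left(\nabla F(X_{k+1})-\nabla F(X_k)\right)$. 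Since $W_k$ is doubly stochastic, $\bar y_{k+1}=\bar y_k+\frac{\mathbf 1_n^T}{n}U_{k+1}$, so subtracting the mean gives the clean recursion $\hat Y_{k+1}-\mathbf 1_n\bar y_{k+1}=(W_k-\mathbf J_n)(\hat Y_k-\mathbf 1_n\bar y_k)+(\mathbf I_n-\mathbf J_n)U_{k+1}$, where I used $W_k\mathbf 1_n=\mathbf 1_n$ to pass from $W_k$ to $W_k-\mathbf J_n$ on the centered term.

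Next I would square, apply Young's inequality with the weight $\beta=\frac{1-\rho_{r,W}}{2\rho_{r,W}}$, and take conditional expectation; Assumption~\ref{Ass_algoirthm} gives $\mathbb E[\|W_k-\mathbf J_n\|_2^2\mid\mathcal F_k]=\rho_{r,W}$, so the self term contracts to $\frac{1+\rho_{r,W}}{2}\|\hat Y_k-\mathbf 1_n\bar y_k\|^2$ while the input term picks up the prefactor $\frac{1+\rho_{r,W}}{1-\rho_{r,W}}$. To match the stated form I would then replace the hat-error by the full-error using the deterministic bound $\|\hat Y_k-\mathbf 1_n\bar y_k\|^2=\|S_k(Y_k-\mathbf 1_M\bar y_k)\|^2\le\|Y_k-\mathbf 1_M\bar y_k\|^2$ (since $S_kS_k^T=\tfrac{1}{b_k}\mathbf I_n$ and $S_k\mathbf 1_M=\mathbf 1_n$), which is also the step that eventually attaches the residual $\alpha^2L^2$ correction onto the coefficient of $\|Y_k-\mathbf 1_M\bar y_k\|^2$.

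The crux is bounding $\mathbb E[\|(\mathbf I_n-\mathbf J_n)U_{k+1}\|^2\mid\mathcal F_k]\le\frac{1}{m}\mathbb E[\|\nabla F(X_{k+1})-\nabla F(X_k)\|^2\mid\mathcal F_k]$, which is the origin of the $1/m$ factor. I would split $\nabla F(X_{k+1})-\nabla F(X_k)=[\nabla F(X_{k+1})-\nabla F(\mathbf 1_Mx^*)]-[\nabla F(X_k)-\nabla F(\mathbf 1_Mx^*)]$ and estimate each piece with Assumption~\ref{Expected smoothness}/\eqref{Eq_Expe_smooth}. For $\nabla F(X_k)$ I use Lemma~\ref{Lem_X_by_RW} (Eq.~\eqref{Eq_nabla_F_by_X_hat}) to separate the unsampled part, which carries the delayed VR error $\|\nabla F(X_{k-1})-\nabla F(\mathbf 1_Mx^*)\|^2$ with coefficient $\sim(1-q)$ via the sampling expectation in Lemma~\ref{Lem_expectation} and the VR recursion of Lemma~\ref{Lem_VR_error}, from the sampled part at $\hat X_k\otimes\mathbf 1_m$, which averaged smoothness turns into $\|\hat X_k-\mathbf 1_n\bar x_k\|^2$ and $f(\bar x_k)-f(x^*)$. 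For $\nabla F(X_{k+1})$ I substitute $\hat X_{k+1}=W_k(\hat X_k-\alpha\hat Y_k)$; the $-\alpha\hat Y_k$ step is exactly what injects the $\alpha^2L^2$-weighted tracking error $\|\hat Y_k-\mathbf 1_n\bar y_k\|^2\le\|Y_k-\mathbf 1_M\bar y_k\|^2$ into both the leading coefficient and the $\frac{4\alpha^2L^2}{n}$ piece of the VR-error coefficient. Collecting the three groups with the prefactor $\frac{1+\rho_{r,W}}{1-\rho_{r,W}}$ and the constant $2$ from the square-of-difference then reproduces the four terms of the claim.

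The main obstacle I anticipate is the bookkeeping in this last step: one must carefully track which contribution of $\nabla F(X_{k+1})-\nabla F(X_k)$ lands on the gradient-tracking error, the consensus error, the (delayed) variance-reduction error, and the optimality gap, while handling the conditional independence of $W_k$ and $\varLambda_{k+1}$ from $\mathcal F_k$ and invoking $\rho(\mathbb E[S_k^TS_k])\le 1/m$ and the VR-error recursion at the right places so that the constants $4+(1-r)q+4(1-r)\alpha^2L^2$ and $1-q+\frac{4\alpha^2L^2}{n}$ emerge exactly. Choosing $\beta$ so that the self term lands precisely at $\frac{1+\rho_{r,W}}{2}$ and confirming that the mixed $\alpha^2$ remainders attach to the correct coefficient is the delicate part, but it is otherwise a routine, if lengthy, smoothness-and-sampling estimate.
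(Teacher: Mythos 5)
Your proposal follows essentially the same route as the paper's proof: the same centered recursion $\hat{Y}_{k+1}-\mathbf{1}_n\bar{y}_{k+1}=(\mathbf{I}_n-\mathbf{J}_n)\left( W_k\hat{Y}_k+S_{k+1}\left( \nabla F\left( X_{k+1} \right) -\nabla F\left( X_k \right) \right) \right)$, Young's inequality with $\beta=\frac{1-\rho_{r,W}}{2\rho_{r,W}}$ giving the $\frac{1+\rho_{r,W}}{2}$ contraction and $\frac{1+\rho_{r,W}}{1-\rho_{r,W}}$ prefactor, the split of the gradient increment around $\nabla F\left( \mathbf{1}_Mx^* \right)$ with the $1/m$ spectral factor, the $\hat{X}$-update to handle $\hat{X}_{k+1}-\hat{X}_k$, and the final assembly via the $\|\bar{y}_k\|^2$ bound and the VR-error recursion (Lemma~\ref{Lem_VR_error}) to shift the index to $k-1$. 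The only cosmetic deviations — using the deterministic operator $\mathbf{I}_n\otimes\frac{\mathbf{1}_m^T}{m}$ in place of $S_{k+1}$ (both yield the $1/m$ factor) and making explicit the bound $\|\hat{Y}_k-\mathbf{1}_n\bar{y}_k\|\leqslant\|Y_k-\mathbf{1}_M\bar{y}_k\|$ that the paper leaves implicit — do not change the argument.
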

\begin{proof}
Recalling the definition of $\hat{Y}_k$ in (\ref{Def_X_Y_hat}) and $\bar{y}_k$ in (\ref{Def_x_y_bar}), we have
\begin{equation*}
\begin{aligned}
&\mathbb{E}\left[ \left\| \hat{Y}_{k+1}-\mathbf{1}_n\bar{y}_{k+1} \right\| ^2|\mathcal{F}_k \right] 
\\
&=\mathbb{E}\left[ \left\| \left( \mathbf{I}_n-\mathbf{J}_n \right) \left( W_k\hat{Y}_k+S_{k+1}\left( \nabla F\left( X_{k+1} \right) -\nabla F\left( X_k \right) \right) \right) \right\| ^2|\mathcal{F}_k \right] 
\\
&\leqslant \frac{1+\rho _{r,W}}{2}\mathbb{E}\left[ \left\| \hat{Y}_k-\mathbf{1}_n\bar{y}_k \right\| ^2|\mathcal{F}_k \right] +\frac{1+\rho _{r,W}}{1-\rho _{r,W}}\mathbb{E}\left[ \left\| S_{k+1}\left( \nabla F\left( X_{k+1} \right) -\nabla F\left( X_k \right) \right) \right\| ^2|\mathcal{F}_k \right],
\end{aligned}
\end{equation*}
where we have used the Young inequality. Then, we first bound the last term:
\begin{equation*}
\begin{aligned}
&\mathbb{E}\left[ \left\| S_{k+1}\left( \nabla F\left( X_{k+1} \right) -\nabla F\left( X_k \right) \right) \right\| ^2|\mathcal{F}_k \right] 
\\
&=\mathbb{E}\left[ \left\| S_{k+1}\nabla F\left( X_{k+1} \right) -S_{k+1}\nabla F\left( \mathbf{1}_Mx^* \right) +S_{k+1}\nabla F\left( \mathbf{1}_Mx^* \right) -S_{k+1}\nabla F\left( X_k \right) \right\| ^2|\mathcal{F}_k \right] 
\\
&\leqslant 2\mathbb{E}\left[ \left\| S_{k+1}\left( \nabla F\left( \hat{X}_{k+1}\otimes \mathbf{1}_m \right) -\nabla F\left( \hat{X}_k\otimes \mathbf{1}_m \right) +\nabla F\left( \hat{X}_k\otimes \mathbf{1}_m \right) -\nabla F\left( \mathbf{1}_Mx^* \right) \right) \right\| ^2|\mathcal{F}_k \right] 
\\
&+\frac{2}{m}\mathbb{E}\left[ \left\| \nabla F\left( X_k \right) -\nabla F\left( \mathbf{1}_Mx^* \right) \right\| ^2|\mathcal{F}_k \right] 
\\
&\leqslant 4L^2\underset{S_3}{\underbrace{\mathbb{E}\left[ \left\| \hat{X}_{k+1}-\hat{X}_k \right\| ^2|\mathcal{F}_k \right] }}+4\underset{S_4}{\underbrace{\mathbb{E}\left[ \left\| S_{k+1}\nabla F\left( \hat{X}_k\otimes \mathbf{1}_m \right) -S_{k+1}\nabla F\left( \mathbf{1}_Mx^* \right) \right\| ^2|\mathcal{F}_k \right] }}
\\
&+\frac{2}{m}\mathbb{E}\left[ \left\| \nabla F\left( X_k \right) -\nabla F\left( \mathbf{1}_Mx^* \right) \right\| ^2|\mathcal{F}_k \right],
\end{aligned}
\end{equation*}
In what follows, we further bound the terms of $S_3$ and $S_4$, respectively. For $S_3$, we have
\begin{equation*}
\begin{aligned}
&\mathbb{E}\left[ \left\| \hat{X}_{k+1}-\hat{X}_k \right\| ^2|\mathcal{F}_k \right] 
\\
&=\mathbb{E}\left[ \left\| W_k\hat{X}_k-\hat{X}_k-\alpha W_k\hat{Y}_k \right\| ^2|\mathcal{F}_k \right] 
\\
&\leqslant \mathbb{E}\left[ \left\| W_k\hat{X}_k-\hat{X}_k \right\| ^2|\mathcal{F}_k \right] +\alpha ^2\mathbb{E}\left[ \left\| W_k\hat{Y}_k-\mathbf{1}_n\bar{y}_k+\mathbf{1}_n\bar{y}_k \right\| ^2|\mathcal{F}_k \right] -2\alpha \left< W_k\hat{X}_k-\hat{X}_k, W_k\hat{Y}_k \right> 
\\
&=\mathbb{E}\left[ \left\| W_k\hat{X}_k-\hat{X}_k \right\| ^2|\mathcal{F}_k \right] +\alpha ^2\mathbb{E}\left[ \left\| W_k\hat{Y}_k-\mathbf{1}_n\bar{y}_k \right\| ^2|\mathcal{F}_k \right] 
\\
&+n\alpha ^2\mathbb{E}\left[ \left\| \bar{y}_k \right\| ^2|\mathcal{F}_k \right] -2\alpha \left< W_k\hat{X}_k-\hat{X}_k, W_k\hat{Y}_k-\mathbf{1}_n\bar{y}_k \right> 
\\
&\leqslant 2\mathbb{E}\left[ \left\| W_k-\mathbf{I}_n \right\| \right]_2 ^2 \left\| \hat{X}_k-\mathbf{1}_n\bar{x}_k \right\| ^2 +2\alpha ^2\rho _{r,W}\mathbb{E}\left[ \left\| \hat{Y}_k-\mathbf{1}_n\bar{y}_k \right\| ^2|\mathcal{F}_k \right] +n\alpha ^2\mathbb{E}\left[ \left\| \bar{y}_k \right\| ^2|\mathcal{F}_k \right].
\end{aligned}
\end{equation*}
Then, for $S_4$, recalling the relations in (\ref{Eq_s_1}) and (\ref{Eq_s_2}), we have
\begin{equation*}
\begin{aligned}
&\mathbb{E}\left[ \left\| S_{k+1}\nabla F\left( \hat{X}_k\otimes \mathbf{1}_m \right) -S_{k+1}\nabla F\left( \mathbf{1}_Mx^* \right) \right\| ^2|\mathcal{F}_k \right] 
\\
&=\mathbb{E}\left[ \left\| S_{k+1}\left( \nabla F\left( \hat{X}_k\otimes \mathbf{1}_m \right) -\nabla F\left( \mathbf{1}_M\bar{x}_k \right) +\nabla F\left( \mathbf{1}_M\bar{x}_k \right) -\nabla F\left( \mathbf{1}_Mx^* \right) \right) \right\| ^2|\mathcal{F}_k \right] 
\\
&\leqslant 2\mathbb{E}\left[ \left\| S_{k+1}\left( \nabla F\left( \hat{X}_k\otimes \mathbf{1}_m \right) -\nabla F\left( \mathbf{1}_M\bar{x}_k \right) \right) \right\| ^2|\mathcal{F}_k \right] +2\mathbb{E}\left[ \left\| S_{k+1}\left( \nabla F\left( \mathbf{1}_M\bar{x}_k \right) -\nabla F\left( \mathbf{1}_Mx^* \right) \right) \right\| ^2|\mathcal{F}_k \right] 
\\
&{\leqslant}2L^2\left\| \hat{X}_k-\mathbf{1}_n\bar{x}_k \right\| ^2+4nL\left( f\left( \bar{x}_k \right) -f\left( x^* \right) \right).
\end{aligned}
\end{equation*}
Combining all these inequalities above, together with the inequality of $\mathbb{E}\left[ \left\| \bar{y}_k \right\| ^2|\mathcal{F}_k \right]$ in (\ref{Eq_y_bar}), we complete the proof.
\end{proof}
\begin{Rem}
It follows from \ref{Lem_GT_error}, together with Lemma \ref{Lem_Cons_error_1} with Lemma \ref{Lem_Cons_error_GT}, that the error caused by the data heterogeneity across devices is decaying when gradient-tracking methods are adopted, otherwise the heterogeneity constant  $\zeta^*$ as defined in Assumption \ref{Ass_sampling} will not be eliminated.
\end{Rem}

\section{Proofs  in Section \ref{Sec_Con_analysis}}
\label{Appe_Con_analysis}
In this section, we provide the proofs of Theorem \ref{Thm_Without_GT_VR}-\ref{Thm_With_GT_VR},  followed by the corresponding Corollary \ref{Cor_Complexcity_1}-\ref{Cor_Complexcity_3} for the strongly convex and smooth objectives.

\subsection{Proof of Theorem \ref{Thm_Without_GT_VR}} \label{Proof of Thm1}

\begin{proof}
For algorithms $\mathcal{A}\left( \cdot , \cdot , C_k\equiv \mathbf{I}_M \right) $, recalling the following parameter settings of Lyapunov function (\ref{Lyapunov_func})
$$
c_0=1, \,\,c_1= \frac{8\left( 1-r \right)\alpha L\left( 4\alpha L+1 \right)}{n\left( 1-\rho _{r,W} \right)},\,\, c_2=c_3=c_4=0,
$$
and using Lemma \ref{Lem_sigma_k}, \ref{Lem_opt_gap}, \ref{Lem_Cons_error_1}, we can derive by properly rearranging terms that
\begin{equation}
    \begin{aligned}
&\mathbb{E}\left[ T_{k+1} \right] =c_0\mathbb{E}\left[ \left\| \bar{x}_{k+1}-x^* \right\| ^2 \right] +c_1\mathbb{E}\left[ \left\| \hat{X}_{k+1}-\mathbf{1}_n\bar{x}_{k+1} \right\| ^2 \right] 
\\
&\leqslant \left( 1-\alpha \mu \right) c_0\mathbb{E}\left[ \left\| \bar{x}_k-x^* \right\| _{2}^{2} \right] +\left( 1-\frac{1-\rho _{{r,W}}}{8} \right) \mathbb{E}\left[ \left\| \hat{X}_k-\mathbf{1}_n\hat{x}_k \right\| ^2 \right] 
\\
&+\underset{e_1}{\underbrace{\left( \left( 1-r \right) \frac{64\alpha ^3L^2\left( 4\alpha L+1 \right) \rho _{r,W}\left( 1+\rho _{r,W} \right)}{\left( 1-\rho _{r,W} \right) ^2}-\left( 2\alpha -8\alpha ^2L \right) \right) }}\mathbb{E}\left[ f\left( \bar{x}_k \right) -f\left( x^* \right) \right]
\\
&+\underset{e_2}{\underbrace{\left( \left( 1-r \right) \frac{\left( 4\alpha ^2L^2+\alpha L \right)}{n}-\left( 1-r \right) \frac{8\alpha L\left( 4\alpha L+1 \right)}{n\left( 1-\rho _{r,W} \right)}\frac{1-\rho _{{r,W}}}{8} \right) }}\mathbb{E}\left[ \left\| \hat{X}_k-\mathbf{1}_n\bar{x}_k \right\| ^2 \right] 
\\
&+\frac{2\alpha ^2\sigma ^*}{nb}+\left( 1-r \right) \frac{8\alpha ^3L\left( 4\alpha L+1 \right) \rho _{r,W}}{\left( 1-\rho _{r,W} \right)}\left( \frac{4\zeta ^*}{1-\rho _{r,W}}+\frac{\sigma ^*}{b} \right) .
    \end{aligned}
\end{equation}
It can be thus verified that if the step-size satisfies
\begin{equation}\label{stepsize_1}
    \begin{aligned}
\alpha \leqslant \min \left\{ \frac{1}{5L}, \frac{1-\rho _{{r,W}}}{4L\sqrt{\rho _{r,W}\left( 1+\rho _{r,W} \right)}}, \frac{1-\rho _{r,W}}{12L\sqrt{2\rho _{r,W}\left( 1+\rho _{r,W} \right)}} \right\} =\min \left\{ \mathcal{O}\left( \frac{1}{L} \right) , \mathcal{O}\left( \frac{1-\rho _{r,W}}{L\sqrt{\rho _{r,W}}} \right) \right\},
\end{aligned}
\end{equation}
then the coefficient $e_0, e_1\leqslant0$, and we thus have
\begin{equation*}
\begin{aligned}
\mathbb{E}\left[ T_{k+1} \right] \leqslant \left( 1-\min \left\{ \alpha \mu , \frac{1-\rho _{{r,W}}}{8} \right\} \right) \mathbb{E}\left[ T_k \right] +\frac{2\alpha ^2\sigma ^*}{nb}+\left( 1-r \right) \frac{16\alpha ^3L\rho _{r,W}}{1-\rho _{r,W}}\left( \frac{4\zeta ^*}{1-\rho _{r,W}}+\frac{\sigma ^*}{{b}} \right),
\end{aligned}
\end{equation*}
which completes the proof.
\end{proof}

\subsection{{Proof of  Corollary \ref{Cor_Complexcity_1}}}

\begin{proof}
According to Theorem \ref{Thm_Without_GT_VR}, we have
\begin{equation}
\begin{aligned}
\mathbb{E}\left[ T_k \right] \leqslant \left( 1-\min \left\{ \alpha \mu ,1-\frac{1-\rho _{r,W}}{8} \right\} \right) ^k\mathbb{E}\left[ T_0 \right] +\frac{1}{\min \left\{ \alpha \mu ,\frac{1-\rho _{r,W}}{8} \right\}}\left( \alpha ^2D_1+\alpha ^3D_2 \right),
\end{aligned}
\end{equation}
where 
$$
D_1=\frac{2\sigma ^*}{nb}, \,\,
D_2=\left( 1-r \right) \frac{8L\rho _{r,W}\left( 4\alpha L+1 \right)}{1-\rho _{r,W}}\left( \frac{4\zeta ^*}{1-\rho _{r,W}}+\frac{\sigma ^*}{{b}} \right).
$$
Let $\gamma$ denote the resulting upper bound on step size $\alpha$ in (\ref{stepsize_1}), and give another extra upper bound as follow:
\begin{equation}
\begin{aligned}
\alpha \leqslant \min \left\{ \gamma , \frac{\ln \left( \max \left\{ 2, \frac{\mu ^2K^2}{D_1}, \frac{\mu ^3K^3}{D_2} \right\} \right)}{\mu K} \right\},
\end{aligned}
\end{equation}
Then, we consider two possible situations.
First, if
$
\gamma \leqslant \frac{\ln \left( \max \left\{ 2, \min \left\{ \frac{\mu ^2K^2}{D_1}, \frac{\mu ^3K^3}{D_2} \right\} \right\} \right)}{\mu K}
$
holds, then we let $\alpha =\gamma$, and obtain
\begin{equation}
\begin{aligned}
&\mathbb{E}\left[ T_k \right] \leqslant \left( 1-\min \left( \gamma \mu ,\frac{1-\rho _{r,W}}{8} \right) \right) ^K\mathbb{E}\left[ T_0 \right] +\frac{1}{\min \left( \gamma \mu ,\frac{1-\rho _{r,W}}{8} \right)}\left( \gamma ^2D_1+\gamma ^3D_2 \right) 
\\
&\leqslant \exp \left( -\min \left( \gamma \mu ,\frac{1-\rho _{r,W}}{8} \right) K \right) \mathbb{E}\left[ T_0 \right] +\frac{1}{\min \left( \gamma \mu ,\frac{1-\rho _{r,W}}{8} \right)}\left( \gamma ^2D_1+\gamma ^3D_2 \right) 
\\
&\leqslant \exp \left( -\min \left( \gamma \mu ,\frac{1-\rho _{r,W}}{8} \right) K \right) +\frac{D_1}{\mu ^2K}+\frac{D_2}{\mu ^3K^2}+\frac{D_1}{\left( 1-\rho _{r,W} \right) \mu ^2K^2}+\frac{D_2}{\left( 1-\rho _{r,W} \right) \mu ^3K^3}.
\end{aligned}
\end{equation}
To get $\mathbb{E}\left[ T_K \right] \leqslant \varepsilon$, we have
\begin{equation}
\begin{aligned}
K\geqslant {\mathcal{O}}\left( \left( \frac{1}{\gamma \mu}+\frac{1}{1-\rho _{r,W}} \right) \log \frac{\mathbb{E}\left[ V_0 \right]}{\varepsilon}+\frac{\sigma ^*}{nb\mu ^2\varepsilon}+\sqrt{\frac{\rho _{r,W}L}{\mu ^3\left( 1-\rho _{r,W} \right) \varepsilon}\left( \frac{4\zeta ^*}{1-\rho _{r,W}}+\frac{\sigma ^*}{{b}} \right)} \right) \geqslant \frac{1}{1-\rho _{r,W}}.
\end{aligned}
\end{equation}

Second, if 
$
\left\{ \gamma ,\frac{1-\rho _{r,W}}{8\mu} \right\}  \geqslant \frac{\ln \left( \max \left\{ 2,\min \left\{ \frac{\mu ^2K^2}{D_1},\frac{\mu ^3K^3}{D_2} \right\} \right\} \right)}{\mu K}
$
holds, we set
$$
\alpha =\frac{\ln \left( \max \left\{ 2, \min \left\{ \frac{\mu ^2K^2}{D_1}, \frac{\mu ^3K^3}{D_2} \right\} \right\} \right)}{\mu K},
$$
and then obtain
\begin{equation*}
\begin{aligned}
&\mathbb{E}\left[ T_K \right] \leqslant \left( 1-\alpha \mu \right) ^K\mathbb{E}\left[ T_0 \right] +\frac{1}{\mu}\left( \gamma D_1+\gamma ^2D_2 \right) 
\\
&=\exp \left( -\frac{\ln \left( \max \left\{ 2, \min \left\{ \frac{\mu ^2K^2}{D_1}, \frac{\mu ^3K^3}{D_2} \right\} \right\} \right)}{\mu K}\mu K \right) +\frac{1}{\mu}\left( \gamma D_1+\gamma ^2D_2 \right) 
\\
&={\mathcal{O}}\left( \frac{D_1}{\mu ^2K}+\frac{D_2}{\mu ^3K^2} \right).
\end{aligned}
\end{equation*}

In all, substituting the value of $\gamma$ and hiding the constants,  we have $\mathbb{E}\left[ T_K \right] \leqslant \varepsilon$ after at most the following number of iterations:
$$
K\geqslant {\mathcal{O}}\left( \left( \frac{L}{\mu \left( 1-\rho _{r,W} \right)} \right) \log \frac{\mathbb{E}\left[ T_0 \right]}{\varepsilon}+\frac{\sigma ^*}{nb\mu ^2\varepsilon}+\sqrt{\frac{\rho _{r,W}L}{\mu ^3\left( 1-\rho _{r,W} \right) \varepsilon}\left( \frac{4\zeta ^*}{1-\rho _{r,W}}+\frac{\sigma ^*}{{b}} \right)} \right) .
$$
which completes the proof.
\end{proof}

\subsection{{Proof of Theorem \ref{Thm_With_VR_only}}}

\begin{proof} \label{Proof_2}

In order to obtain the convergence rate, we choose the following parameters for the Lyapunov function:
\begin{equation}
\begin{aligned}
c_0=1, c_1=\frac{20L\alpha}{n\left( 1-\rho _{r,W} \right)}, c_2=\frac{5\alpha ^2}{Mp},  c_3=\frac{16\alpha ^2}{Mq}, c_4=0.
\end{aligned}
\end{equation}

Then, using Lemma \ref{Lem_sigma_k}, \ref{Lem_opt_gap}, \ref{Lem_Cons_error_1}, \ref{Lem_VR_error} and \ref{Lem_VR_error_delay}, if
$$
\frac{1+\rho _{r,W}}{2}+\frac{4\alpha ^2L^2\rho _{r,W}\left( 1+\rho _{r,W} \right)}{1-\rho _{r,W}}\leqslant \frac{3+\rho _{r,W}}{4},
$$
which implies
$$
\alpha \leqslant \frac{1-\rho _{r,W}}{4L\sqrt{\rho _{r,W}\left( 1+\rho _{r,W} \right)}},
$$
we can derive the recursion of Lyapunov function as follows:
\begin{equation*}
\begin{aligned}
\mathbb{E}\left[ T_{k+1} \right] &=\mathbb{E}\left[ \left\| \bar{x}_{k+1}-x^* \right\| _{2}^{2} \right] +c_1\mathbb{E}\left[ \left\| \hat{X}_{k+1}-\mathbf{1}_n\bar{x}_{k+1} \right\| ^2 \right] 
\\
&+c_2\mathbb{E}\left[ \left\| \nabla F\left( X_{t_k} \right) -\nabla F\left( \mathbf{1}_Mx^* \right) \right\| _{2}^{2} \right] +c_3\mathbb{E}\left[ \nabla F\left( X_k \right) -\nabla F\left( \mathbf{1}_Mx^* \right) \right] 
\\
&\leqslant \underset{e_1}{\underbrace{\left( \frac{160\alpha ^3L^2\rho _{r,W}\left( 1+\rho _{r,W} \right)}{\left( 1-\rho _{r,W} \right) ^2}-\left( 2\alpha -72\alpha ^2L \right) \right) }}\mathbb{E}\left[ f\left( \bar{x}_k \right) -f\left( x^* \right) \right] 
\\
&+\underset{e_2}{\underbrace{\left( \left( 1-r \right) \frac{\left( 4\alpha ^2L^2+\alpha L \right)}{n}+\left( 1-r \right) \frac{32\alpha ^2L^2}{n}-\frac{5L\alpha}{2n} \right) }}\mathbb{E}\left[ \left\| \hat{X}_k-\mathbf{1}_n\bar{x}_k \right\| ^2 \right] 
\\
&+\underset{e_3}{\underbrace{\left( \frac{2\alpha ^2}{n}\frac{1-p}{M}+\frac{20\alpha ^3L\rho _{r,W}}{\left( 1-\rho _{r,W} \right)}\frac{1-p}{M}-\frac{5\alpha ^2}{2M} \right) }}\mathbb{E}\left[ \left\| \nabla F\left( X_{t_{k-1}} \right) -\nabla F\left( \mathbf{1}_Mx^* \right) \right\| _{2}^{2} \right] 
\\
&+\underset{e_4}{\underbrace{\left( \frac{2\alpha ^2}{n}\frac{p}{M}+\frac{20\alpha ^3L\rho _{r,W}}{\left( 1-\rho _{r,W} \right)}\frac{p}{M}+\frac{5\alpha ^2}{M}-\frac{8\alpha ^2}{M} \right) }}\mathbb{E}\left[ \left\| \left[ \nabla F\left( X_{k-1} \right) -\nabla F\left( \mathbf{1}_Mx^* \right) \right] \right\| _{2}^{2} \right] 
\\
&+\left( 1-\alpha \mu \right) \mathbb{E}\left[ \left\| \bar{x}_k-x^* \right\| _{2}^{2} \right] +\left( 1-\frac{1-\rho _{r,W}}{8} \right) c_1\mathbb{E}\left[ \left\| \hat{X}_k-\mathbf{1}_n\bar{x}_k \right\| ^2 \right] 
\\
&+\left( 1-\frac{p}{2} \right) c_2\mathbb{E}\left[ \left\| \nabla F\left( X_{t_{k-1}} \right) -\nabla F\left( \mathbf{1}_Mx^* \right) \right\| _{2}^{2} \right] 
\\
&+\left( 1-\frac{q}{2} \right) c_3\mathbb{E}\left[ \left\| \left[ \nabla F\left( X_k \right) -\nabla F\left( \mathbf{1}_Mx^* \right) \right] \right\| _{2}^{2}\,\, \right] +\frac{80\alpha ^3L\rho _{r,W}}{\left( 1-\rho _{r,W} \right) ^2}\zeta ^*.
\end{aligned}
\end{equation*}
It can be verified that if the step-size satisfies:
\begin{equation}\label{stepsize_2}
\begin{aligned}
\alpha \leqslant \min \left\{ \frac{1}{64L},\frac{1-\rho _{r,W}}{40L},\frac{1-\rho _{r,W}}{16L\sqrt{\rho _{r,W}\left( 1+\rho _{r,W} \right)}} \right\} =\mathcal{O}\left( \frac{1-\rho _{r,W}}{L} \right),
\end{aligned}
\end{equation}
then we have $e_1, e_2, e_3, e_4 \leqslant 0$, and
\begin{equation*}
\begin{aligned}
\mathbb{E}\left[ T_{k+1} \right] \leqslant \left( 1-\min \left\{ \alpha \mu , \frac{pq}{2}, \frac{1-\rho _{r,W}}{8} \right\} \right) \mathbb{E}\left[ T_k \right] +\frac{80\alpha ^3L\rho _{r,W}}{\left( 1-\rho _{r,W} \right) ^2}\zeta ^*,
\end{aligned}
\end{equation*}
which completes the proof.
\end{proof}

\subsection{{Proof of Corollary \ref{Cor_Complexcity_2}}}
\begin{proof}
Similar to the proof of Corollary \ref{Cor_Complexcity_1}, let $\gamma$ denote the resulting upper bound on step size $\alpha$ in (\ref{stepsize_2}), and give another extra upper bound as follow:
\begin{equation}
\begin{aligned}
\alpha \leqslant \min \left\{ \gamma , \frac{\ln \left( \max \left\{ 2, \frac{\mu ^3K^2}{D_1} \right\} \right)}{\mu K} \right\},
\end{aligned}
\end{equation}
where $D_1=\frac{80L\rho _{r,W}}{\left( 1-\rho _{r,W} \right) ^2}\zeta ^*$. Then, for a constant iteration $K$ such that either
$
\gamma \leqslant \frac{\ln \left( \max \left\{ 2, \frac{\mu ^3K^2}{D_1} \right\} \right)}{\mu K}
$
or
$$
\frac{\ln \left( \max \left\{ 2, \frac{\mu ^3K^2}{D_1} \right\} \right)}{\mu K}\leqslant \min \left\{ \frac{1-\rho_{r,W} }{8\mu}, \frac{pq}{2\mu} \right\},
$$
Then, substituting the value of $\gamma$ and hiding the logarithmic factors and constants, we have $\mathbb{E}\left[ T_K \right] \leqslant \varepsilon $ after at most the following number of iterations
$$
K\geqslant \mathcal{O}\left( \left(\frac{1}{pq}+\frac{L}{\mu \left( 1-\rho _{r,W} \right)} \right) \log  \frac{\mathbb{E}\left[ T_0 \right]}{\varepsilon} \right) +\mathcal{O}\left( \sqrt{\frac{\rho _{r,W}L\zeta ^*}{\left( 1-\rho _{r,W} \right) ^2\mu ^3\varepsilon}} \right) ,
$$
which completes the proof.
\end{proof}

\subsection{{Proof of Theorem \ref{Thm_With_GT_VR}}}
\begin{proof}
For algorithms  $\mathcal{A}\left( \cdot ,\cdot ,C_k = W_k\otimes \mathbf{J}_m \right)$, recalling the following parameter settings of Lyapunov function (\ref{Lyapunov_func})
$$
c_0=1, \,\,
c_1=\frac{1-\rho _{r,W}}{n\rho _{r,W}\left( 1+\rho _{r,W} \right)},\,\,
c_2=0, \,\,
c_3=\frac{20\alpha ^2}{Mq\left( 1-\rho _{r,W} \right) ^2},\,\, c_4=\frac{8\alpha ^2}{n\left( 1-\rho _{r,W} \right)}
$$
and using Lemma \ref{Lem_sigma_k}, \ref{Lem_opt_gap}, \ref{Lem_Cons_error_GT}, \ref{Lem_VR_error}, \ref{Lem_GT_error}, we have
\begin{equation}
\begin{aligned}
&\mathbb{E}\left[ T_{k+1} \right] \leqslant \left( 1-\min \left\{ \alpha \mu , \frac{q}{2}, \frac{1-\rho _{r,W}}{8} \right\} \right) \mathbb{E}\left[ T_k \right] 
\\
&+e_1\mathbb{E}\left[ f\left( \bar{x}_k \right) -f\left( x^* \right) \right] +e_2\mathbb{E}\left[ \left\| \hat{X}_k-\mathbf{1}_n\bar{x}_k \right\| ^2 \right] 
\\
&+e_3\mathbb{E}\left[ \left\| \nabla F\left( X_{k-1} \right) -\nabla F\left( \mathbf{1}_Mx^* \right) \right\| ^2 \right] +e_4\mathbb{E}\left[ \left\| \hat{Y}_k-\mathbf{1}_n\bar{y}_k \right\| ^2 \right] ,
\end{aligned}
\end{equation}
where
\begin{equation*}
\begin{aligned}
e_1&=\frac{32\alpha ^2L\left( 1+\rho _{r,W} \right)}{\left( 1-\rho _{r,W} \right) ^2}\left( 8\alpha ^2L^2+4+2q \right) +\frac{80\alpha ^2L}{\left( 1-\rho _{r,W} \right) ^2}-\left( 2\alpha -8\alpha ^2L \right),
\\
e_2&=\frac{32\alpha ^2L^2\left( 1+\rho _{r,W} \right)}{n\left( 1-\rho _{r,W} \right) ^2}\left( 4+\left( 1-r \right) q+4\left( 1-r \right) \alpha ^2L^2 \right) +\left( 1-r \right) \frac{80\alpha ^2L^2}{n\left( 1-\rho _{r,W} \right) ^2}
\\
&+\left( 1-r \right) \frac{\left( 4\alpha ^2L^2+\alpha L \right)}{n}-\frac{\left( 1-\rho _{r,W} \right) ^2}{4n\rho _{r,W}\left( 1+\rho _{r,W} \right)},
\\
e_3&=\frac{16\alpha ^2\left( 1+\rho _{r,W} \right)}{M\left( 1-\rho _{r,W} \right) ^2}\left( 1-q+\frac{4\alpha ^2L^2}{n} \right) +\frac{2\alpha ^2}{M}-\frac{20\alpha ^2}{2M\left( 1-\rho _{r,W} \right) ^2},
\\
e_4&=\frac{\alpha ^2}{n}-\frac{8\alpha ^2}{8n}=0.
\end{aligned}
\end{equation*}
It can be verified that if the step-size satisfies:
\begin{equation}\label{stepsize_3}
\begin{aligned}
\alpha \leqslant \min \left\{ \frac{1}{8L}, \frac{1-\rho _{r,W}}{4L\sqrt{2\rho _{r,W}\left( 1+\rho _{r,W} \right)}},  \frac{\left( 1-\rho _{r,W} \right) ^2}{528L} \right\} =\mathcal{O}\left( \frac{\left( 1-\rho _{r,W} \right) ^2}{L} \right) ,
\end{aligned}
\end{equation}
then all the coefficients satisfy $e_1, e_2, e_3, e_4 \leqslant 0$, which further leads to
\begin{equation}
\begin{aligned}
\mathbb{E}\left[ T_{k+1} \right] \leqslant \left( 1-\min \left\{ \alpha \mu ,\,\,\frac{q}{2},\,\,\frac{1-\rho _{r,W}}{8} \right\} \right) \mathbb{E}\left[ T_k \right].
\end{aligned}
\end{equation}
We thus complete the proof.

\end{proof}
\subsection{{Proof of Corollary \ref{Cor_Complexcity_3}}}

\begin{proof}
By Theorem \ref{Thm_With_GT_VR}, we have
$$
\mathbb{E}\left[ T_k \right] \leqslant \max \left\{ 1-\alpha \mu , \frac{q}{2},\frac{1-\rho _{r,W}}{8}\right\} ^k\mathbb{E}\left[ T_0 \right],
$$
where
$$
\alpha =\mathcal{O}\left( \frac{\left( 1-\rho _{r,W} \right) ^2}{L} \right),
$$
then similar to the proof of Corollary \ref{Cor_Complexcity_1}, we have $\mathbb{E}\left[ T_K \right] \leqslant \varepsilon$ after at most the following number of iterations:
$$
K\geqslant {\mathcal{O}}\left( \left( \frac{L}{\mu \left( 1-\rho _{r,W} \right) ^2}+\frac{1}{q} \right) \log \frac{\mathbb{E}\left[ T_0 \right]}{\varepsilon} \right).
$$
\end{proof}

\section{{Sub-linear Convergence Analysis for Convex Problems}}\label{App_sublinear_results}
In this section, we give the corresponding sub-linear convergence rate in section \ref{Sec_Con_analysis} for smooth and convex ($\mu=0$) objectives, the proofs are also based on the Lyapunov function (\ref{Lyapunov_func}).

\begin{Thm}\label{Thm_4}
Consider algorithms belonging to $\mathcal{A}\left( \cdot , \cdot , C_k\equiv \mathbf{I}_M \right)$. Suppose Assumption \ref{Ass_smoothness}-\ref{Ass_algoirthm} hold and $\mu =0$. Let
\begin{equation*}
\begin{aligned}
c_0=1, \,\,c_1=\left( 1-r \right) \frac{8L\left( 4\alpha L+1 \right)}{n\left( 1-\rho _{r,W} \right)},\,\, c_2=c_3=c_4=0
\end{aligned}
\end{equation*}
and the step-size satisfy 
\begin{equation}\label{Eq_stepsize_4}
\begin{aligned}
\alpha \leqslant \min \left\{ \frac{1}{5L},\frac{\left( 1-\rho _{r,W} \right)}{24L\sqrt{\rho _{r,W}\left( 1+\rho _{r,W} \right)}} \right\} =\min \left\{ \mathcal{O}\left( \frac{1}{L} \right) , \mathcal{O}\left( \frac{1-\rho _{r,W}}{L\sqrt{\rho _{r,W}}} \right) \right\},
\end{aligned}
\end{equation}
then we have for all $k>0$
\begin{equation}
\begin{aligned}
\frac{1}{K}\sum_{k=0}^{K-1}{\mathbb{E}\left[ f\left( \bar{x}_k \right) -f\left( x^* \right) \right]}\leqslant \frac{5\left\| \bar{x}_0-x^* \right\| ^2}{\alpha K}+\frac{10\alpha \sigma ^*}{nb}+\left( 1-r \right) \frac{72\alpha ^2L\rho _{r,W}}{n\left( 1-\rho _{r,W} \right)}\left( \frac{4n\zeta ^*}{1-\rho _{r,W}}+\frac{n\sigma ^*}{b} \right) .
\end{aligned}
\end{equation}
\end{Thm}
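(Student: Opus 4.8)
The plan is to treat this as the general-convex ($\mu=0$) counterpart of Theorem~\ref{Thm_Without_GT_VR}, reusing the same two per-term recursions but telescoping the suboptimality gap instead of forcing a geometric contraction. First I would instantiate the Lyapunov function \eqref{Lyapunov_func} with only its first two blocks active ($c_2=c_3=c_4=0$), so that $T_{k+1}=\|\bar x_{k+1}-x^*\|^2+c_1\|\hat X_{k+1}-\mathbf 1_n\bar x_{k+1}\|^2$. Combining the optimality-gap recursion of Lemma~\ref{Lem_opt_gap} (now with its $-\alpha\mu\|\bar x_k-x^*\|^2$ term absent) weighted by $c_0=1$ with the consensus-error recursion of Lemma~\ref{Lem_Cons_error_1} weighted by $c_1$, and substituting the crude bound $\sigma_k\le\sigma^*/b$ from the first case of Lemma~\ref{Lem_sigma_k}, produces a one-step inequality of the schematic form
\begin{equation*}
\mathbb{E}[T_{k+1}]\le \|\bar x_k-x^*\|^2+e_X\,\|\hat X_k-\mathbf 1_n\bar x_k\|^2+e_1\,\mathbb{E}[f(\bar x_k)-f(x^*)]+\mathrm{noise},
\end{equation*}
where the noise collects $\tfrac{2\alpha^2\sigma^*}{nb}$ from Lemma~\ref{Lem_opt_gap} together with the $\zeta^*,\sigma^*$ residuals carried by the factor $c_1\alpha^2\rho_{r,W}(\cdots)$ in Lemma~\ref{Lem_Cons_error_1}.

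The decisive structural difference from Theorem~\ref{Thm_Without_GT_VR} is that, with $\mu=0$, the optimality-gap coefficient no longer contracts, so I would not try to bound $T_{k+1}$ by $(1-\delta)T_k$. Instead the target is two sign conditions, both enforced by the step-size choice \eqref{Eq_stepsize_4}: (i) $e_X\le c_1$, so that $\|\bar x_k-x^*\|^2+e_X\|\hat X_k-\mathbf 1_n\bar x_k\|^2\le T_k$ and the consensus block folds back into $T_k$, which holds because the spectral-gap slack $\tfrac{1-\rho_{r,W}}{4}\,c_1$ in the consensus contraction dominates the cross term $(1-r)\tfrac{4\alpha^2L^2+\alpha L}{n}$ fed in by Lemma~\ref{Lem_opt_gap}; and (ii) $e_1\le-\tfrac{\alpha}{5}$, i.e. the retained suboptimality coefficient stays strictly negative, which holds because $2\alpha-8\alpha^2L\ge\tfrac{2\alpha}{5}$ under $\alpha\le\tfrac1{5L}$ while the $c_1$-weighted feedback into $f(\bar x_k)-f(x^*)$ is higher order in $\alpha$ than this leading $\Theta(\alpha)$ descent and is suppressed by the second branch $\alpha\le\tfrac{1-\rho_{r,W}}{24L\sqrt{\rho_{r,W}(1+\rho_{r,W})}}$ of \eqref{Eq_stepsize_4}.

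Granting (i)--(ii), the one-step inequality rearranges to $\tfrac{\alpha}{5}\,\mathbb{E}[f(\bar x_k)-f(x^*)]\le \mathbb{E}[T_k]-\mathbb{E}[T_{k+1}]+\mathrm{noise}$. I would then sum over $k=0,\dots,K-1$, telescope the right-hand side, discard the nonnegative $\mathbb{E}[T_K]$, and use that the consensus error vanishes at initialization (all nodes start from $\bar x_0$) so that $T_0=\|\bar x_0-x^*\|^2$. Dividing by $\tfrac{\alpha K}{5}$ and propagating each noise contribution through the same factor $\tfrac{5}{\alpha}$ yields the claimed leading term $\tfrac{5\|\bar x_0-x^*\|^2}{\alpha K}$, the sampling residual $\tfrac{10\alpha\sigma^*}{nb}$, and the heterogeneity residual $(1-r)\tfrac{72\alpha^2L\rho_{r,W}}{n(1-\rho_{r,W})}\bigl(\tfrac{4n\zeta^*}{1-\rho_{r,W}}+\tfrac{n\sigma^*}{b}\bigr)$.

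The main obstacle I anticipate is the simultaneous constant bookkeeping behind (i)--(ii): unlike the strongly convex proof, where $f(\bar x_k)-f(x^*)$ could simply be dropped once its coefficient turned nonpositive, here it must be \emph{retained} with a quantitatively controlled negative coefficient of order $\alpha$, so the single step-size window \eqref{Eq_stepsize_4} must be verified to close both conditions at once. Everything else is the routine recombination of Lemmas~\ref{Lem_opt_gap}, \ref{Lem_Cons_error_1} and \ref{Lem_sigma_k} already assembled for Theorem~\ref{Thm_Without_GT_VR}.
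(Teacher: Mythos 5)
Your proposal is correct and follows essentially the same route as the paper's own proof: the paper likewise reuses the one-step recursion assembled for Theorem~\ref{Thm_Without_GT_VR} (Lemmas~\ref{Lem_opt_gap}, \ref{Lem_Cons_error_1}, \ref{Lem_sigma_k}) with $\mu=0$, uses the second branch of \eqref{Eq_stepsize_4} to keep the coefficient of $f(\bar{x}_k)-f(x^*)$ below $-(\alpha-4\alpha^2L)\leqslant -\alpha/5$ while the consensus block folds back into $T_k$, and then telescopes, divides by $\alpha(1-4\alpha L)K$, and bounds $1/(1-4\alpha L)\leqslant 5$ to obtain the constants $5$, $10$ and $72$. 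The only caveat is cosmetic and inherited from the paper itself: to land exactly on the stated $\alpha^2$ heterogeneity residual one must use the $c_1$ from Theorem~\ref{Thm_Without_GT_VR} (which carries an extra factor of $\alpha$), as the paper's proof in fact does.
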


\begin{proof}
According to the proof of Theorem \ref{Thm_Without_GT_VR} and noticing $\mu=0$, we further let the step size satisfies
$$
\alpha \leqslant \frac{\left( 1-\rho _{r,W} \right)}{24L\sqrt{\rho _{r,W}\left( 1+\rho _{r,W} \right)}},
$$
which implies that
$$
\left( 2\alpha -8\alpha ^2L \right) -\left( 1-r \right) \frac{64\alpha ^3L^2\left( 4\alpha L+1 \right) \rho _{r,W}\left( 1+\rho _{r,W} \right)}{\left( 1-\rho _{r,W} \right) ^2}\geqslant \left( \alpha -4\alpha ^2L \right).
$$
Combining the upper bound of step-size in (\ref{stepsize_1}),
then we have
\begin{equation}
\begin{aligned}
&\left( \alpha -4\alpha ^2L \right) \left( f\left( \hat{x}_k \right) -f\left( x^* \right) \right) 
\\
&\leqslant \mathbb{E}\left[ T_k \right] -\mathbb{E}\left[ T_{k+1} \right] +\frac{2\alpha ^2\sigma ^*}{nb}+\left( 1-r \right) \frac{8\alpha L\left( 4\alpha L+1 \right)}{n\left( 1-\rho _{r,W} \right)}\alpha ^2\rho _{r,W}\left( \frac{4n\zeta ^*}{1-\rho _{r,W}}+\frac{n\sigma ^*}{b} \right). 
\end{aligned}
\end{equation}
Then, summing over $k$ from $0$ to $K-1$, we further obtain
\begin{equation*}
\begin{aligned}
&\frac{1}{K}\sum_{k=1}^{K-1}{\left( f\left( \bar{x}_k \right) -f\left( x^* \right) \right)}
\\
&\leqslant \frac{\mathbb{E}\left[ T_0 \right]}{\alpha \left( 1-4\alpha L \right) K}+\frac{2\alpha \sigma ^*}{n b\left( 1-4\alpha L \right)}+\left( 1-r \right) \frac{8\alpha ^2L\left( 4\alpha L+1 \right) \rho _{r,W}}{n\left( 1-\rho _{r,W} \right) \left( 1-4\alpha L \right)}\left( \frac{4n\zeta ^*}{1-\rho _{r,W}}+\frac{n\sigma ^*}{b} \right) 
\\
&\leqslant \frac{5\left\| \bar{x}_0-x^* \right\| ^2}{\alpha K}+\frac{10\alpha \sigma ^*}{n b}+\left( 1-r \right) \frac{72\alpha ^2L\rho _{r,W}}{n\left( 1-\rho _{r,W} \right)}\left( \frac{4n\zeta ^*}{1-\rho _{r,W}}+\frac{n\sigma ^*}{b} \right),
\end{aligned}
\end{equation*}
which completes the proof.
\end{proof}

\begin{Cor}\label{Cor_4}
Under the same conditions in Theorem \ref{Thm_4}, suppose the step-size satisfy
$$
\alpha \leqslant \left\{ \gamma , \frac{1}{\sqrt{H_1K}}, \frac{1}{\sqrt[3]{H_2K}} \right\},
$$
where $\gamma$ denotes the resulting upper bound on $\alpha$ in (\ref{Eq_stepsize_4}) and
$$
H_1=\frac{\sigma ^*}{n b \left\| x_0-x^* \right\| ^2}, \,\, H_2=\frac{72L\rho _{r,W}\left( 1-r \right)}{\left( 1-\rho _{r,W} \right) \left\| x_0-x^* \right\| ^2}\left( \frac{4\zeta ^*}{1-\rho _{r,W}}+\frac{\sigma ^*}{b} \right) .
$$
Then, we have
$$
\frac{1}{K}\sum_{k=0}^{K-1}{\mathbb{E}\left[ f\left( \bar{x}_k \right) -f\left( x^* \right) \right]}\leqslant \varepsilon
$$
after at most the following iterations:
\begin{equation}\label{Comp_4}
\begin{aligned}
K\geqslant {\mathcal{O}}\left( \frac{1}{\gamma \varepsilon}+\frac{\sigma ^*}{nb\varepsilon ^2}+\frac{1}{\varepsilon ^{3/2}}\sqrt{\frac{\rho _{r,W}L\left( b\zeta ^*+\left( 1-\rho _{r,W} \right) \sigma ^* \right)}{b\left( 1-\rho _{r,W} \right) ^2}} \right) \left\| \bar{x}_0-x^* \right\| ^2.
\end{aligned}
\end{equation}
\end{Cor}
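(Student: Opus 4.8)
The plan is to read the bound in Theorem~\ref{Thm_4} as a single instance of the standard constant-stepsize tuning trilemma, in which $\alpha$ balances a deterministic ``bias'' term decaying like $1/(\alpha K)$ against two ``noise'' terms that grow linearly and quadratically in $\alpha$. First I would write $R^2:=\left\| \bar{x}_0-x^* \right\|^2$ and cancel the $n$-factors in the last term of Theorem~\ref{Thm_4} (the $1/n$ outside and the $n$ inside the bracket cancel), recasting the guarantee compactly as
\[
\frac{1}{K}\sum_{k=0}^{K-1}\mathbb{E}\left[ f\left( \bar{x}_k \right)-f\left( x^* \right) \right]\leqslant \frac{5R^2}{\alpha K}+\alpha A+\alpha^2 B,
\]
where $A:=\tfrac{10\sigma^*}{nb}=10H_1R^2$ and $B:=\left( 1-r \right)\tfrac{72L\rho_{r,W}}{1-\rho_{r,W}}\left( \tfrac{4\zeta^*}{1-\rho_{r,W}}+\tfrac{\sigma^*}{b} \right)=H_2R^2$, with $H_1,H_2$ exactly the constants defined in the corollary.

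Next I would invoke the prescribed choice $\alpha=\min\left\{ \gamma,\ (H_1K)^{-1/2},\ (H_2K)^{-1/3} \right\}$ and use that taking a minimum gives $1/\alpha=\max\left\{ 1/\gamma,\ \sqrt{H_1K},\ \sqrt[3]{H_2K} \right\}\leqslant 1/\gamma+\sqrt{H_1K}+\sqrt[3]{H_2K}$. Substituting this into the bias term $5R^2/(\alpha K)$ splits it into three pieces of orders $R^2/(\gamma K)$, $R^2\sqrt{H_1/K}$ and $R^2H_2^{1/3}/K^{2/3}$; meanwhile the cap $\alpha\leqslant(H_1K)^{-1/2}$ collapses $\alpha A$ to order $R^2\sqrt{H_1/K}$ and $\alpha\leqslant(H_2K)^{-1/3}$ caps $\alpha^2B$ at order $R^2H_2^{1/3}/K^{2/3}$. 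Collecting terms yields
\[
\frac{1}{K}\sum_{k=0}^{K-1}\mathbb{E}\left[ f\left( \bar{x}_k \right)-f\left( x^* \right) \right]=\mathcal{O}\!\left( \frac{R^2}{\gamma K}+R^2\sqrt{\frac{H_1}{K}}+\frac{R^2H_2^{1/3}}{K^{2/3}} \right).
\]

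Finally I would invert this rate: requiring each summand to be at most $\varepsilon$ forces $K\gtrsim R^2/(\gamma\varepsilon)$, $K\gtrsim R^4H_1/\varepsilon^2$, and $K\gtrsim R^3\sqrt{H_2}/\varepsilon^{3/2}$ respectively. Substituting $H_1=\sigma^*/(nbR^2)$ collapses the second requirement to $\mathcal{O}\!\left( \sigma^*/(nb\varepsilon^2) \right)R^2$; substituting $H_2$ and simplifying $\tfrac{1}{1-\rho_{r,W}}\left( \tfrac{4\zeta^*}{1-\rho_{r,W}}+\tfrac{\sigma^*}{b} \right)=\tfrac{4b\zeta^*+(1-\rho_{r,W})\sigma^*}{b(1-\rho_{r,W})^2}$ turns the third into the stated radical, with the shared $R^2$ factoring out cleanly to reproduce~\eqref{Comp_4}. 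The argument is the textbook tuning lemma (cf.\ \citet{koloskova2020unified, stich2019local}), so I anticipate no conceptual obstacle; the only delicate points are clerical—carrying the constant $5$ from the bias term and $10$ from the linear noise term through the min-stepsize case split without mishandling the $\gamma$-binding regime (where $\alpha=\gamma$ dominates and the two noise caps go slack), and checking that folding $(1-r)\leqslant 1$ together with the numerical constants into $\mathcal{O}(\cdot)$ recovers precisely the radical in~\eqref{Comp_4}.
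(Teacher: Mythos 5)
Your proposal is correct and follows essentially the same route as the paper: both tune $\alpha=\min\{\gamma,(H_1K)^{-1/2},(H_2K)^{-1/3}\}$, bound the bias term $5R^2/(\alpha K)$ and the two noise terms of Theorem~\ref{Thm_4} using the respective caps, and invert the resulting $\mathcal{O}\bigl(R^2/(\gamma K)+R^2\sqrt{H_1/K}+R^2H_2^{1/3}/K^{2/3}\bigr)$ rate to obtain~\eqref{Comp_4}. The only difference is cosmetic: the paper organizes the argument as a three-way case split on which term of the min is active, while you use the equivalent $\max\leqslant\mathrm{sum}$ bound on $1/\alpha$, which is slightly cleaner but not a different method.
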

\begin{proof}
According to the obtained sub-linear rate in Theorem \ref{Thm_4} and the upper bound on $\alpha$, we have three cases: 

$$
\gamma \leqslant \left\{ \frac{1}{\sqrt{H_1K}}, \frac{1}{\sqrt[3]{H_2K}} \right\} ,\quad
\frac{1}{\sqrt{H_1K}}\leqslant \left\{ \gamma , \frac{1}{\sqrt[3]{H_2K}} \right\} ,\quad
\frac{1}{\sqrt[3]{H_2K}}\leqslant \left\{ \gamma , \frac{1}{\sqrt{H_1K}} \right\}.
$$
Then, we can obtain
\begin{equation*}
\begin{aligned}
&\frac{1}{K}\sum_{k=0}^{K-1}{\begin{array}{c}
	\mathbb{E}\left[ f\left( \bar{x}_k \right) -f\left( x^* \right) \right]\\
\end{array}}
\\
&\leqslant \frac{5\left\| \bar{x}_0-x^* \right\| ^2}{\min \left\{ \gamma , \frac{1}{\sqrt{H_1K}}, \frac{1}{\sqrt[3]{H_2K}} \right\} K}+\frac{10\sigma ^*}{nb}\frac{1}{\sqrt{H_1K}}
\\
&+\left( 1-r \right) \frac{72L\rho _{r,W}}{n\left( 1-\rho _{r,W} \right)}\left( \frac{4n\zeta ^*}{1-\rho _{r,W}}+\frac{n\sigma ^*}{b} \right) \frac{1}{\sqrt[3]{H_2K}} \leqslant \varepsilon .
\end{aligned}
\end{equation*}
Plugging $H_1$ and $H_2$ into the above inequality and merging the similar terms, we obtain the complexity in (\ref{Comp_4}). 
\end{proof}

\begin{Thm}\label{Thm_5}
Consider algorithms $\mathcal{A}\left( \cdot ,\cdot ,C_k= \mathbf{I}_n\otimes V_k \right)$ with $p>0$. Suppose Assumption \ref{Ass_smoothness}-\ref{Ass_algoirthm} hold and $\mu =0$. Let
\begin{equation*}
\begin{aligned}
c_0=1, \,\,c_1=\frac{20L\alpha}{n\left( 1-\rho _{r,W} \right)},\,\, c_2=\frac{5\alpha ^2}{Mp},\,\,  c_3=\frac{16\alpha ^2}{Mq},\,\, c_4=0
\end{aligned}
\end{equation*}
and the step-size satisfy
\begin{equation}\label{eq:upperbound_stepsize}
\begin{aligned}
\alpha \leqslant \min \left\{ \frac{1}{64L}, \frac{1-\rho _{r,W}}{40L}, \frac{1-\rho _{r,W}}{32L\sqrt{\rho _{r,W}\left( 1+\rho _{r,W} \right)}} \right\} =\mathcal{O}\left( \frac{1-\rho _{r,W}}{L} \right).
\end{aligned}
\end{equation}
Then, we have
\begin{equation}
\begin{aligned}
\frac{1}{K}\sum_{k=1}^{K-1}{\left( f\left( \bar{x}_k \right) -f\left( x^* \right) \right)}\leqslant \frac{\mathbb{E}\left[ T_0 \right]}{\alpha K}+\frac{80\alpha ^2L\rho _{r,W}}{\left( 1-\rho _{r,W} \right) ^2}\zeta ^*.
\end{aligned}
\end{equation}
\end{Thm}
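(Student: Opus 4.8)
The plan is to recycle the per-iteration Lyapunov recursion established in the proof of Theorem~\ref{Thm_With_VR_only}, now specializing to $\mu=0$, and to convert it into a telescoping ``progress'' inequality exactly as was done for the non-VR case in Theorem~\ref{Thm_4}. Concretely, with the stated choice $c_0=1$, $c_1=\frac{20L\alpha}{n(1-\rho_{r,W})}$, $c_2=\frac{5\alpha^2}{Mp}$, $c_3=\frac{16\alpha^2}{Mq}$, $c_4=0$, combining Lemma~\ref{Lem_sigma_k}, Lemma~\ref{Lem_opt_gap}, Lemma~\ref{Lem_Cons_error_1}, Lemma~\ref{Lem_VR_error} and Lemma~\ref{Lem_VR_error_delay} yields, after rearranging, a bound of the form
\begin{equation*}
\mathbb{E}[T_{k+1}]\leq \mathbb{E}[T_k]+e_1\,\mathbb{E}[f(\bar{x}_k)-f(x^*)]+e_2\,\mathbb{E}[\|\hat{X}_k-\mathbf{1}_n\bar{x}_k\|^2]+e_3(\cdots)+e_4(\cdots)+\frac{80\alpha^3L\rho_{r,W}}{(1-\rho_{r,W})^2}\zeta^*,
\end{equation*}
where the $\sigma_k$ contributions appearing in Lemma~\ref{Lem_opt_gap} and Lemma~\ref{Lem_Cons_error_1} are routed, via the variance-reduction branch of Lemma~\ref{Lem_sigma_k}, into the coefficients $e_3,e_4$ of the (delayed) VR-error terms. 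The crucial structural difference from Theorem~\ref{Thm_With_VR_only} is that the optimality-gap term $\|\bar{x}_k-x^*\|^2$ now carries the factor $1-\alpha\mu=1$ and hence does not contract, so it cannot be folded into a geometric decay.

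First I would verify that $e_2,e_3,e_4\leq 0$ under the step-size window~\eqref{eq:upperbound_stepsize}; since these coefficients do not involve $\mu$, the computation is identical to that in the proof of Theorem~\ref{Thm_With_VR_only} and can be quoted verbatim. Discarding the three corresponding error terms, the internal variance $\sigma^*$ then disappears entirely: it enters only through $\sigma_k$, which variance reduction bounds by the now-nonpositive VR-error terms, leaving only the external heterogeneity $\zeta^*$ in the residual bias.

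Next I would handle the optimality-gap coefficient $e_1=\frac{160\alpha^3L^2\rho_{r,W}(1+\rho_{r,W})}{(1-\rho_{r,W})^2}-(2\alpha-72\alpha^2L)$. Because strong convexity is unavailable, I cannot drop this term; instead I use the sharper third branch of~\eqref{eq:upperbound_stepsize}, namely $\alpha\leq \frac{1-\rho_{r,W}}{32L\sqrt{\rho_{r,W}(1+\rho_{r,W})}}$, to dominate the cubic coupling term by a small multiple of $\alpha$, forcing $-e_1\geq c\,\alpha$ for a positive absolute constant $c$ (the analogue of $-e_1\geq \alpha-4\alpha^2L$ used for Theorem~\ref{Thm_4}). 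Rearranging gives $c\,\alpha\,\mathbb{E}[f(\bar{x}_k)-f(x^*)]\leq \mathbb{E}[T_k]-\mathbb{E}[T_{k+1}]+\frac{80\alpha^3L\rho_{r,W}}{(1-\rho_{r,W})^2}\zeta^*$. Summing over $k=0,\dots,K-1$, using $T_K\geq 0$, and dividing by $\alpha K$ collapses the Lyapunov part to $\frac{\mathbb{E}[T_0]}{\alpha K}$ and drops one power of $\alpha$ from the bias, producing the claimed $\frac{80\alpha^2L\rho_{r,W}}{(1-\rho_{r,W})^2}\zeta^*$ once $c$ is absorbed into the constants.

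The main obstacle is the coefficient bookkeeping for $e_1$ together with the $\sigma_k$ cancellation. Unlike the strongly convex proof, where the function-gap term was simply dropped once its coefficient was nonpositive, here I must retain it with a quantitatively controlled negative coefficient $-e_1\gtrsim\alpha$ so that it drives the telescope; guaranteeing this simultaneously with $e_2,e_3,e_4\leq 0$ is precisely what pins down the three-way step-size constraint in~\eqref{eq:upperbound_stepsize}. Verifying that both $\sigma_k$ contributions (from the optimality-gap and the consensus-error recursions) are dominated by the VR-error budget carried in $c_2,c_3$—so that no $\sigma^*$ leaks into the final bound—is the other delicate point, though it is mechanical given the variance-reduction case of Lemma~\ref{Lem_sigma_k}.
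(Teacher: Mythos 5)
Your proposal takes essentially the same route as the paper's own proof: both specialize the Lyapunov recursion of Theorem~\ref{Thm_With_VR_only} to $\mu=0$, keep $e_2,e_3,e_4\leqslant 0$ exactly as in the strongly convex case so that all $\sigma_k$ (hence $\sigma^*$) contributions are absorbed by the VR-error budget, and use the third step-size branch to lower-bound the function-gap coefficient by a positive multiple of $\alpha$ (the paper's explicit version is $-e_1\geqslant \alpha-36\alpha^2L$), after which telescoping and dividing by $\alpha K$ gives the claimed bound with only the $\zeta^*$ bias surviving. The argument is correct and matches the paper step for step.
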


\begin{proof}
According to the proof of Theorem \ref{Thm_With_VR_only} and noticing $\mu=0$, let the step size satisfy
$$
\alpha \leqslant \frac{1-\rho _{r,W}}{32L\sqrt{\rho _{r,W}\left( 1+\rho _{r,W} \right)}},
$$
which implies that
$$
\frac{160\alpha ^3L^2\rho _{r,W}\left( 1+\rho _{r,W} \right)}{n\left( 1-\rho _{r,W} \right) ^2}\geqslant \left( \alpha -36\alpha ^2L \right).
$$
Combining the upper bound of step-size in (\ref{stepsize_2}),
then we get
\begin{equation}
\begin{aligned}
\left( \alpha -36\alpha ^2L \right) \left( f\left( \bar{x}_k \right) -f\left( x^* \right) \right) \leqslant \mathbb{E}\left[ T_k \right] -\mathbb{E}\left[ T_{k+1} \right] +\frac{80\alpha ^3L\rho _{r,W}}{\left( 1-\rho _{r,W} \right) ^2}\zeta ^*.
\end{aligned}
\end{equation}
Summing over $k$ from $0$ to $K-1$, we have
\begin{equation}
\begin{aligned}
\frac{1}{K}\sum_{k=1}^{K-1}{\left( f\left( \bar{x}_k \right) -f\left( x^* \right) \right)} &\leqslant \frac{\mathbb{E}\left[ T_0 \right]}{\alpha \left( 1-36\alpha L \right) K}+\frac{80\alpha ^2L\rho _{r,W}}{\left( 1-36\alpha L \right) \left( 1-\rho _{r,W} \right) ^2}\zeta ^*\leqslant \frac{\mathbb{E}\left[ T_0 \right]}{\alpha K}+\frac{80\alpha ^2L\rho _{r,W}}{\left( 1-\rho _{r,W} \right) ^2}\zeta ^*
\end{aligned}
\end{equation}
where
\begin{equation*}
\begin{aligned}
T_0&=\left\| x_0-x^* \right\| ^2+\mathcal{O}\left( \frac{\alpha ^2}{pq} \right) \left\| \nabla f\left( x_0 \right) -\nabla f\left( x^* \right) \right\| ^2
\\
&\leqslant \mathbb{E}\left[ \left\| x_0-x^* \right\| ^2 \right] +\mathcal{O}\left( \frac{\alpha ^2}{pq} \right) L^2\left\| x_0-x^* \right\| ^2=\mathcal{O}\left( \frac{1}{pq} \right) \left\| x_0-x^* \right\| ^2.
\end{aligned}
\end{equation*}
\end{proof}

\begin{Cor}\label{Cor_5}
Under the same conditions in Theorem \ref{Thm_5}, suppose that the step-size
$$
\alpha \leqslant \min\left\{ \gamma,\frac{1}{\sqrt{H_1K}} \right\},
$$
where $\gamma$ is the resulting upper bound of $\alpha$ in~(\ref{eq:upperbound_stepsize}) and
$$
H_1=\frac{80L\rho _{r,W}pq}{\left( 1-\rho _{r,W} \right) ^2\left\| x_0-x^* \right\| ^2}\zeta ^*.
$$
Then, we have
$$
\frac{1}{K}\sum_{k=0}^{K-1}{\mathbb{E}\left[ f\left( \bar{x}_k \right) -f\left( x^* \right) \right]}\leqslant \varepsilon 
$$
after at most the following iterations:
\begin{equation}\label{Comp_5}
\begin{aligned}
K\geqslant {\mathcal{O}}\left( \frac{1}{\gamma pq\varepsilon}+\frac{1}{pq\varepsilon ^{3/2}}\sqrt{\frac{\rho _{r,W}L\zeta ^*}{\left( 1-\rho _{r,W} \right) ^2}} \right) \left\| x_0-x^* \right\| ^2.
\end{aligned}
\end{equation}
\end{Cor}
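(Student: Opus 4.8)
The plan is to turn the averaged suboptimality estimate of Theorem~\ref{Thm_5} into an iteration-complexity statement by optimizing over the step-size $\alpha$, in the same spirit as the proofs of Corollaries~\ref{Cor_Complexcity_1} and~\ref{Cor_4}. First I would recall from Theorem~\ref{Thm_5} the bound
\[
\frac{1}{K}\sum_{k=0}^{K-1}\mathbb{E}\left[ f\left( \bar{x}_k \right) -f\left( x^* \right) \right]\le \frac{\mathbb{E}\left[ T_0 \right]}{\alpha K} + \frac{80\alpha ^2L\rho _{r,W}}{\left( 1-\rho _{r,W} \right) ^2}\zeta ^*,
\]
and substitute the initialization estimate $\mathbb{E}\left[ T_0 \right]=\mathcal{O}\!\left( 1/(pq) \right)\left\| x_0-x^* \right\|^2$ established at the end of that proof (it follows from $c_2,c_3=\mathcal{O}(\alpha^2/(pq))$, the smoothness bound $\left\| \nabla f\left( x_0 \right)-\nabla f\left( x^* \right) \right\|\le L\left\| x_0-x^* \right\|$, and the fact that $\alpha=\mathcal{O}((1-\rho_{r,W})/L)$ forces $\alpha^2L^2=\mathcal{O}(1)$). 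Abbreviating the right-hand side as $A/(\alpha K)+B\alpha^2$ with $A=\mathcal{O}\!\left( \left\| x_0-x^* \right\|^2/(pq) \right)$ and $B=\tfrac{80L\rho_{r,W}}{(1-\rho_{r,W})^2}\zeta^*$ reduces the claim to a two-term step-size tuning problem.

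Next I would run the standard tuning argument, noting the key simplification relative to Corollary~\ref{Cor_4}: since variance reduction has eliminated the internal-variance contribution that scales linearly in $\alpha$ (the $\sigma^*/(nb)$ term in Theorem~\ref{Thm_4}), only two regimes survive. I would take $\alpha=\min\{\gamma,\hat\alpha_K\}$, where $\gamma$ is the admissible upper bound inherited from~\eqref{eq:upperbound_stepsize} and $\hat\alpha_K\propto (A/(BK))^{1/3}$ balances $A/(\alpha K)$ against $B\alpha^2$, and then split into two cases. When $\gamma\le\hat\alpha_K$ the transient term $A/(\gamma K)$ dominates, so $\tfrac{1}{K}\sum\le\varepsilon$ is guaranteed once $K=\mathcal{O}\!\left( A/(\gamma\varepsilon) \right)=\mathcal{O}\!\left( \left\| x_0-x^* \right\|^2/(\gamma pq\varepsilon) \right)$. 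When $\hat\alpha_K<\gamma$ the two terms are balanced and each is $\mathcal{O}\!\left( B^{1/3}A^{2/3}K^{-2/3} \right)$, which is at most $\varepsilon$ as soon as $K=\mathcal{O}\!\left( \sqrt{B}\,A/\varepsilon^{3/2} \right)$. Substituting $A$ and $B$ turns this into $\tfrac{\left\| x_0-x^* \right\|^2}{pq\,\varepsilon^{3/2}}\sqrt{\rho_{r,W}L\zeta^*/(1-\rho_{r,W})^2}$, and summing the two contributions reproduces exactly the claimed bound~\eqref{Comp_5}.

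The hard part will not be conceptual but rather the constant bookkeeping: I must check that the balancing step-size $\hat\alpha_K$ together with the cap $\gamma$ reproduces the precise factors, in particular the $pq$ dependence (which enters only through $\mathbb{E}\left[ T_0 \right]$ and hence through $A$) and the network factor $\sqrt{\rho_{r,W}}/(1-\rho_{r,W})$, and that the chosen $\alpha$ always respects the admissibility constraint $\alpha\le\gamma$ demanded by Theorem~\ref{Thm_5}. A secondary point worth verifying explicitly is that, unlike the non-variance-reduced Corollary~\ref{Cor_4}, no $\varepsilon^{-2}$ term survives; this is precisely where the $\sigma^*$-free recursion of Theorem~\ref{Thm_5} is invoked, and it quantifies the claim that variance reduction removes the internal-variance floor.
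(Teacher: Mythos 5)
Your proposal is correct and follows essentially the route the paper intends: the paper omits this proof, stating only that it is ``similar to the proof of Corollary~\ref{Cor_4}'', and your argument is exactly that proof specialized to the case where the linear-in-$\alpha$ variance term is absent --- the Theorem~\ref{Thm_5} recursion has only the transient term $A/(\alpha K)$ with $A=\mathbb{E}[T_0]=\mathcal{O}\left(\|x_0-x^*\|^2/(pq)\right)$ and the quadratic term $B\alpha^2$ with $B=80L\rho_{r,W}\zeta^*/(1-\rho_{r,W})^2$, so only the $\gamma$-capped regime and the cube-root balancing regime survive, and your two-case analysis reproduces \eqref{Comp_5}.

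One point worth flagging: your balancing step-size $\hat\alpha_K\propto(A/(BK))^{1/3}$ is precisely $1/\sqrt[3]{H_1K}$ (since $H_1=B/A$ up to constants), whereas the corollary as printed writes $1/\sqrt{H_1K}$. This exponent matters: under the literal square-root cap, the choice $\alpha=1/\sqrt{H_1K}$ yields $A/(\alpha K)+B\alpha^2=\sqrt{AB/K}+A/K$, which forces $K=\mathcal{O}(AB/\varepsilon^2)$ --- an $\varepsilon^{-2}$ dependence --- and for large $K$ the cube-root choice would actually violate that cap, so the stated constraint cannot produce the claimed $\varepsilon^{-3/2}$ term. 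The cube root is also what the analogous quantity $H_2$ in Corollary~\ref{Cor_4} uses for the quadratic term there. In other words, your proof is the correct one and implicitly corrects what appears to be a typo (square root in place of cube root) in the statement of the corollary; it would be worth making that correction explicit rather than silently using a different step-size than the one prescribed.
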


Proof of Corollary \ref{Cor_5} is similar to the proof of Corollary \ref{Cor_4} and thus omitted.

\begin{Thm}\label{Thm_6}
Consider the algorithms $\mathcal{A}\left( \cdot ,\cdot ,C_k = W_k\otimes \mathbf{J}_m \right) $. Suppose Assumption \ref{Ass_smoothness}-\ref{Ass_algoirthm} hold and $\mu =0$. Let
\begin{equation*}
\begin{aligned}
c_0=1, \,\,
c_1=\frac{1-\rho _{r,W}}{n\rho _{r,W}\left( 1+\rho _{r,W} \right)},\,\,
c_2=0, \,\,
c_3=\frac{20\alpha ^2}{Mq\left( 1-\rho _{r,W} \right) ^2},\,\, c_4=\frac{8\alpha ^2}{n\left( 1-\rho _{r,W} \right)}
\end{aligned}
\end{equation*}
and the step-size satisfy 
\begin{equation}
\begin{aligned}
\alpha \leqslant \min \left\{ \frac{1}{8L}, \frac{1-\rho _{r,W}}{4L\sqrt{2\rho _{r,W}\left( 1+\rho _{r,W} \right)}},  \frac{\left( 1-\rho _{r,W} \right) ^2}{1056L} \right\} =\mathcal{O}\left( \frac{\left( 1-\rho _{r,W} \right) ^2}{L} \right) .
\end{aligned}
\end{equation}
Then, we have
\begin{equation}
\begin{aligned}
\frac{1}{K}\sum_{k=0}^{K-1}{\mathbb{E}\left[ f\left( \bar{x}_k \right) -f\left( x^* \right) \right]}\leqslant \frac{\left( 1-\rho _{W} \right) ^2}{2\alpha \left( \left( 1-\rho _{W} \right) ^2-\left( 640+10\left( 1-\rho _{W} \right) ^2 \right) \alpha L \right) K}\mathbb{E}\left[ T_0 \right].
\end{aligned}
\end{equation}
\end{Thm}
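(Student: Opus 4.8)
The plan is to mirror the proof of Theorem~\ref{Thm_With_GT_VR} (the strongly convex GT--VR case) up to its one-step master recursion, then specialize to $\mu=0$ and turn that recursion into a telescoping inequality, exactly as Theorem~\ref{Thm_5} does for the VR-only setting. Concretely, I would keep the same Lyapunov weights $c_0=1$, $c_1=\frac{1-\rho_{r,W}}{n\rho_{r,W}(1+\rho_{r,W})}$, $c_2=0$, $c_3=\frac{20\alpha^2}{Mq(1-\rho_{r,W})^2}$, $c_4=\frac{8\alpha^2}{n(1-\rho_{r,W})}$, and combine Lemmas~\ref{Lem_sigma_k}, \ref{Lem_opt_gap}, \ref{Lem_Cons_error_GT}, \ref{Lem_VR_error}, \ref{Lem_GT_error} to reach the identical intermediate bound
\begin{equation*}
\mathbb{E}[T_{k+1}]\le \big(1-\min\{\alpha\mu,\tfrac{q}{2},\tfrac{1-\rho_{r,W}}{8}\}\big)\mathbb{E}[T_k]+e_1\mathbb{E}[f(\bar x_k)-f(x^*)]+e_2(\cdots)+e_3(\cdots)+e_4(\cdots),
\end{equation*}
with the same coefficients $e_1,\dots,e_4$ recorded in that proof.

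Setting $\mu=0$ collapses the contraction factor to $1$, so the recursion becomes $\mathbb{E}[T_{k+1}]\le\mathbb{E}[T_k]+e_1\mathbb{E}[f(\bar x_k)-f(x^*)]+e_2(\cdots)+e_3(\cdots)+e_4(\cdots)$. The coefficients $e_2,e_3,e_4$ do not involve $\mu$, with $e_4=0$ identically; moreover the step-size cap here is exactly half of the one in Theorem~\ref{Thm_With_GT_VR} (note $1056=2\cdot 528$) while its first two components coincide, so $e_2,e_3,e_4\le 0$ persist (shrinking $\alpha$ only deflates their positive $\alpha^2$-contributions). Dropping these nonpositive terms leaves $\mathbb{E}[T_{k+1}]\le\mathbb{E}[T_k]+e_1\mathbb{E}[f(\bar x_k)-f(x^*)]$. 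The crux is to bound $-e_1$ from below: writing $e_1=P-2\alpha$ and estimating the positive part $P$ via $q\le 1$, $1+\rho_{r,W}\le 2$ and $8\alpha^2L^2\le\tfrac18$ (from $\alpha\le\tfrac1{8L}$), I would establish a bound of the form $P\le\frac{1280\alpha^2L}{(1-\rho_{r,W})^2}+20\alpha^2L$, hence
\begin{equation*}
-e_1\ge \kappa:=2\alpha-\frac{1280\alpha^2L}{(1-\rho_{r,W})^2}-20\alpha^2L=\frac{2\alpha\big((1-\rho_{r,W})^2-(640+10(1-\rho_{r,W})^2)\alpha L\big)}{(1-\rho_{r,W})^2},
\end{equation*}
and the cap $\alpha\le\frac{(1-\rho_{r,W})^2}{1056L}$ is precisely what guarantees $\kappa>0$.

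With this in hand the recursion rearranges to $\kappa\,\mathbb{E}[f(\bar x_k)-f(x^*)]\le\mathbb{E}[T_k]-\mathbb{E}[T_{k+1}]$, which telescopes over $k=0,\dots,K-1$; using $\mathbb{E}[T_K]\ge 0$ and dividing by $\kappa K$ gives $\frac1K\sum_{k=0}^{K-1}\mathbb{E}[f(\bar x_k)-f(x^*)]\le\frac{\mathbb{E}[T_0]}{\kappa K}$, and substituting the expression for $1/\kappa$ reproduces the claimed bound (the statement's $\rho_W$ coincides with $\rho_{r,W}$ in the GT setting where $r=0$).

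The main obstacle is purely the constant bookkeeping in the lower bound on $-e_1$: unlike the general-convex analyses of Theorems~\ref{Thm_4} and \ref{Thm_5}, there is no residual noise term to carry, since the choice $C_k=W_k\otimes\mathbf{J}_m$ eliminates both $\sigma^*$ and $\zeta^*$ (this is why the recursion of Theorem~\ref{Thm_With_GT_VR} has no additive term). The entire argument therefore hinges on retaining a strictly positive fraction of the $-2\alpha\,(f-f^*)$ descent after absorbing the $\mathcal{O}(\alpha^2)$ error inflation from $e_1$, which is exactly the role played by halving the step-size constant from $528$ to $1056$.
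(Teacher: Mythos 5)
Your proposal is correct and follows essentially the same route as the paper: it reuses the master recursion and Lyapunov weights from the proof of Theorem~\ref{Thm_With_GT_VR}, sets $\mu=0$ so the contraction factor degenerates to $1$, checks that $e_2,e_3,e_4\leqslant 0$ persist under the halved step-size cap, lower-bounds $-e_1$, and telescopes. If anything, your bookkeeping of $-e_1$ via $\kappa$ reproduces the exact constant $\left(640+10\left(1-\rho_W\right)^2\right)$ appearing in the theorem's displayed bound, whereas the paper's own proof absorbs the step-size condition into an implicit inequality $\alpha\leqslant \frac{\left(1-4\alpha L\right)\left(1-\rho_{r,W}\right)^2}{1056L}$ and concludes with the equivalent form $\frac{\mathbb{E}\left[T_0\right]}{\left(1-4\alpha L\right)\alpha K}$.
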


\begin{proof}
According to the proof of Theorem \ref{Thm_With_VR_only} and noticing $\mu=0$, let the step size satisfy
$$
\alpha \leqslant \frac{\left( 1-4\alpha L \right) \left( 1-\rho _{r,W} \right) ^2}{1056L},
$$
which implies that
$$
\frac{32\alpha ^2L\left( 1+\rho _{r,W} \right)}{\left( 1-\rho _{r,W} \right) ^2}\left( 8\alpha ^2L^2+4+2q \right) +\frac{80\alpha ^2L}{\left( 1-\rho _{r,W} \right) ^2}\leqslant \left( \alpha -4\alpha ^2L \right).
$$
Combining the upper bound of step-size in (\ref{stepsize_3}), then we get
$$
\left( \alpha -4\alpha ^2L \right) \left( f\left( \bar{x}_k \right) -f\left( x^* \right) \right) \leqslant \mathbb{E}\left[ T_k \right] -\mathbb{E}\left[ T_{k+1} \right].
$$
\vspace{-0.2cm}
Summing over $k$ from $0$ to $K-1$, we have
\begin{equation*}
\begin{aligned}
\frac{1}{K}\sum_{k=0}^{K-1}{\left( f\left( \bar{x}_k \right) -f\left( x^* \right) \right)}\leqslant \frac{\mathbb{E}\left[ T_0 \right]}{\left( 1-\alpha L \right) \alpha K},
\end{aligned}
\end{equation*}
where
\begin{equation}
\begin{aligned}
&\mathbb{E}\left[ T_0 \right] =\left\| x_0-x^* \right\| ^2+\mathcal{O}\left( \frac{\alpha ^2}{Mq\left( 1-\rho _{r,W} \right) ^2} \right) \left\| \nabla F\left( \mathbf{1}_Mx_0 \right) \right\| ^2
\\
&=\left\| x_0-x^* \right\| ^2+\mathcal{O}\left( \frac{\alpha ^2L^2}{q\left( 1-\rho _{r,W} \right) ^2} \right) \left\| x_0-x^* \right\| ^2
=\mathcal{O}\left( \frac{1}{q}\left\| x_0-x^* \right\| ^2 \right).
\end{aligned}
\vspace{-0.8cm}
\end{equation}

\end{proof}

\begin{Cor}\label{Cor_6}
Under the conditions same in Theorem \ref{Thm_6}, we have
$$
\frac{1}{K}\sum_{k=0}^{K-1}{\mathbb{E}\left[ f\left( \bar{x}_k \right) -f\left( x^* \right) \right]}\leqslant \varepsilon 
$$
after at most the following iterations:
\begin{equation}\label{Comp_5}
\begin{aligned}
K\geqslant \mathcal{O}\left( \frac{1}{\gamma q\varepsilon} \right) \left\| x_0-x^* \right\| ^2.
\end{aligned}
\end{equation}
\end{Cor}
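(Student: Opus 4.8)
The plan is to read off Corollary~\ref{Cor_6} directly from the averaged bound established in Theorem~\ref{Thm_6}, since the guarantee there is a clean $\mathcal{O}\!\left(1/(\alpha K)\right)$ decay with \emph{no} residual noise floor---precisely because the combined GT and VR schemes eliminate both the internal variance $\sigma^*$ and the external heterogeneity $\zeta^*$. First I would start from
\begin{equation*}
\frac{1}{K}\sum_{k=0}^{K-1}\mathbb{E}\left[ f\left( \bar{x}_k \right) -f\left( x^* \right) \right]\leqslant \frac{\left( 1-\rho _{W} \right) ^2}{2\alpha \left( \left( 1-\rho _{W} \right) ^2-\left( 640+10\left( 1-\rho _{W} \right) ^2 \right) \alpha L \right) K}\mathbb{E}\left[ T_0 \right],
\end{equation*}
and simplify its prefactor. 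Under the step-size cap $\alpha\leqslant\gamma=\mathcal{O}\!\left((1-\rho_{r,W})^2/L\right)$ from Theorem~\ref{Thm_6}, the subtracted term $(640+10(1-\rho_W)^2)\alpha L$ is at most $\tfrac12(1-\rho_W)^2$, so the denominator is at least $\alpha(1-\rho_W)^2K$ and the whole prefactor collapses to $1/(\alpha K)$. This reduces the guarantee to the single-term form $\tfrac1K\sum_k\mathbb{E}[f(\bar x_k)-f(x^*)]\leqslant \mathbb{E}[T_0]/(\alpha K)$.

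Next I would control the initial Lyapunov value $\mathbb{E}[T_0]$. Since $c_2=0$ for this case, only the VR-error and GT-error terms contribute beyond the optimality gap; with the initialization $Y_0=\nabla F(X_0)$ both reduce to multiples of $\|\nabla F(\mathbf{1}_Mx_0)-\nabla F(\mathbf{1}_Mx^*)\|^2$, which $L$-smoothness of each $f_{ij}$ bounds by $\mathcal{O}(ML^2)\|x_0-x^*\|^2$. Inserting the chosen coefficients $c_3=\tfrac{20\alpha^2}{Mq(1-\rho_{r,W})^2}$ and $c_4=\tfrac{8\alpha^2}{n(1-\rho_{r,W})}$ together with $\alpha\leqslant\gamma$ makes the factor $\alpha^2 L^2/(1-\rho_{r,W})^2=\mathcal{O}(1)$, so that $\mathbb{E}[T_0]=\mathcal{O}\!\left(q^{-1}\right)\|x_0-x^*\|^2$, exactly as recorded in the proof of Theorem~\ref{Thm_6}.

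Finally I would choose $\alpha$ as large as the constraint permits, $\alpha=\gamma$, to minimize the iteration count, and solve $\mathbb{E}[T_0]/(\gamma K)\leqslant\varepsilon$ for $K$. Substituting the bound on $\mathbb{E}[T_0]$ and hiding constants yields $K\geqslant\mathcal{O}\!\left(1/(\gamma q\varepsilon)\right)\|x_0-x^*\|^2$, which is the claim. There is no genuine analytical obstacle here---the corollary is essentially a one-step reduction---so the only points needing care are (i) verifying that the step-size cap makes the Theorem~\ref{Thm_6} prefactor at most $1/(\alpha K)$, and (ii) the $\mathbb{E}[T_0]$ estimate, both of which are elementary given the step-size constraint. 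As a consistency check, with $\gamma\sim(1-\rho_{r,W})^2/L$ and $q=b/m$ this recovers the GT-SAGA/PGA-GT-SAGA complexity $\tfrac{mL}{b(1-\rho_{r,W})^2\varepsilon}$ reported in Table~\ref{Tab_comparing_convex}.
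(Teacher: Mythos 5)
Your proposal is correct and follows essentially the same route as the paper: the paper omits this proof as "similar to Corollary \ref{Cor_4}," and since Theorem \ref{Thm_6} has no residual $\sigma^*$/$\zeta^*$ noise floor, that reduction degenerates to exactly your argument — absorb the prefactor into $\mathcal{O}(1/(\alpha K))$ via the step-size cap, use the estimate $\mathbb{E}[T_0]=\mathcal{O}(1/q)\left\| x_0-x^* \right\|^2$ already established in the proof of Theorem \ref{Thm_6}, set $\alpha=\gamma$, and solve for $K$. The only blemishes (the constant in "$\leqslant\tfrac12(1-\rho_W)^2$" and treating the initial GT/VR errors as multiples of $\left\| \nabla F\left( \mathbf{1}_Mx_0 \right) -\nabla F\left( \mathbf{1}_Mx^* \right) \right\|^2$) are constant-level or inherited from the paper's own write-up and do not affect the $\mathcal{O}(\cdot)$ claim.
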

Proof of Corollary \ref{Cor_6} is similar to the proof of Corollary \ref{Cor_4} and thus omitted.

\section{Additional Experiments} \label{Appe_add_experiments}

\begin{table}[h]
 \begin{center}
 \vspace{-0.5cm}
  \caption{Summary of the experimental setup.}
  \label{Exp_Setting_2}
  \vspace{0.1cm}
  \small
  \begin{tabular}{|c|c|c|c|c|c|c|c|} 
   \hline
   \rule {0pt}{10pt}
   \textbf{Dataset} & \textbf{Node} $\left( n \right) $ & \# \textbf{Train } & \# \textbf{Test} & $\textbf{Dimension}$ & \textbf{BS ($n\times b$)} & \textbf{SS ($\alpha$)} & $\lambda$
   \rule {0pt}{10pt}
   \\
   \hline
   \rule {0pt}{10pt}
   F-MNIST & \{8, 20, 50\} & 60000 & 10000 & 784 & 200 &  0.05 & 0.001\\
   \hline
   \rule {0pt}{10pt}
   CIFAR-10 & \{8, 20, 50\} & 50000 & 10000 & 3072 & 400 &  0.008 & 0.001 \\
   \hline
  \end{tabular}
 \end{center}
 \vspace{-0.25cm}
\end{table}

In this section, we further verify our theoretical findings by several extra experiments. 
The experiment setting is recalled here. 
We train a regularized logistic regression classifier on both CIFAR-10 and Fashion-MNIST (F-MNIST) datasets over a network of $n$ nodes each of which locally stores $m$ data samples, as defined in (\ref{prob:logistic}) and (\ref{cross-entropy loss}):
\begin{equation*}
\begin{aligned}
\vspace{-0.2cm}
\underset{x\in \mathbb{R}^{d}}{\min}f\left( x \right) :=\frac{1}{n}\sum_{i=1}^n{\frac{1}{m}\sum_{j=1}^m{\underset{f_{ij}}{\left( \underbrace{\ell \left( x,\xi _{ij} \right) +\frac{\lambda}{2}\left\| x \right\| ^2} \right)}}},
\end{aligned}
\vspace{-0.5cm}
\end{equation*}
with the cross-entropy loss $\ell$:
\begin{equation*}
\begin{aligned}
\vspace{-0.2cm}
\ell \left( x,\xi _{ij} \right) :=-\sum_{c=1}^{10}{\phi _{ij}^{c}}\log \left( 1+\exp \left( -x^T\theta _{i,j} \right) \right) ^{-1},
\end{aligned}
\vspace{-0.2cm}
\end{equation*}
where $\lambda$ is a regularization parameter, $\xi _{ij}$ represents the $j$-th sample of node $i$ with feature vector $\theta _{i,j} \in \mathbb{R}^d$ and label $\phi _{ij}^{c}\in \left\{ -1, 1 \right\}$ of class $c$. 
The datasets and parameters we used are summarized in Table~\ref{Exp_Setting_2} with $r=0.05$ for corresponding algorithms. 
All the algorithms are executed on a server with 8 GPUs (NVIDIA RTX 2080Ti).

\begin{table}[h]
 \begin{center}
  \caption{Unbalanced label distribution of training samples with $h=20$ for CIFAR-10.}
  \label{data_split_h20}
  \vspace{0.1cm}
  \small
  \begin{tabular}{|c|c|c|c|c|c|c|c|c|c|c|} 
   \hline
   \rule {0pt}{10pt}
   { \diagbox[]{Node}{Label}} & 1 & 2 & 3 & 4 & 5 & 6 & 7 & 8 & 9 & 10
   \rule {0pt}{10pt}
   \\
   \hline
   \rule {0pt}{10pt}
    1 & 555 & 575 & 595 & 615 & 635 & 655 & 675 & 695 & 625 & 625\\
   \hline
   \rule {0pt}{10pt}
    2 & 575 & 595 & 615 & 635 & 655 & 675 & 695 & 555 & 625 & 625\\
    \hline
   \rule {0pt}{10pt}
    3 & 595 & 615 & 635 & 655 & 675 & 695 & 555 & 575 & 625 & 625\\
    \hline
   \rule {0pt}{10pt}
    4 & 615 & 635 & 655 & 675 & 695 & 555 & 575 & 595 & 625 & 625\\
    \hline
   \rule {0pt}{10pt}
    5 & 635 & 655 & 675 & 695 & 555 & 575 & 595 & 615 & 625 & 625\\
    \hline
   \rule {0pt}{10pt}
    6 & 655 & 675 & 695 & 555 & 575 & 595 & 615 & 635 & 625 & 625\\
    \hline
   \rule {0pt}{10pt}
    7 & 675 & 695 & 555 & 575 & 595 & 615 & 635 & 655 & 625 & 625\\
    \hline
   \rule {0pt}{10pt}
    8 & 695 & 555 & 575 & 595 & 615 & 635 & 655 & 675 & 625 & 625\\
    \hline
  \end{tabular}
 \end{center}
 \vspace{-0.5cm}
\end{table}

\begin{table}[h]

 \begin{center}
  \caption{A highly unbalanced label distribution of training samples denoted by $h=h_{\max}$ for CIFAR-10.}
  \label{data_split_hmax}
  \vspace{0.1cm}
  \small 
  \begin{tabular}{|c|c|c|c|c|c|c|c|c|c|c|} 
   \hline
   \rule {0pt}{10pt}
   {\diagbox[]{Node}{Label}} & 1 & 2 & 3 & 4 & 5 & 6 & 7 & 8 & 9 & 10
   \rule {0pt}{10pt}
   \\
   \hline
   \rule {0pt}{10pt}
    1 & 1000 & 0 & 0 & 0 & 1000 & 1000 & 1000 & 1000 & 625 & 625\\
   \hline
   \rule {0pt}{10pt}
    2 & 1000 & 1000 & 0 & 0 & 0 & 1000 & 1000 & 1000 & 625 & 625\\
    \hline
   \rule {0pt}{10pt}
    3 & 1000 & 1000 & 1000 & 0 & 0 & 0 & 1000 & 1000 & 625 & 625\\
    \hline
   \rule {0pt}{10pt}
    4 & 1000 & 1000 & 1000 & 1000 & 0 & 0 & 0 & 1000 & 625 & 625\\
    \hline
   \rule {0pt}{10pt}
    5 & 1000 & 1000 & 1000 & 1000 & 1000 & 0 & 0 & 0 & 625 & 625\\
    \hline
   \rule {0pt}{10pt}
    6 & 0 & 1000 & 1000 & 1000 & 1000 & 1000 & 0 & 0 & 625 & 625\\
    \hline
   \rule {0pt}{10pt}
    7 & 0 & 0 & 1000 & 1000 & 1000 & 1000 & 1000 & 0 & 625 & 625\\
    \hline
   \rule {0pt}{10pt}
    8 & 0 & 0 & 0 & 1000 & 1000 & 1000 & 1000 & 1000 & 625 & 625\\
    \hline
  \end{tabular}
 \end{center}
 \vspace{-0.5cm}
\end{table}

\hy{\textbf{Heterogeneously split dataset.}  In order to generate different levels of data heterogeneity among devices, we split the samples of 10 classes into $n=\{8, 20, 50\}$ nodes with different \textit{label distributions} for both Fashion-MNIST and CIFAR-10 datasets. 
In particular, we denote by $m_{i,c}$ the number of samples of class $c\in[10]$ allocated to node $i\in [n]$, then we generate the unbalanced label distribution through cyclic arithmetic sequences with a difference of $h$, i.e, for the case $n=8$, we have}

\begin{equation}\label{Eq_data_splite_1}
\begin{aligned}
m_{i,c}=\begin{cases}
	m_0+h (i+c-2), \,\, \forall i, c\in \left[ n \right]\\
	\frac{M}{10n},\,\, \forall i\in \left[ n \right] , c \notin  \left[ n \right]\\
\end{cases},\quad
\mathrm{s}.\mathrm{t}.,\,\, \sum_{i=1}^n{m_{i,c}}=\frac{M}{10},\,\, \sum_{c=1}^{10}{m_{i,c}=}\frac{M}{n},
\end{aligned}
\end{equation}
where $M$ denotes the total number of samples to be allocated, $m_0$ denotes the initial term of the arithmetic sequence, and $h$ is the difference between the consecutive terms which we used to represent the level of data heterogeneity. 
We also consider the cases $n=20, 50$ which are larger than the number of classes, then the label distribution $\{m_{i,c}\}$ is generated as follow:
\begin{equation}\label{Eq_data_splite_2}
\begin{aligned}
m_{i,c}=m_0+h (\left( i+c-2 \right) \%10),  \forall i\in \left[ n \right] , c\in \left[ 10 \right], \quad
\mathrm{s}.\mathrm{t}.,\,\, \sum_{i=1}^n{m_{i,c}}=\frac{M}{10},\,\, \sum_{c=1}^{10}{m_{i,c}=}\frac{M}{n},
\end{aligned}
\end{equation}
where $\%$ denotes the remainder operator. Intuitively, the larger the value of $h$, the more heterogeneous the local datasets will be. For example, let $n=8$, $h=20$ and $m_0=555$, we get the allocation strategy of training samples on CIFAR-10 dataset by \eqref{Eq_data_splite_1} in Table \ref{data_split_h20}. Furthermore, we also design a
highly unbalanced label distribution of CIFAR-10 denoted by $h_{\max}$ in Table \ref{data_split_hmax} (can be adapted to F-MNIST), which indicates that each node only has samples of 7 classes while other classes of samples are inaccessible. As a result, it is very difficult to learn a global model for 10-class image classification task in a distributed manner  (c.f. Fig.~\ref{Expe}).

\textbf{Topology dependence.} We provide more experiments to verify the topology dependence of different algorithms, as we reported in the theoretical results in the main text. We compare various algorithms that can be recovered in the proposed SPP framework on heterogeneously split datasets ($h=20$) over graphs: i) directed ring with $n=8$; ii) directed ring with $n=20$; iii) geometric graph with $n=50$. The results are summarized in Fig.~\ref{Expe_3}. \finetune{It follows from the figure that the algorithms adopting VR and/or GT schemes (solid lines) outperform the others both in terms of testing accuracy and training loss, especially on the graph with worse connectivity (i.e., $n=50, \rho_W \approx 0.99$), which verifies the dependency of the performance of the algorithms on the sampling variance,  data heterogeneity, and the connectivity of the graph.}

\begin{figure}[t]
    \centering
    \subfigure 
    {
        \begin{minipage}[t]{0.3\textwidth}
            \centering          
            \includegraphics[width=\textwidth]{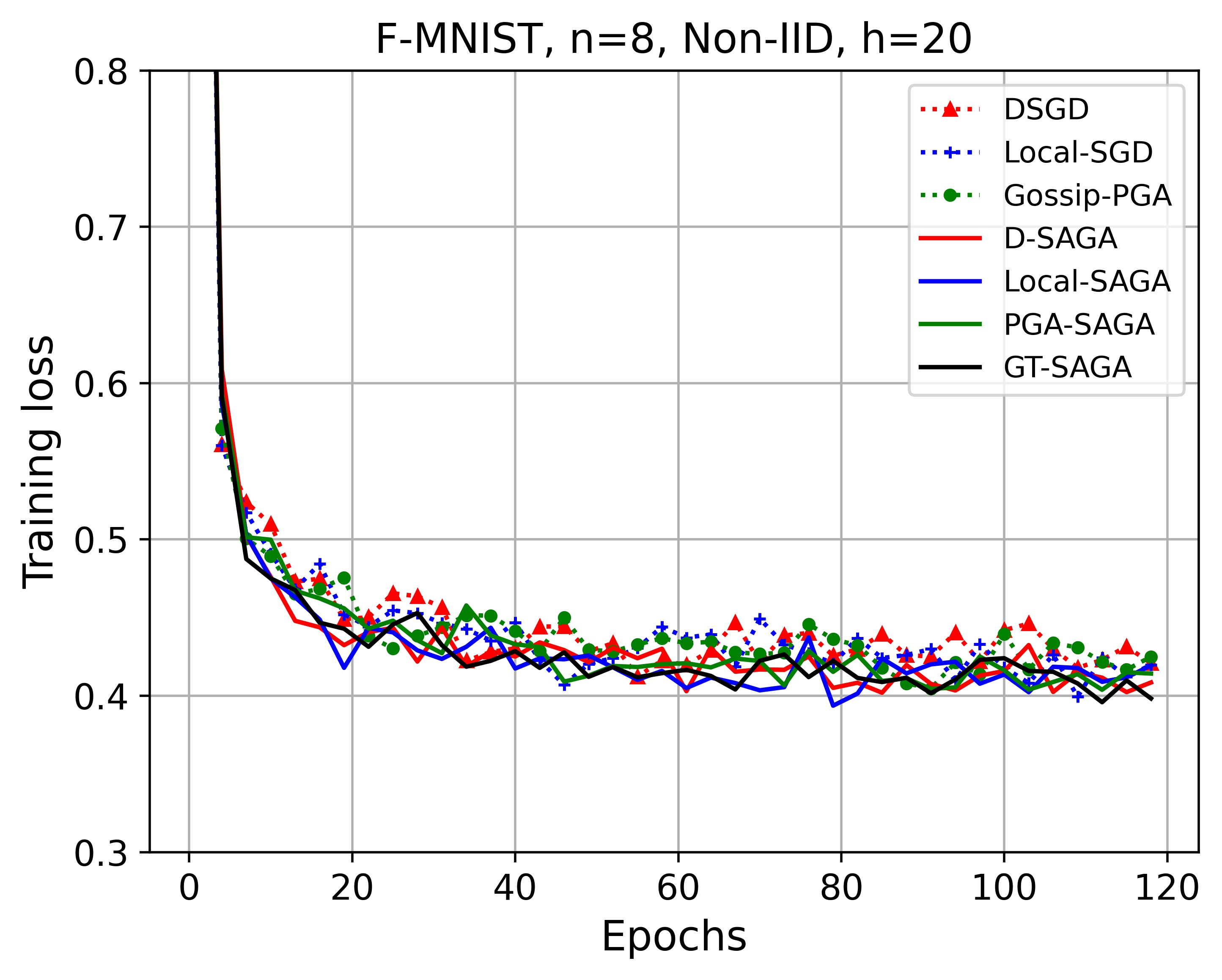}   
        \end{minipage}%
    }
        \subfigure 
    {
        \begin{minipage}[t]{0.3\textwidth}
            \centering          
            \includegraphics[width=\textwidth]{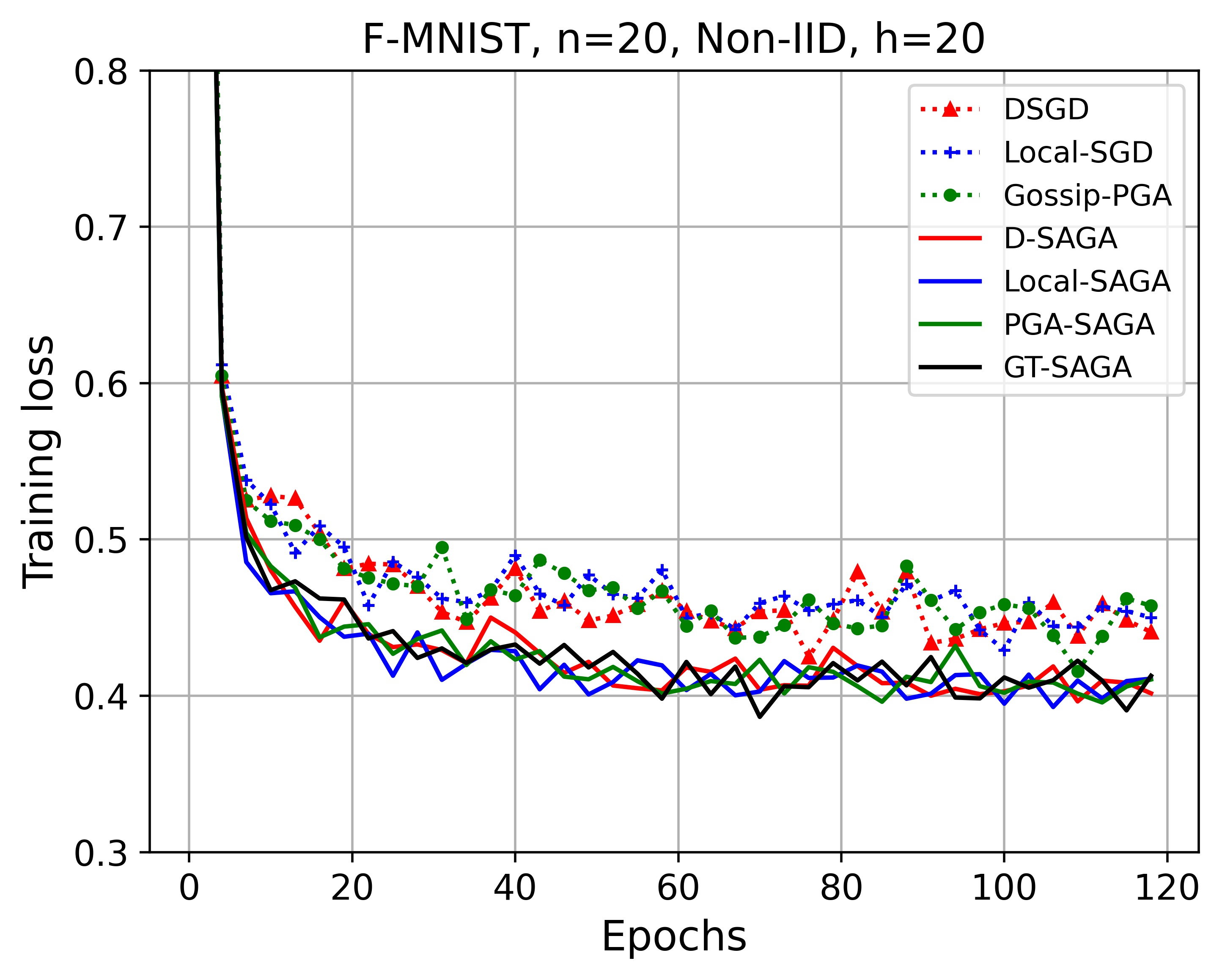}   
        \end{minipage}%
    }
        \subfigure 
    {
        \begin{minipage}[t]{0.3\textwidth}
            \centering          
            \includegraphics[width=\textwidth]{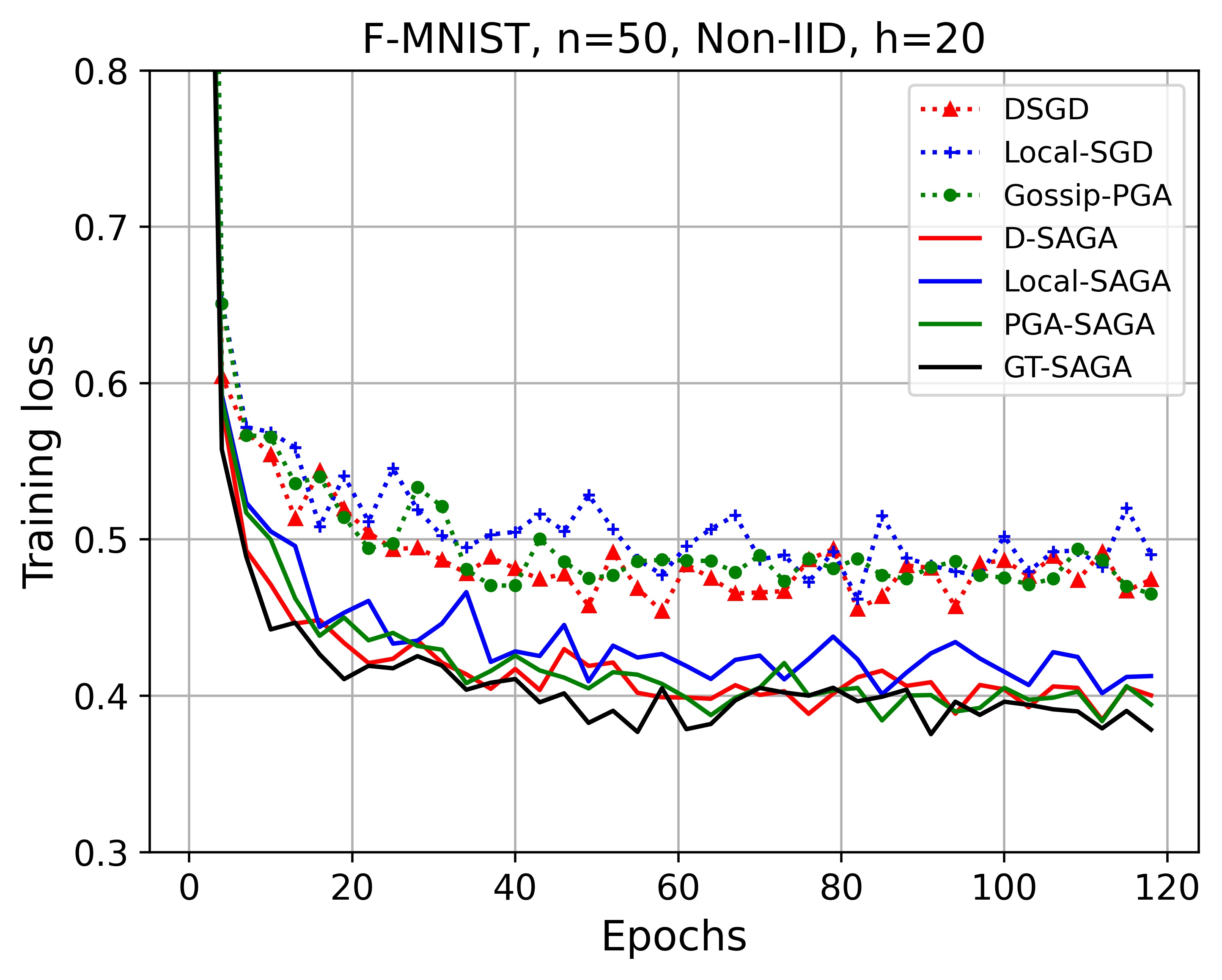}   
        \end{minipage}%
    }\\
    \subfigure 
    {
        \begin{minipage}[t]{0.3\textwidth}
            \centering          
            \includegraphics[width=\textwidth]{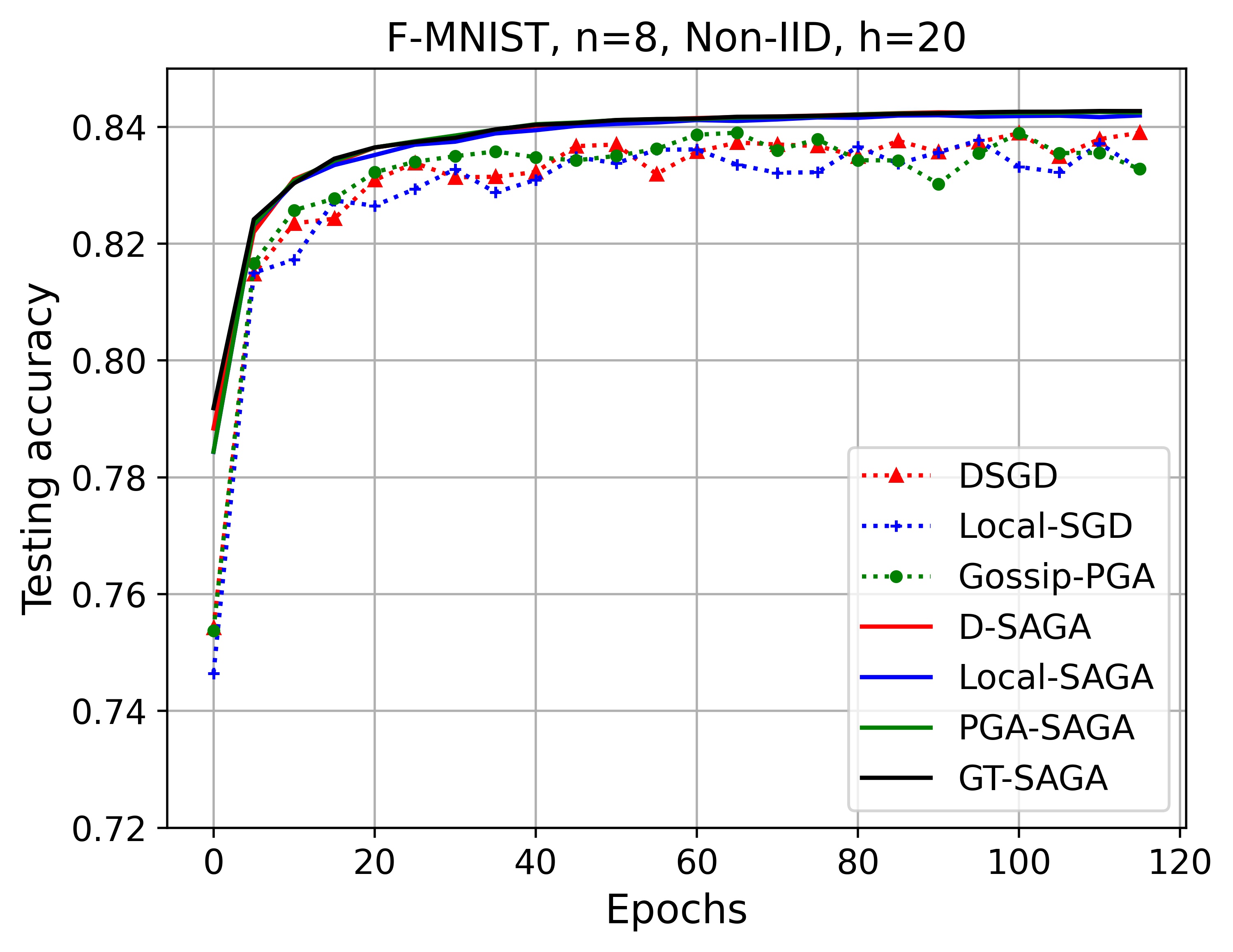}   
        \end{minipage}%
    }
    \subfigure 
    {
        \begin{minipage}[t]{0.3\textwidth}
            \centering      
            \includegraphics[width=\textwidth]{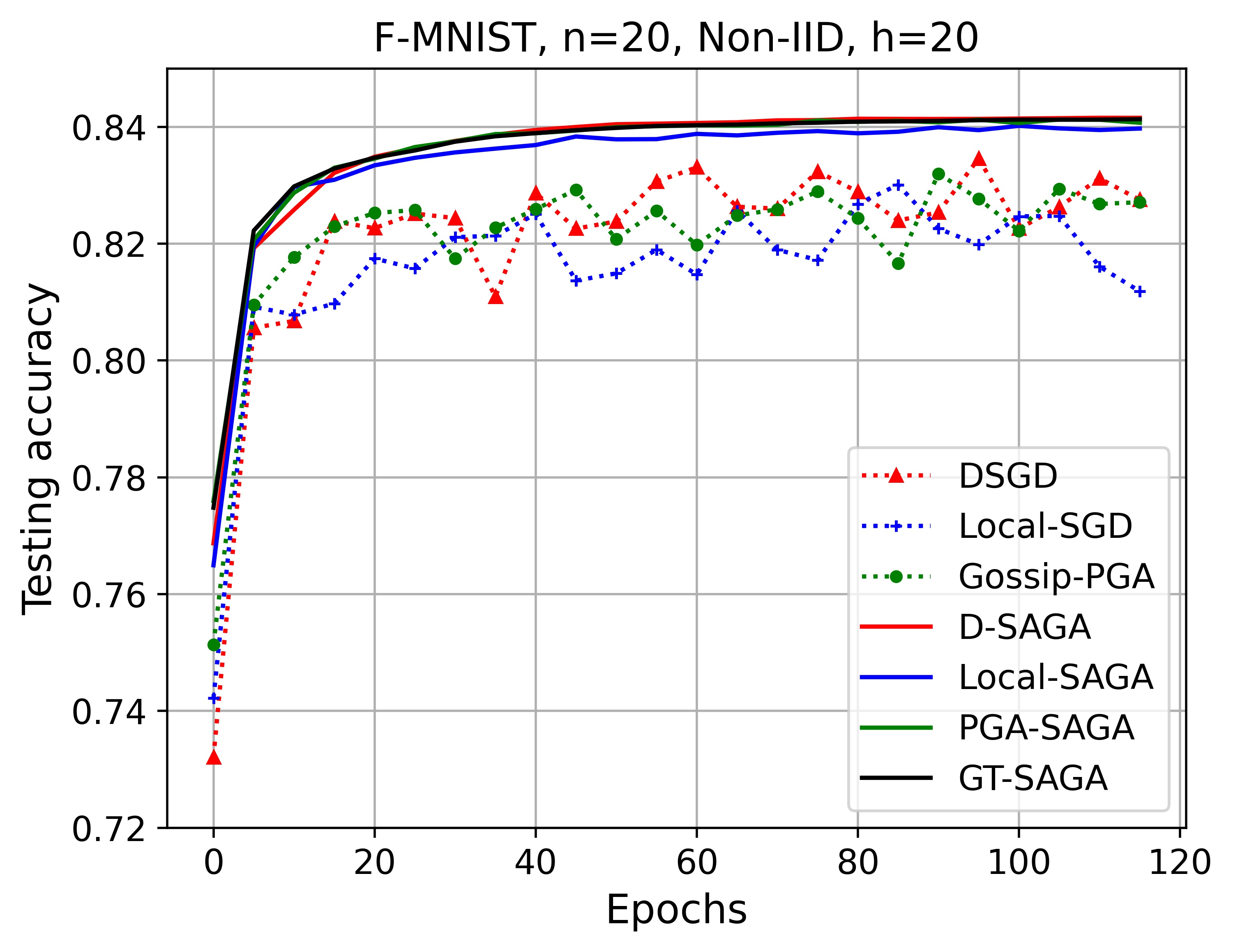}    
        \end{minipage}
    }
    \subfigure 
    {
        \begin{minipage}[t]{0.3\textwidth}
            \centering      
            \includegraphics[width=\textwidth]{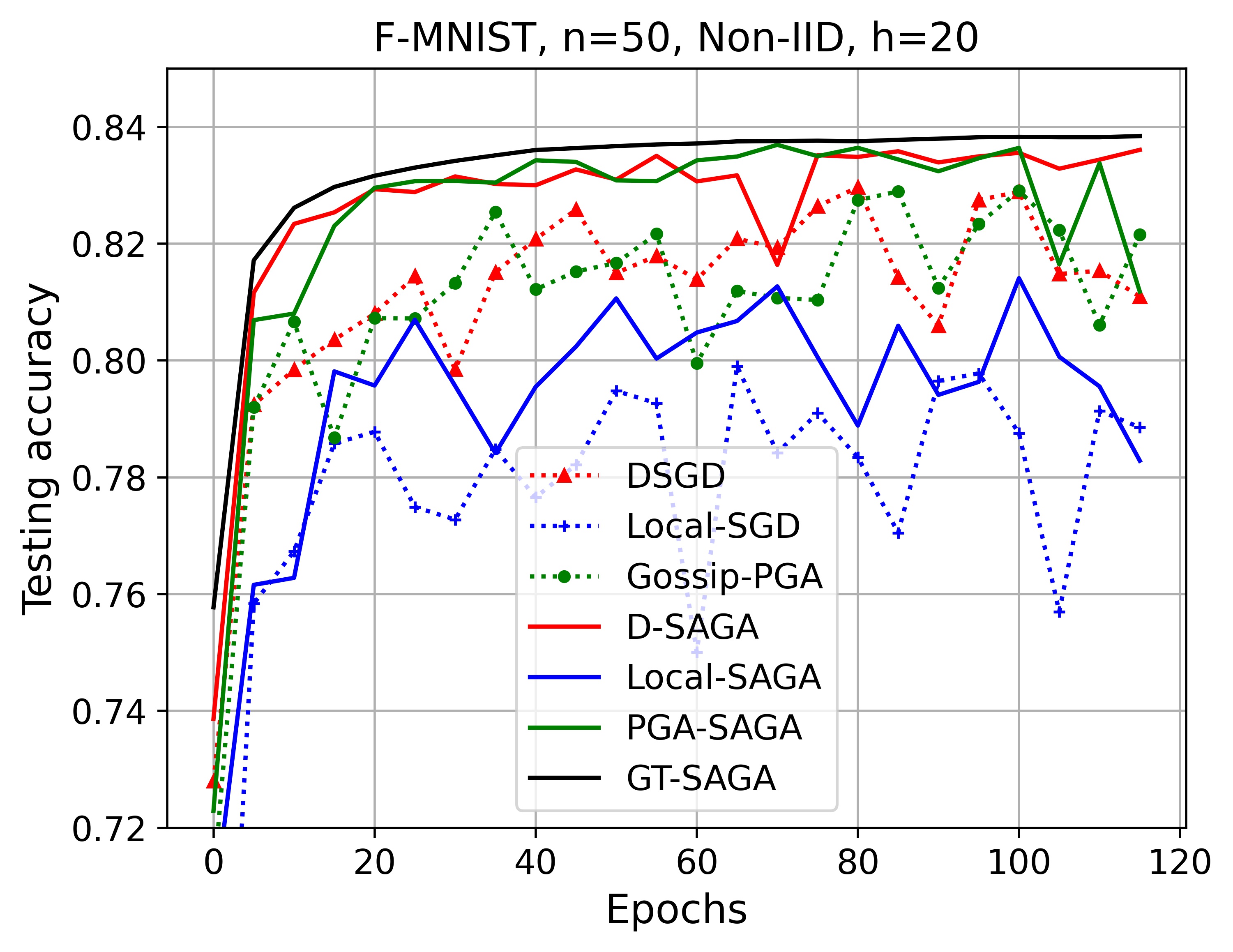}    
        \end{minipage}
    }\\
    \subfigure 
    {
        \begin{minipage}[t]{0.3\textwidth}
            \centering          
            \includegraphics[width=\textwidth]{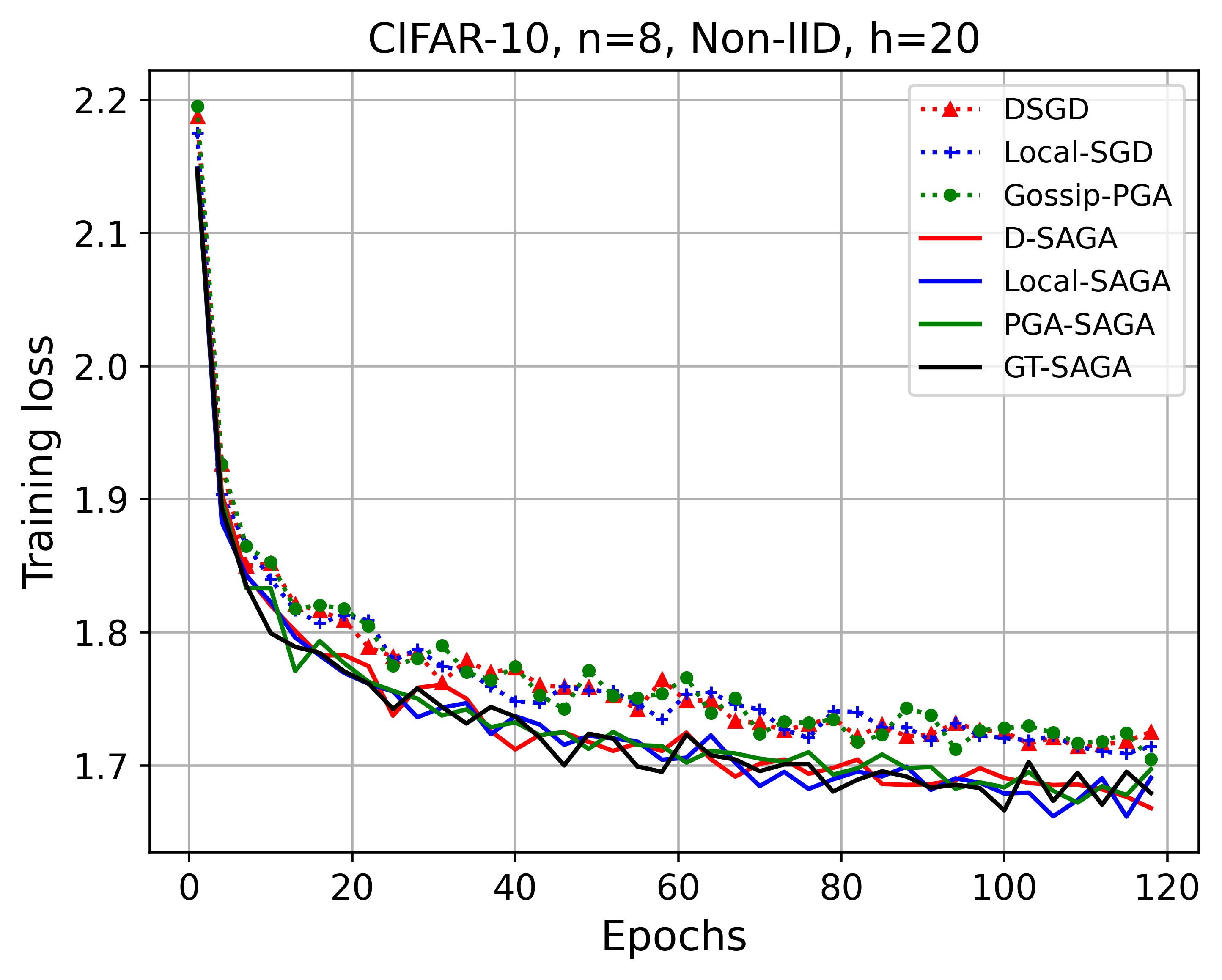}   
        \end{minipage}%
    }
    \subfigure 
    {
        \begin{minipage}[t]{0.3\textwidth}
            \centering          
            \includegraphics[width=\textwidth]{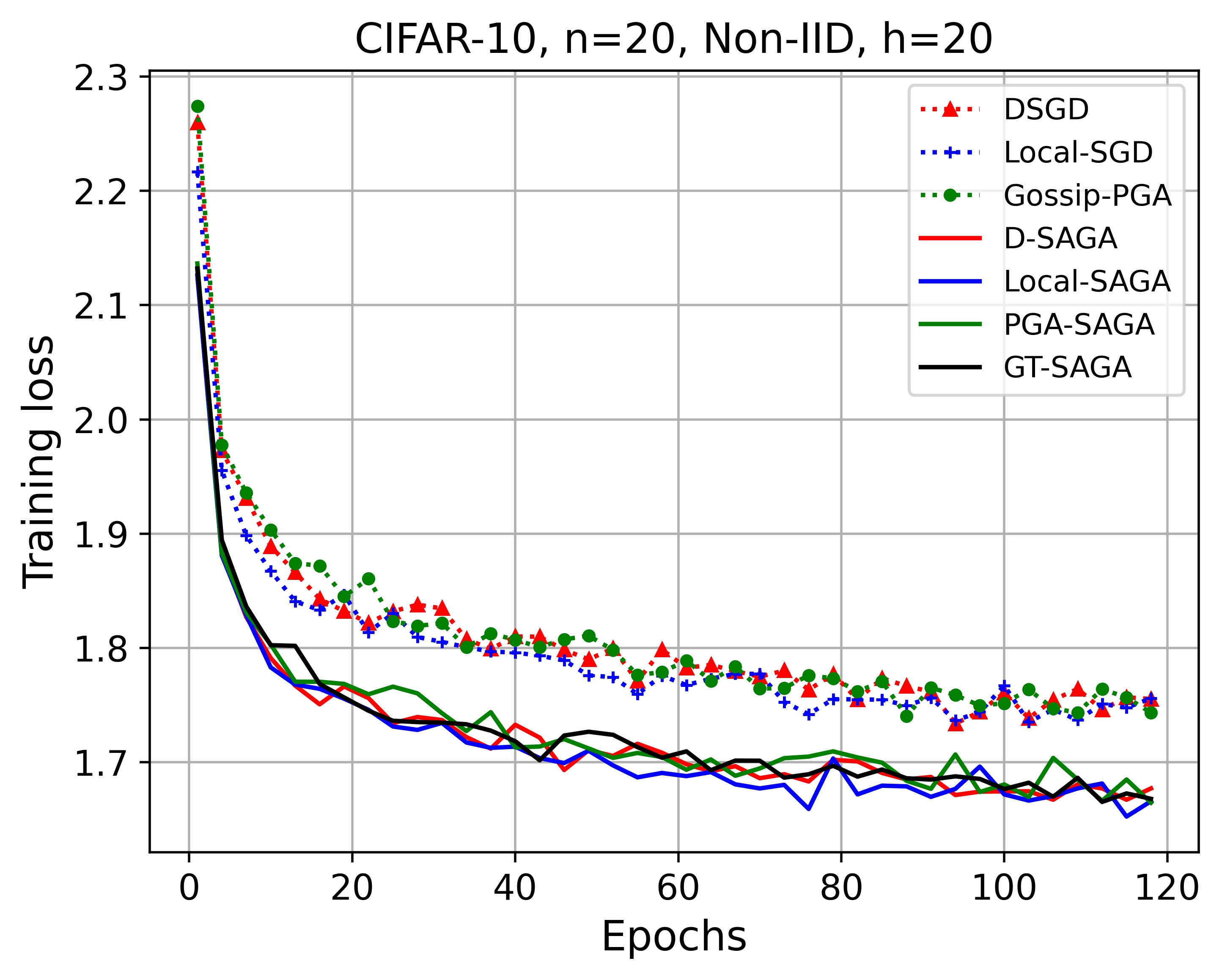}   
        \end{minipage}%
    }
    \subfigure 
    {
        \begin{minipage}[t]{0.3\textwidth}
            \centering          
            \includegraphics[width=\textwidth]{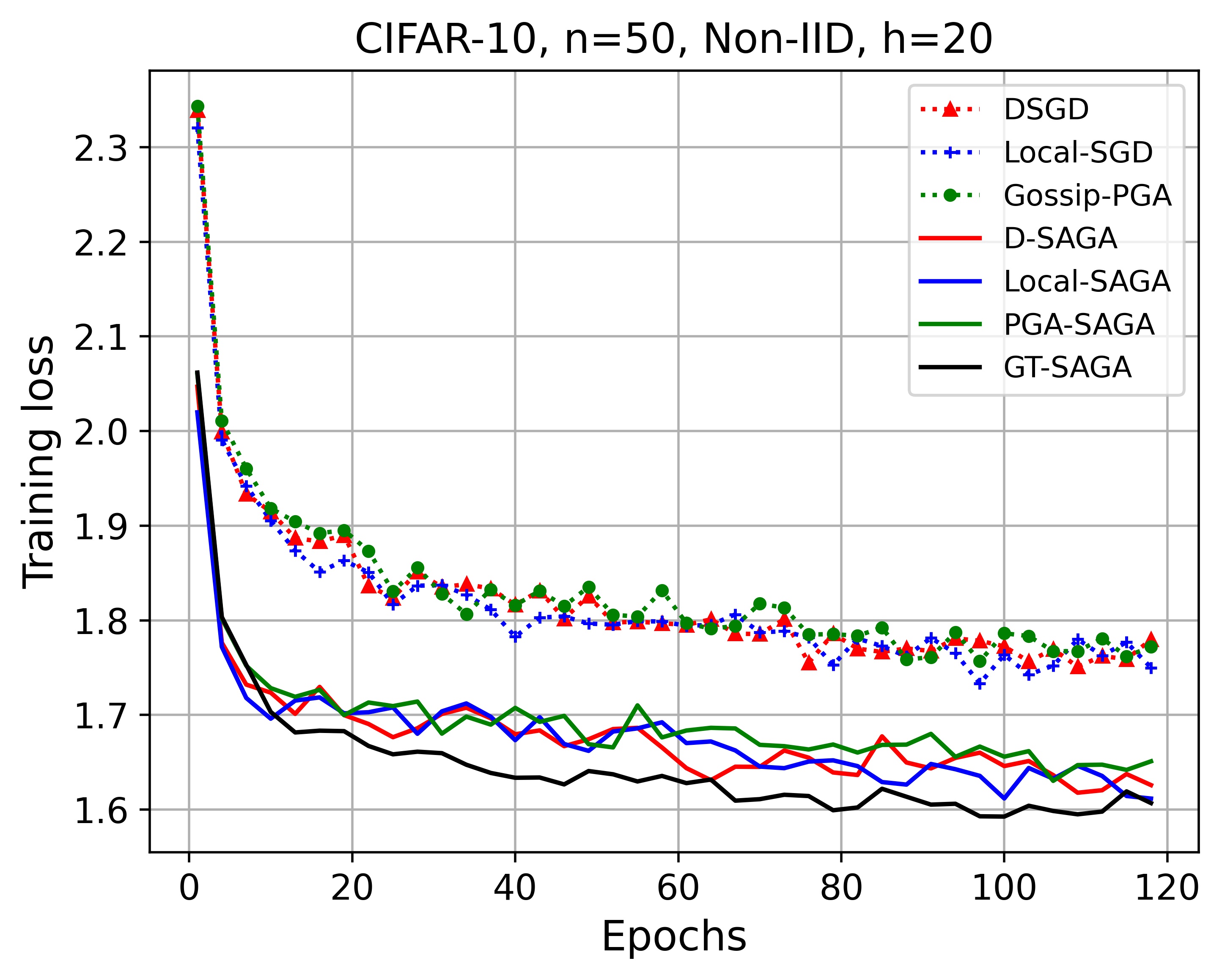}   
        \end{minipage}%
    }\\
    \subfigure 
    {
        \begin{minipage}[t]{0.3\textwidth}
            \centering          
            \includegraphics[width=\textwidth]{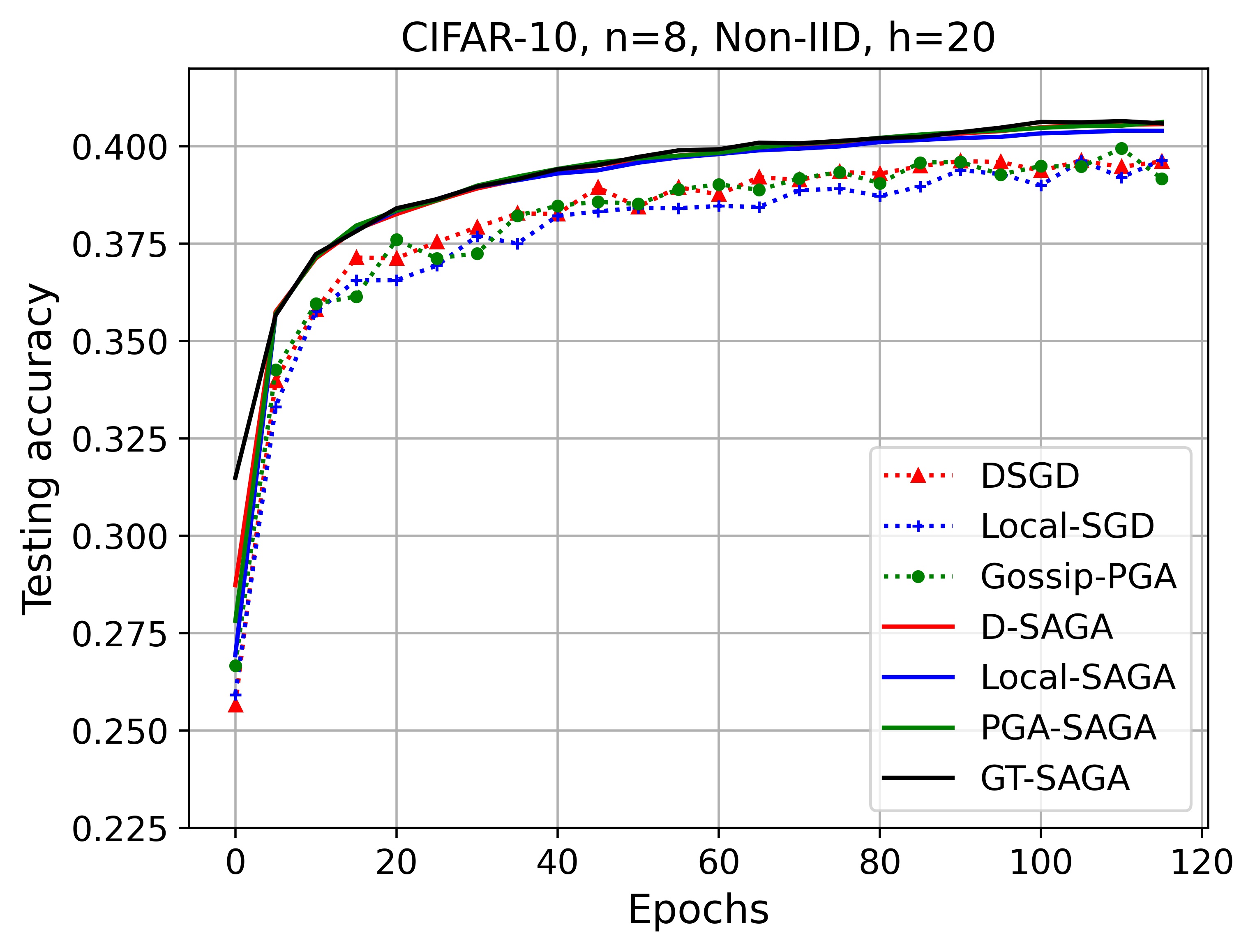}   
        \end{minipage}%
    }
   \subfigure 
    {
        \begin{minipage}[t]{0.3\textwidth}
            \centering      
            \includegraphics[width=\textwidth]{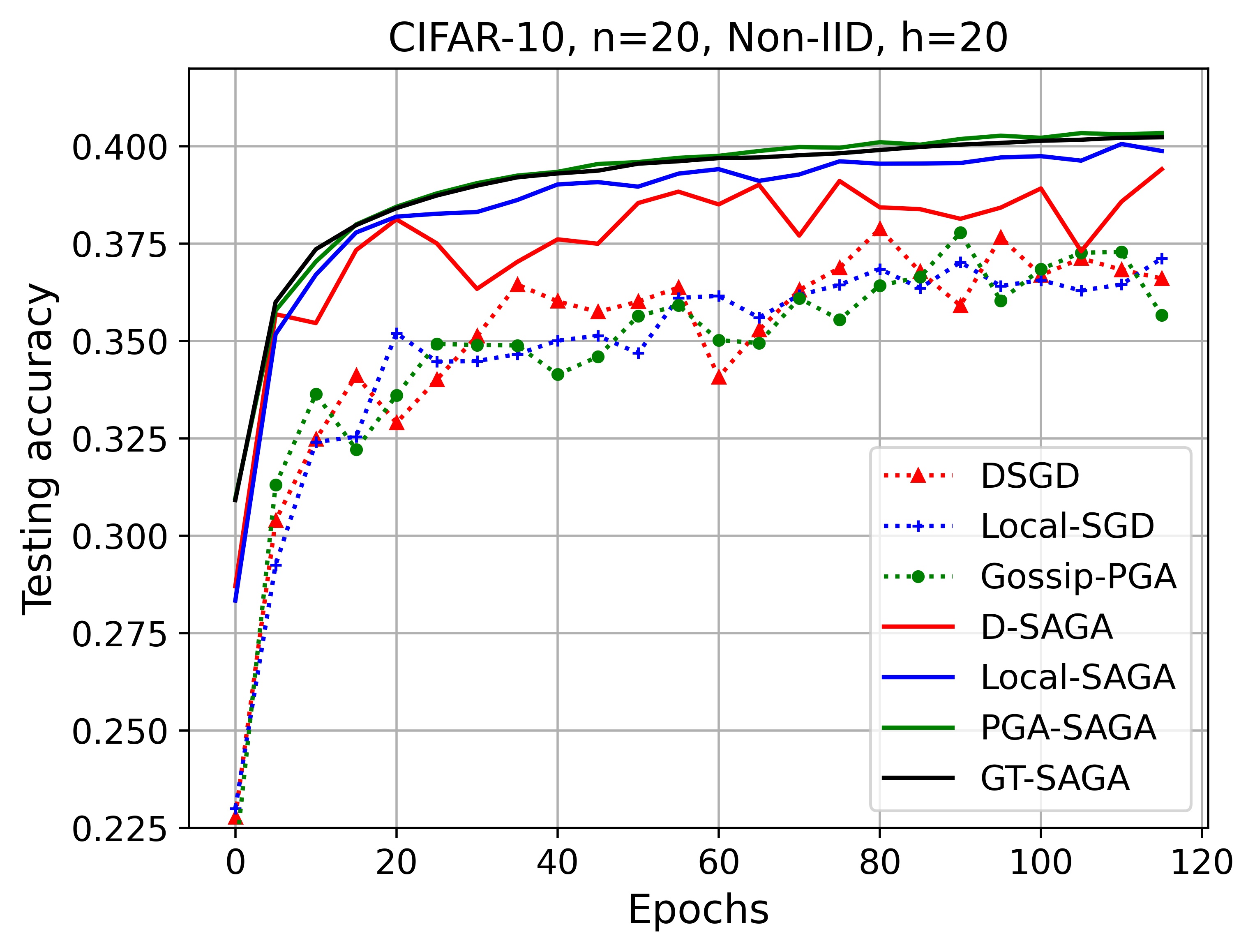}    
        \end{minipage}
    }
    \subfigure 
    {
        \begin{minipage}[t]{0.3\textwidth}
            \centering      
            \includegraphics[width=\textwidth]{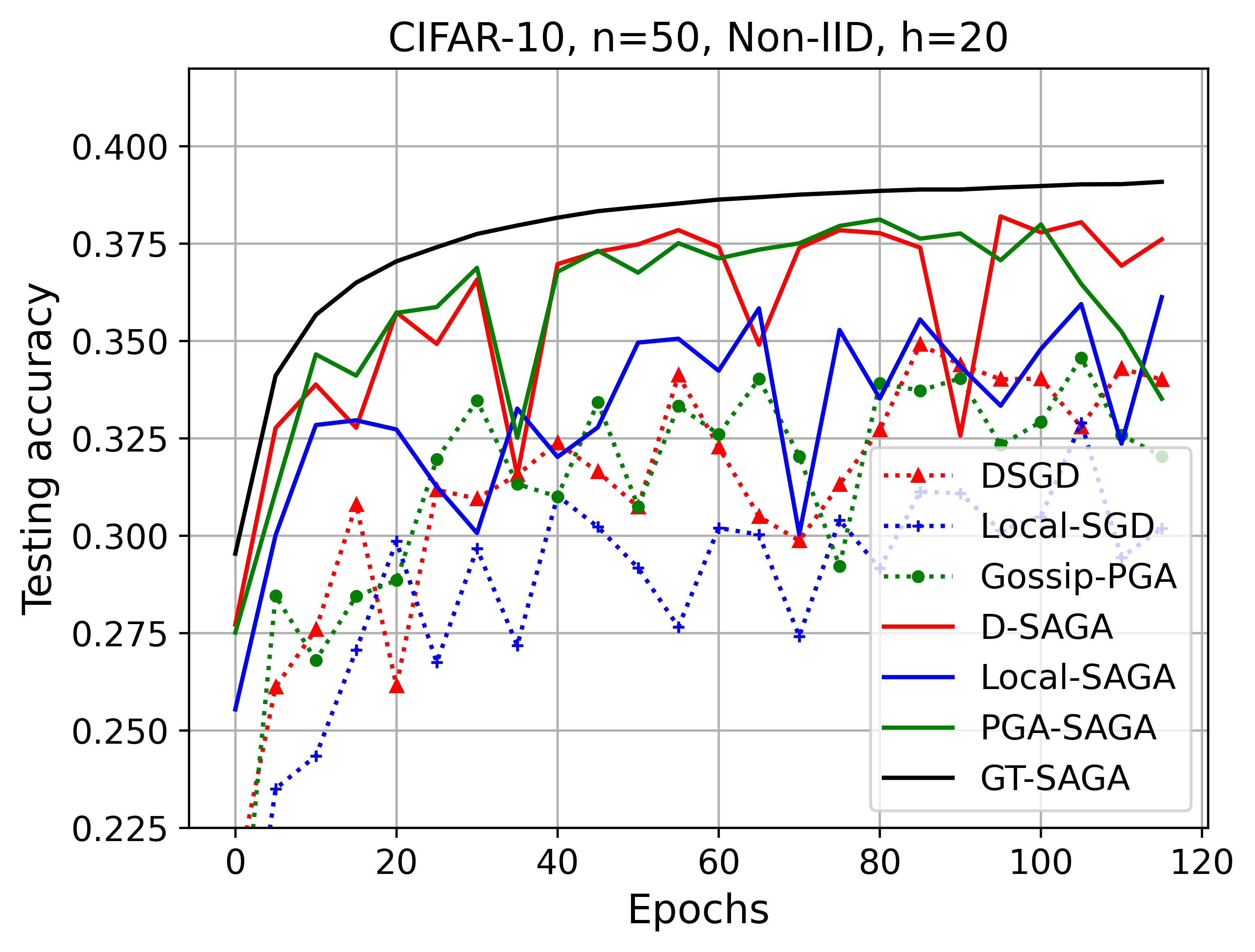}    
        \end{minipage}
    }
    \vspace{-0.25cm}
    \caption{\hy{Performance comparison of DSGD, Local-SGD, Gossip-PGA, D-SAGA, Local-SAGA, PGA-SAGA and GT-SAGA over three graphs: i) directed ring with $n=8$ (first column); ii) directed ring with $n=20$ (second column); iii) geometric graph with $n=50$ (third column). The sub-figures on the first two rows plot the training loss and testing accuracy of the algorithms on Fashion-MNIST dataset, respectively, and the sub-figures on the last two rows plot the training loss and testing accuracy on CIFAR-10 dataset.}}
    \label{Expe_3}
    \vspace{-0.25cm}
\end{figure}

\end{document}